\providecommand{\U}[1]{\protect\rule{.1in}{.1in}}
\newtheoremstyle{theorem}
{10pt}		
{10pt}
{\sl}
{\parindent}
{\bf}
{. }
{ }
{}
\theoremstyle{theorem}
\newtheorem{theorem}{Theorem}
\newtheorem{acknowledgement}{Acknowledgement}
\newtheorem{algorithm}{Lemma}[section]
\newtheorem{claim}{Claim}
\newtheorem{corollary}{Corollary}
\newtheorem{lemma}{Lemma}
\newtheorem{proposition}{Proposition}
\newtheorem{remark}{Remark}
\newtheoremstyle{defi}
{10pt}		
{10pt}
{\rm}
{\parindent}
{\bf}
{. }
{ }
{}
\theoremstyle{defi}
\newtheorem{definition}{Definition}
\begin{document}
\author{Borka Jadrijevi\'{c}}
\address{Faculty of Science, University of Split, Croatia}
\email{borka@pmfst.hr}
\author{Kristina Mileti\'{c}}
\address{Mostar, Bosnia and Herzegovina}
\email{kristina.miletic@gmail.com}
\subjclass[2020]{ 11A63, 37B10}
\keywords{canonical number systems, shift radix systems}
\title{\textbf{Characterization of quadratic }$\varepsilon-$\textbf{CNS}
\textbf{polynomials (extended version)}}
\maketitle

\begin{abstract}
In this paper, we give characterization of quadratic $\varepsilon-$canonical
number system ($\varepsilon-$CNS) polynomials for all values $\varepsilon
\in\lbrack0,1)$. Our characterization provides a unified view of the
well-known characterizations of the classical quadratic CNS polynomials
($\varepsilon=0$) and quadratic SCNS polynomials ($\varepsilon=1/2$). This
result is a consequence of our new characterization results of $\varepsilon
-$shift radix systems ($\varepsilon-$SRS) in the two-dimensional case and
their relation to quadratic $\varepsilon-$CNS polynomials.

\end{abstract}

\section{Introduction}

In the last century, starting with the first examples of Knuth
\cite{Knuth-clanak} and Penney \cite{Penney} up to the early 1990s, canonical
number systems were defined as number systems that allow to represent elements
of orders (in particular, rings of integers) in number fields. The definition
of canonical number systems in number fields and an overview of the early
theory of number systems can be found for example in \cite{Akiyama-BBPT},
\cite{BHP}, \cite{Everste-Gyory}. In 1991 Peth\H{o} \cite{Petho} gave a more
general definition of canonical number systems: Let $P\in\mathbb{Z}[x]$ be a
monic polynomial and let $\mathcal{N\subset}\mathbb{Z}$ be a complete residue
system modulo $P(0)$ containing $0$. The pair $(P,\mathcal{N})$ is called a
\textit{number system} if for each $a\in\mathbb{Z}[x]$ there exist integers
$l>0$ and $d_{0},...,d_{l-1}\in\mathcal{N}$ such that%
\[
a\equiv\sum_{j=0}^{l-1}d_{j}x^{j}(\operatorname{mod}P).
\]
$P$ is called \textit{basis }of this number system and $\mathcal{N}$ is called
its \textit{set of digits}. If such a representation of $a\in\mathbb{Z}[x]$
exists, it is unique if we require $d_{l-1}\neq0$ for $a\not \equiv
0(\operatorname{mod}P)$ and take the empty expansion for $a\equiv
0(\operatorname{mod}P).$ Accordingly, we can assume that $\left\vert
P(0)\right\vert \geq2.$ Moreover, it is important to note that the fact that
$0\in\mathcal{N}$ is crucial for the unicity of the representation.

Choosing the set of digits $\mathcal{N}_{0}=\left\{  0,1,...,\left\vert
P(0)\right\vert -1\right\}  $, the number system $(P,\mathcal{N}_{0})$ is
referred to as a \textit{canonical number system} (CNS for short) and $P$ as a
\textit{CNS basis}\textbf{\ }or \textit{CNS polynomial. }Canonical number
systems $(P,\mathcal{N}_{0})$ have been extensively studied. Many papers are
devoted to the following two problems: to work out an efficient algorithm that
allows to decide whether a given polynomial $P$ is a CNS polynomial or not,
and to give the characterization of CNS polynomials by considering only their
coefficients. So far, a complete description (characterization) of CNS
polynomials is an open problem, even for polynomials of small degree. Various
variants of number systems and canonical number systems have been studied in
the literature. For example, a modification of the set of digits leads us to
the so-called \textit{symmetric CNS }introduced by Akiyama and Scheicher
\cite{Akiyama-Scheicher}. Namely, the number system $(P,\mathcal{N})$ is
called a \textit{symmetric canonical number system} (SCNS) if $P\in
\mathbb{Z}[x]$ is a monic polynomial and $\mathcal{N=}\left[  -\frac
{\left\vert P(0)\right\vert }{2},\frac{\left\vert P(0)\right\vert }{2}\right)
\cap\mathbb{Z}.$ This motivates a generalization that includes both types of
canonical number systems:

\begin{definition}
Let $P\left(  x\right)  =x^{d}+p_{d-1}x^{d-1}+...+p_{1}x+p_{0}\in
\mathbb{Z}\left[  x\right]  ,$ $\left\vert p_{0}\right\vert \geq2,$
$\varepsilon\in\left[  0,1\right)  .$ Then the \textit{number system}
$(P,\mathcal{N}_{\varepsilon})$ is called an $\varepsilon-$\textit{canonical
number system} if\textit{\ }%
\begin{equation}
\mathcal{N}_{\varepsilon}=\left[  -\varepsilon\left\vert p_{0}\right\vert
,\left(  1-\varepsilon\right)  \left\vert p_{0}\right\vert \right)
\cap\mathbb{Z}. \label{N-eps}%
\end{equation}
$P$ is called the \textit{base of the }$\varepsilon-$\textit{CNS} or
$\varepsilon-$\textit{CNS polynomial}. To $\mathcal{N}_{\varepsilon}$ we refer
as the $\varepsilon-$\textit{set of digits. }
\end{definition}

Note that the set $\mathcal{N}_{\varepsilon}$ consists of $\left\vert
p_{0}\right\vert $ consecutive integers and contains $0,\ $and that it depends
on both $\varepsilon$ and $p_{0}=P\left(  0\right)  $. Also note that the case
$\varepsilon=0$ corresponds to the usual CNS, while $\varepsilon=\frac{1}{2}$
corresponds to SCNS. A fundamental problem is to characterize all
$\varepsilon$-CNS polynomials (for example of given degree). It turns out that
this problem is strongly related to a dynamical systems, so called
$\varepsilon$-\textit{shift radix systems} (for details see
\cite{Akiyama-BBPT} and \cite{Surer}). The concept of \textit{shift radix
systems} (SRS) was introduced by Akiyama et al. \cite{Akiyama-BBPT}. Akiyama
and Scheicher \cite{Akiyama-Scheicher} presented a slight modification of SRS,
so called \textit{symmetric shift radix systems} (SSRS). Surer \cite{Surer}
constructed the following new generalization:

\begin{definition}
Let $d\geq1$ be an integer and $\varepsilon\in\left[  0,1\right)  .$ To
$\mathbf{r}=(r_{1},\ldots,r_{d})\in\mathbb{R}^{d}$ we associate the mapping
$\tau_{\mathbf{r},\varepsilon}:\mathbb{Z}^{d}\rightarrow\mathbb{Z}^{d}$ in the
following way:%
\begin{equation}
\mathbf{z}=(z_{1},\ldots,z_{d})\in\mathbb{Z}^{d}\longmapsto\tau_{\mathbf{r}%
,\varepsilon}(\mathbf{z})=(z_{2},\ldots,z_{d},-\left\lfloor \mathbf{rz+}%
\varepsilon\right\rfloor ), \label{tau-eps}%
\end{equation}
where $\mathbf{rz}=r_{1}z_{1}+...+r_{d}z_{d}$, i.e. $\mathbf{rz}$ is the inner
product of vectors $\mathbf{r}$ and $\mathbf{z}$. The mapping $\tau
_{\mathbf{r},\varepsilon}$ is called an $\varepsilon$\textit{-shift radix
system} ($\varepsilon$-SRS) if for any $\mathbf{z}\in\mathbb{Z}^{d}$ there
exists $k\in\mathbb{N}$ such that $\tau_{\mathbf{r},\varepsilon}%
^{k}(\mathbf{z})=\mathbf{0,}$ where $\mathbf{0}=(0,\ldots,0)\in\mathbb{Z}%
^{d}.$
\end{definition}

\noindent We define the following two sets related to the behavior of periods
of $\tau_{\mathbf{r},\varepsilon}$:
\begin{align*}
\mathcal{D}_{d,\varepsilon}  &  =\left\{  \mathbf{r}\in\mathbb{R}^{d}:\left(
\tau_{\mathbf{r},\varepsilon}^{n}(\mathbf{z})\right)  _{n\in\mathbb{N}}\text{
is ultimately periodic for all }\mathbf{z}\in\mathbb{Z}^{d}\right\} \\
\mathcal{D}_{d,\varepsilon}^{0}  &  =\left\{  \mathbf{r}\in\mathbb{R}^{d}%
:\tau_{\mathbf{r},\varepsilon}\text{ is }\varepsilon\text{-SRS}\right\}  .
\end{align*}
Lots of basic properties and notations concerning $\mathcal{D}_{d,\varepsilon
}$ and $\mathcal{D}_{d,\varepsilon}^{0}$ can be directly adopted from the well
analyzed case $\varepsilon=0$ and the case $\varepsilon=\frac{1}{2},$ since
$0$-SRS corresponds to classical SRS while $\frac{1}{2}$-SRS corresponds to
SSRS (see \cite{Surer}). The relation between $0$-SRS and $0$-CNS is given by
Akiyama et al. \cite[Theorem 3.1]{Akiyama-BBPT}. This theorem can easily be
generalized for any $\varepsilon\in\left[  0,1\right)  $:

\begin{theorem}
[{\cite[Theorem 4.2.]{Surer}}]\label{TRM-veza}Let\textit{\ }$\varepsilon
\in\left[  0,1\right)  $ and $P\left(  x\right)  =x^{d}+p_{d-1}x^{d-1}%
+...+p_{1}x+p_{0}\in\mathbb{Z}\left[  x\right]  $. \textit{Then }%
$P$\textit{\ is }$\varepsilon-$CNS polynomial\textit{\ }if and only if
$\left(  \frac{1}{p_{0}},\frac{p_{d-1}}{p_{0}},...,\frac{p_{1}}{p_{0}}\right)
\in\mathcal{D}_{d,\varepsilon}^{0}.$
\end{theorem}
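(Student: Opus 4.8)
The plan is to establish a direct dictionary between the digit expansion defined by the $\varepsilon$-CNS and the orbit of the map $\tau_{\mathbf{r},\varepsilon}$, as was done by Akiyama et al. in the classical case $\varepsilon=0$. The strategy rests on the observation that the "backward division" algorithm implicit in the $\varepsilon$-CNS representation is conjugate, via the companion matrix of $P$, to iteration of $\tau_{\mathbf{r},\varepsilon}$ with $\mathbf{r}=\left(\frac{1}{p_0},\frac{p_{d-1}}{p_0},\dots,\frac{p_1}{p_0}\right)$.

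First I would make explicit the arithmetic of the $\varepsilon$-CNS digit map. Given $a\in\mathbb{Z}[x]/(P)$, represented as a residue, the next digit $d$ is the unique element of $\mathcal{N}_\varepsilon=[-\varepsilon|p_0|,(1-\varepsilon)|p_0|)\cap\mathbb{Z}$ with $a\equiv d\pmod{p_0}$, and one then forms $(a-d)/x$ in the quotient ring. Identifying $a$ with its coordinate vector $\mathbf{z}=(z_1,\dots,z_d)\in\mathbb{Z}^d$ in the basis $1,x,\dots,x^{d-1}$, I would compute $d$ explicitly as a floor expression: the congruence condition together with the interval defining $\mathcal{N}_\varepsilon$ forces $d=z_1-p_0\left\lfloor \frac{z_1}{p_0}+\varepsilon\right\rfloor$ (up to the sign bookkeeping for $p_0$), so that division by $x$ sends $\mathbf{z}$ to a vector whose last coordinate is exactly $-\lfloor\mathbf{rz}+\varepsilon\rfloor$ and whose first $d-1$ coordinates are the shift $(z_2,\dots,z_d)$. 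This is precisely $\tau_{\mathbf{r},\varepsilon}(\mathbf{z})$ from \eqref{tau-eps}; the key step is verifying that the floor in the definition of $\tau_{\mathbf{r},\varepsilon}$ reproduces the digit selection rule for $\mathcal{N}_\varepsilon$.

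Once the conjugacy $\Phi\circ(\text{digit map})=\tau_{\mathbf{r},\varepsilon}\circ\Phi$ is set up, where $\Phi$ is the coordinate isomorphism $\mathbb{Z}[x]/(P)\cong\mathbb{Z}^d$, the equivalence is almost formal. By definition $P$ is an $\varepsilon$-CNS polynomial iff every $a\in\mathbb{Z}[x]/(P)$ has a finite digit expansion, which means the digit map sends every residue to the zero residue after finitely many steps. Transported through $\Phi$, this says exactly that for every $\mathbf{z}\in\mathbb{Z}^d$ there is $k$ with $\tau_{\mathbf{r},\varepsilon}^k(\mathbf{z})=\mathbf{0}$, i.e. $\tau_{\mathbf{r},\varepsilon}$ is an $\varepsilon$-SRS, i.e. $\mathbf{r}\in\mathcal{D}_{d,\varepsilon}^0$. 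I would also note that finiteness of the expansion is equivalent to the orbit reaching $\mathbf{0}$ because $\mathbf{0}$ is the unique fixed point and terminating expansions correspond to orbits that hit $\mathbf{0}$ (the empty expansion of $a\equiv0$).

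The main obstacle I anticipate is the careful treatment of the floor and the sign of $p_0$: the interval $\mathcal{N}_\varepsilon$ is defined via $|p_0|$, whereas the inner product $\mathbf{rz}$ carries $\frac{1}{p_0}$ with its actual sign, so I expect the delicate point to be checking that $d=z_1-p_0\lfloor \mathbf{rz}+\varepsilon\rfloor$ lands in $\mathcal{N}_\varepsilon$ for both signs of $p_0$ and that the resulting recursion matches \eqref{tau-eps} on the nose. Since Theorem~\ref{TRM-veza} is quoted from \cite{Surer} as a known generalization of \cite[Theorem 3.1]{Akiyama-BBPT}, I would present the argument as a verification of this conjugacy rather than reprove every step, emphasizing only the $\varepsilon$-dependent modification of the digit selection.
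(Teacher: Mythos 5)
Your top-level plan is the standard one: the paper itself gives no proof of Theorem \ref{TRM-veza} (it is quoted from Surer, and is the generalization of \cite[Theorem 3.1]{Akiyama-BBPT}), and the literature proof does exactly what you outline — conjugate the backward-division digit map to $\tau_{\mathbf{r},\varepsilon}$ by a $\mathbb{Z}$-linear bijection of $\mathbb{Z}[x]/(P)$ with $\mathbb{Z}^{d}$, then transfer the statement ``every orbit reaches $0$''. However, your central step contains a genuine error: in the power basis $1,x,\ldots,x^{d-1}$ the identity you claim is false. Write $a=z_{1}+z_{2}x+\cdots+z_{d}x^{d-1}$. The digit is determined by $z_{1}$ alone: $d_{0}=z_{1}-p_{0}q$ with $q=\lfloor z_{1}/p_{0}+\varepsilon\rfloor$ (say $p_{0}>0$), so the inner product $\mathbf{rz}$ cannot appear at this stage; and to divide $a-d_{0}$ by $x$ one must substitute $p_{0}q\equiv-q\left(x^{d}+p_{d-1}x^{d-1}+\cdots+p_{1}x\right)\pmod{P}$, which gives $(a-d_{0})/x\equiv(z_{2}-qp_{1})+(z_{3}-qp_{2})x+\cdots+(z_{d}-qp_{d-1})x^{d-2}-qx^{d-1}$. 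This is not the shift $(z_{2},\ldots,z_{d},-\lfloor\mathbf{rz}+\varepsilon\rfloor)$: the floor is the wrong one ($\lfloor z_{1}/p_{0}+\varepsilon\rfloor$ instead of $\lfloor\mathbf{rz}+\varepsilon\rfloor$), and every coordinate picks up a correction term $-qp_{i}$. Hence your stated key step — ``verifying that the floor in the definition of $\tau_{\mathbf{r},\varepsilon}$ reproduces the digit selection rule'' — cannot be carried out in this basis, because the assertion being verified is false there.

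The missing idea is a nontrivial (unimodular, triangular) change of coordinates. Set $Q_{j}(x)=x^{j-1}+p_{d-1}x^{j-2}+\cdots+p_{d-j+1}$ for $j=1,\ldots,d$ (so $Q_{1}=1$, $xQ_{j-1}=Q_{j}-Q_{j}(0)$, and $xQ_{d}\equiv-p_{0}\pmod{P}$), and identify $\mathbf{z}\in\mathbb{Z}^{d}$ with $A=\sum_{j=1}^{d}z_{j}Q_{j}$. Then $A(0)=z_{1}+p_{d-1}z_{2}+\cdots+p_{1}z_{d}=p_{0}\,\mathbf{rz}$, so for $p_{0}>0$ the digit parameter is exactly $q=\lfloor\mathbf{rz}+\varepsilon\rfloor$, and $(A-d_{0})/x\equiv\sum_{j\geq2}z_{j}Q_{j-1}-qQ_{d}$, which is precisely the element corresponding to $\tau_{\mathbf{r},\varepsilon}(\mathbf{z})$: the change of basis simultaneously produces the correct floor and absorbs all correction terms. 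Moreover, your deferred ``sign bookkeeping'' for $p_{0}<0$ is not routine: there the digit extraction yields $\lceil\mathbf{rz}-\varepsilon\rceil$, which differs from $\lfloor\mathbf{rz}+\varepsilon\rfloor$ for almost all values, and one must conjugate by $-\Lambda$ instead of $\Lambda$ (equivalently compose with $\mathbf{z}\mapsto-\mathbf{z}$ and use $-\lceil t-\varepsilon\rceil=\lfloor-t+\varepsilon\rfloor$); this still suffices because $\mathbf{z}\mapsto-\mathbf{z}$ is a bijection fixing $\mathbf{0}$, and it is essential here since the $\varepsilon$-CNS case genuinely admits negative $p_{0}$ (see (\ref{druga 1})). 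Finally, the equivalence ``finite expansions for all residues $\Longleftrightarrow$ all orbits of the digit map reach $0$'' should be justified by uniqueness of the digit modulo $p_{0}$ (any representation must begin with the extracted digit), not by the remark that $\mathbf{0}$ is the unique fixed point.
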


It turned out that the description of the sets $\mathcal{D}_{d,\varepsilon}$
and $\mathcal{D}_{d,\varepsilon}^{0}$ is almost trivial for $d=1$, while
considerable difficulties already arise in the dimension $d=2$. For example,
the set $\mathcal{D}_{2,\varepsilon}^{0}\subset\mathbb{R}^{2}$ for
$\varepsilon=0$ has a very complicated structure, so that $\mathcal{D}%
_{2,0}^{0}$ cannot be described completely (see \cite{Akiyama-BBPT} and
\cite{Akiyama-BBPT-II} for several characterization results on $\mathcal{D}%
_{2,0}^{0}$). On the other hand, it turned out \cite{Akiyama-Scheicher} that
the set $\mathcal{D}_{2,\varepsilon}^{0}\subset\mathbb{R}^{2}$ for
$\varepsilon=\frac{1}{2}$ has a very simple structure, so that it can be
characterized completely (see Proposition \ref{SRSpola}).

Clearly, we have $\mathcal{D}_{d,\varepsilon}^{0}\subseteq\mathcal{D}%
_{d,\varepsilon}$, so any analysis of $\mathcal{D}_{d,\varepsilon}^{0}$ starts
with $\mathcal{D}_{d,\varepsilon}$. The set $\mathcal{D}_{d,\varepsilon}$ is,
up to the boundary, easy to describe. Namely, we have $\mathcal{E}_{d}%
\subset\mathcal{D}_{d,\varepsilon}\subset\overline{\mathcal{E}}_{d}$, where
$\mathcal{E}_{d}$ is an open bounded set characterized by several strict
inequalities. For example in dimension $d=2$, we have
\[
\mathcal{E}_{2}=\left\{  \left(  x,y\right)  \in\mathbb{R}^{2}:\left\vert
x\right\vert <1,\text{ }\left\vert y\right\vert <x+1\right\}  .
\]
For definition of the set $\mathcal{E}_{d}$ (sometimes referred to as the
Schur-Takagi-region) see for example \cite{Akiyama-BBPT} or \cite{Surer}. The
interior of $\mathcal{D}_{d,\varepsilon}$ equals $\mathcal{E}_{d}$, so it does
not depend on $\varepsilon,$ and, consequently, the interior of $\mathcal{D}%
_{d,\varepsilon}$ is equal for all $\varepsilon\in\left[  0,1\right)  $. Set
$\mathcal{D}_{d,\varepsilon}\cap\partial\mathcal{D}_{d,\varepsilon
}=\mathcal{D}_{d,\varepsilon}\diagdown\mathcal{E}_{d}$ is very hard to
describe and probably depends on $\varepsilon$. Furthermore, Surer
\cite{Surer} has shown that $\mathcal{D}_{d,\varepsilon}^{0}$, for
$\varepsilon\in\left(  0,1\right)  \diagdown\left\{  \frac{1}{2}\right\}  $
can be gained by cutting out polyhedra from $\mathcal{D}_{d,\varepsilon}$ and
has presented a method to obtain this \textit{family of cutout polyhedra} of
$\mathcal{D}_{d,\varepsilon}^{0}$ (the method is adopted from the case
$\varepsilon=0$ and slightly modified). He has also shown that $\mathcal{D}%
_{d,\varepsilon}^{0}$ is closely related to $\mathcal{D}_{d,1-\varepsilon}%
^{0}$ for $\varepsilon\in\left(  0,1\right)  \ $and stated several
characterization results of $\mathcal{D}_{d,\varepsilon}^{0}$ in the two
dimensional case. Namely, for each $\varepsilon\in\left(  0,1\right)
\diagdown\left\{  \frac{1}{2}\right\}  $ he found an explicitly given set
$D^{\ast}\left(  \varepsilon\right)  $ with $\mathcal{D}_{2,\varepsilon}%
^{0}\subset D^{\ast}\left(  \varepsilon\right)  \subset\mathcal{E}_{2}$ and
showed that $\mathcal{D}_{2,\varepsilon}^{0}$ can be obtained from $D^{\ast
}\left(  \varepsilon\right)  $ by cutting out finitely many polyhedra. Surer
used these results to give explicit characterizations of $\mathcal{D}%
_{2,\varepsilon}^{0}$ for some particular values of $\varepsilon\in\left(
0,1\right)  \diagdown\left\{  \frac{1}{2}\right\}  \ $(see \cite[Section
5]{Surer}).

In the present paper, we give characterization of quadratic\textbf{\ }%
$\varepsilon-$CNS polynomials for all $\varepsilon\in\left[  0,1\right)  .$
This characterization provides a unified view of the well-known
characterizations of the classical quadratic CNS polynomials ($\varepsilon=0$)
and quadratic SCNS polynomials ($\varepsilon=1/2$). Our result is a
consequence of some new characterization results on $\mathcal{D}%
_{2,\varepsilon}^{0}$ and Theorem \ref{TRM-veza}. Some of the consequences of
our main result fit into the general results presented in
\cite{Petho-Thuswaldner}, as well as in \cite{EGPT} (see Remark \ref{rem-PT}).

The paper is organized as follows. In Section \ref{Sect2}, we state the main
result and explain the basic idea of its proof. The main result, given by
Theorem \ref{TRM-main-manje}, provides the necessary and sufficient conditions
for a quadratic polynomial $P(x)=x^{2}+p_{1}x+p_{0}\in\mathbb{Z}\left[
x\right]  $ to be an $\varepsilon-$CNS polynomial for any $\varepsilon
\in\left[  0,1\right)  .$ To prove Theorem \ref{TRM-main-manje}, we first
define two subsets $D\left(  \varepsilon\right)  \ $and $B\left(
\varepsilon\right)  \ $of the set $D^{\ast}\left(  \varepsilon\right)  $ for
any $\varepsilon\in\left[  0,1\right)  .$ The crucial point of the proof of
Theorem \ref{TRM-main-manje} is to show that the set inclusions $B\left(
\varepsilon\right)  \subset\mathcal{D}_{2,\varepsilon}^{0}\subset D\left(
\varepsilon\right)  $ hold for all $\varepsilon\in\left[  0,1\right)  .$ In
Section \ref{Sect3}, we prove the set inclusions $B\left(  \varepsilon\right)
\subset\mathcal{D}_{2,\varepsilon}^{0}\subset D\left(  \varepsilon\right)  $
for all $\varepsilon\in\left[  0,1\right)  $ using a powerful algorithm (given
in Lemma \ref{lem-alg}) and extensive hand calculations. The set inclusion
$\mathcal{D}_{2,\varepsilon}^{0}\subset D\left(  \varepsilon\right)  $ is easy
to prove, but it turns out to be a hard problem to prove that some parts of
the set $B\left(  \varepsilon\right)  $ belong to $\mathcal{D}_{2,\varepsilon
}^{0}.$ For example, the problem for $\varepsilon\in\left(  0,\frac{1}%
{2}\right)  $ becomes more and more harder the closer $\varepsilon$ is to $0$
and the closer we get to the line $y=x+1-\varepsilon$. Section \ref{Sect4} is
devoted to the proof of Theorem \ref{TRM-main-manje}. First, we appropriately
rewrite the interval $[0,1)$ as a disjoint union of the subintervals which
allows us to characterize all pairs $\left(  p_{0},p_{1}\right)  \in
\mathbb{Z}^{2}$ with the properties $\left(  \frac{1}{p_{0}},\frac{p_{1}%
}{p_{0}}\right)  \in D\left(  \varepsilon\right)  $ and $\left(  \frac
{1}{p_{0}},\frac{p_{1}}{p_{0}}\right)  \in B\left(  \varepsilon\right)  ,$
depending on which subinterval $\varepsilon$ belongs to. This characterization
and a combination of $B\left(  \varepsilon\right)  \subset\mathcal{D}%
_{2,\varepsilon}^{0}\subset D\left(  \varepsilon\right)  $ and Theorem
\ref{TRM-veza} yield the proof of Theorem \ref{TRM-main-manje}.

\section{Main theorem and basic idea of its proof\label{Sect2}}

The characterization of classical quadratic\textbf{\ }CNS polynomials is
already given in \cite{Katai-Kovacs}, \cite{Katai-Kovacs1}, \cite{Gilbert},
\cite{Thuswaldner} and \cite{Akiyama-Rao} in several ways. Namely, we have:

\begin{theorem}
\label{TRM-car-0}\textit{Let }$P(x)=x^{2}+p_{1}x+p_{0}\in\mathbb{Z}\left[
x\right]  $. \textit{Then }$P$\textit{\ is a }$0-$CNS polynomial if and only
if\textit{\ }$-1\leq p_{1}\leq p_{0}\ $and\ $p_{0}\geq2$.
\end{theorem}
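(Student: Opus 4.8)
The plan is to leverage Theorem~\ref{TRM-veza} with $\varepsilon=0$ and $d=2$, which reduces the problem to a purely dynamical one: $P(x)=x^{2}+p_{1}x+p_{0}$ is a $0$-CNS polynomial if and only if $\mathbf{r}=\left(\frac{1}{p_{0}},\frac{p_{1}}{p_{0}}\right)\in\mathcal{D}_{2,0}^{0}$, i.e. $\tau_{\mathbf{r},0}$ is a $0$-SRS. First I would translate the desired coefficient inequalities $-1\leq p_{1}\leq p_{0}$ and $p_{0}\geq 2$ into conditions on $\mathbf{r}=(r_{1},r_{2})=\left(\frac{1}{p_{0}},\frac{p_{1}}{p_{0}}\right)$. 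Since $\left\vert p_{0}\right\vert\geq 2$, we have $0<r_{1}\leq\frac{1}{2}$, and the chain $-1\leq p_{1}\leq p_{0}$ becomes, after dividing by the positive $p_{0}$, the statement $-r_{1}\leq r_{2}\leq 1$; combined with $r_{1}>0$ this is essentially the assertion that $\mathbf{r}$ lies in the subregion of $\mathcal{E}_{2}=\{(x,y):\left\vert x\right\vert<1,\ \left\vert y\right\vert<x+1\}$ cut out by $0<x\leq\frac12$ and $-x\leq y\leq 1$. So the theorem is equivalent to identifying $\mathcal{D}_{2,0}^{0}$ with this explicit polygonal region among the points of the form $\left(\frac{1}{p_{0}},\frac{p_{1}}{p_{0}}\right)$.

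For the direction that these conditions are \emph{sufficient}, I would argue directly that $\tau_{\mathbf{r},0}$ eventually maps every $\mathbf{z}\in\mathbb{Z}^{2}$ to $\mathbf{0}$. The natural tool is a descent/Lyapunov argument: show that the max-norm (or a suitable weighted norm) of the iterates $\tau_{\mathbf{r},0}^{n}(\mathbf{z})$ is nonincreasing and strictly decreases outside a bounded region, so that every orbit enters a fixed finite set; one then checks by hand that on this finite set of small vectors the only cycle is the fixed point $\mathbf{0}$. Concretely, writing $\tau_{\mathbf{r},0}(z_{1},z_{2})=(z_{2},-\lfloor r_{1}z_{1}+r_{2}z_{2}\rfloor)$, the condition $r_{1}+\left\vert r_{2}\right\vert<1$ (which the inequalities $0<r_{1}\leq\frac12$, $-r_{1}\leq r_{2}\leq 1$ do \emph{not} quite give on the upper edge $r_{2}\to 1$) controls the contraction; the boundary cases $r_{2}=1$, i.e. $p_{1}=p_{0}$, and $r_{2}=-r_{1}$, i.e. $p_{1}=-1$, will need separate bare-hands verification that no nonzero cycle appears. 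For \emph{necessity}, I would instead exhibit, for each $\mathbf{r}$ violating one of the inequalities, an explicit nonzero periodic orbit of $\tau_{\mathbf{r},0}$: for $p_{1}<-1$ or $p_{1}>p_{0}$ one produces a witness point whose orbit cycles without hitting $\mathbf{0}$, and for $p_{0}\leq 1$ (equivalently $r_{1}$ too large or negative) one uses that $\mathbf{r}$ falls outside the closure $\overline{\mathcal{E}}_{2}$, where $\tau_{\mathbf{r},0}$ is not even ultimately periodic.

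I expect the main obstacle to be the boundary analysis. The interior of $\mathcal{D}_{2,0}^{0}$ is governed by the strict inequalities defining $\mathcal{E}_{2}$ and is handled uniformly by the contraction estimate, but the theorem's inequalities are \emph{non}-strict, so the genuine content lives precisely on the edges $p_{1}=-1$ and $p_{1}=p_{0}$ (and the corner behaviour at $p_{0}=2$). On these edges the norm argument no longer strictly decreases, and one must rule out spurious cycles by a careful finite case check, tracking the effect of the floor function $\lfloor\,\cdot\,\rfloor$ on the small lattice points. This is exactly the kind of delicate boundary discussion that the paper later automates via the algorithm of Lemma~\ref{lem-alg}; for the pure $\varepsilon=0$ statement, however, it can be completed by the classical hand computations already present in \cite{Katai-Kovacs}, \cite{Gilbert}, \cite{Akiyama-Rao}, which I would cite for the finitely many remaining boundary cases rather than reproving them from scratch.
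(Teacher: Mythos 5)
Your opening reduction, via Theorem~\ref{TRM-veza}, to the membership question $\left(\frac{1}{p_{0}},\frac{p_{1}}{p_{0}}\right)\in\mathcal{D}_{2,0}^{0}$ is exactly the paper's first step, but from there your plan has genuine gaps. In the sufficiency direction, your descent argument is not uniform in $(p_{0},p_{1})$. For $p_{1}\geq0$ the contraction constant is $r_{1}+\left\vert r_{2}\right\vert=\frac{1+p_{1}}{p_{0}}$, which fails to be $<1$ not only on the edge $p_{1}=p_{0}$ but already at $p_{1}=p_{0}-1$; and even where it is $<1$, the absorbing ball it yields has radius of order $\frac{p_{0}}{p_{0}-1-p_{1}}$, which is unbounded over the admissible pairs. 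So what remains after the Lyapunov step is not ``finitely many boundary cases'' (the edges $p_{1}=-1$ and $p_{1}=p_{0}$ are themselves infinite families, one point for each $p_{0}\geq2$) but an infinite family of case checks of unbounded size, concentrated near the line $y=x+1$ --- precisely the region where the paper's quoted input, Proposition~\ref{TRM-car-0 3-2} (\cite[Theorem 5.4]{Akiyama-BBPT-II}), does the real work via the witness-set algorithm applied to convex regions. Your fallback of citing \cite{Katai-Kovacs}, \cite{Gilbert}, \cite{Akiyama-Rao} for the leftover cases does not repair this: those papers prove exactly the theorem under discussion, so as a ``reproof'' your argument collapses into a citation of its own statement.

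In the necessity direction there is a concretely false step: you claim that $p_{0}\leq1$ forces $\mathbf{r}$ outside $\overline{\mathcal{E}}_{2}$, where orbits fail to be ultimately periodic. This is wrong for negative $p_{0}$: for instance $(p_{0},p_{1})=(-3,0)$ gives $\mathbf{r}=\left(-\frac{1}{3},0\right)\in\mathcal{E}_{2}$, so \emph{every} orbit is ultimately periodic; what fails is only that $(0)$ is not the unique cycle --- here $(1,0)\rightarrow(0,1)\rightarrow(1,0)$, since the cutout polygon of the cycle $(1,0)$ at $\varepsilon=0$ is $\left\{(x,y):-1\leq x<0,\ 0\leq y<1\right\}$. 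Hence for $p_{0}\leq-2$ you must exhibit explicit cycles (different ones in different subregions, e.g.\ the fixed point $(1,1)$ when $-p_{0}-1\leq p_{1}\leq-2$), again an infinite family of verifications. The paper sidesteps all of this by combining the two quoted inclusions (\ref{inc-0}), $B(0)\subset\mathcal{D}_{2,0}^{0}\subset D(0)$: sufficiency is the one-line check $0<\frac{1}{p_{0}}\leq\frac{1}{2}<\frac{2}{3}$ and $-\frac{1}{p_{0}}\leq\frac{p_{1}}{p_{0}}\leq1<\frac{1}{p_{0}}+1$, giving membership in $B(0)$; necessity is that membership in $D(0)$ forces $0\leq\frac{1}{p_{0}}<1$ and $-\frac{1}{p_{0}}\leq\frac{p_{1}}{p_{0}}<\frac{1}{p_{0}}+1$, i.e.\ $p_{0}\geq2$ and $-1\leq p_{1}\leq p_{0}$. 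If you want a genuinely self-contained dynamical proof instead, you would need to run the machinery of Lemma~\ref{lem-alg} on convex hulls covering the whole region at once, which is how the inclusions (\ref{inc-0}) were established in the literature in the first place.
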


\noindent Akiyama and Scheicher \cite{Akiyama-Scheicher} gave a complete
description of the set $D_{2,\frac{1}{2}}^{0}.$

\begin{proposition}
[{\cite[Theorem 5.2]{Akiyama-Scheicher}}]\label{SRSpola}%
\begin{align*}
\mathcal{D}_{2,\frac{1}{2}}^{0}  &  =\left\{  \left(  {\small x,y}\right)
{\small \in}\mathbb{R}^{2}:\left\vert x\right\vert <\frac{1}{2},\text{
}{\small -x-}\frac{1}{2}{\small <y\leq x+}\frac{1}{2}\right\} \\
&  {\small \cup}\left\{  \left(  \frac{1}{2}{\small ,y}\right)  \in
\mathbb{R}^{2}:{\small -1<y\leq}\frac{1}{2}\text{ or }{\small y=1}\right\}  .
\end{align*}

\end{proposition}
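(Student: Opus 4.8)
The plan is to work through the general SRS framework specialized to $d=2$, $\varepsilon=\tfrac12$. By definition $(x,y)\in\mathcal{D}_{2,1/2}^{0}$ exactly when every orbit of $\tau_{\mathbf{r},1/2}$ reaches $\mathbf{0}$. Since $\mathcal{D}_{2,1/2}^{0}\subseteq\mathcal{D}_{2,1/2}\subseteq\overline{\mathcal{E}}_{2}$, I would first restrict attention to $(x,y)\in\overline{\mathcal{E}}_{2}$, where every orbit is ultimately periodic. On this region membership in $\mathcal{D}_{2,1/2}^{0}$ is equivalent to the statement that the \emph{only} cycle of $\tau_{\mathbf{r},1/2}$ is the trivial one $\{\mathbf{0}\}$. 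Thus the whole problem reduces to deciding, for each $(x,y)$, whether a nonzero cycle exists.

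Next I would set up the cutout polygons. For a finite candidate cycle $\pi=(\mathbf{a}_{0},\dots,\mathbf{a}_{L-1})$ with $\mathbf{a}_{i}=(a_{i},a_{i+1})$, being a genuine $\tau_{\mathbf{r},1/2}$-cycle means $a_{i+2}=-\lfloor x a_{i}+y a_{i+1}+\tfrac12\rfloor$ for every $i$, which translates into the system of half-planes
\[
-a_{i+2}-\tfrac12\le x a_{i}+y a_{i+1}<-a_{i+2}+\tfrac12 ,\qquad i=0,\dots,L-1 .
\]
These cut out a convex polygon $P(\pi)\subseteq\mathbb{R}^{2}$, and $\mathcal{D}_{2,1/2}\setminus\mathcal{D}_{2,1/2}^{0}=\bigcup_{\pi\neq\mathbf{0}}P(\pi)$. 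For the necessity direction I would enumerate the short cycles — the constant cycles $(c,c)$ with $c=\pm1$, the alternating period-two cycles such as $(1,-1)\mapsto(-1,1)\mapsto(1,-1)$, and a few more — compute their polygons explicitly, and thereby exhibit, for every $(x,y)\in\mathcal{D}_{2,1/2}$ lying outside the asserted set, a concrete nonzero cycle obstructing the SRS property.

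For sufficiency I would show that no nonzero cycle survives in the claimed interior region $\{|x|<\tfrac12,\ -x-\tfrac12<y\le x+\tfrac12\}$. Here I would exploit the near-oddness of the symmetric rounding, namely $\tau_{\mathbf{r},1/2}(-\mathbf{z})=-\tau_{\mathbf{r},1/2}(\mathbf{z})$ away from tie points, so that nontrivial cycles occur in $\pm$ pairs and it suffices to rule them out in a half-space. Then a Lyapunov-type argument using the sup-norm $\max(|z_{1}|,|z_{2}|)$ would show that for $|x|<\tfrac12$ and $y$ in the stated band the largest coordinate cannot be sustained, so every cycle is forced into the box $\{-1,0,1\}^{2}$, where a direct finite check leaves only $\mathbf{0}$. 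This norm-decrease estimate is exactly what keeps the list of cycles finite and the final description clean.

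The hard part will be the boundary, where the half-open inequalities and the round-half-up tie-breaking genuinely matter. On the slanted edges $y=x+\tfrac12$ and $y=-x-\tfrac12$ one must decide each edge by the precise strict-versus-nonstrict inequality, which is what produces the half-open condition $-x-\tfrac12<y\le x+\tfrac12$. The truly delicate piece is the vertical edge $x=\tfrac12$: there certain degenerate cycles appear or disappear, accounting for the restriction to $-1<y\le\tfrac12$ together with the isolated reappearance at $y=1$. I would finish by analysing these edges and the distinguished point $(\tfrac12,1)$ one by one, checking in each case whether the corresponding boundary cycles close up, and this case analysis is where I expect the main obstacle to lie.
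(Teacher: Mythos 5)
First, note that the paper itself does not prove Proposition \ref{SRSpola}: it is imported verbatim from \cite[Theorem 5.2]{Akiyama-Scheicher}, so your attempt must be measured against the standard SRS machinery that the paper recalls in Section \ref{Sect3} (cycles, cutout polygons, and the set-of-witnesses algorithm of Lemma \ref{lem-alg} with Proposition \ref{prop-conv-H}). Your framework is exactly that machinery: reducing membership in $\mathcal{D}_{2,1/2}^{0}$ to the nonexistence of nonzero cycles and encoding candidate cycles by polygons is correct (your half-plane system is the specialization of (\ref{P-eps})), and your necessity plan is sound in principle. It is, however, underestimated: the short cycles you list do not suffice. For example the point $(0.7,-1)$ must be excluded, but it lies in none of the polygons of $(\pm1)$, $(1,-1)$, or the rectangle-type cycles; one needs at least the period-six cycle $(1,1,0,-1,-1,0)$, whose cutout polygon contains it, and a small further enumeration to cover all of $\{x>\tfrac12\}$ and the segment $\{\tfrac12\}\times(\tfrac12,1)$ while sparing the point $(\tfrac12,1)$. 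This part is repairable by finite (if tedious) enumeration.

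The genuine gap is your sufficiency step. The sup-norm is \emph{not} a Lyapunov function for $\tau_{\mathbf{r},1/2}$ on the claimed region, because that region contains points with $|x|+|y|>1$. Take $(x,y)=(0.4,0.9)$, which lies in the asserted set (it is on the admitted edge $y=x+\tfrac12$). For $\mathbf{z}=(N,N)$ with $N$ large, the new entry is $-\lfloor 1.3N+0.5\rfloor$, of absolute value roughly $1.3N>N$: the largest coordinate grows, and the cycle estimate $M\le(|x|+|y|)M+\tfrac12$ is vacuous. Hence nothing in your argument confines cycles to $\{-1,0,1\}^{2}$, and the concluding finite check never gets off the ground precisely on the part of the region, near the corners $(\tfrac12,\pm1)$, where the statement is hardest. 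This is not a removable technicality: ruling out nonzero cycles there is the core of the theorem, and it is exactly why both Akiyama--Scheicher's original proof and the method this paper uses for its own analogous results proceed via finite witness sets closed under the map (Lemma \ref{lem-alg}), listing the cycles of the resulting graph and intersecting their cutout polygons with the region, rather than via norm decay. To repair your proof you would either run that algorithm on a covering of the claimed region, or work in an $(x,y)$-adapted norm in which the companion matrix of $t^{2}+yt+x$ is a contraction and control the transient growth uniformly; either way this is where the real work lies, and your boundary analysis in the final paragraph, while reasonable in spirit, rests on this broken step. (A further small inaccuracy: ultimate periodicity of all orbits is guaranteed on $\mathcal{D}_{2,1/2}$, not on all of $\overline{\mathcal{E}}_{2}$; this is harmless here because exhibiting a nonzero cycle always suffices for exclusion, and every claimed point lies in $\mathcal{E}_{2}$.)
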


As a consequence of Proposition \ref{SRSpola} and Theorem \ref{TRM-veza} for
$\left(  d,\varepsilon\right)  =\left(  2,\frac{1}{2}\right)  $, Akiyama and
Scheicher \cite[Theorem 2.2]{Akiyama-Scheicher} obtained the characterization
of quadratic\textbf{\ }SCNS polynomials. The statement of the following
corollary is a slight modification of the statement of \cite[Theorem
2.2]{Akiyama-Scheicher} adapted to our needs.

\begin{corollary}
\label{Corr-1}\textit{Let }$P\left(  x\right)  =x^{2}+p_{1}x+p_{0}%
\in\mathbb{Z}\left[  x\right]  $. \textit{Then }$P$\textit{\ is a }$\frac
{1}{2}\mathcal{-}$CNS polynomial if and only if
\begin{gather}
\left\vert p_{1}\right\vert \leq sgn\left(  p_{0}\right)  +\frac{\left\vert
p_{0}\right\vert -1}{2}\text{, }\left\vert p_{0}\right\vert \geq3\text{, when
}\left\vert p_{0}\right\vert \text{ is odd,}\label{car-1/2-odd}\\
\left\vert {\small p}_{1}\right\vert {\small \leq sgn}\left(  {\small p}%
_{0}\right)  {\small -1+}\frac{\left\vert {\small p}_{0}\right\vert
}{{\small 2}}\text{ \ or}\ \ {\small p}_{1}{\small =1+}\frac{{\small p}_{0}%
}{{\small 2}}{\small ,}\text{ }\left\vert {\small p}_{0}\right\vert
{\small \geq4}\text{, when }\left\vert {\small p}_{0}\right\vert
{\small >2}\text{ is even}\label{car-1/2-even}\\
-1\leq p_{1}\leq2\text{, }p_{0}=2\text{, when }\left\vert p_{0}\right\vert =2.
\label{car-1/2-2}%
\end{gather}

\end{corollary}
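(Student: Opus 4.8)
The plan is to derive Corollary \ref{Corr-1} as a direct consequence of Theorem \ref{TRM-veza} applied with $(d,\varepsilon)=(2,\tfrac12)$ together with the explicit description of $\mathcal{D}_{2,\frac12}^{0}$ in Proposition \ref{SRSpola}. By Theorem \ref{TRM-veza}, the polynomial $P(x)=x^{2}+p_{1}x+p_{0}$ is a $\tfrac12$-CNS polynomial if and only if the point $\mathbf{r}=\left(\tfrac{1}{p_{0}},\tfrac{p_{1}}{p_{0}}\right)$ lies in $\mathcal{D}_{2,\frac12}^{0}$. So the entire task reduces to translating the membership condition $\mathbf{r}\in\mathcal{D}_{2,\frac12}^{0}$, as given by the two-piece description in Proposition \ref{SRSpola}, into inequalities on the integer coefficients $p_{0}$ and $p_{1}$.

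First I would record that since $|p_{0}|\geq 2$, the first coordinate $x=\tfrac{1}{p_{0}}$ satisfies $0<|x|=\tfrac{1}{|p_{0}|}\leq\tfrac12$, with equality precisely when $|p_{0}|=2$. This observation splits the analysis according to whether $|x|<\tfrac12$ (the open strip, i.e.\ $|p_{0}|\geq 3$) or $x=\tfrac12$ (the boundary line, i.e.\ $p_{0}=2$; note $x=-\tfrac12$ is impossible since the second set in Proposition \ref{SRSpola} only allows $x=+\tfrac12$, forcing $p_{0}=2$). For the open-strip case, the governing condition from the first set is $-x-\tfrac12<y\leq x+\tfrac12$ with $y=\tfrac{p_{1}}{p_{0}}$. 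Substituting $x=\tfrac{1}{p_{0}}$ and clearing denominators by multiplying through by $p_{0}$—carefully tracking the sign of $p_{0}$, which flips inequalities when $p_{0}<0$—turns these into bounds on $p_{1}$; consolidating the two-sided bound into the single inequality $|p_{1}|\leq sgn(p_{0})+\tfrac{|p_{0}|-1}{2}$ is the arithmetic heart of the matter. I then split by the parity of $|p_{0}|$ because the half-integer right-hand side interacts with the integrality of $p_{1}$ differently in each case: for odd $|p_{0}|$ the bound $\tfrac{|p_{0}|-1}{2}$ is already an integer and yields \eqref{car-1/2-odd}, whereas for even $|p_{0}|$ the half-integer edge produces the extra exceptional value captured by the alternative $p_{1}=1+\tfrac{p_{0}}{2}$ in \eqref{car-1/2-even} (arising from the single extra lattice point that the closed upper boundary $y\leq x+\tfrac12$ admits).

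For the boundary case $p_{0}=2$, I would plug $x=\tfrac12$ into the second set of Proposition \ref{SRSpola}, whose condition reads $-1<y\leq\tfrac12$ or $y=1$ with $y=\tfrac{p_{1}}{2}$. Multiplying by $2$ gives $-2<p_{1}\leq 1$ or $p_{1}=2$, and since $p_{1}\in\mathbb{Z}$ this is exactly $p_{1}\in\{-1,0,1,2\}$, i.e.\ $-1\leq p_{1}\leq 2$, matching \eqref{car-1/2-2}.

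The step I expect to be the main obstacle is the even-$|p_{0}|$ case, because of the subtle interaction between the strict/non-strict nature of the bounds $-x-\tfrac12<y\leq x+\tfrac12$ and the parity of $p_{0}$: one must verify precisely which integer values of $p_{1}$ satisfy the inequalities after clearing the (possibly negative) denominator, and in particular confirm that the closed upper boundary contributes the single extra point recorded as the disjunct $p_{1}=1+\tfrac{p_{0}}{2}$ while the open lower boundary excludes the corresponding point at the bottom. Keeping the inequality directions consistent under the substitution for both signs of $p_{0}$, and correctly merging the resulting one-sided conditions into the stated absolute-value form, is where the bookkeeping must be done with care; the remaining cases are routine.
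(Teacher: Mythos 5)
Your proposal is correct and follows essentially the same route as the paper: both reduce the statement to the membership $\left(\frac{1}{p_{0}},\frac{p_{1}}{p_{0}}\right)\in\mathcal{D}_{2,\frac{1}{2}}^{0}$ via Theorem \ref{TRM-veza}, translate the two-piece description of Proposition \ref{SRSpola} into inequalities on $(p_{0},p_{1})$ with sign-dependent clearing of the denominator, and then use the parity of $\left\vert p_{0}\right\vert$ and integrality of $p_{1}$ to convert the strict bound $\left\vert p_{1}\right\vert<sgn\left(p_{0}\right)+\frac{\left\vert p_{0}\right\vert}{2}$ into the stated non-strict forms, with the closed boundary $y\leq x+\frac{1}{2}$ accounting for the exceptional disjunct $p_{1}=1+\frac{p_{0}}{2}$ in the even case and the line $x=\frac{1}{2}$ handling $p_{0}=2$. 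The only difference is presentational: the paper states the intermediate characterization compactly and then does the parity bookkeeping, while you spell out the coordinate translation in more detail.
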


\begin{proof}
It is easy to see that Theorem \ref{TRM-veza} and Proposition \ref{SRSpola}
imply: $P$\textit{\ }is a\textit{\ }$\frac{1}{2}\mathcal{-}$CNS polynomial if
and only if
\begin{align*}
\left\vert p_{1}\right\vert  &  <\operatorname{sgn}\left(  p_{0}\right)
+\frac{\left\vert p_{0}\right\vert }{2}\text{ or \ }p_{1}=1+\frac{p_{0}}%
{2}\text{, when }\left\vert p_{0}\right\vert >2\text{ }\\
-1  &  \leq p_{1}\leq2,\text{ when \ }p_{0}=2.
\end{align*}
If $\left\vert p_{0}\right\vert \geq3$ is odd, then $1+\frac{p_{0}}{2}%
\notin\mathbb{Z}$ and $\left\vert p_{1}\right\vert <sgn\left(  p_{0}\right)
+\frac{\left\vert p_{0}\right\vert }{2}$ is equivalent with $\left\vert
p_{1}\right\vert \leq sgn\left(  p_{0}\right)  +\frac{\left\vert
p_{0}\right\vert -1}{2}.$ If $\left\vert p_{0}\right\vert \geq4$ is even, then
$\left\vert p_{1}\right\vert <sgn\left(  p_{0}\right)  +\frac{\left\vert
p_{0}\right\vert }{2}$ is equivalent with $\left\vert p_{1}\right\vert \leq
sgn\left(  p_{0}\right)  -1+\frac{\left\vert p_{0}\right\vert }{2}.$
\end{proof}

\noindent Our main result is given in the following theorem.

\begin{theorem}
\label{TRM-main-manje}\textit{Let }$P\left(  x\right)  =x^{2}+p_{1}x+p_{0}%
\in\mathbb{Z}\left[  x\right]  $, $\left\vert p_{0}\right\vert \geq2\ $and
$\varepsilon\in\left[  0,1\right)  .$ Let $k=\left\lfloor \mathcal{\varepsilon
}\left\vert p_{0}\right\vert \right\rfloor $. Then corresponding
$\varepsilon-$set of digits is $\mathcal{N}_{\varepsilon}=\left\{
-k,...,\left\vert p_{0}\right\vert -1-k\right\}  .$

\begin{enumerate}
\item[\textbf{i)}] Let $\varepsilon\in\left[  0,\frac{1}{2}\right)  $ or let
$\varepsilon=\frac{1}{2}$ when $\left\vert p_{0}\right\vert $ is odd.
\textit{Then }$P$\textit{\ is a }$\mathcal{\varepsilon-}$CNS polynomial if and
only if
\begin{equation}
-k-1\leq p_{1}\leq\left\vert p_{0}\right\vert -k,\text{ \ }p_{0}\geq
2\text{,}\ \text{for }\varepsilon\in\left[  0,\frac{1}{2}\right]  ,
\label{prva1}%
\end{equation}
or%
\begin{equation}
k+2-\left\vert p_{0}\right\vert \leq p_{1}\leq k-1,\text{ \ }p_{0}%
\leq-3,\ \ \text{when \ }\varepsilon\in\left[  \frac{1}{\left\vert
p_{0}\right\vert },\frac{1}{2}\right]  . \label{druga 1}%
\end{equation}

\item[\textbf{ii)}] Let $\varepsilon\in\left(  \frac{1}{2},1\right)  $ or let
$\varepsilon=\frac{1}{2}$ when $\left\vert p_{0}\right\vert $ is even.
\textit{Then }$P$\textit{\ is a }$\mathcal{\varepsilon-}$CNS polynomial if and
only if
\begin{equation}
-\left\vert p_{0}\right\vert +k\leq p_{1}\leq k+1,\text{ \ }p_{0}\geq2\text{,
}\ \ \text{for }\varepsilon\in\left[  \frac{1}{2},1\right)  , \label{prva2}%
\end{equation}
or%
\begin{equation}
-k+1\leq p_{1}\leq-k-2+\left\vert p_{0}\right\vert ,\text{ \ }p_{0}%
\leq-3,\ \ \text{when \ }\varepsilon\in\left[  \frac{1}{2},\frac{\left\vert
p_{0}\right\vert -1}{\left\vert p_{0}\right\vert }\right)  . \label{druga2}%
\end{equation}

\end{enumerate}
\end{theorem}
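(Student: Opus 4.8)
The plan is to reduce Theorem \ref{TRM-main-manje} to the geometric characterization of $\mathcal{D}_{2,\varepsilon}^{0}$ via Theorem \ref{TRM-veza}, and then translate the membership condition $\left(\frac{1}{p_0},\frac{p_1}{p_0}\right)\in\mathcal{D}_{2,\varepsilon}^{0}$ into explicit inequalities on the integers $p_0,p_1$. First I would establish the preliminary claim about the $\varepsilon$-set of digits: since $\mathcal{N}_{\varepsilon}=[-\varepsilon|p_0|,(1-\varepsilon)|p_0|)\cap\mathbb{Z}$ and $k=\lfloor\varepsilon|p_0|\rfloor$, the smallest integer $\geq-\varepsilon|p_0|$ is $-k$ (using that $\lceil-\varepsilon|p_0|\rceil=-\lfloor\varepsilon|p_0|\rfloor=-k$), and the largest integer $<(1-\varepsilon)|p_0|$ is $|p_0|-1-k$; this gives $\mathcal{N}_{\varepsilon}=\{-k,\ldots,|p_0|-1-k\}$ directly. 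This is a short verification handling the two cases of whether $\varepsilon|p_0|$ is an integer.

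Next I would invoke the set inclusions $B(\varepsilon)\subset\mathcal{D}_{2,\varepsilon}^{0}\subset D(\varepsilon)$ established in Section \ref{Sect3}, together with the Section \ref{Sect4} characterization of which integer pairs $(p_0,p_1)$ satisfy $\left(\frac{1}{p_0},\frac{p_1}{p_0}\right)\in D(\varepsilon)$ and $\left(\frac{1}{p_0},\frac{p_1}{p_0}\right)\in B(\varepsilon)$. The key observation is that $B$ and $D$ differ only on a measure-zero boundary set, so away from the boundary the two inclusions pinch $\mathcal{D}_{2,\varepsilon}^{0}$ between them and force the explicit inequalities. The argument splits according to the disjoint decomposition of $[0,1)$ into subintervals: for each subinterval the conditions $\left(\frac{1}{p_0},\frac{p_1}{p_0}\right)\in D(\varepsilon)$ unwind (after clearing the denominator $p_0$, with attention to the sign of $p_0$ reversing inequality directions) into the bounds $-k-1\leq p_1\leq|p_0|-k$ and $p_0\geq2$ for case i), and symmetrically into \eqref{prva2} for case ii). The role of $k=\lfloor\varepsilon|p_0|\rfloor$ is precisely to convert the real inequalities defining $D(\varepsilon)$ into integer inequalities, since the defining lines of $D(\varepsilon)$ involve $\varepsilon$ while the lattice points involve the integer $k$.

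I would then treat the negative-$p_0$ branches \eqref{druga 1} and \eqref{druga2} separately. Here the symmetry relation between $\mathcal{D}_{d,\varepsilon}^{0}$ and $\mathcal{D}_{d,1-\varepsilon}^{0}$ noted by Surer \cite{Surer} is the natural tool: the case $p_0\leq-3$ with parameter $\varepsilon$ mirrors the case $p_0\geq2$ with parameter $1-\varepsilon$, which accounts for both the restricted ranges of $\varepsilon$ (the conditions $\varepsilon\in[\frac{1}{|p_0|},\frac{1}{2}]$ and $\varepsilon\in[\frac{1}{2},\frac{|p_0|-1}{|p_0|})$) and the reflected inequalities. The boundary cases — when $\varepsilon|p_0|\in\mathbb{Z}$, and the SCNS threshold $\varepsilon=\frac{1}{2}$ split according to the parity of $|p_0|$ — must be checked against the known Corollary \ref{Corr-1} to confirm consistency.

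The main obstacle will be the careful bookkeeping at the endpoints: the sets $B(\varepsilon)$ and $D(\varepsilon)$ coincide except on their boundaries, and deciding membership of a lattice point lying \emph{on} a bounding line of $\mathcal{D}_{2,\varepsilon}^{0}$ requires knowing on which side the set is closed. These boundary lattice points are exactly what distinguish a genuine $\varepsilon$-CNS polynomial from a near-miss, so the whole content of converting the strict real inequalities into the non-strict integer inequalities \eqref{prva1}–\eqref{druga2} hinges on resolving them. This is where the hard inclusion $B(\varepsilon)\subset\mathcal{D}_{2,\varepsilon}^{0}$ from Section \ref{Sect3} does its work, and I expect that verifying the claimed endpoints belong to (rather than narrowly miss) $\mathcal{D}_{2,\varepsilon}^{0}$ — particularly near the critical line $y=x+1-\varepsilon$ as $\varepsilon\to0^{+}$ flagged in the introduction — will be the delicate part of the proof.
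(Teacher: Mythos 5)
Your proposal has a genuine gap in the sufficiency direction, and it stems from a false key claim. You assert that $B(\varepsilon)$ and $D(\varepsilon)$ ``differ only on a measure-zero boundary set,'' so that the inclusions $B(\varepsilon)\subset\mathcal{D}_{2,\varepsilon}^{0}\subset D(\varepsilon)$ pinch $\mathcal{D}_{2,\varepsilon}^{0}$ and decide everything away from boundaries. That is not what the definitions (\ref{D}) and (\ref{B}) say: the two sets share all defining inequalities \emph{except the upper bound on} $x$ ($x<1-\varepsilon$ versus $x\leq\frac{2}{3}-\varepsilon$ when $\varepsilon\in\left[0,\frac{1}{2}\right)$, and analogously for $\varepsilon\in\left[\frac{1}{2},1\right)$), so their difference is a two-dimensional region of positive measure. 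For lattice points this matters: $\left(\frac{1}{p_{0}},\frac{p_{1}}{p_{0}}\right)$ lies in $D(\varepsilon)\setminus B(\varepsilon)$ precisely when $p_{0}\in\{2,3,4,5\}$ and $\varepsilon$ lies in a corresponding interval (Proposition \ref{Lem2} i) of the paper). For such pairs your sandwich is silent: $D(\varepsilon)$-membership does not place the point in $\mathcal{D}_{2,\varepsilon}^{0}$, yet these small bases are exactly cases the theorem must cover (e.g.\ $p_{0}=2$, which (\ref{prva1}) and (\ref{prva2}) explicitly allow). So your argument proves necessity (CNS $\Rightarrow$ inequalities, via $\mathcal{D}_{2,\varepsilon}^{0}\subset D(\varepsilon)$ and the unwinding of inequalities), but cannot conclude sufficiency for all $(p_{0},p_{1})$.

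The paper closes this gap with an idea absent from your plan: replace $\varepsilon$ by $\varepsilon_{k}=\frac{k}{\left\vert p_{0}\right\vert }$. Since $\mathcal{N}_{\varepsilon}=\mathcal{N}_{\varepsilon_{k}}$, $P$ is $\varepsilon$-CNS if and only if it is $\varepsilon_{k}$-CNS (Corollary \ref{TRM-veza 2}); membership of $\left(\frac{1}{p_{0}},\frac{p_{1}}{p_{0}}\right)$ in $D(\varepsilon)$ is equivalent to membership in $D(\varepsilon_{k})$ (Propositions \ref{Lem1} and \ref{Lem3}); and, decisively, $D(\varepsilon)$-membership forces $B(\varepsilon_{k})$-membership for all pairs \emph{except} $p_{0}=2$ or $4$ with $\varepsilon\in\left[\frac{1}{2},\frac{1}{2}+\frac{1}{\left\vert p_{0}\right\vert }\right)$ (Proposition \ref{Lem2} ii)); those last cases are settled directly from the known $\frac{1}{2}$-CNS characterization (Lemma \ref{Lem5}). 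Without this reduction, or some substitute treating $p_{0}=2,3,4,5$ by hand, the sufficiency argument fails exactly where the geometry of $B$ versus $D$ is nontrivial. A secondary caution: deriving the negative-$p_{0}$ branches (\ref{druga 1}) and (\ref{druga2}) from Surer's $\varepsilon\leftrightarrow1-\varepsilon$ symmetry is weaker than the direct computation the paper performs, because that symmetry holds only up to sets of measure zero and so cannot decide individual lattice points; in fact no symmetry is needed there, since for $p_{0}<0$ the point has negative $x$-coordinate and thus lies in $B(\varepsilon)$ whenever it lies in $D(\varepsilon)$.
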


\begin{remark}
\textit{Let }$P\left(  x\right)  =x^{2}+p_{1}x+p_{0}\in\mathbb{Z}\left[
x\right]  $, $\left\vert p_{0}\right\vert \geq2.$

a) If $\varepsilon\in\left[  0,\frac{1}{\left\vert p_{0}\right\vert }\right)
,$ then $k=\left\lfloor \mathcal{\varepsilon}\left\vert p_{0}\right\vert
\right\rfloor =0.$ Thus $\mathcal{N}_{\varepsilon}=\left\{  0,...,\left\vert
p_{0}\right\vert -1\right\}  .$ Since $\varepsilon\in\left[  0,\frac{1}%
{2}\right)  $ and $\varepsilon\notin\left[  \frac{1}{\left\vert p_{0}%
\right\vert },\frac{1}{2}\right]  $, by Theorem \ref{TRM-main-manje}, i) we
obtain: $P$ is a $\varepsilon$-CNS polynomial if and only if $-1\leq p_{1}\leq
p_{0},$ \ $p_{0}\geq2$. In particular, for $\varepsilon=0$, we obtain the
statement of Theorem \ref{TRM-car-0}.

b) If $\varepsilon=\frac{1}{2}$, then%
\begin{equation}
k=\left\lfloor \mathcal{\varepsilon}\left\vert p_{0}\right\vert \right\rfloor
=\left\{
\begin{tabular}
[c]{l}%
$\frac{\left\vert p_{0}\right\vert }{2}\text{ if }\left\vert p_{0}\right\vert
\text{ is even}$\\
$\frac{\left\vert p_{0}\right\vert -1}{2}\text{ if }\left\vert p_{0}%
\right\vert \text{ is odd}$%
\end{tabular}
\right.  . \label{k-pola}%
\end{equation}
Thus, set of $\frac{1}{2}-$digits is
\[
\mathcal{N}_{\frac{1}{2}}=\left\{
\begin{tabular}
[c]{l}%
$\left\{  -\frac{\left\vert p_{0}\right\vert }{2},...,\frac{\left\vert
p_{0}\right\vert }{2}-1\right\}  ,$ $\text{ if }\left\vert p_{0}\right\vert
\text{ is even}$\\
$\left\{  -\frac{\left\vert p_{0}\right\vert -1}{2},...,\frac{\left\vert
p_{0}\right\vert -1}{2}\right\}  ,\text{ if }\left\vert p_{0}\right\vert
\text{ is }$odd
\end{tabular}
\right.  .
\]
If $\varepsilon=\frac{1}{2}$ and $\left\vert p_{0}\right\vert $ is odd, then,
by Theorem \ref{TRM-main-manje}, i) and (\ref{k-pola}), we easily derive: $P$
is $\frac{1}{2}$-CNS polynomial if and only if (\ref{car-1/2-odd}) holds. If
$\varepsilon=\frac{1}{2}$ and $\left\vert p_{0}\right\vert $ is even, then
Theorem \ref{TRM-main-manje}, ii) and (\ref{k-pola}) imply: $P$ is $\frac
{1}{2}$-CNS if and only if (\ref{car-1/2-even}) or (\ref{car-1/2-2}) holds.
Therefore, Theorem \ref{TRM-main-manje} implies Corollary \ref{Corr-1}.

c) If $\varepsilon\in\left[  \frac{\left\vert p_{0}\right\vert -1}{\left\vert
p_{0}\right\vert },1\right)  ,$ then $k=\left\vert p_{0}\right\vert -1$ and
corresponding $\varepsilon-$set of digits is $\mathcal{N}_{\varepsilon
}=\left\{  -\left\vert p_{0}\right\vert +1,...,0\right\}  .$ In this case
Theorem \ref{TRM-main-manje}, ii) implies: $P$ is $\varepsilon$-CNS polynomial
if and only if $-1\leq p_{1}\leq p_{0},$ \ $p_{0}\geq2$.

d) Note that if $(x^{2}+p_{1}x+p_{0},\mathcal{N}_{\varepsilon})$ is
$\varepsilon$-CNS with corresponding $\varepsilon-$set of digits
$\mathcal{N}_{\varepsilon}=\left\{  0,...,\left\vert p_{0}\right\vert
-1\right\}  $ or $\left\{  -\left\vert p_{0}\right\vert +1,...,0\right\}  ,$
than constant term $p_{0}$ of $\varepsilon$-CNS basis $P$ has to be positive
integer $\geq2$. In all other cases, i.e. when $\left\{  -1,0,1\right\}
\subseteq\mathcal{N}_{\varepsilon}=\left\{  -k,...,\left\vert p_{0}\right\vert
-1-k\right\}  ,$ constant term $p_{0}$ can be any positive or negative integer
with $\left\vert p_{0}\right\vert \geq2\ $and $p_{0}\neq-2.$
\end{remark}

\begin{remark}
\label{rem-PT}Note that for any monic quadratic polynomial $P$, $P\left(
m\right)  \geq2$ holds if $\left\vert m\right\vert $ is sufficiently large.
Therefore, it's easy to see that Theorem \ref{TRM-main-manje} (more precisely
(\ref{prva1}) and (\ref{prva2})) implies that if $\varepsilon\in\left(
0,1\right)  ,$ then $P\left(  x\pm m\right)  $ is $\varepsilon$-CNS polynomial
for all integers $m\ $large enough, while if $\varepsilon=0,$ then $P\left(
x+m\right)  $ is, up to finitely many possible exceptions, a $0$-CNS
polynomial if and only if $m>M$, where $M$ denotes a constant. Since
$\varepsilon$-CNS are a special case of \textit{generalized number
systems\textit{ over orders }}(GNS for short)\textit{ }defined by Peth\H{o}
and Thuswaldner \cite{Petho-Thuswaldner}, this fits into the general results
on GNS obtained in \cite{Petho-Thuswaldner} (see \cite[Theorem 4.1, Corollary
4.3. and Theorem 5.2.]{Petho-Thuswaldner}). Moreover, for any given monic
quadratic polynomial $P$, using Theorem \ref{TRM-main-manje}, the integers $m$
for which the polynomials $P\left(  x+m\right)  $ are $\varepsilon$-CNS
polynomials can be all effectively determined.
\end{remark}

We reprove Theorem \ref{TRM-car-0} as a contribution to the understanding of
the basic idea of the proof of Theorem \ref{TRM-main-manje}. By \cite[Example
4.7]{Akiyama-BBPT} and \cite[Corollary 2.5]{Akiyama-BBPT-II} we have
$\mathcal{D}_{2,0}^{0}\subset D\left(  0\right)  $, where $D\left(  0\right)
$ is the trapezium%
\begin{equation}
D\left(  0\right)  :=\left\{  \left(  x,y\right)  \in\mathbb{R}^{2}:-x\leq
y<x+1,\text{ \ }0\leq x<1\text{ }\right\}  . \label{D0*}%
\end{equation}
Further, Akiyama et al. \cite{Akiyama-BBPT-II} showed that set $\mathcal{D}%
_{2,0}^{0}$ has a very simple structure if $x\leq2/3$ and they completely
characterized $\mathcal{D}_{2,0}^{0}$ in that region. Precisely, we have:

\begin{proposition}
[{\cite[Theorem 5.4]{Akiyama-BBPT-II}}]\label{TRM-car-0 3-2}
\[
\mathcal{D}_{2,0}^{0}\cap R\left(  0\right)  =\left\{  \left(  x,y\right)
\in\mathbb{R}^{2}:-x\leq y<x+1,\text{ \ }0\leq x\leq\frac{2}{3}\right\}
=:B\left(  0\right)  ,
\]
where $R\left(  0\right)  $ is the half-plane $R\left(  0\right)  =\left\{
\left(  x,y\right)  \in\mathbb{R}^{2}:x\leq\frac{2}{3}\right\}  .$
\end{proposition}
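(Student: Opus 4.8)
The plan is to prove the two inclusions $\mathcal{D}_{2,0}^{0}\cap R(0)\subseteq B(0)$ and $B(0)\subseteq\mathcal{D}_{2,0}^{0}$ separately. The first is immediate from what precedes: since $\mathcal{D}_{2,0}^{0}\subset D(0)$ and $D(0)\cap R(0)=B(0)$ (intersecting $0\leq x<1$ with $x\leq\frac{2}{3}$ yields $0\leq x\leq\frac{2}{3}$, the inequalities in $y$ being unchanged), we get $\mathcal{D}_{2,0}^{0}\cap R(0)\subseteq D(0)\cap R(0)=B(0)$. The whole difficulty therefore lies in the reverse inclusion $B(0)\subseteq\mathcal{D}_{2,0}^{0}$.

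For the reverse inclusion I would first observe that $B(0)\subset\mathcal{E}_{2}$: on the bottom edge $y=-x$ one has $|y|=x<x+1$, the top edge $y=x+1$ is excluded by the strict inequality, and $0\leq x\leq\frac{2}{3}$ gives $|x|<1$, so every defining inequality of $\mathcal{E}_{2}=\{|x|<1,\ |y|<x+1\}$ holds strictly on $B(0)$. Because $\mathcal{E}_{2}\subset\mathcal{D}_{2,0}$, the orbit $(\tau_{(x,y),0}^{n}(\mathbf{z}))_{n}$ is ultimately periodic for every $(x,y)\in B(0)$ and every $\mathbf{z}\in\mathbb{Z}^{2}$. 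Consequently $(x,y)\in\mathcal{D}_{2,0}^{0}$ if and only if $\tau_{(x,y),0}$ has no nontrivial cycle, and the task reduces to showing that $\mathbf{0}$ is the only periodic point whenever $(x,y)\in B(0)$.

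To rule out nontrivial cycles I would record that a cycle is a periodic integer sequence $(a_{i})$ satisfying $a_{i+2}=-\lfloor x a_{i}+y a_{i+1}\rfloor$, equivalently $0\leq a_{i+2}+x a_{i}+y a_{i+1}<1$ for all $i$. Since $(x,y)\in\mathcal{E}_{2}$ means that $t^{2}+yt+x$ is a Schur polynomial (both roots in the open unit disk), the linear part of $\tau_{(x,y),0}$ is contractive, so every cycle element obeys an a priori bound; exploiting $x\leq\frac{2}{3}$ and $|y|<x+1$ I would make this bound explicit and uniform on $B(0)$, reducing the problem to finitely many candidate sequences $(a_{i})$. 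For each candidate, the set of parameters realizing it is the (cutout) polyhedron carved by the inequalities $0\leq a_{i+2}+x a_{i}+y a_{i+1}<1$, and I would show that each such polyhedron is disjoint from $B(0)$, i.e. contained in $\{y<-x\}\cup\{y\geq x+1\}$. The two smallest cycles already display the mechanism: the constant cycle $a_{i}\equiv c\neq 0$ forces $x+y<0$ (so $y<-x$), while the alternating cycle $a_{i}=(-1)^{i}$ forces $y=x+1$; both lie outside the half-open trapezium $B(0)$.

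The main obstacle is the cycle enumeration together with the sharpness of the threshold $x=\frac{2}{3}$, which is precisely the value at which a cutout polyhedron first reaches into the strip $-x\leq y<x+1$. The a priori bound on $|a_{i}|$ and the ensuing case analysis must be sharp enough to certify that, for $x\leq\frac{2}{3}$, every nontrivial cutout polyhedron stays in $\{y<-x\}$ or $\{y\geq x+1\}$; the cycles whose polyhedra crowd against the two boundary lines $y=-x$ and $y=x+1$ are the delicate ones and will demand the most careful estimates. Once all candidates are disposed of, no nontrivial periodic point survives on $B(0)$, giving $B(0)\subseteq\mathcal{D}_{2,0}^{0}$ and, together with the first inclusion, the claimed equality.
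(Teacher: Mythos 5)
Your first inclusion is correct and essentially free: $\mathcal{D}_{2,0}^{0}\subset D\left(0\right)$ is available (the paper quotes it from \cite[Example 4.7]{Akiyama-BBPT} and \cite[Corollary 2.5]{Akiyama-BBPT-II}), and intersecting $D\left(0\right)$ with the half-plane $R\left(0\right)$ visibly gives $B\left(0\right)$. Note, however, that the paper does not prove this Proposition at all: it is quoted as Theorem 5.4 of \cite{Akiyama-BBPT-II}. So the entire content of the statement is the reverse inclusion $B\left(0\right)\subseteq\mathcal{D}_{2,0}^{0}$, which is precisely the part your proposal does not execute but only sketches.

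That sketch contains a genuine gap, not merely an omitted computation. Your plan is to bound the elements of any nontrivial cycle uniformly over $B\left(0\right)$ using contractivity of the linear part, thereby reducing to finitely many candidate cycles and finitely many cutout polyhedra. No such uniform bound exists: $B\left(0\right)$ contains points arbitrarily close to its excluded top edge $y=x+1$, and this edge lies on the boundary of the Schur region $\mathcal{E}_{2}$ (when $y=x+1$ the characteristic polynomial factors as $t^{2}+yt+x=\left(t+1\right)\left(t+x\right)$, so an eigenvalue of the companion matrix $M$ tends to $-1$ as $y\uparrow x+1$). Any a priori bound on periodic points, which is of the shape $\left\Vert \left(I-M^{l}\right)^{-1}\right\Vert \sum_{j<l}\left\Vert M^{j}\right\Vert$, blows up like $1/\left(x+1-y\right)$, so the candidate cycles that must be excluded form an infinite family whose elements and periods grow without bound as one approaches the line $y=x+1$. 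A finite enumeration plus a polyhedron-by-polyhedron check therefore cannot close the argument; one needs either the delicate structural cycle analysis of the cited Theorem 5.4 of \cite{Akiyama-BBPT-II}, or an inductive treatment of an infinite family of regions and graphs, exactly as this paper is forced to do in the analogous situation for $\varepsilon\in\left(0,\frac{1}{2}\right)$ near the line $y=x+1-\varepsilon$ (see Lemma \ref{Lemma-delta-CZ} and the Appendix, where the graphs have $18n-3$ vertices with $n\rightarrow\infty$). Your two sample cycles (constant, forcing $x+y<0$, and alternating, forcing $y=x+1$) are handled correctly, but they are the easy cases; the cycles accumulating near the top edge are the ones your uniform bound would need to, and cannot, control.
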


Therefore, we have
\begin{equation}
B\left(  0\right)  \subset\mathcal{D}_{2,0}^{0}\subset D\left(  0\right)  .
\label{inc-0}%
\end{equation}
Using (\ref{inc-0}) and Theorem \ref{TRM-veza} we are able to reprove Theorem
\ref{TRM-car-0}.\smallskip

\begin{proof}
[Proof of Theorem \ref{TRM-car-0}]Let $-1\leq p_{1}\leq p_{0}\ $%
and\ $p_{0}\geq2.$ Then $-\frac{1}{p_{0}}\leq\frac{p_{1}}{p_{0}}\leq1<\frac
{1}{p_{0}}+1$ and $0<\frac{1}{p_{0}}\leq\frac{1}{2}<\frac{2}{3}$. Therefore
$\left(  \frac{1}{p_{0}},\frac{p_{1}}{p_{0}}\right)  \in B\left(  0\right)
\subset\mathcal{D}_{2,0}^{0},$ and by Theorem \ref{TRM-veza}, we conclude that
$P$ is a $0-$CNS polynomial.

Let $P$ be a $0-$CNS polynomial. Then, by Theorem \ref{TRM-veza}, we have
$\left(  \frac{1}{p_{0}},\frac{p_{1}}{p_{0}}\right)  \in\mathcal{D}_{2,0}%
^{0}.$ Since $\mathcal{D}_{2,0}^{0}\subset D\left(  0\right)  ,$ we have
$\left(  \frac{1}{p_{0}},\frac{p_{1}}{p_{0}}\right)  \in D\left(  0\right)  $,
which implies $0\leq\frac{1}{p_{0}}<1\ $and $-\frac{1}{p_{0}}\leq\frac{p_{1}%
}{p_{0}}<\frac{1}{p_{0}}+1.$ Thus, $p_{0}>1$ and $-1\leq p_{1}<p_{0}+1,$ or
equivalently $-1\leq p_{1}\leq p_{0}$ and $p_{0}\geq2.\smallskip$
\end{proof}

Note that it was possible to provide an alternative proof of Theorem
\ref{TRM-car-0} using the set inclusions (\ref{inc-0}) and Theorem
\ref{TRM-veza}, since the following equivalences hold for $p_{0},p_{1}%
\in\mathbb{Z}$:
\[
\left(  \frac{1}{{\small p}_{0}}{\small ,}\frac{{\small p}_{1}}{{\small p}%
_{0}}\right)  {\small \in D}\left(  {\small 0}\right)  \text{ \ }%
\Longleftrightarrow\text{ \ }{\small -1\leq p}_{1}{\small \leq p}%
_{0}{\small ,\ p}_{0}{\small \geq2}\text{ \ }\Longleftrightarrow\text{
\ }\left(  \frac{1}{{\small p}_{0}}{\small ,}\frac{{\small p}_{1}}%
{{\small p}_{0}}\right)  {\small \in B}\left(  0\right)  .
\]
Now we will generalize this idea for each $\varepsilon\in\left[  0,1\right)
.$ Let $\varepsilon\in\left[  0,1\right)  $ and let us define the set
$D^{\ast}\left(  \varepsilon\right)  $ by%
\begin{equation}
D^{\ast}(\varepsilon):={\footnotesize {%
\begin{cases}
\left\{  \left(  x,y\right)  \in\mathbb{R}^{2}:-x-\varepsilon\leq
y<x+1-\varepsilon,\ \ x<1-\varepsilon\right\}  ,\text{ for }\varepsilon
\in\left[  0,\frac{1}{2}\right) \\
\left\{  \left(  x,y\right)  \in\mathbb{R}^{2}:-x-1+\varepsilon<y\leq
x+\varepsilon,\ \ x\leq\varepsilon\right\}  ,\text{ for }\varepsilon\in\left[
\frac{1}{2},1\right)
\end{cases}
}} \label{D*}%
\end{equation}
and half-planes $L\left(  \varepsilon\right)  $ and $R\left(  \varepsilon
\right)  $ by
\[
L(\varepsilon):={\small {%
\begin{cases}
\left\{  (x,y)\in\mathbb{R}^{2}:-\varepsilon\leq x\right\}  ,\text{ for
}\varepsilon\in\left[  0,\frac{1}{2}\right) \\
\left\{  (x,y)\in\mathbb{R}^{2}:-\left(  1-\varepsilon\right)  <x\right\}
,\text{ for }\varepsilon\in\left[  \frac{1}{2},1\right)
\end{cases}
{\footnotesize \!\!\!\!\!},}}%
\]%
\[
R(\varepsilon):={\small {%
\begin{cases}
\left\{  (x,y)\in\mathbb{R}^{2}:x\leq\frac{2}{3}-\varepsilon\right\}  ,\text{
for }\varepsilon\in\left[  0,\frac{1}{2}\right) \\
\left\{  (x,y)\in\mathbb{R}^{2}:x\leq\frac{2}{3}-\left(  1-\varepsilon\right)
\right\}  ,\text{ for }\varepsilon\in\left[  \frac{1}{2},1\right)
\end{cases}
{\footnotesize \!\!\!\!\!}.}}%
\]
We know that%
\begin{equation}
\mathcal{D}_{2,0}^{0}\subset D^{\ast}\left(  \varepsilon\right)  \text{ for
all }\varepsilon\in\left[  0,1\right)  . \label{D*-sur}%
\end{equation}
Namely, Surer \cite[Section 5 and 6]{Surer} have shown that (\ref{D*-sur})
holds for all $\varepsilon\in\left(  0,1\right)  \diagdown\left\{  \frac{1}%
{2}\right\}  $. For $\varepsilon=0$ it is easy to see that (\ref{D*-sur})
holds since $\mathcal{D}_{2,0}^{0}\subset D\left(  0\right)  $ and $D\left(
0\right)  \subset D^{\ast}\left(  0\right)  ,$ where $D\left(  0\right)  $ is
a trapezium given by (\ref{D0*}). For $\varepsilon=\frac{1}{2}$ the set
inclusion (\ref{D*-sur}) follows directly from Proposition \ref{SRSpola}.

We noticed that (\ref{inc-0}), Proposition \ref{SRSpola} and examples in
\cite[Section 6]{Surer} indicate that for all $\varepsilon\in\left[
0,1\right)  $ "$\varepsilon-$SRS region" $\mathcal{D}_{2,\varepsilon}^{0}$
should be contained in the half-plane $L\left(  \varepsilon\right)  ,$ and
that sets $\mathcal{D}_{2,\varepsilon}^{0}$ and $D^{\ast}\left(
\varepsilon\right)  $ should coincide in the stripe $S\left(  \varepsilon
\right)  :=L\left(  \varepsilon\right)  \cap R\left(  \varepsilon\right)  .$
Let
\begin{equation}
D\left(  \varepsilon\right)  :=D^{\ast}\left(  \varepsilon\right)  \cap
L\left(  \varepsilon\right)  \ \ \ \text{and}\ \ \ B\left(  \varepsilon
\right)  :=D^{\ast}\left(  \varepsilon\right)  \cap S\left(  \varepsilon
\right)  . \label{def-D-B}%
\end{equation}
Therefore, since (\ref{D*-sur}) holds, to prove our conjectures it suffices to
prove that
\begin{equation}
B\left(  \varepsilon\right)  \subset\mathcal{D}_{2,\varepsilon}^{0}\subset
D\left(  \varepsilon\right)  \label{incl-main}%
\end{equation}
holds for all $\varepsilon\in\left[  0,1\right)  .$ Note that the definitions
of the sets $B\left(  \varepsilon\right)  $ and $D\left(  \varepsilon\right)
$ given by (\ref{def-D-B})$,$ imply that $B\left(  \varepsilon\right)
=D\left(  \varepsilon\right)  \cap R\left(  \varepsilon\right)  $ for all
$\varepsilon\in\left[  0,1\right)  ,$ which means that these two sets coincide
in the half-plane $R\left(  \varepsilon\right)  .$ More precisely, we have%
\begin{equation}
{\footnotesize {D(\varepsilon)=%
\begin{cases}
\left\{  (x,y)\in\mathbb{R}^{2}:-x-\varepsilon\leq y<x+1-\varepsilon
,\ \ -\varepsilon\leq x<1-\varepsilon\right\}  ,\text{ if }\varepsilon
\in\left[  0,\frac{1}{2}\right) \\
\left\{  (x,y)\in\mathbb{R}^{2}:-x-1+\varepsilon<y\leq x+\varepsilon
,\ -\left(  1-\varepsilon\right)  <x\leq\varepsilon\right\}  ,\text{ if
}\varepsilon\in\left[  \frac{1}{2},1\right)
\end{cases}
\!\!\!\!\!,}} \label{D}%
\end{equation}%
\begin{equation}
{\footnotesize {B(\varepsilon)=\!\!%
\begin{cases}
\left\{  \left(  x,y\right)  \in\mathbb{R}^{2}:-x-\varepsilon\leq
y<x+1-\varepsilon,\ -\varepsilon\leq x\leq\frac{2}{3}-\varepsilon\right\}
,\text{ if }\varepsilon\in\left[  0,\frac{1}{2}\right) \\
\left\{  \left(  x,y\right)  \in\mathbb{R}^{2}:-x-1+\varepsilon<y\leq
x+\varepsilon,\ -\left(  1-\varepsilon\right)  <x\leq\frac{2}{3}-\left(
1-\varepsilon\right)  \right\}  ,\text{ if }\varepsilon\in\left[  \frac{1}%
{2},1\right)
\end{cases}
\!\!\!\!\!.}} \label{B}%
\end{equation}

The sets $B(\varepsilon)$ and $D(\varepsilon)$ for each $\varepsilon\in\left[
0,1\right)  $ are defined by the same inequalities, with the exception of the
upper bound for $x$. Consequently, if $\left(  \frac{1}{p_{0}},\frac{p_{1}%
}{p_{0}}\right)  \in D\left(  \varepsilon\right)  ,$ then $\left(  \frac
{1}{p_{0}},\frac{p_{1}}{p_{0}}\right)  \in B\left(  \varepsilon\right)  $
except for finitely many possible positive small values of $p_{0}.$ So, if we
prove that (\ref{incl-main}) holds and characterize all $\left(  p_{0}%
,p_{1}\right)  $, $\left\vert p_{0}\right\vert \geq2$ such that $\left(
\frac{1}{p_{0}},\frac{p_{1}}{p_{0}}\right)  \in D\left(  \varepsilon\right)
,$ then using (\ref{incl-main}) and Theorem \ref{TRM-main-manje}, similarly as
in case $\varepsilon=0,$ we can characterize all $\varepsilon-$CNS polynomials
except for finitely many possible $p_{0}$.

\section{Complete characterization of set $\mathcal{D}_{2,\varepsilon}^{0}%
$\ in half-plane $R\left(  \varepsilon\right)  $\label{Sect3}}

The aim of this section is to prove the following theorem.

\begin{theorem}
\label{Trm-inclus}Let $\varepsilon\in\left[  0,1\right)  \ $and let sets
$D\left(  \varepsilon\right)  $ and $B\left(  \varepsilon\right)  $ be defined
by (\ref{D}) and (\ref{B}), respectively. Than (\ref{incl-main}) holds.
\end{theorem}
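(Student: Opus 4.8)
The plan is to establish the two inclusions of (\ref{incl-main}) separately, after first cutting down the parameter range by symmetry. I would dispose of $\varepsilon=\frac12$ at once from the complete description in Proposition \ref{SRSpola}, and for $\varepsilon\in\left(\frac12,1\right)$ I would transport the statement already proved for $1-\varepsilon\in\left(0,\frac12\right)$ via Surer's conjugacy relating $\mathcal{D}_{2,\varepsilon}^{0}$ and $\mathcal{D}_{2,1-\varepsilon}^{0}$; this is consistent with the fact that the defining inequalities of $D(\varepsilon)$ and $B(\varepsilon)$ in (\ref{D}), (\ref{B}) are exactly the reflections $\varepsilon\mapsto1-\varepsilon$, $y\mapsto-y$ of one another across the two regimes. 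It therefore remains to treat $\varepsilon\in\left[0,\frac12\right)$.

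The inclusion $\mathcal{D}_{2,\varepsilon}^{0}\subset D(\varepsilon)$ is the easy one. Since $D(\varepsilon)=D^{\ast}(\varepsilon)\cap L(\varepsilon)$ and we already know $\mathcal{D}_{2,\varepsilon}^{0}\subset D^{\ast}(\varepsilon)$ from (\ref{D*-sur}), it suffices to show $\mathcal{D}_{2,\varepsilon}^{0}\subset L(\varepsilon)$, i.e. that no $\mathbf{r}=(x,y)\in D^{\ast}(\varepsilon)$ with $x<-\varepsilon$ is an $\varepsilon$-SRS. For such $\mathbf{r}$ the constraints defining $D^{\ast}(\varepsilon)$ force $0<y+\varepsilon<1$ and $-1<x+\varepsilon<0$, so a direct evaluation of (\ref{tau-eps}) gives $\tau_{\mathbf{r},\varepsilon}(0,1)=(1,0)$ and $\tau_{\mathbf{r},\varepsilon}(1,0)=(0,1)$. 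Thus $(0,1)$ lies on a nontrivial $2$-cycle whose orbit never reaches $\mathbf{0}$, so $\mathbf{r}\notin\mathcal{D}_{2,\varepsilon}^{0}$. This exhibits the required witness uniformly in $\varepsilon$ and finishes the first inclusion.

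The inclusion $B(\varepsilon)\subset\mathcal{D}_{2,\varepsilon}^{0}$ is the hard one. Here I fix $\mathbf{r}\in B(\varepsilon)$ and must show that every orbit of $\tau_{\mathbf{r},\varepsilon}$ reaches $\mathbf{0}$, equivalently (by Lemma \ref{lem-alg}) that a certain finite set of integer vectors, obtained by saturating a seed set under $\tau_{\mathbf{r},\varepsilon}$, contains no nonzero cycle and eventually maps to $\mathbf{0}$. The additional bound $x\le\frac23-\varepsilon$ contributed by $R(\varepsilon)$ in (\ref{B}) keeps $\mathbf{r}$ away from the degenerate corner $x=1-\varepsilon$ of $D^{\ast}(\varepsilon)$, and this is precisely what lets me confine every orbit to an explicit finite critical region so that Lemma \ref{lem-alg} applies. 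Since the integer parts in (\ref{tau-eps}) are piecewise constant, I would partition both the interval $\left[0,\frac12\right)$ and the polygon $B(\varepsilon)$ into finitely many cells on which the combinatorics of $\left\lfloor\mathbf{rz}+\varepsilon\right\rfloor$ is uniform, and on each cell run the algorithm of Lemma \ref{lem-alg} symbolically, ruling out every candidate nonzero period. For $\varepsilon=0$ this reproduces Proposition \ref{TRM-car-0 3-2}, which serves both as a consistency check and as the model computation.

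The main obstacle is this second inclusion, and within it the subregion where $\varepsilon$ is small and $\mathbf{r}$ approaches the upper edge $y=x+1-\varepsilon$ of $B(\varepsilon)$. There the orbits persist longest before collapsing to $\mathbf{0}$, the lengths of the potential periods that must be excluded grow, and the finite verification set produced by Lemma \ref{lem-alg} becomes large; consequently the symbolic case analysis is most delicate exactly as $\varepsilon\to0^{+}$ near that edge, and this is where the bulk of the hand calculation will be concentrated.
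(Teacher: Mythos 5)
Your treatment of $\varepsilon\in\left[0,\tfrac12\right)$ is sound in outline, and your first inclusion there is actually cleaner than the paper's: for $(x,y)\in D^{\ast}(\varepsilon)$ with $x<-\varepsilon$ one indeed has $x\in\left(-\tfrac12,-\varepsilon\right)$ (nonemptiness of the $y$-range forces $x>-\tfrac12$), hence $\lfloor x+\varepsilon\rfloor=-1$ and $0<y+\varepsilon<1$, so $(0,1)\mapsto(1,0)\mapsto(0,1)$ is a nontrivial cycle; this is exactly the cutout polygon $P_{\varepsilon}(\pi_{1})$ that the paper extracts from the graph of $\Delta_{1}$ in Lemma \ref{lemma-delta1}. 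However, for the hard inclusion $B(\varepsilon)\subset\mathcal{D}_{2,\varepsilon}^{0}$ your plan to partition both $\left[0,\tfrac12\right)$ and $B(\varepsilon)$ into \emph{finitely many} cells with uniform combinatorics is precisely what fails. Near the edge $y=x+1-\varepsilon$ the witness-set construction of Lemma \ref{lem-alg} does not terminate uniformly as $\varepsilon\to0^{+}$: the paper must use an infinite family of subdivisions indexed by an integer $n\geq4$ (for $\varepsilon\in\big[\tfrac{2}{3(2n+1)},\tfrac{2}{3(2n-1)}\big)$), with a number of cells and graphs whose size grow with $n$ (e.g. $18n-3$ vertices), and the absence of cycles other than $\pi_{2}$ in these unboundedly large graphs is then proved by induction (Lemmas \ref{Lemma-18-prvi} and \ref{Lemma-18-s} in the Appendix). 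So no fixed finite cell decomposition can carry the symbolic verification; your argument must be upgraded to a parametrized family of decompositions together with an inductive analysis of the resulting graphs.

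The more serious gap is your reduction of $\varepsilon\in\left(\tfrac12,1\right)$ to $1-\varepsilon$ via ``Surer's conjugacy''. That relation is only an equality up to a set of measure zero: it comes from $\operatorname{Int}P_{\varepsilon}(\pi)=\operatorname{Int}P_{1-\varepsilon}(-\pi)$, and it controls nothing about which \emph{boundary} points of the cutout polyhedra lie in which set. Both inclusions of (\ref{incl-main}) are statements exactly about such boundary points, since $B(\varepsilon)$ and $D(\varepsilon)$ for $\varepsilon>\tfrac12$ contain their upper edge $y=x+\varepsilon$ and omit the lower one, the reverse of the regime $\varepsilon<\tfrac12$. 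Membership genuinely flips rather than transports: for $\varepsilon'=1-\varepsilon<\tfrac12$ the segment $\overline{BC}\setminus\{C\}$ lies in $\mathcal{D}_{2,\varepsilon'}^{0}$ (Lemma \ref{lemma-delta1}), whereas for $\varepsilon>\tfrac12$ the corresponding triangle satisfies $\widetilde{\Delta}_{1}\cap\mathcal{D}_{2,\varepsilon}^{0}=\varnothing$ (the relevant cycle is now $(-1,0)$, not $(1,0)$); likewise $\Delta_{3},\Delta_{4},\Delta_{5}\subset\mathcal{D}_{2,\varepsilon'}^{0}$ in their entirety, while $\widetilde{\Delta}_{3},\widetilde{\Delta}_{4},\widetilde{\Delta}_{5}$ lose the boundary segments $\overline{\tilde{B}\tilde{E}},\overline{\tilde{E}\tilde{G}},\overline{\tilde{G}\tilde{H}}$ when $\varepsilon>\tfrac12$. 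This is exactly the caveat of Remark \ref{rem-cyc}, and it is why the paper re-runs the entire algorithmic analysis for $\varepsilon\in\left(\tfrac12,1\right)$ in Proposition \ref{prop-inc-2} instead of invoking symmetry; your proof needs to do the same, since no pointwise symmetry of $\mathcal{D}_{2,\varepsilon}^{0}$ is available.
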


We know that (\ref{incl-main}) holds$\ $for $\varepsilon=0$ (see
(\ref{inc-0})), and for $\varepsilon=\frac{1}{2}$ the set inclusions
(\ref{incl-main}) easily follow from Proposition \ref{SRSpola}. Thus, we have
to prove that (\ref{incl-main}) holds for every $\varepsilon\in\left(
0,1\right)  \diagdown\left\{  \frac{1}{2}\right\}  .$ Also note that if
Theorem \ref{Trm-inclus} holds, then the definitions of the sets $D\left(
\varepsilon\right)  $ and $B\left(  \varepsilon\right)  $ imply $\mathcal{D}%
_{2,\varepsilon}^{0}\cap R\left(  \varepsilon\right)  =B\left(  \varepsilon
\right)  ,$ which means that we completely characterized set $\mathcal{D}%
_{2,\varepsilon}^{0}$ in the half-plane $R\left(  \varepsilon\right)  $ for
every $\varepsilon\in\left[  0,1\right)  .$ Before proving Theorem
\ref{Trm-inclus}, we give a brief overview of the notations and results that
we will use in the proof (for more details see \cite[Section 2]{Surer}).

In order to analyze the structure of the $\varepsilon-$SRS region
$\mathcal{D}_{d,\varepsilon}^{0}$, we start with $\mathcal{D}_{d,\varepsilon}$
and have to remove all points $\mathbf{r}$ where $\left(  \tau_{\mathbf{r}%
,\varepsilon}^{n}(\mathbf{z})\right)  _{n\in\mathbb{N}}$ is periodic for some
$\mathbf{z\in}\mathbb{Z}^{d},$ $\mathbf{z\neq0}.$ In particular,
$\mathbf{r}\in\mathcal{D}_{d,\varepsilon}\diagdown\mathcal{D}_{d,\varepsilon
}^{0}$ when there exist nonzero points $\mathbf{z}_{1},...,\mathbf{z}_{l}%
\in\mathbb{Z}^{d}$ with $\tau_{\mathbf{r},\varepsilon}(\mathbf{z}%
_{i})=\mathbf{z}_{i+1}$ for $i=1,...,l-1$ and $\tau_{\mathbf{r},\varepsilon
}(\mathbf{z}_{l})=\mathbf{z}_{1}.$ Since the mapping $\tau_{\mathbf{r}%
,\varepsilon}$ keeps $d-1$ entries unchanged (see (\ref{tau-eps})), it
suffices to write down only the first entry of each vector $\mathbf{z}%
_{1},...,\mathbf{z}_{l}.$ Therefore, if these vectors are given by
$\mathbf{z}_{i}=\left(  z_{i},...,z_{i+d-1}\right)  ,$ $i=1,...,l$, where the
indices have to be taken modulo $l,$ then these $l$ vectors can be represented
by $l$ integers $z_{1},...,z_{l}.$ We refer to such a sequence of integers as
a \textit{cycle of} $\tau_{\mathbf{r},\varepsilon}$ \textit{of period }$l$ and
we write it in the form $\pi=\left(  z_{1},...,z_{l}\right)  $. By definition
of the mapping $\tau_{\mathbf{r},\varepsilon}$, it is easy to see that a cycle
$\pi=\left(  z_{1},...,z_{l}\right)  \in\mathbb{Z}^{l}$ is a cycle of
$\tau_{\mathbf{r},\varepsilon}$ if and only if%
\begin{equation}
0\leq r_{1}z_{1+i}+r_{2}z_{2+i}+...+r_{d}z_{d+i}+z_{d+1+i}+\varepsilon
<1,\ \forall i\in\left\{  0,...,l-1\right\}  , \label{P-eps}%
\end{equation}
where $z_{k+i}=z_{\left(  k+i\right)  \operatorname{mod}l}$. The finite set of
inequalities (\ref{P-eps}) determines a (possibly degenerate or empty)
polyhedron $P_{\varepsilon}\left(  \pi\right)  \subset\mathbb{R}^{d}$. We call
this polyhedron the \textit{cutout polyhedron} defined by the cycle $\pi$.
Since $\mathbf{r}\in\mathcal{D}_{d,\varepsilon}^{0}$ if and only if
$\tau_{\mathbf{r},\varepsilon}$ has $(0)$ as its only period, we conclude
that
\begin{equation}
\mathcal{D}_{d,\varepsilon}^{0}=\mathcal{D}_{d,\varepsilon}\diagdown%
{\textstyle\bigcup\limits_{\pi\in\Pi}}
P_{\varepsilon}\left(  \pi\right)  , \label{poly}%
\end{equation}
where $\Pi$ is the set of all integer vectors $\left(  z_{1},...,z_{l}\right)
$ of finite dimension $l$ such that the word $z_{1}\cdots z_{l}\in N$, where
$N$ is a set of representatives of the primitive necklaces of finite length
without the word $0.$ The problem is that this representation is not very
practicable, since $\Pi$ is an infinite set. But (\ref{poly}) shows that it is
much easier to recognize a subsets of $\mathcal{D}_{d,\varepsilon}$ not to be
a subset of $\mathcal{D}_{d,\varepsilon}^{0}$ than to be a subset of
$\mathcal{D}_{d,\varepsilon}^{0}$. There is an algorithm that uses the idea of
the \textquotedblleft set of witnesses\textquotedblright, invented
independently by Brunotte \cite{Brunotte} and Scheicher and Thuswaldner
\cite{Scheicher-Thuswaldner}, which allows one to decide whether a given
vector $\mathbf{r}$ is an element of $\mathcal{D}_{d,\varepsilon}^{0}.$ Using
the "convexity property" of $\tau_{\mathbf{r},\varepsilon}$, it was shown that
a similar algorithm works even for a given closed and convex set, for example
the convex hull $\mathcal{H}$ of $\mathbf{r}_{1},...,\mathbf{r}_{k}%
\in\mathbb{R}^{d},$ which lies in $\operatorname*{Int}\mathcal{D}%
_{d,\varepsilon}=\mathcal{E}_{d}\ $(see \cite[Section 2]{Surer} and
\cite[Lemma 3.2]{Akiyama-BBPT-II}).

\begin{lemma}
\label{lem-alg}Let $\varepsilon\in\left[  0,1\right)  $ and $\mathbf{r}%
_{1},...,\mathbf{r}_{k}$ be points of $\mathcal{D}_{d,\varepsilon}$. Denote by
$\mathcal{H}$ the convex hull of $\mathbf{r}_{1},...,\mathbf{r}_{k}$. We
assume that $\mathcal{H}$ is contained in $\mathcal{E}_{d}$. If the diameter
of $\mathcal{H}$ is sufficiently small, then there exists an algorithm to
create a \textit{set of witnesses} $\mathcal{V}\left(  \mathcal{H}\right)
\subset\mathbb{Z}^{d}$ for $\mathcal{H}\ $and an algorithm to create a finite
\textit{directed graph} $G_{\varepsilon}(\mathcal{H}):=\mathcal{V}\left(
\mathcal{H}\right)  \times E_{\varepsilon}$ with set of vertices
$\mathcal{V}\left(  \mathcal{H}\right)  $ and set of edges $E_{\varepsilon
}\subset\mathcal{V}\left(  \mathcal{H}\right)  \times\mathcal{V}\left(
\mathcal{H}\right)  $ having the following properties:

\begin{enumerate}
\item[1)] $\pm\mathbf{e}_{1},...,\pm\mathbf{e}_{d}\in\mathcal{V}\left(
\mathcal{H}\right)  $, where $\mathbf{e}_{i}$ is a $d-$dimensional standard
unit vector $\mathbf{e}_{i}=(0,...,1,0,...,0).$

\item[2)] For each $\mathbf{z}=\left(  z_{1},z_{2},...,z_{d}\right)
\in\mathcal{V}\left(  \mathcal{H}\right)  $, then $\left(  z_{2}%
,...,z_{d},z_{d+1}\right)  \in\mathcal{V}\left(  \mathcal{H}\right)  $ if and
only if%
\[
z_{d+1}\in\lbrack\min_{1\leq i\leq k}\{\lfloor-\mathbf{r}_{i}\mathbf{z}%
\rfloor\},-\min_{1\leq i\leq k}\{\lfloor\mathbf{r}_{i}\mathbf{z}\rfloor
\}]\cap\mathbb{Z}.
\]

\item[3)] Let $\mathbf{z,z}^{\prime}\in\mathcal{V}\left(  \mathcal{H}\right)
,$ where $\mathbf{z}=\left(  z_{1},z_{2},...,z_{d}\right)  $. We put a
\textit{direct }edge $\left(  \mathbf{z,z}^{\prime}\right)  \in E_{\varepsilon
}$ (or $\mathbf{z\rightarrow z}^{\prime})$ if and only if $\mathbf{z}^{\prime
}=\left(  z_{2},...,z_{d},z_{d+1}\right)  $ and%
\begin{equation}
z_{d+1}\in\lbrack-\max_{1\leq i\leq k}\{\lfloor\mathbf{r}_{i}\mathbf{z}%
+\varepsilon\rfloor\},-\min_{1\leq i\leq k}\{\lfloor\mathbf{r}_{i}%
\mathbf{z}+\varepsilon\rfloor\}]\cap\mathbb{Z}. \label{edg}%
\end{equation}

\end{enumerate}
\end{lemma}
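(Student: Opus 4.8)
The plan is to realize $\mathcal{V}(\mathcal{H})$ as the smallest subset of $\mathbb{Z}^d$ that contains the unit vectors and is closed under the successor rule in property 2), and then to build $G_\varepsilon(\mathcal{H})$ on top of it by the edge rule in property 3). First I would initialize $\mathcal{V}_0=\{\pm\mathbf{e}_1,\ldots,\pm\mathbf{e}_d\}$, which secures property 1). Then, iteratively, for every vector $\mathbf{z}=(z_1,\ldots,z_d)$ produced so far I would adjoin all vectors $(z_2,\ldots,z_d,z_{d+1})$ whose last entry $z_{d+1}$ lies in the integer interval
\[
[\min_{1\le i\le k}\{\lfloor-\mathbf{r}_i\mathbf{z}\rfloor\},-\min_{1\le i\le k}\{\lfloor\mathbf{r}_i\mathbf{z}\rfloor\}],
\]
and set $\mathcal{V}(\mathcal{H})=\bigcup_{n}\mathcal{V}_n$. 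By construction this union satisfies the closure condition in property 2). The endpoints of the interval depend only on the vertices $\mathbf{r}_1,\ldots,\mathbf{r}_k$ because $\mathbf{r}\mapsto\mathbf{r}\mathbf{z}$ is affine in $\mathbf{r}$, so its extrema over the convex hull $\mathcal{H}$ are attained at those vertices; this is what lets one admissibility test account for every $\mathbf{r}\in\mathcal{H}$ at once.

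The step I expect to be the main obstacle is proving that this process terminates, i.e. that $\mathcal{V}(\mathcal{H})$ is finite. Here I would exploit the hypothesis $\mathcal{H}\subset\mathcal{E}_d$: since $\mathcal{E}_d$ is the interior of $\mathcal{D}_{d,\varepsilon}$, every $\mathbf{r}_i$ lies in the region where the companion-type matrix $M_{\mathbf{r}_i}$ governing $\tau_{\mathbf{r}_i,\varepsilon}$ has spectral radius strictly less than $1$. Hence there is a norm $\|\cdot\|$ on $\mathbb{R}^d$ in which $\|M_{\mathbf{r}_i}\mathbf{z}\|\le\rho\|\mathbf{z}\|$ with a common $\rho<1$, while the floor operation in (\ref{tau-eps}) perturbs $M_{\mathbf{r}_i}\mathbf{z}$ by a bounded amount. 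Passing to the successor set over all vertices enlarges this perturbation only by a quantity proportional to the diameter of $\mathcal{H}$, so requiring the diameter to be sufficiently small keeps a uniform estimate of the form (norm of any successor of $\mathbf{z}$) $\le\rho\|\mathbf{z}\|+C$ in force. It follows that once $\|\mathbf{z}\|$ exceeds $C/(1-\rho)$ every successor is strictly shorter, so the iteration never escapes the ball $\|\mathbf{z}\|\le C/(1-\rho)$; as this ball contains only finitely many integer points, the construction stabilizes after finitely many steps. This is precisely the convexity argument carried out for closed convex sets in \cite[Lemma 3.2]{Akiyama-BBPT-II} and \cite[Section 2]{Surer}, which I would adapt, the only change being the shift $\varepsilon$ inside the floor, which affects neither $\rho$ nor the boundedness of the perturbation.

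Finally, with $\mathcal{V}(\mathcal{H})$ in hand and finite, I would define $E_\varepsilon$ by drawing an edge $\mathbf{z}\to\mathbf{z}'$ exactly when $\mathbf{z}'=(z_2,\ldots,z_d,z_{d+1})$ and $z_{d+1}$ lies in the interval (\ref{edg}); this gives property 3) by definition and makes $G_\varepsilon(\mathcal{H})$ a finite directed graph. To confirm that both endpoints of every edge actually belong to $\mathcal{V}(\mathcal{H})$, I would check that the edge interval (\ref{edg}) is contained in the witness interval of property 2). This follows from the elementary inequalities $\lfloor-t\rfloor\le-\lfloor t+\varepsilon\rfloor\le-\lfloor t\rfloor$, valid for all real $t$ and all $\varepsilon\in[0,1)$: applying them with $t=\mathbf{r}_i\mathbf{z}$ and then taking the minimum over $i$ shows that the lower and upper endpoints of (\ref{edg}) lie, respectively, above the lower and below the upper endpoint of the interval in property 2). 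The resulting graph therefore encodes, via its nontrivial cycles, all nonzero periods of $\tau_{\mathbf{r},\varepsilon}$ for $\mathbf{r}\in\mathcal{H}$ simultaneously, which is the property that makes it the working tool for testing the inclusions (\ref{incl-main}).
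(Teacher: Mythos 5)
Your proposal is correct and follows essentially the route the paper itself relies on: the paper gives no proof of Lemma \ref{lem-alg} at all, importing it from the witness-set construction of Brunotte and Scheicher--Thuswaldner as extended to convex hulls in \cite[Lemma 3.2]{Akiyama-BBPT-II} and \cite[Section 2]{Surer}, and your sketch (closure of $\{\pm\mathbf{e}_1,\dots,\pm\mathbf{e}_d\}$ under the successor rule, termination via a contraction estimate available because $\mathcal{H}\subset\mathcal{E}_{d}$ and its diameter is small, and containment of the edge interval (\ref{edg}) in the witness interval via $\lfloor -t\rfloor\le-\lfloor t+\varepsilon\rfloor\le-\lfloor t\rfloor$) is exactly that argument. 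One point of precision: a single norm contracting all the companion matrices $M_{\mathbf{r}_{i}}$ does \emph{not} follow merely from each having spectral radius less than $1$ (the joint spectral radius of a finite family can exceed $1$); the correct order of quantifiers is to fix a Lyapunov norm for one $M_{\mathbf{r}_{i_{0}}}$ and then invoke the small-diameter hypothesis to absorb the perturbation $M_{\mathbf{r}}-M_{\mathbf{r}_{i_{0}}}$ for all $\mathbf{r}\in\mathcal{H}$, which is where that hypothesis genuinely enters rather than only in bounding the floor-function error.
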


Note that the definition of the graph $G_{\varepsilon}(\mathcal{H})$ is
meaningful because a set of witnesses $\mathcal{V}\left(  \mathcal{H}\right)
$ of $\mathcal{H}$ is closed under the application of $\tau_{\mathbf{r}%
,\varepsilon}$ (see \cite[Lemma 2.9]{Surer}). If $\mathcal{H}$ is small
enough, the procedure will terminate, yielding finite set $\mathcal{V}\left(
\mathcal{H}\right)  $. The finiteness condition on $\mathcal{V}\left(
\mathcal{H}\right)  $ assures the finiteness of the graph $G_{\varepsilon
}(\mathcal{H})$. We are interested in the (directed) cycles of the graph
$G_{\varepsilon}(\mathcal{H})$. By the definition of the edges of the graph
$G_{\varepsilon}(\mathcal{H}),$ the graph-cycle $\mathbf{z}_{1}%
\mathbf{\rightarrow z}_{2}\mathbf{\rightarrow}...\mathbf{\rightarrow z}%
_{l}\mathbf{\rightarrow z}_{1}\ $is uniquely determined by the $l$ integers
that form the first entries of the vectors $\mathbf{z}_{1}\mathbf{,...,z}_{l}%
$. Thus, the graph-cycles, similar as cycles of $\tau_{\mathbf{r},\varepsilon
}$, can also be written as $\pi=\left(  z_{1},...,z_{l}\right)  \in
\mathbb{Z}^{l}.$ The graph-cycles and the cycles of $\tau_{\mathbf{r}%
,\varepsilon}$ for $\mathbf{r\in}\mathcal{H}$ are closely related. More
precisely, we have

\begin{proposition}
[{\cite[Theorem 2.11]{Surer}}]\label{prop-conv-H}Let $\varepsilon\in\left[
0,1\right)  $ and let $\mathcal{H}\subset\mathcal{E}_{d}$ be the convex hull
of $\mathbf{r}_{1},...,\mathbf{r}_{k}\in\mathcal{E}_{d}$ with a finite set of
witnesses $\mathcal{V}\left(  \mathcal{H}\right)  .$ Further, let $\Lambda$ be
the set of primitive graph-cycles of $G_{\varepsilon}(\mathcal{H})$ without
the trivial one $(0)$. Then $\mathcal{H}\cap\mathcal{D}_{d,\varepsilon}%
^{0}=\mathcal{H}\diagdown%
{\textstyle\bigcup\limits_{\pi\in\Lambda}}
P_{\varepsilon}\left(  \pi\right)  .$
\end{proposition}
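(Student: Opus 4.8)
The plan is to deduce the equality from the global description (\ref{poly}) of $\mathcal{D}_{d,\varepsilon}^{0}$ and then match cutouts over $\mathcal{H}$. Since $\mathcal{H}\subset\mathcal{E}_{d}\subset\mathcal{D}_{d,\varepsilon}$, intersecting (\ref{poly}) with $\mathcal{H}$ gives
\[
\mathcal{H}\cap\mathcal{D}_{d,\varepsilon}^{0}=\mathcal{H}\diagdown\bigcup_{\pi\in\Pi}P_{\varepsilon}\left(\pi\right),
\]
so it suffices to prove the set identity $\mathcal{H}\cap\bigcup_{\pi\in\Pi}P_{\varepsilon}\left(\pi\right)=\mathcal{H}\cap\bigcup_{\pi\in\Lambda}P_{\varepsilon}\left(\pi\right)$. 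The inclusion $\supseteq$ is immediate, because every primitive nontrivial graph-cycle is in particular a primitive nontrivial integer cycle-word, so $\Lambda\subseteq\Pi$.

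For the reverse inclusion I would fix $\mathbf{r}\in\mathcal{H}$ with $\mathbf{r}\in P_{\varepsilon}(\pi)$ for some nontrivial $\pi=(z_{1},\ldots,z_{l})\in\Pi$; by (\ref{P-eps}) this says precisely that $\pi$ is a genuine cycle of $\tau_{\mathbf{r},\varepsilon}$. Passing to the primitive root of $\pi$ leaves the defining inequalities (\ref{P-eps}), and hence the polyhedron $P_{\varepsilon}(\pi)$, unchanged, so I may assume $\pi$ primitive. Let $\mathbf{z}_{1},\ldots,\mathbf{z}_{l}\in\mathbb{Z}^{d}$ be the corresponding periodic orbit, $\mathbf{z}_{i+1}=\tau_{\mathbf{r},\varepsilon}(\mathbf{z}_{i})$ with indices taken modulo $l$. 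It is then enough to show that this orbit is a graph-cycle of $G_{\varepsilon}(\mathcal{H})$, i.e. that each $\mathbf{z}_{i}\in\mathcal{V}(\mathcal{H})$ and each step $\mathbf{z}_{i}\to\mathbf{z}_{i+1}$ is an edge; since $\pi$ is primitive and nontrivial this yields $\pi\in\Lambda$, whence $\mathbf{r}\in\bigcup_{\pi\in\Lambda}P_{\varepsilon}(\pi)$.

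That the steps are edges follows from a convex-combination estimate, which is where $\varepsilon$ enters. Writing $\mathbf{r}=\sum_{j}\lambda_{j}\mathbf{r}_{j}$ with $\lambda_{j}\ge0$ and $\sum_{j}\lambda_{j}=1$, we have $\mathbf{r}\mathbf{z}_{i}+\varepsilon=\sum_{j}\lambda_{j}(\mathbf{r}_{j}\mathbf{z}_{i}+\varepsilon)$, so by monotonicity of the floor
\[
\min_{1\le j\le k}\lfloor\mathbf{r}_{j}\mathbf{z}_{i}+\varepsilon\rfloor\le\lfloor\mathbf{r}\mathbf{z}_{i}+\varepsilon\rfloor\le\max_{1\le j\le k}\lfloor\mathbf{r}_{j}\mathbf{z}_{i}+\varepsilon\rfloor.
\]
As the last coordinate of $\mathbf{z}_{i+1}$ equals $-\lfloor\mathbf{r}\mathbf{z}_{i}+\varepsilon\rfloor$, this is exactly the edge condition (\ref{edg}), so $\mathbf{z}_{i}\to\mathbf{z}_{i+1}$ is an edge of $G_{\varepsilon}(\mathcal{H})$ provided both endpoints lie in $\mathcal{V}(\mathcal{H})$. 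To place the whole orbit in $\mathcal{V}(\mathcal{H})$ I would invoke the defining property of a set of witnesses (\cite[Lemma 2.9]{Surer} together with Lemma \ref{lem-alg}): for every $\mathbf{r}\in\mathcal{H}$ each $\tau_{\mathbf{r},\varepsilon}$-orbit eventually enters $\mathcal{V}(\mathcal{H})$, and $\mathcal{V}(\mathcal{H})$ is closed under $\tau_{\mathbf{r},\varepsilon}$ because the edge interval (\ref{edg}) is contained in the witness interval of part 2 of Lemma \ref{lem-alg} (using $\lfloor t+\varepsilon\rfloor\le\lceil t\rceil$ for $\varepsilon\in[0,1)$). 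A periodic orbit must therefore enter and then remain in $\mathcal{V}(\mathcal{H})$, so all of $\mathbf{z}_{1},\ldots,\mathbf{z}_{l}$ are witnesses, completing the argument.

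The main obstacle is exactly this last point: certifying that a genuine cycle of $\tau_{\mathbf{r},\varepsilon}$ is captured inside the finite set $\mathcal{V}(\mathcal{H})$, which relies on the smallness-of-diameter hypothesis and on the fact that the witness-generating interval of part 2 of Lemma \ref{lem-alg} is uniform over all $\varepsilon\in[0,1)$ and dominates the edge interval. Everything else — the reduction via (\ref{poly}), the trivial inclusion $\Lambda\subseteq\Pi$, and the floor estimate that turns orbit steps into graph edges — is routine; the genuine content sits in the finiteness and $\tau_{\mathbf{r},\varepsilon}$-invariance of the set of witnesses.
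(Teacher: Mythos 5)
The paper itself offers no proof of this proposition --- it is quoted directly as \cite[Theorem 2.11]{Surer} --- so your proposal must be measured against the witness-set argument in the literature (Brunotte's method, cf.\ \cite{Akiyama-BBPT}, \cite{Akiyama-BBPT-II}, \cite{Surer}). Your reduction via (\ref{poly}), the inclusion $\Lambda\subseteq\Pi$, and the convex-combination floor estimate showing that every orbit step of $\tau_{\mathbf{r},\varepsilon}$, $\mathbf{r}\in\mathcal{H}$, satisfies the edge condition (\ref{edg}) are all correct. The gap is exactly where you locate it, and your patch for it does not hold. The assertion that ``for every $\mathbf{r}\in\mathcal{H}$ each $\tau_{\mathbf{r},\varepsilon}$-orbit eventually enters $\mathcal{V}\left(\mathcal{H}\right)$'' is \emph{not} a defining property of a set of witnesses: Lemma \ref{lem-alg} and \cite[Lemma 2.9]{Surer} give only $\pm\mathbf{e}_{1},\dots,\pm\mathbf{e}_{d}\in\mathcal{V}\left(\mathcal{H}\right)$ and closure of $\mathcal{V}\left(\mathcal{H}\right)$ under the witness operation (hence under $\tau_{\mathbf{r},\varepsilon}$). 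Absorption of \emph{all} orbits does not follow from this: the natural induction on $\left\Vert \mathbf{z}\right\Vert _{1}$, writing $\mathbf{z}=\mathbf{x}+\mathbf{v}$ with $\mathbf{v}=\pm\mathbf{e}_{i}$ and using the almost-additivity $\tau_{\mathbf{r},\varepsilon}^{n}(\mathbf{z})=\tau_{\mathbf{r},\varepsilon}^{n}(\mathbf{x})+\mathbf{w}_{n}$ with $\mathbf{w}_{n}\in\mathcal{V}\left(\mathcal{H}\right)$, only yields $\tau_{\mathbf{r},\varepsilon}^{N}(\mathbf{z})\in\mathcal{V}\left(\mathcal{H}\right)+\mathcal{V}\left(\mathcal{H}\right)$ once the orbit of $\mathbf{x}$ has merely \emph{entered} $\mathcal{V}\left(\mathcal{H}\right)$; it lands in $\mathcal{V}\left(\mathcal{H}\right)$ itself only if the orbit of $\mathbf{x}$ reaches $\mathbf{0}$ exactly --- which is the very thing being proved. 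Worse, the statement you are trying to establish (that the \emph{given} periodic orbit realizing $\pi\in\Pi$ lies in $\mathcal{V}\left(\mathcal{H}\right)$, so that $\pi\in\Lambda$) is strictly stronger than the proposition requires and is not guaranteed: the proposition only forces \emph{some} nontrivial graph-cycle to be realized at $\mathbf{r}$, not the particular cycle you started from.

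The correct completion replaces your absorption claim by a minimal-counterexample (equivalently, induction on $\left\Vert \cdot\right\Vert _{1}$) argument proving the contrapositive form: if $\mathbf{r}\in\mathcal{H}$ and $\mathbf{r}\notin P_{\varepsilon}\left(\pi\right)$ for every $\pi\in\Lambda$, then every orbit reaches $\mathbf{0}$. For $\left\Vert \mathbf{z}\right\Vert _{1}\leq1$ the orbit starts in $\mathcal{V}\left(\mathcal{H}\right)$, stays there by closure, is eventually periodic because $\mathcal{V}\left(\mathcal{H}\right)$ is finite, and by your edge estimate together with (\ref{P-eps}) its periodic part is a graph-cycle $\pi$ with $\mathbf{r}\in P_{\varepsilon}\left(\pi\right)$; by hypothesis that cycle is $(0)$, so the orbit dies. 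For the inductive step, write $\mathbf{z}=\mathbf{x}+\mathbf{v}$, use the almost-additivity above, wait until $\tau_{\mathbf{r},\varepsilon}^{N}(\mathbf{x})=\mathbf{0}$ (inductive hypothesis), so that $\tau_{\mathbf{r},\varepsilon}^{N}(\mathbf{z})=\mathbf{w}_{N}\in\mathcal{V}\left(\mathcal{H}\right)$, and repeat the base-case argument on the tail. Together with your (correct) easy direction this gives the stated equality. Note that this is where the ``convexity property'' of $\tau_{\mathbf{r},\varepsilon}$ mentioned in the paper before Lemma \ref{lem-alg} genuinely enters; your proposal invokes convexity only for the easy edge estimate and skips the step where it is indispensable.
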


Since the number of primitive cycles of a finite directed graph is finite,
Proposition \ref{prop-conv-H} gives an efficient way to determine a subregion
of $\mathcal{D}_{d,\varepsilon}^{0}$ contained in a given convex polyhedron
$\mathcal{H}$ lying in $\operatorname*{Int}\mathcal{D}_{d,\varepsilon
}=\mathcal{E}_{d}.$ Note that if $G_{\varepsilon}(\mathcal{H})$ has only
trivial cycle $(0)$ or if $%
{\textstyle\bigcup\limits_{\pi\in\Lambda}}
P_{\varepsilon}\left(  \pi\right)  \cap\mathcal{H}=\emptyset,$ then
$\mathcal{H}\subset\mathcal{D}_{d,\varepsilon}^{0}.$ If the algorithm to
create a set of witnesses $\mathcal{V}\left(  \mathcal{H}\right)  $ does not
terminate in a certain prescribed time, we have to subdivide $\mathcal{H}$
into several convex hulls and perform the algorithm for each of them. In fact,
Lemma \ref{lem-alg} and Proposition \ref{prop-conv-H} provide an effective
algorithm to obtain $\mathcal{F}\cap\mathcal{D}_{d,\varepsilon}^{0}$ for any
compact set $\mathcal{F}$ contained in the set $\mathcal{E}_{d}.$ Namely, if
$\mathcal{F}$ is compact and contained in $\mathcal{E}_{d}$, there exists a
finite covering of $\mathcal{F}$ by sufficiently small convex polyhedra, each
of which is contained in the interior of $\mathcal{E}_{d}$. In particular, if
$\mathcal{F}$ is a closed set and $\mathcal{D}_{d,\varepsilon}^{0}%
\mathcal{\subset F\subset E}_{d}$, then $\mathcal{D}_{d,\varepsilon}^{0}$ can
be characterized by cutting out finitely many polyhedra from $\mathcal{F}$.

Surer \cite{Surer} has shown that $\mathcal{D}_{2,\varepsilon}^{0}\subseteq
D^{\ast}\left(  \varepsilon\right)  $ and $\overline{D^{\ast}\left(
\varepsilon\right)  }\subset\mathcal{E}_{2}$ for all $\varepsilon\in\left(
0,1\right)  \diagdown\left\{  \frac{1}{2}\right\}  .$ Therefore,
$\varepsilon-$SRS region $\mathcal{D}_{2,\varepsilon}^{0}$ can be obtained
from $D^{\ast}\left(  \varepsilon\right)  $ by cutting out finitely many
polygons. Note that $D\left(  \varepsilon\right)  =D^{\ast}\left(
\varepsilon\right)  \diagdown T\left(  \varepsilon\right)  ,$ where $T\left(
\varepsilon\right)  $ is triangle given by%

\begin{equation}
T\left(  \varepsilon\right)  {\footnotesize {=\!\!%
\begin{cases}
\left\{  \left(  x,y\right)  \in\mathbb{R}^{2}:-x-\varepsilon\leq
y<x+1-\varepsilon,\ -\frac{1}{2}\leq x<-\varepsilon\right\}  ,\text{ if
}\varepsilon\in\left[  0,\frac{1}{2}\right) \\
\left\{  \left(  x,y\right)  \in\mathbb{R}^{2}:-x-1+\varepsilon<y\leq
x+\varepsilon,\ -\frac{1}{2}\leq x\leq-\left(  1-\varepsilon\right)  \right\}
,\text{ if }\varepsilon\in\left[  \frac{1}{2},1\right)
\end{cases}
\!\!\!\!\!.}} \label{T}%
\end{equation}
To prove $\mathcal{D}_{2,\varepsilon}^{0}\subset D\left(  \varepsilon\right)
,$ it is therefore sufficient to prove that $T\left(  \varepsilon\right)  $
can be cut out from $D^{\ast}\left(  \varepsilon\right)  ,$ i.e. that
$T\left(  \varepsilon\right)  \subseteq D^{\ast}\left(  \varepsilon\right)
\diagdown\mathcal{D}_{d,\varepsilon}^{0}.$ Therefore, to prove
(\ref{incl-main}), we have to apply the algorithm of Lemma \ref{lem-alg} to
$\overline{D^{\ast}\left(  \varepsilon\right)  }$, precisely to the closed
convex subsets $\overline{T\left(  \varepsilon\right)  }$ and $\overline
{B\left(  \varepsilon\right)  }\ $of the set $\overline{D^{\ast}\left(
\varepsilon\right)  }.$

Surer \cite[Corollary 2.3 ]{Surer} has shown that for $\varepsilon\in\left(
0,1\right)  $ the sets $\mathcal{D}_{d,\varepsilon}^{0}$ and $\mathcal{D}%
_{d,1-\varepsilon}^{0}$ differ only by a set of measure zero with respect to
the $d-$dimensional Lebesgue measure. This result is a direct consequence of
(\ref{poly}) and \cite[Lemma 2.2]{Surer}, which states that for each cycle
$\pi$ we have $\operatorname{Int}P_{\varepsilon}\left(  \pi\right)
=\operatorname{Int}P_{1-\varepsilon}\left(  -\pi\right)  $ for all
$\varepsilon\in\left(  0,1\right)  $. From (\ref{D*})\ it is easy to see that
for all $\varepsilon\in\left(  0,1\right)  $ the sets $D^{\ast}\left(
\varepsilon\right)  $ and $D^{\ast}\left(  1-\varepsilon\right)  $ are equal
up to the boundary and the boundaries are reversed. The same symmetry also
applies to the sets $D\left(  \varepsilon\right)  $ and $D\left(
1-\varepsilon\right)  ,$ to the sets $T\left(  \varepsilon\right)  $ and
$T\left(  1-\varepsilon\right)  ,$ and also to the sets $B\left(
\varepsilon\right)  $ and $B\left(  1-\varepsilon\right)  ,$ except on a part
of the boundary of these sets where $x=\frac{2}{3}-\varepsilon\ $(see
(\ref{D}), (\ref{T}) and (\ref{B})). Consequently, the sets $B\left(
\varepsilon\right)  \cap\mathcal{D}_{2,\varepsilon}^{0}$ and $B\left(
1-\varepsilon\right)  \cap\mathcal{D}_{2,1-\varepsilon}^{0}$ differ only by a
set of measure zero. The same applies to the sets $T\left(  \varepsilon
\right)  \cap\mathcal{D}_{2,\varepsilon}^{0}$ and $T\left(  1-\varepsilon
\right)  \cap\mathcal{D}_{2,1-\varepsilon}^{0}.$ Therefore, we will first
prove Theorem \ref{Trm-inclus} for $\varepsilon\in\left(  0,\frac{1}%
{2}\right)  $ and then use the symmetry described above to prove it for
$\varepsilon\in\left(  \frac{1}{2},1\right)  .$

\begin{remark}
\label{rem-cyc}According to Lemma \ref{lem-alg}, for a given (sufficiently
small) convex hull $\mathcal{H}$ of $\mathbf{r}_{1},...,\mathbf{r}_{k}$ and
$\varepsilon\in\left[  0,1\right)  ,$ the set of vertices $\mathcal{V}\left(
\mathcal{H}\right)  $ of the corresponding graph $G_{\varepsilon}%
(\mathcal{H})$ does not depend on $\varepsilon$, but the set of edges
$E_{\varepsilon}$ does. Therefore, the graphs $G_{\varepsilon}(\mathcal{H})$
and $G_{1-\varepsilon}(\mathcal{H})$ have the same sets of vertices for each
$\varepsilon\in\left(  0,1\right)  $, but the sets of edges may differ. From
the definition of the set of witnesses $\mathcal{V}\left(  \mathcal{H}\right)
$ also follows: if $\mathbf{z\in}\mathcal{V}\left(  \mathcal{H}\right)  ,$
then $-\mathbf{z\in}\mathcal{V}\left(  \mathcal{H}\right)  .$ Thus, if $\pi
\in\Pi\ $represents a non-zero primitive cycle $\mathbf{z}_{1}%
\mathbf{\rightarrow z}_{2}\mathbf{\rightarrow}...\mathbf{\rightarrow z}%
_{l}\mathbf{\rightarrow z}_{1}$ of the graph $G_{\varepsilon}(\mathcal{H})$,
then for each $l=1,...,l$ the pair $\left(  \mathbf{z}_{i}\mathbf{,z}%
_{i+1}\right)  $ is an edge of the graph $G_{\varepsilon}(\mathcal{H})$ and
$-\mathbf{z}_{1}\mathbf{,-z}_{2}\mathbf{,}...\mathbf{,-z}_{l}$ are vertexes of
the graph $G_{1-\varepsilon}(\mathcal{H})$, but each $\left(  -\mathbf{z}%
_{i}\mathbf{,-z}_{i+1}\right)  $ does not necessarily belong to the set of
edges $E_{1-\varepsilon}$ of the graph $G_{1-\varepsilon}(\mathcal{H}).$
Consequently, $-\pi$ is not necessarily a non-zero primitive cycle of the
graph $G_{1-\varepsilon}(\mathcal{H}).$ However, if for a non-zero primitive
cycle $\pi$ of the graph $G_{\varepsilon}(\mathcal{H})$ we have
$\operatorname{Int}P_{\varepsilon}\left(  \pi\right)  \cap\mathcal{H\neq
\varnothing}$, then we can expect $-\pi$ to be a non-zero primitive cycle of
the graph $G_{1-\varepsilon}(\mathcal{H}),$ since $\operatorname{Int}%
P_{\varepsilon}\left(  \pi\right)  =\operatorname{Int}P_{1-\varepsilon}\left(
-\pi\right)  $. Further, since sets $P_{\varepsilon}\left(  \pi\right)  $ and
$P_{1-\varepsilon}\left(  -\pi\right)  $ are equal up to the boundary and
their boundaries are reversed, then if there is a cycle $\pi$ of the graph
$G_{\varepsilon}(\mathcal{H})$ such that $\operatorname{Int}P_{\varepsilon
}\left(  \pi\right)  \cap\mathcal{H=\varnothing}$ and $P_{\varepsilon}\left(
\pi\right)  \cap\mathcal{H\neq\varnothing}$, then probably $-\pi$ is not a
cycle of the graph $G_{1-\varepsilon}(\mathcal{H})$.
\end{remark}

In the following, we use $\square\left(  Q_{1},...,Q_{k}\right)  $ to denote
the convex hull of the points $Q_{1},...,Q_{k}\in\mathbb{R}^{2}.$ In
particular, we use $\Delta\left(  Q_{1},Q_{2},Q_{3}\right)  $ to denote closed
plane triangle with vertices $Q_{1},Q_{2},Q_{3}$ and $\overline{Q_{1}Q_{2}}$
to denote the closed line segment with endpoints $Q_{1}$ and $Q_{2}.$

First assume $\varepsilon\in\left(  0,\frac{1}{2}\right)  $. We define the
following sets:

\noindent$\Delta_{1}\left(  \varepsilon\right)  =\Delta(A_{\varepsilon
},B_{\varepsilon},C_{\varepsilon}),$ where $A_{\varepsilon}=(-\frac{1}%
{2},\frac{1}{2}-\varepsilon),B_{\varepsilon}=(-\varepsilon,0),C_{\varepsilon
}=(-\varepsilon,1-2\varepsilon),$

\noindent$\Delta_{2}\left(  \varepsilon\right)  =\Delta(B_{\varepsilon
},C_{\varepsilon},D_{\varepsilon}),$ where $D_{\varepsilon}=(\frac{2}%
{3}-\varepsilon,0),$

\noindent$\Delta_{3}\left(  \varepsilon\right)  =\square(B_{\varepsilon
},D_{\varepsilon},F_{\varepsilon},E_{\varepsilon}),$ where $E_{\varepsilon
}=(\frac{1}{3}-\varepsilon,-\frac{1}{3}),$ $F_{\varepsilon}=(\frac{2}%
{3}-\varepsilon,-\frac{1}{3}),$

\noindent$\Delta_{4}\left(  \varepsilon\right)  =\Delta(E_{\varepsilon
},F_{\varepsilon},G_{\varepsilon}),$ where $G_{\varepsilon}=(\frac{1}%
{2}-\varepsilon,-\frac{1}{2}),$

\noindent$\Delta_{5}\left(  \varepsilon\right)  =\Delta(G_{\varepsilon
},F_{\varepsilon},H_{\varepsilon}),$ where $H_{\varepsilon}=(\frac{2}%
{3}-\varepsilon,-\frac{2}{3}),$

\noindent$\Delta_{6}\left(  \varepsilon\right)  =\Delta(D_{\varepsilon
},I_{\varepsilon},K_{\varepsilon}),$ where $I_{\varepsilon}=(\frac{2}%
{3}-\varepsilon,\frac{1}{2}-\varepsilon),K_{\varepsilon}=(\frac{1}%
{3}-\varepsilon,\frac{1}{2}-\varepsilon),$

\noindent$\Delta_{7}\left(  \varepsilon\right)  =\Delta(I_{\varepsilon
},K_{\varepsilon},U_{\varepsilon}),$ where $U_{\varepsilon}=(\frac{1}%
{2}-\varepsilon,\frac{3}{4}-\frac{3}{2}\varepsilon),$

\noindent$\Delta_{8}\left(  \varepsilon\right)  =\Delta(I_{\varepsilon
},J_{\varepsilon},U_{\varepsilon}),$ where $J_{\varepsilon}=(\frac{2}%
{3}-\varepsilon,1-2\varepsilon),$

\noindent$\Delta_{9}\left(  \varepsilon\right)  =\Delta(J_{\varepsilon
},K_{\varepsilon},L_{\varepsilon}),$ where $L_{\varepsilon}=(\frac{1}%
{3}-\varepsilon,1-2\varepsilon),$

\noindent$\Delta_{10}\left(  \varepsilon\right)  =\Delta(C_{\varepsilon
},K_{\varepsilon},L_{\varepsilon}),$

\noindent$\Delta_{11}\left(  \varepsilon\right)  =\Delta(J_{\varepsilon
},N_{\varepsilon},O_{\varepsilon}),$ where $N_{\varepsilon}=(\frac{1}%
{2}-\varepsilon,1-2\varepsilon),O_{\varepsilon}=(\frac{2}{3}-\varepsilon
,\frac{7}{6}-2\varepsilon),$

\noindent$\Delta_{12}\left(  \varepsilon\right)  =\square(M_{\varepsilon
},N_{\varepsilon},O_{\varepsilon},P_{\varepsilon},R_{\varepsilon}),$ where
$M_{\varepsilon}=(\frac{1}{6}-\varepsilon,1-2\varepsilon),$ $P_{\varepsilon
}=(\frac{2}{3}-\varepsilon,\frac{4}{3}-2\varepsilon),$ $R_{\varepsilon}%
=(\frac{1}{2}-\varepsilon,\frac{4}{3}-2\varepsilon),$

\noindent$\Delta_{13}\left(  \varepsilon\right)  =\Delta(P_{\varepsilon
},R_{\varepsilon},S_{\varepsilon}),$ where $S_{\varepsilon}=(\frac{2}%
{3}-\varepsilon,\frac{3}{2}-2\varepsilon),$

\noindent$\Delta_{14}\left(  \varepsilon\right)  =\square(C_{\varepsilon
},M_{\varepsilon},S_{\varepsilon},Z_{\varepsilon}),$ where $Z_{\varepsilon
}=(\frac{2}{3}-\varepsilon,\frac{5}{3}-2\varepsilon).$

\noindent Then $\overline{T\left(  \varepsilon\right)  }=\Delta_{1}\left(
\varepsilon\right)  $ and $\overline{B\left(  \varepsilon\right)  }%
=\bigcup\limits_{i=2}^{14}\Delta_{i}\left(  \varepsilon\right)  $ (see Figure
\ref{slika180}). In what follows, we will use the abbreviations:
$A:=A_{\varepsilon},...,Z:=Z_{\varepsilon}\ $and $\Delta_{i}:=\Delta
_{i}\left(  \varepsilon\right)  ,$ $i=1,...,14.$%

\begin{figure}[h]
\centering
\includegraphics{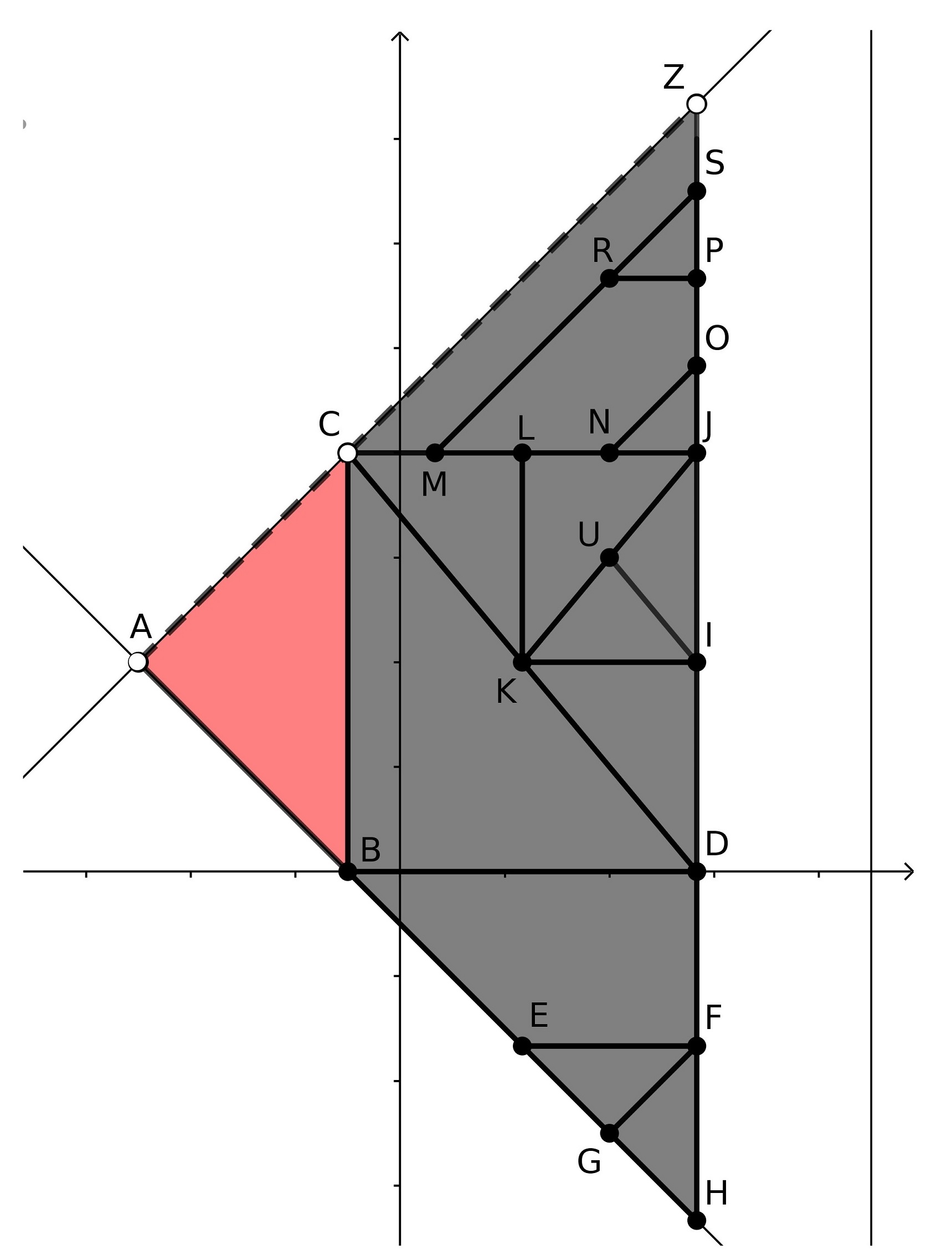}
\caption{\small The sets $\overline{B\left( \varepsilon\right) }$ and $\overline{T\left(  \varepsilon\right) }.$}
\label{slika180}
\end{figure}

\begin{lemma}
\label{lemma-delta1}Let $\varepsilon\in\left(  0,\frac{1}{2}\right)  .$ Then
$\Delta_{1}\cap\mathcal{D}_{2,\varepsilon}^{0}=\mathcal{F}_{1},\ $where
\[
\mathcal{F}_{1}\mathcal{=}\{(-\varepsilon,y)\in\mathbb{R}^{2}:0\leq
y<1-2\varepsilon\}=\overline{BC}\setminus\left\{  C\right\}  .
\]

\end{lemma}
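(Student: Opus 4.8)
The plan is to apply Proposition \ref{prop-conv-H} to the closed convex set $\mathcal{H}=\overline{\Delta_1}=\Delta(A,B,C)$, so that $\Delta_1\cap\mathcal{D}_{2,\varepsilon}^0=\Delta_1\setminus\bigcup_{\pi\in\Lambda}P_\varepsilon(\pi)$, and then to identify $\Lambda$ and compute the cutouts explicitly. First I would record that $\Delta_1\subset\mathcal{E}_2$: each of $A=(-\tfrac12,\tfrac12-\varepsilon)$, $B=(-\varepsilon,0)$, $C=(-\varepsilon,1-2\varepsilon)$ satisfies $|x|<1$ and $|y|<x+1$ for $\varepsilon\in(0,\tfrac12)$, and convexity of $\mathcal{E}_2$ gives $\Delta_1\subset\mathcal{E}_2=\operatorname{Int}\mathcal{D}_{2,\varepsilon}$; note this is exactly where $\varepsilon>0$ is used, since at $\varepsilon=0$ the apex $A$ lands on $\partial\mathcal{E}_2$. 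Hence the algorithm of Lemma \ref{lem-alg} applies to $\mathcal{H}=\Delta_1$.

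Next I would build the set of witnesses $\mathcal{V}(\Delta_1)$ from part 2) of Lemma \ref{lem-alg}, starting from $\pm\mathbf{e}_1,\pm\mathbf{e}_2$ with $\mathbf{r}_1=A,\mathbf{r}_2=B,\mathbf{r}_3=C$. A direct evaluation of the floors $\lfloor\pm\mathbf{r}_i\mathbf{z}\rfloor$ shows the procedure closes up and yields, uniformly for all $\varepsilon\in(0,\tfrac12)$, the seven-point set $\mathcal{V}(\Delta_1)=\{\mathbf{0},\pm(1,0),\pm(0,1),\pm(1,-1)\}$; in particular the witness set is finite, so the algorithm terminates and no subdivision is needed. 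I would then read off the edge set $E_\varepsilon$ from part 3). The floors $\lfloor\mathbf{r}_i\mathbf{z}+\varepsilon\rfloor$ at the four relevant vertices $(1,0),(0,1),(1,-1),(-1,1)$ are constant in $\varepsilon$ on $(0,\tfrac12)$, and give the edges $(0,1)\to(1,0)\to(0,1)$ and $(1,-1)\to(-1,1)\to(1,-1)$. The remaining vertices carry no cycle: $(-1,0)$ and $(0,0)$ lead only into $(0,0)$, and $(0,-1)$ receives no incoming edge (so although the edges leaving $(0,-1)$ do change with $\varepsilon$, at $\varepsilon=\tfrac14$ and $\varepsilon=\tfrac13$, it lies on no cycle). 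Thus the only non-trivial primitive graph-cycles are $\pi_1=(0,1)$ and $\pi_2=(1,-1)$.

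It then remains to compute the two cutout polyhedra from (\ref{P-eps}) and intersect with $\Delta_1$. For $\pi_1=(0,1)$ the inequalities are $0\le y+\varepsilon<1$ and $0\le x+1+\varepsilon<1$, so $P_\varepsilon(\pi_1)=\{(x,y):-\varepsilon\le y<1-\varepsilon,\ -1-\varepsilon\le x<-\varepsilon\}$; since every point of $\Delta_1$ has $0\le y\le1-2\varepsilon$ and $-\tfrac12\le x\le-\varepsilon$, only the strict constraint $x<-\varepsilon$ survives on $\Delta_1$, giving $P_\varepsilon(\pi_1)\cap\Delta_1=\Delta_1\setminus\overline{BC}$. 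For $\pi_2=(1,-1)$ the inequalities collapse to the strip $1-\varepsilon\le y-x\le1+\varepsilon$; since $y-x\le1-\varepsilon$ throughout $\Delta_1$ with equality exactly on the upper edge, $P_\varepsilon(\pi_2)\cap\Delta_1=\overline{AC}$. Because $\overline{AC}\setminus\{C\}$ already has $x<-\varepsilon$, the union contributes to $\Delta_1\setminus\overline{BC}$ only the single extra point $C$, whence $\bigl(P_\varepsilon(\pi_1)\cup P_\varepsilon(\pi_2)\bigr)\cap\Delta_1=\Delta_1\setminus(\overline{BC}\setminus\{C\})=\Delta_1\setminus\mathcal{F}_1$. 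Subtracting from $\Delta_1$ yields $\Delta_1\cap\mathcal{D}_{2,\varepsilon}^0=\mathcal{F}_1$.

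The main obstacle is the graph analysis: one must verify that $\mathcal{V}(\Delta_1)$ is finite and, crucially, that $\pi_1,\pi_2$ are the \emph{only} non-trivial primitive cycles, uniformly in $\varepsilon$. The delicate point is that some edges of $G_\varepsilon(\Delta_1)$ genuinely depend on $\varepsilon$, so one has to argue that these $\varepsilon$-sensitive edges never close into a cycle — here because the vertices carrying them have no incoming edges, which decouples the two components $\{(1,0),(0,1)\}$ and $\{(1,-1),(-1,1)\}$. The second subtlety is geometric rather than combinatorial: the strip $P_\varepsilon(\pi_2)$ meets $\Delta_1$ along the entire edge $\overline{AC}$, yet adds only the boundary point $C$ to the cutout (the rest of $\overline{AC}$ being already removed by $\pi_1$); this is exactly the mechanism that excludes $C$ from $\mathcal{F}_1$ while leaving all of $\overline{BC}\setminus\{C\}$ inside $\mathcal{D}_{2,\varepsilon}^0$.
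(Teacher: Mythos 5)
Your proposal is correct and follows essentially the same route as the paper's proof: you run the witness-set algorithm of Lemma \ref{lem-alg} on $\mathcal{H}=\Delta_{1}$, obtain the same seven-vertex graph whose only non-trivial primitive cycles are (up to rotation of representatives) $\pi_{1}=(1,0)$ and $\pi_{2}=(-1,1)$, and compute the same cutout intersections $P_{\varepsilon}(\pi_{1})\cap\Delta_{1}=\Delta_{1}\setminus\overline{BC}$ and $P_{\varepsilon}(\pi_{2})\cap\Delta_{1}=\overline{AC}$, which yield $\Delta_{1}\cap\mathcal{D}_{2,\varepsilon}^{0}=\mathcal{F}_{1}$. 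One negligible imprecision: the edge set leaving $(0,-1)$ changes only at $\varepsilon=\tfrac{1}{3}$, not at $\varepsilon=\tfrac{1}{4}$ (there only an individual floor value changes, not the resulting edge interval), but since $(0,-1)$ has no incoming edge this does not affect your argument.
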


\begin{proof}
The algorithm of Lemma \ref{lem-alg} for $\mathcal{H}=\Delta_{1}$ and all
$\varepsilon\in\left(  0,\frac{1}{2}\right)  $ leads to the graph
$G_{\varepsilon}(\Delta_{1})$ of $7$ vertices and $10$ or $11$ edges as
follows:\\[0.02in]$\left(  0,0\right)  ,\pm\left(  1,0\right)  ,\pm\left(
0,1\right)  ,\pm\left(  1,-1\right)  ;$\\[0.02in]$\left(  0,0\right)
\rightarrow\left(  0,0\right)  ,$ $\left(  1,0\right)  \rightarrow\left(
0,1\right)  ,$ $\left(  1,0\right)  \rightarrow\left(  0,0\right)  ,$ $\left(
-1,0\right)  \rightarrow\left(  0,0\right)  ,$ $\left(  0,1\right)
\rightarrow\left(  1,0\right)  ,$ $\left(  0,-1\right)  \rightarrow\left(
-1,0\right)  ,$ $\left(  -1,1\right)  \rightarrow\left(  0,1\right)  ,$
$\left(  -1,1\right)  \rightarrow\left(  1,-1\right)  ,$ $\left(  1,-1\right)
\rightarrow\left(  -1,0\right)  ,$ $\left(  1,-1\right)  \rightarrow\left(
-1,1\right)  \ $for all $\varepsilon\in\left(  0,\frac{1}{2}\right)  $, and
also $\left(  0,-1\right)  \rightarrow\left(  -1,1\right)  $ if $\varepsilon
\in\left(  0,\frac{1}{3}\right)  .$ \newline Therefore, the graph
$G_{\varepsilon}(\Delta_{1})$ contains two non-zero primitive cycles for all
$\varepsilon\in\left(  0,\frac{1}{2}\right)  $: $\pi_{1}=(1,0)\ $and $\pi
_{2}=(-1,1)\ $which give cutout polygons
\begin{align*}
P_{\varepsilon}(\pi_{1})  &  =\{(x,y)|-1\leq x+\varepsilon<0,0\leq
y+\varepsilon<1\}\\
P_{\varepsilon}(\pi_{2})  &  =\{(x,y)|1\leq-x+y+\varepsilon<2,-1\leq
x-y+\varepsilon<0\}.
\end{align*}
Since $\Delta_{1}\cap P_{\varepsilon}(\pi_{1})=\Delta_{1}\backslash
\overline{BC}$ and $\Delta_{1}\cap P_{\varepsilon}(\pi_{2})=\overline{AC}$,
then $\Delta_{1}\cap(P(\pi_{1})\cup P(\pi_{2}))=\Delta_{1}\backslash
\mathcal{F}_{1},$ which implies$\ \Delta_{1}\cap\mathcal{D}_{2,\varepsilon
}^{0}=\mathcal{F}_{1}$.
\end{proof}

\begin{lemma}
\label{Lemma-delta-0}Let $\varepsilon\in\left(  0,\frac{1}{2}\right)  .$ Then
$\Delta_{i}\subset\mathcal{D}_{2,\varepsilon}^{0}$ for
$i=3,4,5,6,7,8,9,11,12,13.$
\end{lemma}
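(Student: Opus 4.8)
The plan is to treat each of the ten polygons $\Delta_i$ exactly as $\Delta_1$ was treated in Lemma~\ref{lemma-delta1}: I would run the witness--graph algorithm of Lemma~\ref{lem-alg} on $\mathcal{H}=\Delta_i$ and then invoke Proposition~\ref{prop-conv-H}. Each $\Delta_i$ is the convex hull of finitely many explicit points whose coordinates are affine functions of $\varepsilon$, and one checks directly that these polygons lie in $\mathcal{E}_2=\operatorname{Int}\mathcal{D}_{2,\varepsilon}$, so the algorithm applies and terminates with a finite set of witnesses $\mathcal{V}(\Delta_i)$ and a finite graph $G_\varepsilon(\Delta_i)$. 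By Proposition~\ref{prop-conv-H} we have $\Delta_i\cap\mathcal{D}_{2,\varepsilon}^0=\Delta_i\setminus\bigcup_{\pi\in\Lambda}P_\varepsilon(\pi)$, where $\Lambda$ is the set of non-trivial primitive graph-cycles. Hence, to obtain $\Delta_i\subset\mathcal{D}_{2,\varepsilon}^0$ it suffices to show, for every index $i$ in the list, that either $G_\varepsilon(\Delta_i)$ has no non-trivial primitive cycle, or that each such cycle $\pi$ produces a cutout polygon with $P_\varepsilon(\pi)\cap\Delta_i=\varnothing$.

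Concretely, I would first compute $\mathcal{V}(\Delta_i)$; by Remark~\ref{rem-cyc} the vertex set does not depend on $\varepsilon$, and by properties~1)--2) of Lemma~\ref{lem-alg} it is generated from $\pm\mathbf{e}_1,\pm\mathbf{e}_2$ by adjoining successors $z_3$ lying in the integer interval $[\min_j\lfloor-\mathbf{r}_j\mathbf{z}\rfloor,-\min_j\lfloor\mathbf{r}_j\mathbf{z}\rfloor]$, with $\mathbf{r}_j$ ranging over the vertices of $\Delta_i$. Since those vertices are affine in $\varepsilon$, the inner products $\mathbf{r}_j\mathbf{z}$ are explicit affine functions of $\varepsilon$ and the floor bounds are straightforward to evaluate. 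With $\mathcal{V}(\Delta_i)$ in hand I would list the edges from~(\ref{edg}) and read off the primitive cycles $\pi=(z_1,\dots,z_l)$. For each such $\pi$ the cutout polygon $P_\varepsilon(\pi)$ is defined by the affine inequalities~(\ref{P-eps}); since both $\Delta_i$ and $P_\varepsilon(\pi)$ are convex and cut out by affine constraints in $(x,y)$, emptiness of the intersection can be certified by exhibiting, uniformly over $(x,y)\in\Delta_i$, a single inequality of~(\ref{P-eps}) that is violated, which in turn can be verified at the vertices of $\Delta_i$.

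The genuine difficulty, and the source of the extensive hand calculations, is that the edge set $E_\varepsilon$ — unlike the vertex set — does depend on $\varepsilon$: the quantities $\lfloor\mathbf{r}_j\mathbf{z}+\varepsilon\rfloor$ in~(\ref{edg}) jump as $\varepsilon$ crosses the finitely many values at which some $\mathbf{r}_j\mathbf{z}+\varepsilon$ becomes an integer. Just as $\varepsilon=\frac{1}{3}$ produced an extra edge in the analysis of $\Delta_1$, each $\Delta_i$ will force a subdivision of $\left(0,\frac{1}{2}\right)$ into finitely many subintervals on which $G_\varepsilon(\Delta_i)$ is combinatorially constant, and the cycle analysis and the disjointness check must then be carried out separately on each subinterval. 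The hard part will therefore be the bookkeeping: for every $i$ and every subinterval, determining the complete list of primitive cycles and verifying $P_\varepsilon(\pi)\cap\Delta_i=\varnothing$ uniformly in $\varepsilon$. I expect the tightest estimates to occur for those $\Delta_i$ sitting closest to the boundary of a cutout polygon, where the margin in the violated inequality of~(\ref{P-eps}) shrinks and must be tracked as an explicit affine function of $\varepsilon$; the genuinely boundary-touching pieces $\Delta_2,\Delta_{10},\Delta_{14}$ are excluded from this lemma precisely because there the inclusion fails on part of the boundary and they require separate treatment.
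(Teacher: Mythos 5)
Your proposal takes essentially the same route as the paper's own proof: for each listed $\Delta_i$ the authors run the witness--graph algorithm of Lemma \ref{lem-alg}, subdividing $\left(0,\frac{1}{2}\right)$ into finitely many subintervals on which $G_{\varepsilon}(\Delta_i)$ is combinatorially constant, and then verify that the graph either has only the trivial cycle or that each non-trivial primitive cycle (namely $\pi_{3}=(1,1,-1)$, $\pi_{4}=(1,-2,2)$, $\pi_{5}=(-1,3,-2)$, $\pi_{6}=(-1,0,1)$, $\pi_{7}=(0,-1,2,-2,2)$) yields a cutout polygon that is empty or disjoint from $\Delta_i$. This is exactly the case analysis you outline, including the $\varepsilon$-dependence of the edge set and the uniform-in-$\varepsilon$ disjointness checks.
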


\begin{proof}
The algorithm of Lemma \ref{lem-alg} for $\mathcal{H}=\Delta_{3}$ and every
$\varepsilon\in\left(  0,\frac{1}{2}\right)  $ leads to the graph
$G_{\varepsilon}(\Delta_{3}),$ which is a subgraph of the graph $G^{\prime
}(\Delta_{3})$ with $9$ vertices and $16$ edges (for all $\varepsilon$'s, the
graph $G_{\varepsilon}(\Delta_{3})$ has $9$ vertices but the number of edges
depends on which subinterval of $\left(  0,\frac{1}{2}\right)  $ $\varepsilon$
belongs to). Since the graph $G^{\prime}(\Delta_{3})$ only contains the
trivial cycle $(0)$, then, for each $\varepsilon\in\left(  0,\frac{1}%
{2}\right)  ,$ the corresponding graph $G_{\varepsilon}(\Delta_{3})$ also only
has the trivial cycle $(0).$ This implies $\Delta_{3}\subset\mathcal{D}%
_{2,\varepsilon}^{0}$ for all $\varepsilon\in\left(  0,\frac{1}{2}\right)  .$

Similarly, for any $\Delta_{i},$ $i=4,5,9,11,13,$ the algorithm of Lemma
\ref{lem-alg} yields the graph $G_{\varepsilon}(\Delta_{i}),$ which only has
the trivial cycle $(0)$ for all $\varepsilon\in\left(  0,\frac{1}{2}\right)
.$ More precisely, we have:

\begin{itemize}
\item For $\Delta_{4}$ we distinguish between two cases, depending on which
subinterval $\varepsilon$ belongs to:

\begin{itemize}
\item if $\varepsilon\in\left[  \frac{1}{3},\frac{1}{2}\right)  ,$ then the
graph $G_{\varepsilon}(\Delta_{4})$ has $7$ vertices, $8$ edges and only the
trivial cycle $(0);$

\item if $\varepsilon\in\left(  0,\frac{1}{3}\right)  ,$ then the graph
$G_{\varepsilon}(\Delta_{4})$ is subgraph of the graph $G^{\prime}(\Delta
_{4})$ with $9$ vertices, $12$ edges and only the trivial cycle $(0);$
\end{itemize}

\item For $\Delta_{5}$ we distinguish between three cases:

\begin{itemize}
\item if $\varepsilon\in\left[  \frac{1}{3},\frac{1}{2}\right)  ,$ then the
graph $G_{\varepsilon}(\Delta_{5})$ has $7$ vertices, $9$ edges and only the
trivial cycle $(0);$

\item if $\varepsilon\in\left[  \frac{1}{6},\frac{1}{3}\right)  ,$ then the
graph $G_{\varepsilon}(\Delta_{5})$ is subgraph of the graph $G^{\prime
}(\Delta_{5})$ with $15$ vertices, $21$ edges and only the trivial cycle
$(0);$

\item if $\varepsilon\in\left(  0,\frac{1}{6}\right)  ,$ then the graph
$G_{\varepsilon}(\Delta_{5})$ is subgraph of the graph $G^{\prime}(\Delta
_{5})$ with $21$ vertices and $30$ edges and only the trivial cycle $(0).$
\end{itemize}

\item For $\Delta_{9}$ we distinguish between two cases:

\begin{itemize}
\item if $\varepsilon\in\left(  \frac{1}{3},\frac{1}{2}\right)  ,$ then the
graph $G_{\varepsilon}(\Delta_{9})$ is subgraph of the graph $G^{\prime
}(\Delta_{9})$ with $9$ vertices, $10$ edges and only the trivial cycle $(0);$

\item if $\varepsilon\in\left(  0,\frac{1}{3}\right]  ,$ then the graph
$G_{\varepsilon}(\Delta_{9})$ is subgraph of the graph $G^{\prime\prime
}(\Delta_{9})$ with $7$ vertices and $10$ edges and only the trivial cycle
$(0).$
\end{itemize}

\item For $\Delta_{11}$ we distinguish between three cases:

\begin{itemize}
\item if $\varepsilon\in\left[  \frac{1}{3},\frac{1}{2}\right)  ,$ then graph
$G_{\varepsilon}(\Delta_{11})$ is subgraph of the graph $G^{\prime}%
(\Delta_{11})$ with $9$ vertices and $12$ edges and only the trivial cycle
$(0);$

\item if $\varepsilon\in\left[  \frac{1}{12},\frac{1}{3}\right)  ,$ then graph
$G_{\varepsilon}(\Delta_{11})$ has $7$ vertices and $10$ edges and only the
trivial cycle $(0);$

\item if $\varepsilon\in\left(  0,\frac{1}{12}\right)  ,$ then the graph
$G_{\varepsilon}(\Delta_{11})$ is subgraph of the graph $G^{\prime\prime
}(\Delta_{11})$ with $33$ vertices and $49$ edges and only the trivial cycle
$(0).$
\end{itemize}

\item For $\Delta_{13}$ we distinguish between three cases:

\begin{itemize}
\item if $\varepsilon\in\left[  \frac{1}{4},\frac{1}{2}\right)  ,$ then the
graph $G_{\varepsilon}(\Delta_{13})$ has $7$ vertices and $9$ edges and only
the trivial cycle $(0);$

\item if $\varepsilon\in\left[  \frac{1}{9},\frac{1}{4}\right)  ,$ then the
graph $G_{\varepsilon}(\Delta_{13})$ has $15$ vertices and $19$ edges and only
the trivial cycle $(0);$

\item if $\varepsilon\in\left(  0,\frac{1}{9}\right)  ,$ then the graph
$G_{\varepsilon}(\Delta_{13})$ is subgraph of the graph $G^{\prime}%
(\Delta_{13})$ with $37$ vertices and $57$ edges and only the trivial cycle
$(0).$
\end{itemize}
\end{itemize}

This proves lemma for $i=4,5,9,11,13$.

The algorithm of Lemma \ref{lem-alg} for $\mathcal{H}=\Delta_{i}$, $i=6,7$ and
each$\ \varepsilon\in\left(  0,\frac{1}{2}\right)  $ leads to the graphs
$G_{\varepsilon}(\Delta_{i})$ which are subgraphs of the graphs with one
non-zero cycle\ $\pi_{3}=(1,1,-1)$. More precisely, we have:

\begin{itemize}
\item For $\Delta_{6}$ we distinguish between two cases:

\begin{itemize}
\item if $\varepsilon\in\left[  \frac{1}{12},\frac{1}{2}\right)  ,$ then the
graph $G_{\varepsilon}(\Delta_{6})$ is subgraph of the graph $G^{\prime
}(\Delta_{6})$ with $9$ vertices, $15$ edges and one non-zero cycle\ $\pi_{3}$.

\item if $\varepsilon\in\left(  0,\frac{1}{12}\right)  $ then the graph
$G_{\varepsilon}(\Delta_{6})$ is subgraph of the graph $G^{\prime\prime
}(\Delta_{6})$ with $39$ vertices, $79$ edges and one non-zero cycle\ $\pi
_{3}$.
\end{itemize}

\item For $\Delta_{7}$ we distinguish between two cases:

\begin{itemize}
\item if $\varepsilon\in\left[  \frac{1}{10},\frac{1}{2}\right)  $ then the
graph $G_{\varepsilon}(\Delta_{7})$ is subgraph of the graph $G^{\prime
}(\Delta_{7})$ with $9$ vertices and $15$ edges and one non-zero cycle
$\pi_{3};$

\item if $\varepsilon\in\left(  0,\frac{1}{10}\right)  $ then the graph
$G_{\varepsilon}(\Delta_{7})$ is subgraph of the graph $G^{\prime\prime
}(\Delta_{7})$ with $21$ vertices and $33$ edges and one non-zero cycle
$\pi_{3}.$
\end{itemize}
\end{itemize}

The graphs $G_{\varepsilon}(\Delta_{6})$ and $G_{\varepsilon}(\Delta_{7}) $
for all $\varepsilon\in\left(  0,\frac{1}{2}\right)  $ contain either only the
trivial cycle $(0)$ or the trivial cycle $(0)$ and a non-zero cycle $\pi
_{3}=(1,1,-1).$ The corresponding cutout polygon $P_{\varepsilon}(\pi_{3})$ is
empty for all $\varepsilon\in\left(  0,\frac{1}{2}\right)  $, so we conclude
$\Delta_{6},\Delta_{7}\subset\mathcal{D}_{2,\varepsilon}^{0}.$

\begin{itemize}
\item For $\Delta_{8}$ we distinguish between three cases:

\begin{itemize}
\item if $\varepsilon\in\left[  \frac{2}{9},\frac{1}{2}\right)  $ then the
graph $G_{\varepsilon}(\Delta_{8})$ has $9$ vertices and $13$ edges and only a
trivial cycle $(0);$

\item if $\varepsilon\in\left[  \frac{1}{9},\frac{2}{9}\right)  $ then the
graph $G_{\varepsilon}(\Delta_{8})$ has $21$ vertices and $36$ edges and one
non-zero cycle $\pi_{3}=(1,1,-1);$

\item if $\varepsilon\in\left(  0,\frac{1}{9}\right)  $ then the graph
$G_{\varepsilon}(\Delta_{8})$ has $37$ vertices and $66$ edges and three
non-zero cycles\ $\ \pi_{3}=(1,1,-1),\ \pi_{4}=(1,-2,2)$ and $\pi
_{5}=(-1,3,-2)$.
\end{itemize}

For all $\varepsilon\in\left(  0,\frac{1}{2}\right)  $ the corresponding
cutout polygons $P_{\varepsilon}(\pi_{3})$ are $P_{\varepsilon}(\pi_{4})$ are
empty, while $P_{\varepsilon}(\pi_{5})$ is not empty but $\Delta_{8}\cap
P_{\varepsilon}(\pi_{5})=\varnothing.$ This proves the lemma for $i=8$.

\item For $\Delta_{12}$ we distinguish between two cases:

\begin{itemize}
\item if $\varepsilon\in\left[  \frac{1}{6},\frac{1}{2}\right)  ,$ then the
graph $G_{\varepsilon}(\Delta_{12})$ has $7$ vertices and $11$ edges and one
non-zero cycle $\pi_{6}=(-1,0,1)$;

\item if $\varepsilon\in\left(  0,\frac{1}{6}\right)  ,$ then the graph
$G_{\varepsilon}(\Delta_{12})$ is a subgraph of the graph $G^{\prime}%
(\Delta_{12})$ with $27$ vertices and $53$ edges and one non-zero
cycle\ $\ \pi_{7}=\left(  0,-1,2,-2,2\right)  .$

For all $\varepsilon\in\left(  0,\frac{1}{2}\right)  $ we have $\Delta
_{12}\cap P_{\varepsilon}(\pi_{6})=\varnothing,$ while $P_{\varepsilon}%
(\pi_{7})$ is empty, so that we conclude $\Delta_{12}\subset\mathcal{D}%
_{2,\varepsilon}^{0}\ $for all $\varepsilon\in\left(  0,\frac{1}{2}\right)  $.
\end{itemize}
\end{itemize}

This completes the proof of the lemma.
\end{proof}

\begin{lemma}
\label{Lemma-delta-C}Let $\varepsilon\in\left(  0,\frac{1}{2}\right)  .$ Then
$\Delta_{i}\cap\mathcal{D}_{2,\varepsilon}^{0}=\Delta_{i}\setminus\left\{
C\right\}  $ for $i=2,10.$
\end{lemma}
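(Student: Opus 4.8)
The plan is to handle the two triangles $\Delta_2=\Delta(B,C,D)$ and $\Delta_{10}=\Delta(C,K,L)$ exactly as in Lemma \ref{lemma-delta1}: apply the witness-graph algorithm of Lemma \ref{lem-alg} to each closed convex hull $\mathcal{H}=\Delta_i$ and for each $\varepsilon\in\left(0,\frac{1}{2}\right)$ read off the nonzero primitive cycles of $G_{\varepsilon}(\Delta_i)$, then intersect the resulting cutout polygons $P_{\varepsilon}(\pi)$ with $\Delta_i$. By Proposition \ref{prop-conv-H} we have $\Delta_i\cap\mathcal{D}_{2,\varepsilon}^{0}=\Delta_i\setminus\bigcup_{\pi\in\Lambda}P_{\varepsilon}(\pi)$, so the whole task reduces to identifying which cutout polygons meet $\Delta_i$ and checking that together they remove precisely the single point $C=C_{\varepsilon}=(-\varepsilon,1-2\varepsilon)$ from each triangle.

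First I would run the algorithm on $\Delta_2$. Since $\Delta_2$ shares the edge $\overline{BC}$ with $\Delta_1$, I expect the cycle $\pi_2=(-1,1)$ (whose cutout polygon already accounted for the segment $\overline{AC}$ in Lemma \ref{lemma-delta1}) to reappear and to cut out everything in $\Delta_2$ except the vertex $C$; the point $C$ survives because it lies on the boundary of $P_{\varepsilon}(\pi_2)$ where one of the half-open inequalities in (\ref{P-eps}) fails. As in Lemma \ref{lemma-delta1}, I would verify that any other nonzero primitive cycle produced by $G_{\varepsilon}(\Delta_2)$ yields a cutout polygon that either is empty or satisfies $P_{\varepsilon}(\pi)\cap\Delta_2=\varnothing$ (cf.\ the role of $\pi_3,\pi_5$ in Lemma \ref{Lemma-delta-0}), so that it contributes nothing. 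Combining these gives $\Delta_2\cap\bigcup_{\pi}P_{\varepsilon}(\pi)=\Delta_2\setminus\{C\}$, whence $\Delta_2\cap\mathcal{D}_{2,\varepsilon}^{0}=\Delta_2\setminus\{C\}$.

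The treatment of $\Delta_{10}=\Delta(C,K,L)$ is entirely analogous: $\Delta_{10}$ also has $C$ as a vertex, and I would expect the same cycle $\pi_2=(-1,1)$ (whose polygon is bounded by the line through $A$ and $C$) to be responsible for removing the open part of $\Delta_{10}$ while leaving $C$ on its boundary. Again any further nonzero cycles appearing in $G_{\varepsilon}(\Delta_{10})$ must be checked to give empty cutout polygons or to miss $\Delta_{10}$, yielding $\Delta_{10}\cap\mathcal{D}_{2,\varepsilon}^{0}=\Delta_{10}\setminus\{C\}$.

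The main obstacle, and the reason this lemma is separated from the purely "good" triangles of Lemma \ref{Lemma-delta-0}, is the boundary bookkeeping at $C$: one must track the strict-versus-nonstrict inequalities in (\ref{P-eps}) carefully to confirm that $C$ genuinely escapes \emph{every} cutout polygon rather than being removed by one of them. Because $\varepsilon$ ranges over the whole interval $\left(0,\frac{1}{2}\right)$, the edge set $E_{\varepsilon}$ and hence the collection of nonzero cycles may change across subintervals (as already seen in Lemmas \ref{lemma-delta1} and \ref{Lemma-delta-0}), so I would organize the computation by the relevant subintervals of $\left(0,\frac{1}{2}\right)$ and check in each case that the surviving set is exactly $\Delta_i\setminus\{C\}$, uniformly in $\varepsilon$.
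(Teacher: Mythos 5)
Your high-level plan --- run the algorithm of Lemma \ref{lem-alg} on $\Delta_2$ and $\Delta_{10}$, identify the nonzero primitive cycles, and subtract the cutout polygons via Proposition \ref{prop-conv-H} --- is exactly what the paper does. But the geometric content of your argument is inverted, and as written the proof is wrong. You predict that $P_{\varepsilon}(\pi_2)$ ``cuts out everything in $\Delta_2$ except the vertex $C$'' and that ``$C$ survives'' because a half-open inequality in (\ref{P-eps}) fails at $C$. Both claims are false, and they are incompatible with your own final line: if it were true that $\Delta_2\cap\bigcup_{\pi}P_{\varepsilon}(\pi)=\Delta_2\setminus\{C\}$, then Proposition \ref{prop-conv-H} would give $\Delta_2\cap\mathcal{D}_{2,\varepsilon}^{0}=\{C\}$, not $\Delta_2\setminus\{C\}$; your ``whence'' is a non sequitur. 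Likewise, the task is not to verify that $C$ ``genuinely escapes every cutout polygon'': the lemma asserts that $C\notin\mathcal{D}_{2,\varepsilon}^{0}$, so $C$ must be \emph{caught} by a cutout polygon, while every other point of $\Delta_i$ must escape all of them.

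What actually happens is the reverse of your picture. From (\ref{P-eps}), $P_{\varepsilon}(\pi_2)=\{(x,y):1\leq -x+y+\varepsilon<2,\ -1\leq x-y+\varepsilon<0\}$, which for $\varepsilon\in\left(0,\frac{1}{2}\right)$ is the diagonal strip $1-\varepsilon\leq y-x\leq 1+\varepsilon$. The linear functional $y-x$ attains its maximum over $\Delta_2=\Delta(B,C,D)$ only at the vertex $C=(-\varepsilon,1-2\varepsilon)$, where it equals exactly $1-\varepsilon$; hence $\Delta_2\cap P_{\varepsilon}(\pi_2)=\{C\}$, and since the bounding inequality $1\leq -x+y+\varepsilon$ is non-strict, $C$ does belong to the polygon and is removed. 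The same computation gives $\Delta_{10}\cap P_{\varepsilon}(\pi_2)=\{C\}$. (Note also that $\pi_1=(1,0)$ cannot reappear here: $P_{\varepsilon}(\pi_1)$ requires $x+\varepsilon<0$, while $\Delta_2$ and $\Delta_{10}$ lie in the half-plane $x\geq-\varepsilon$.) The paper's proof then consists of checking that for every $\varepsilon\in\left(0,\frac{1}{2}\right)$ the graph $G_{\varepsilon}(\Delta_2)$ (resp.\ $G_{\varepsilon}(\Delta_{10})$) is a subgraph of a single master graph $G^{\prime}(\Delta_2)$ with $9$ vertices and $15$ edges (resp.\ $G^{\prime}(\Delta_{10})$ with $7$ vertices and $11$ edges) whose only nonzero primitive cycle is $\pi_2$, and that $\pi_2$ is a cycle of each $G_{\varepsilon}$; this uniform ``master graph'' device makes your proposed case analysis over subintervals of $\left(0,\frac{1}{2}\right)$ unnecessary. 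Proposition \ref{prop-conv-H} then yields $\Delta_i\cap\mathcal{D}_{2,\varepsilon}^{0}=\Delta_i\setminus\{C\}$ for $i=2,10$.
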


\begin{proof}
The algorithm of Lemma \ref{lem-alg} for $\mathcal{H}=\Delta_{2}$ and each
$\varepsilon\in\left(  0,\frac{1}{2}\right)  $ yields the graph
$G_{\varepsilon}(\Delta_{2})$ which is a subgraph of the graph $G^{\prime
}(\Delta_{2})$ with $9$ vertices, $15$ edges and one non-zero cycle\ $\pi
_{2}=(-1,1)$. For $\mathcal{H}=\Delta_{10}$ and each $\varepsilon\in\left(
0,\frac{1}{2}\right)  $, the graph $G_{\varepsilon}(\Delta_{10})$ is subgraph
of the graph $G^{\prime}(\Delta_{10})$ with $7$ vertices, $11$ edges and one
non-zero cycle\ $\pi_{2}=(-1,1)$. Since the cycle\ $\pi_{2}\ $is\ also the
cycle$\ $of\ the graphs $G_{\varepsilon}(\Delta_{2})$ and $G_{\varepsilon
}(\Delta_{10})$ for each $\varepsilon\in\left(  0,\frac{1}{2}\right)  $ and
for the associated cutout polygon $P_{\varepsilon}(\pi_{2})$ we have
$\Delta_{2}\cap P_{\varepsilon}(\pi_{2})=\left\{  C\right\}  \ $and
$\Delta_{10}\cap P_{\varepsilon}(\pi_{2})=\left\{  C\right\}  ,$ the lemma is proved.
\end{proof}

\begin{lemma}
\label{Lemma-delta-CZ}Let $\varepsilon\in\left(  0,\frac{1}{2}\right)  .$ Then
$\Delta_{14}\cap\mathcal{D}_{d,\varepsilon}^{0}=\Delta_{14}\backslash
\mathcal{F}_{2},$ where%
\[
\mathcal{F}_{2}\mathcal{=}\{(x,y)\in\mathbb{R}^{2}:-\varepsilon\leq x\leq
\frac{2}{3}-\varepsilon,y=x+1-\varepsilon\}=\overline{CZ}.
\]

\end{lemma}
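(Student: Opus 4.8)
The plan is to apply the witness-graph algorithm of Lemma \ref{lem-alg} to the closed convex set $\mathcal{H}=\Delta_{14}=\square(C,M,S,Z)$, exactly as in the preceding lemmas. First I would run the algorithm for $\mathcal{H}=\Delta_{14}$ and each $\varepsilon\in\left(0,\frac{1}{2}\right)$ to produce the graph $G_{\varepsilon}(\Delta_{14})$, noting (as in Lemma \ref{Lemma-delta-0}) that the vertex set is independent of $\varepsilon$ while the edge set, and hence the number of subcases, depends on which subinterval of $\left(0,\frac{1}{2}\right)$ contains $\varepsilon$. I expect to identify the full list of non-zero primitive cycles of $G_{\varepsilon}(\Delta_{14})$; by Proposition \ref{prop-conv-H} we then have
\[
\Delta_{14}\cap\mathcal{D}_{2,\varepsilon}^{0}=\Delta_{14}\diagdown\bigcup_{\pi\in\Lambda}P_{\varepsilon}(\pi),
\]
so the task reduces to computing each cutout polygon $P_{\varepsilon}(\pi)$ via the defining inequalities (\ref{P-eps}) and intersecting it with $\Delta_{14}$.

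The key computational step is to show that the union of all $P_{\varepsilon}(\pi)\cap\Delta_{14}$ over the non-trivial cycles $\pi\in\Lambda$ is exactly the segment $\overline{CZ}$, i.e. the set $\mathcal{F}_{2}$ of points with $-\varepsilon\leq x\leq\frac{2}{3}-\varepsilon$ lying on the upper boundary line $y=x+1-\varepsilon$. Since $Z_{\varepsilon}=(\frac{2}{3}-\varepsilon,\frac{5}{3}-2\varepsilon)$ lies on this line and $C_{\varepsilon}=(-\varepsilon,1-2\varepsilon)$ lies on it as well (using $x=-\varepsilon$), the segment $\overline{CZ}$ is precisely the portion of the top edge of $\Delta_{14}$. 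I anticipate, by analogy with the cycle $\pi_{2}=(-1,1)$ appearing in Lemma \ref{Lemma-delta-C} whose cutout polygon $P_{\varepsilon}(\pi_{2})$ carved out the single boundary vertex $\{C\}$, that one distinguished cycle here will have its cutout polygon meet $\Delta_{14}$ along the whole open or half-open top edge, while every other non-trivial cycle either yields an empty $P_{\varepsilon}(\pi)$ or misses $\Delta_{14}$ entirely (i.e. $\operatorname{Int}P_{\varepsilon}(\pi)\cap\Delta_{14}=\varnothing$). The verification that these other cycles do not contribute is the bookkeeping part: for each I would check $P_{\varepsilon}(\pi)\cap\Delta_{14}=\varnothing$ by comparing the vertex coordinates of $\Delta_{14}$ against the strict/non-strict inequalities defining $P_{\varepsilon}(\pi)$.

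The main obstacle, as the authors flag in Section \ref{Sect2}, is that the difficulty grows as $\varepsilon\to 0$ and as one approaches the line $y=x+1-\varepsilon$ — and $\Delta_{14}$ sits right against that line. Concretely, the witness set $\mathcal{V}(\Delta_{14})$, and therefore the graph $G_{\varepsilon}(\Delta_{14})$, will blow up in size for small $\varepsilon$, forcing a subdivision into several subintervals of $\left(0,\frac{1}{2}\right)$ with separate graphs $G'(\Delta_{14})$, $G''(\Delta_{14})$, etc., each possibly contributing additional spurious non-trivial cycles. The care required is to confirm, across every subinterval, that no extra cycle's cutout polygon intrudes into the interior of $\Delta_{14}$, so that only $\overline{CZ}$ is removed. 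Finally I would confirm the precise inclusion/exclusion of endpoints: the claim is that the full closed segment $\overline{CZ}$ (including both $C$ and $Z$) is cut out, consistently with $C$ already being removed in Lemmas \ref{lemma-delta1}, \ref{Lemma-delta-C} and with the strict upper inequality $y<x+1-\varepsilon$ in the definition (\ref{B}) of $B(\varepsilon)$, which places the whole top edge outside $\mathcal{D}_{2,\varepsilon}^{0}$.
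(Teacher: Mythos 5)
There is a genuine gap in your plan. You propose to run the algorithm of Lemma \ref{lem-alg} directly on $\mathcal{H}=\Delta_{14}$ and to handle the growth of the graph for small $\varepsilon$ by ``subdivision into several subintervals of $\left(0,\frac{1}{2}\right)$''. But that is a subdivision of the \emph{parameter} range, and it is exactly what all the other lemmas already do; it cannot repair the real obstruction here. The paper records that for $\varepsilon\in\left(0,\frac{1}{3}\right)$ the construction of the witness set $\mathcal{V}\left(\Delta_{14}\right)$ does not seem to converge at all: for a \emph{fixed} small $\varepsilon$, the quadrilateral $\Delta_{14}$ presses against the line $y=x+1-\varepsilon$ (hence against $\partial\mathcal{E}_{2}$), and Lemma \ref{lem-alg} only applies when the diameter of $\mathcal{H}$ is sufficiently small. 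Refining the $\varepsilon$-interval does not shrink $\Delta_{14}$, so your step ``produce the graph $G_{\varepsilon}(\Delta_{14})$ and list its primitive cycles'' never gets off the ground in precisely the regime that matters.

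What is actually needed — and what the paper does — is a \emph{geometric} subdivision of $\Delta_{14}$ into convex pieces, adapted to $\varepsilon$: $\Delta_{15}=\square(C,M,S,W)$ and $\Delta_{16}=\Delta(W,T,S)$ with auxiliary points $W=(\varepsilon,1)$ and $T=\left(\frac{2}{3}-\varepsilon,\frac{3}{2}-\varepsilon\right)$ that may lie \emph{outside} $\overline{B\left(\varepsilon\right)}$ but still inside $\mathcal{E}_{2}$ (a subtlety your proposal does not anticipate), plus a remaining triangle $\Delta_{17}$ or $\Delta_{18}$ depending on whether $\varepsilon\geq\frac{1}{6}$. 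Worse, for $\varepsilon\in\left(0,\frac{2}{21}\right)$ even $\Delta_{18}$ is too large: one must cut it into thin triangles $\Delta_{19},\Delta_{18}^{\left(s\right)}$, $s=2,\ldots,l-1$, where $l=\left\lfloor\frac{2n^{2}-3n}{4}\right\rfloor+1$ grows without bound as $\varepsilon\to0$. Consequently no finite amount of case-checking of cutout polygons (your ``bookkeeping part'') can finish the proof; the paper instead proves by induction (Lemmas \ref{Lemma-18-prvi} and \ref{Lemma-18-s} in the Appendix) that every graph in this infinite family has $\pi_{2}=(-1,1)$ as its only non-trivial cycle, and then reassembles $\Delta_{14}\cap\mathcal{D}_{2,\varepsilon}^{0}=\Delta_{14}\setminus\overline{CZ}$ piece by piece. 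Your closing observation about the endpoints of $\overline{CZ}$ is consistent with the paper, but without the geometric subdivision and the inductive cycle analysis the core of the argument is missing.
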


\begin{proof}
If $\varepsilon\in\left(  0,\frac{1}{3}\right)  ,$ the construction of the set
of witnesses $\mathcal{V}\left(  \Delta_{14}\right)  $ does not seem to
converge. Therefore, in this case, we are forced to subdivide $\Delta
_{14}=\square(C,M,S,Z)$ into two or more parts and perform the algorithm for
each of these parts. First, for each $\varepsilon\in\left(  0,\frac{1}%
{2}\right)  ,$ we define the following sets: $\Delta_{15}=\square(C,M,S,W)$
and $\Delta_{16}=\Delta(W,T,S),$ where $W=(\varepsilon,1)$ and $T=(\frac{2}%
{3}-\varepsilon,\frac{3}{2}-\varepsilon).$ Note that the point $W$ belongs to
the line $y=x+1-\varepsilon$ and the point $T$ belongs to the line $x=\frac
{2}{3}-\varepsilon,$ but these points do not necessarily belong to the
boundary of $\overline{B\left(  \varepsilon\right)  }$ for all $\varepsilon
\in\left(  0,\frac{1}{2}\right)  $. More precisely, $W\notin\overline{B\left(
\varepsilon\right)  }$ if $\varepsilon\in\left(  \frac{1}{3},\frac{1}%
{2}\right)  $ and $T\notin\overline{B\left(  \varepsilon\right)  }$ if
$\varepsilon\in\left(  \frac{1}{6},\frac{1}{2}\right)  .$ Consequently,
$\Delta_{15}$ and $\Delta_{16}$ do not necessarily lie in $\overline{B\left(
\varepsilon\right)  }$, but $\Delta_{15}\subset\overline{D^{\ast}\left(
\varepsilon\right)  }\subset\mathcal{E}_{d}$ and $\Delta_{16}\subset
\mathcal{E}_{d}$ for all $\varepsilon\in\left(  0,\frac{1}{2}\right)  ,$ so
that we can apply the algorithm of Lemma \ref{lem-alg} to these sets. We have:
$\Delta_{14}\subseteq\Delta_{15}$ for $\varepsilon\in\left[  \frac{1}{3}%
,\frac{1}{2}\right)  ,$ $\Delta_{14}=\Delta_{15}\cup\Delta_{17}$ for
$\varepsilon\in\left[  \frac{1}{6},\frac{1}{3}\right)  $ and $\Delta
_{14}=\Delta_{15}\cup\Delta_{16}\cup\Delta_{18}$\ for $\varepsilon\in\left(
0,\frac{1}{6}\right)  ,$ where $\Delta_{17}=\Delta(W,S,Z)$ and $\Delta
_{18}=\Delta(W,T,Z)$ (see Figure \ref{Slika2}).

\ \ \ \ \ \
\begin{figure}
{\parbox[b]{1.8273in}{\begin{center}
\includegraphics{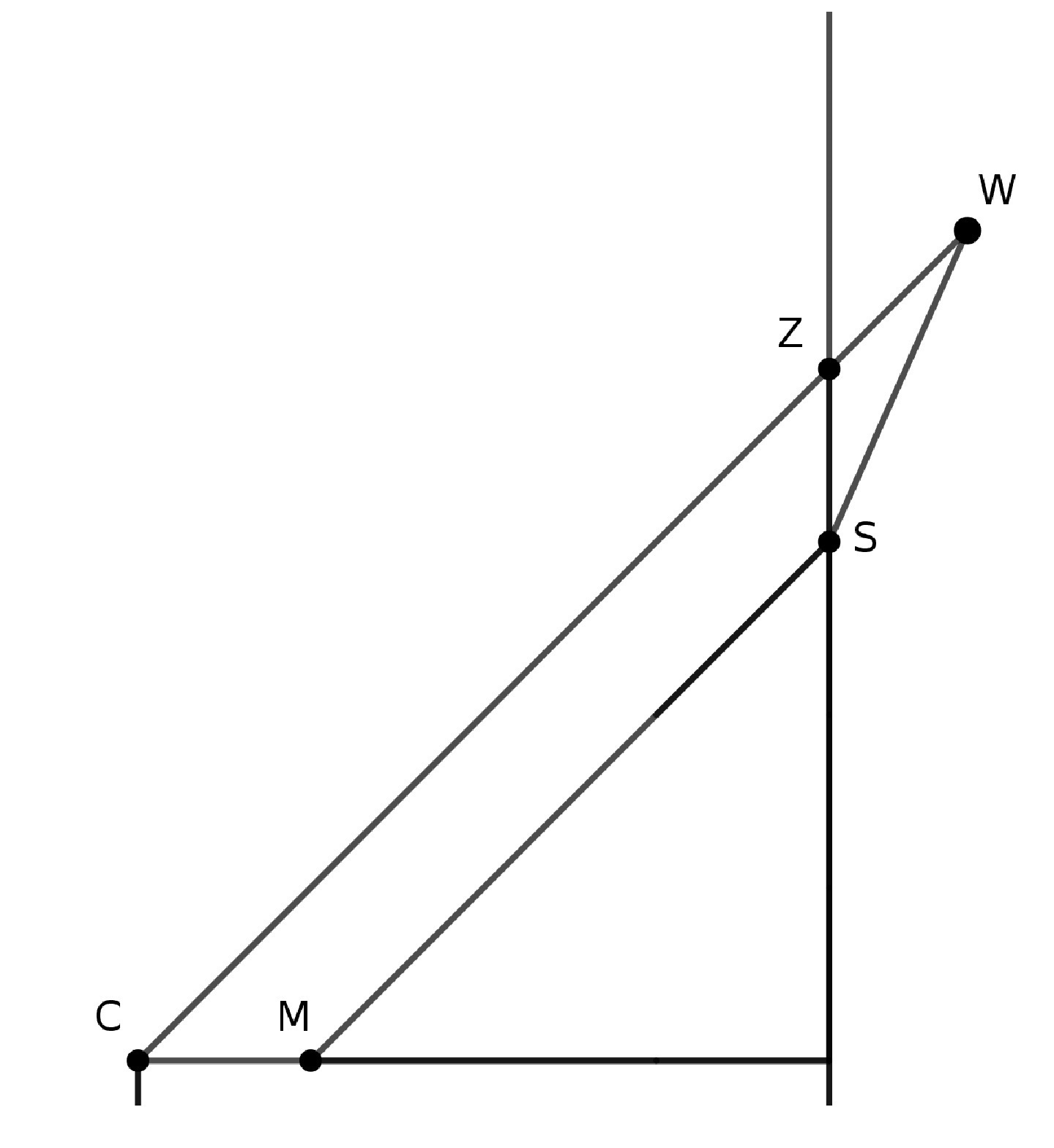}%
\\
{\protect\small The case }$\varepsilon\in\left[  \frac{1}{3}{\protect\small ,}%
\frac{1}{2}\right)  $%
\end{center}}}
\ \ \ \ \ \ \
{\parbox[b]{1.9527in}{\begin{center}
\includegraphics{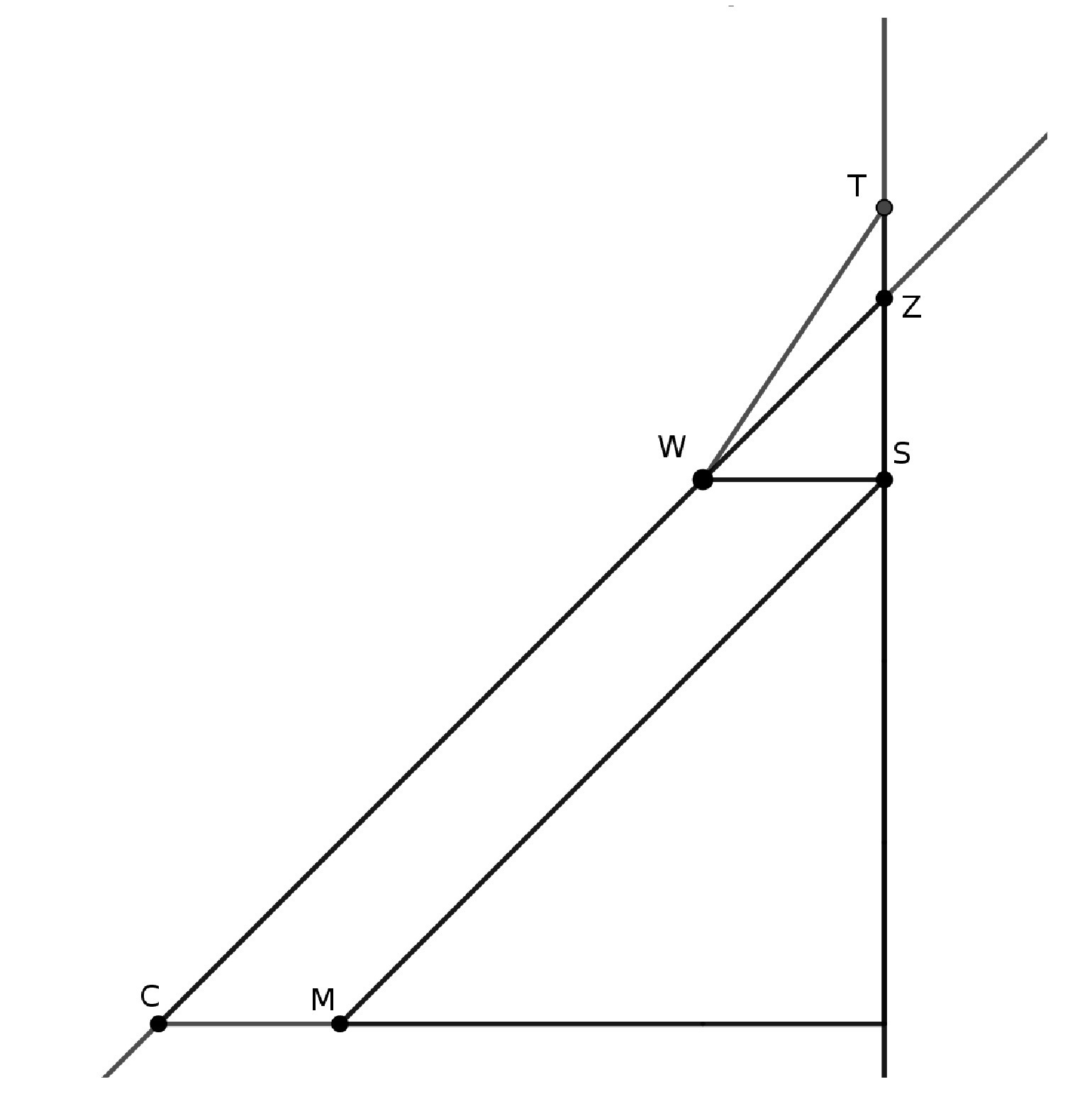}%
\\
{\protect\small The case }$\varepsilon\in\left[  \frac{1}{6}{\protect\small ,}%
\frac{1}{3}\right)  $%
\end{center}}}

\ \ \
{\parbox[b]{2.0176in}{\begin{center}
\includegraphics{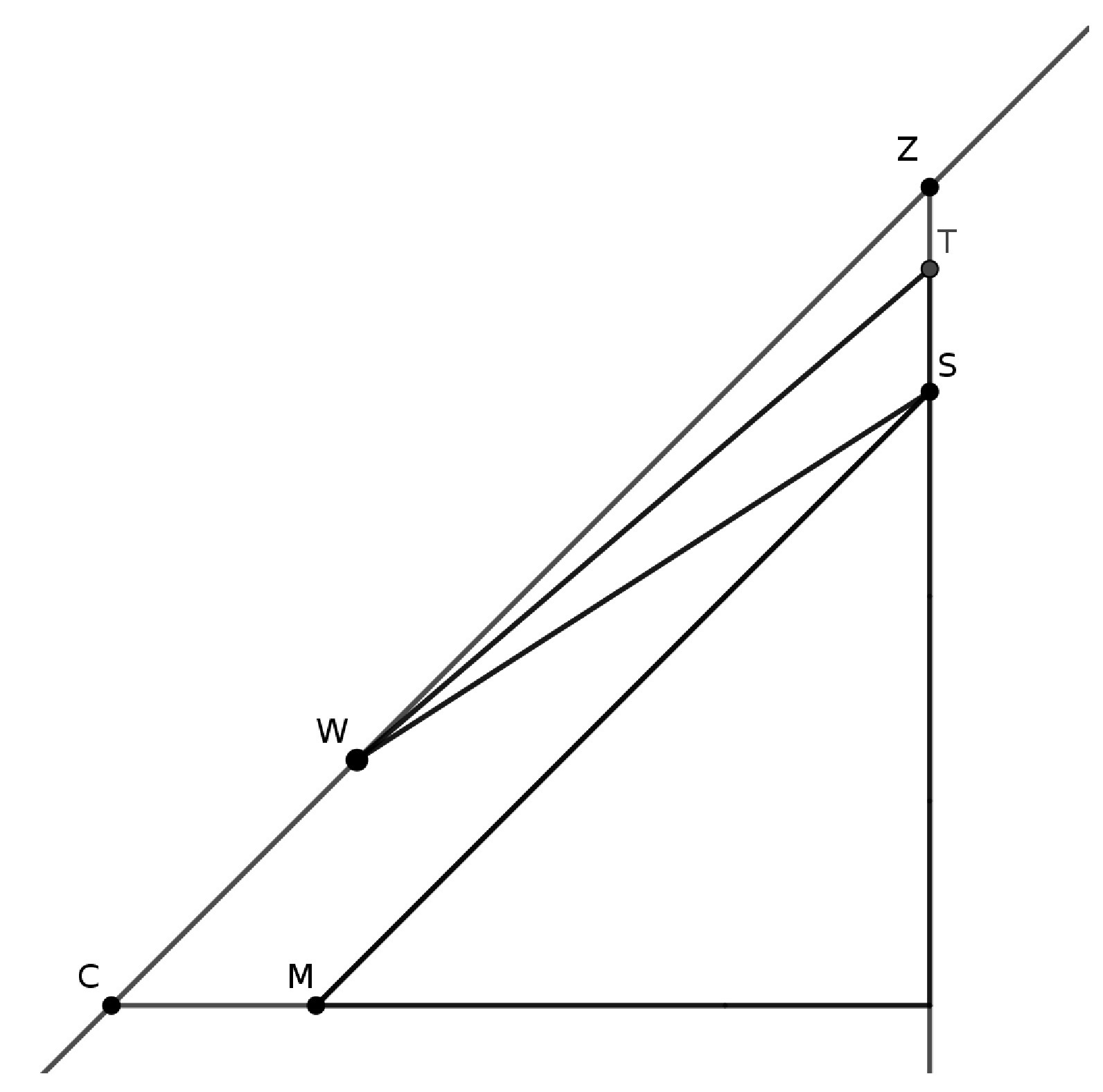}%
\\
{\protect\small The case }$\varepsilon\in\left(  {\protect\small 0,}\frac
{1}{6}\right)  $%
\end{center}}}
\ \ \ \
{\parbox[b]{2.1266in}{\begin{center}
\includegraphics{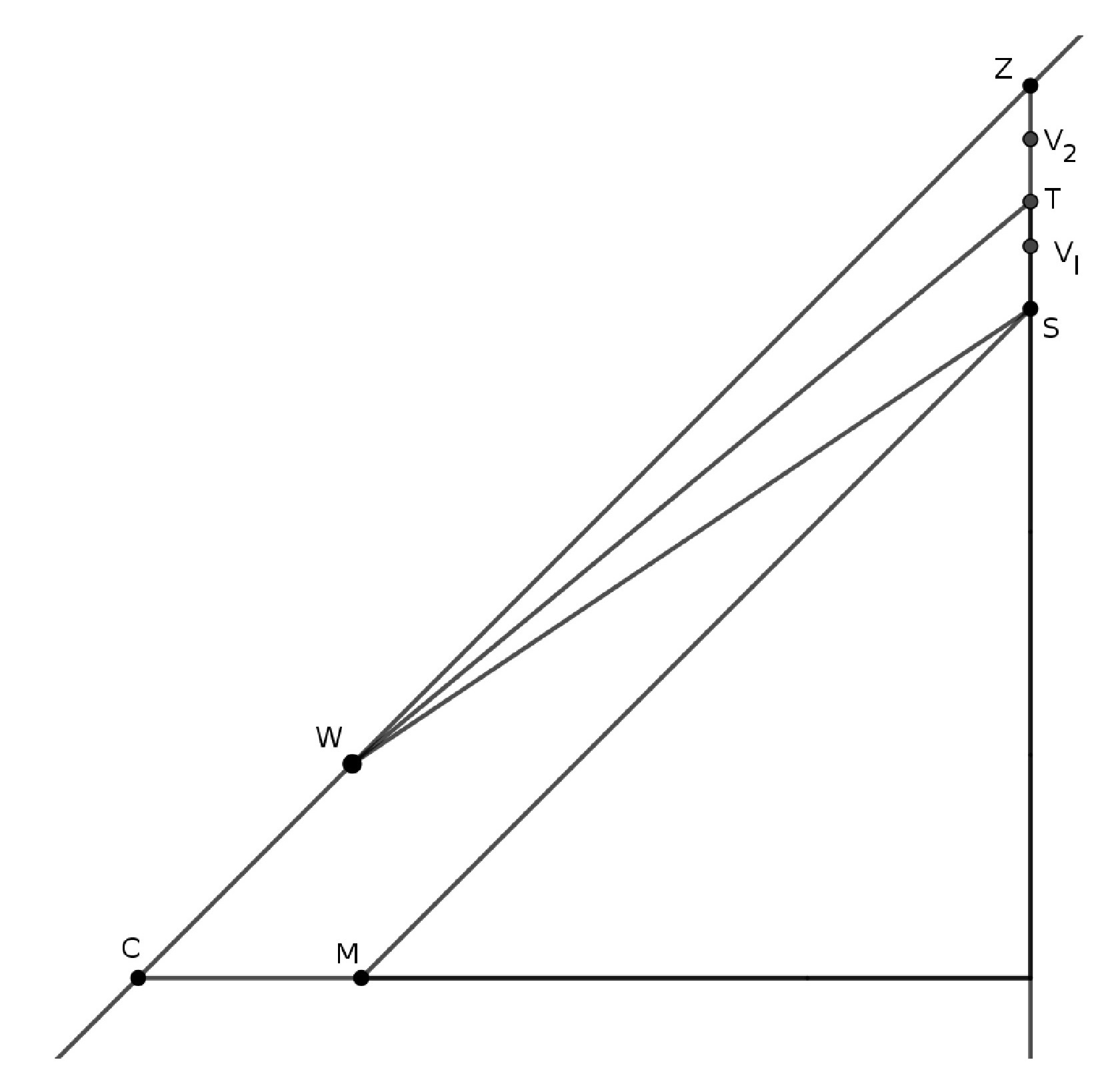}%
\\
{\protect\small The case }$\varepsilon\in\left(  {\protect\small 0,}\frac
{2}{21}\right)  $%
\end{center}}}
\caption{Convex hull $\Delta _{14}$.}
\label{Slika2}
\end{figure}

In the algorithm of Lemma \ref{lem-alg} for $\mathcal{H}=\Delta_{15}$ we
distinguish between four cases, depending on which subinterval of $\left(
0,\frac{1}{2}\right)  $ $\varepsilon$ belongs to:

\begin{itemize}
\item if $\varepsilon\in\left[  \frac{1}{4},\frac{1}{2}\right)  $ then the
graph $G_{\varepsilon}(\Delta_{15})$ is a subgraph of the graph $G^{\prime
}(\Delta_{15})$ with $7$ vertices, $11$ edges and one non-trivial cycle
$\pi_{2}=(-1,1);$

\item if $\varepsilon\in\left[  \frac{1}{9},\frac{1}{4}\right)  $ then the
graph $G_{\varepsilon}(\Delta_{15})$ is a subgraph of the graph $G^{\prime
\prime}(\Delta_{15})$ with $13$ vertices, $22$ edges and one non-trivial cycle
$\pi_{2}=(-1,1);$

\item if $\varepsilon\in\left[  \frac{1}{24},\frac{1}{9}\right)  $ then the
graph $G_{\varepsilon}(\Delta_{15})$ has $23$ vertices, $39$ edges and one
non-trivial cycle $\pi_{2}=(-1,1);$

\item if $\varepsilon\in\left(  0,\frac{1}{24}\right)  $ then the graph
$G_{\varepsilon}(\Delta_{15})$ has $35$ vertices, $61$ edges and one
non-trivial cycle $\pi_{2}=(-1,1).$
\end{itemize}

\noindent Since the cycle\ $\pi_{2}\ $is\ also the cycle$\ $of\ the graph
$G_{\varepsilon}(\Delta_{15})$ for each $\varepsilon\in\left(  0,\frac{1}%
{2}\right)  $ and for the corresponding cutout polygon $P_{\varepsilon}%
(\pi_{2})$ we have $\Delta_{15}\cap P(\pi_{2})=\overline{CW}$, we obtain
$\Delta_{15}\cap\mathcal{D}_{2,\varepsilon}^{0}=\Delta_{15}\diagdown
\overline{CW}$ for all $\varepsilon\in\left(  0,\frac{1}{2}\right)  .$

In the algorithm of Lemma \ref{lem-alg} for $\mathcal{H}=\Delta_{16}$ and
$\varepsilon\in\left(  0,\frac{1}{3}\right)  ,$ we distinguish between two
cases, depending on which subinterval of $\left(  0,\frac{1}{3}\right)  $
$\varepsilon$ belongs to:

\begin{itemize}
\item if $\varepsilon\in\left[  \frac{1}{6},\frac{1}{3}\right)  $ then the
graph $G_{\varepsilon}(\Delta_{16})$ is a subgraph of the graph $G^{\prime
}(\Delta_{16})$ with $25$ vertices, $41$ edges and one non-trivial cycle
$\pi_{2}=(-1,1);$

\item if $\varepsilon\in\left(  0,\frac{1}{6}\right)  $ then the graph
$G_{\varepsilon}(\Delta_{16})$ is a subgraph of the graph $G^{\prime\prime
}(\Delta_{16})$ with $37$ vertices, $61$ edges and one non-trivial cycle
$\pi_{2}=(-1,1).$
\end{itemize}

\noindent The cycle\ $\pi_{2}\ $is\ also the cycle$\ $of\ the graph
$G_{\varepsilon}(\Delta_{16})$ for each $\varepsilon\in\left(  0,\frac{1}%
{2}\right)  $ and for the corresponding cutout polygon $P_{\varepsilon}%
(\pi_{2})$ we have
\[
\Delta_{16}\cap\mathcal{D}_{2,\varepsilon}^{0}=\Delta_{16}\diagdown
P_{\varepsilon}\left(  \pi_{2}\right)  =\left\{
\begin{array}
[c]{cc}%
\Delta_{16}\diagdown\Delta(W,T,Z) & \text{if }\varepsilon\in\left[  \frac
{1}{6},\frac{1}{3}\right) \\
\Delta_{16}\diagdown\left\{  W\right\}  & \text{if }\varepsilon\in\left(
0,\frac{1}{6}\right)
\end{array}
\right.  .
\]

\textbf{Case} $\varepsilon\in\left[  \frac{1}{3},\frac{1}{2}\right)  .$ Since
for each $\varepsilon\in\left[  \frac{1}{3},\frac{1}{2}\right)  $ we have
$\Delta_{14}\subseteq\Delta_{15},$ $\Delta_{15}\cap\mathcal{D}_{2,\varepsilon
}^{0}=\Delta_{15}\diagdown\overline{CW}$ and $\overline{CW}\cap\Delta
_{14}=\overline{CZ}=\mathcal{F}_{2},$ then $\Delta_{14}\cap\mathcal{D}%
_{2,\varepsilon}^{0}=\Delta_{14}\backslash\mathcal{F}_{2}.$ Thus, we proved
lemma for $\varepsilon\in\left[  \frac{1}{3},\frac{1}{2}\right)  .$

\textbf{Case} $\varepsilon\in\left[  \frac{1}{6},\frac{1}{3}\right)  .$ In
this case we have $\Delta_{14}=\Delta_{15}\cup\Delta_{17}$ and $\Delta
_{16}=\Delta_{17}\cup\Delta_{18}.$ Since $\Delta_{16}\cap\mathcal{D}%
_{2,\varepsilon}^{0}=\Delta_{16}\diagdown\Delta(W,T,Z),$ $\Delta_{17}%
\subseteq\Delta_{16}$ and $\Delta(W,T,Z)\cap\Delta_{17}=\overline{WZ},$ then
$\Delta_{17}\cap\mathcal{D}_{2,\varepsilon}^{0}=\Delta_{17}\backslash
\overline{WZ}.$ Consequently, we obtain
\[
\Delta_{14}\cap\mathcal{D}_{2,\varepsilon}^{0}=\left(  \Delta_{15}%
\cap\mathcal{D}_{2,\varepsilon}^{0}\right)  \cup\left(  \Delta_{17}%
\cap\mathcal{D}_{2,\varepsilon}^{0}\right)  =\Delta_{14}\backslash\left(
\overline{CW}\cup\overline{WZ}\right)  =\Delta_{14}\backslash\overline{CZ}.
\]
Therefore, $\Delta_{14}\cap\mathcal{D}_{2,\varepsilon}^{0}=\Delta
_{14}\backslash\mathcal{F}_{2}$ and we proved lemma for $\varepsilon\in\left[
\frac{1}{6},\frac{1}{3}\right)  .$

\textbf{Case} $\varepsilon\in\left(  0,\frac{1}{6}\right)  .$ In this case we
have $\Delta_{14}=\Delta_{15}\cup\Delta_{16}\cup\Delta_{18}.$ Therefore, to
complete the proof of the lemma we need to find the set $\Delta_{18}%
\cap\mathcal{D}_{2,\varepsilon}^{0}$ for all $\varepsilon\in\left(  0,\frac
{1}{6}\right)  .$

Let $\varepsilon\in\left[  \frac{2}{21},\frac{1}{6}\right)  .$ In this case,
the algorithm of Lemma \ref{lem-alg} for $\mathcal{H}=\Delta_{18}$ yields the
graph $G_{\varepsilon}(\Delta_{18}),$ which has $51$ vertices and $82$ edges.
The only non-trivial cycle in this graph is $\pi_{2}=(-1,1).$ Since
$\Delta_{18}\cap P_{\varepsilon}\left(  \pi_{2}\right)  =\overline{WZ},$ we
obtain $\Delta_{18}\cap\mathcal{D}_{2,\varepsilon}^{0}=\Delta_{18}%
\backslash\overline{WZ}.$ Thus,
\[
\Delta_{14}\cap\mathcal{D}_{2,\varepsilon}^{0}=\Delta_{14}\backslash\left(
\overline{CW}\cup\left\{  W\right\}  \cup\overline{WZ}\right)  =\Delta
_{14}\backslash\overline{CZ},
\]
or equivalently $\Delta_{14}\cap\mathcal{D}_{d,\varepsilon}^{0}=\Delta
_{14}\backslash\mathcal{F}_{2}$, so that we complete the proof of the lemma
for all $\varepsilon\in\left[  \frac{2}{21},\frac{1}{6}\right)  .$

If $\varepsilon\in\left(  0,\frac{2}{21}\right)  ,$ in order to find the set
$\Delta_{18}\cap\mathcal{D}_{2,\varepsilon}^{0}$ we are forced to subdivide
the triangle $\Delta_{18}:=\Delta(W,T,Z)$ into two or more triangles,
depending on which subinterval of $\left(  0,\frac{2}{21}\right)  $
$\varepsilon$ belongs to, and perform the algorithm for each of these parts.
If $\varepsilon\in\left(  0,\frac{2}{21}\right)  $, then there exists unique
integer $n\geq4$ such that $\varepsilon\in\Big[\frac{2}{3(2n+1)},\frac
{2}{3(2n-1)}\Big).$ So, let's assume $\varepsilon\in\Big[\frac{2}%
{3(2n+1)},\frac{2}{3(2n-1)}\Big)$ and $n\geq4.$ Define $l=\left\lfloor
\frac{2n^{2}-3n}{4}\right\rfloor +1\ $and $V_{k}=(\frac{2}{3}-\varepsilon
,\frac{5}{3}-2\varepsilon-\frac{k}{n}\varepsilon)$, $k=0,1,..,l.$ Then%
\begin{equation}
\Delta_{17}=\bigcup\limits_{s=0}^{l-1}\Delta(W,V_{s},V_{s+1})\cup\Delta_{16}
\label{unija}%
\end{equation}
Note that $V_{0}=Z\ $and $l$ is a minimal positive integer so that
(\ref{unija}) holds. Also note that $\Delta_{18}\subseteq\bigcup
\limits_{s=0}^{l-1}\Delta(W,V_{s},V_{s+1})=\Delta(W,Z,V_{l}).$ Define sets
$\Delta_{18}^{\left(  s\right)  }=\Delta(W,V_{s},V_{s+1})\ $for $s=0,...,l-1.$

The algorithm of Lemma \ref{lem-alg} for $\mathcal{H}=\Delta_{18}^{\left(
0\right)  }\cup\Delta_{18}^{\left(  1\right)  }=\Delta(W,Z,V_{2})=:\Delta
_{19}$ for each $\varepsilon\in\Big[\frac{2}{3(2n+1)},\frac{2}{3(2n-1)}\Big)$
leads to the graph $G_{\varepsilon}(\Delta_{19}\mathcal{)}$ which is a
subgraph of the graph $G^{\prime}(\Delta_{19})$ with $18n-3$ vertices and
$32n-12$ edges. It can be shown by induction that the graph $G^{\prime}%
(\Delta_{19})$ has only one non-trivial cycle $\pi_{2}=(-1,1)$ (see Lemma
\ref{Lemma-18-prvi}). Since the cycle\ $\pi_{2}\ $is\ also the cycle$\ $%
of\ the graph $G_{\varepsilon}(\Delta_{19}\mathcal{)}$ for each $\varepsilon
\in\Big[\frac{2}{3(2n+1)},\frac{2}{3(2n-1)}\Big)$, then $G_{\varepsilon
}(\Delta_{19})$ also has only one non-trivial cycle $\pi_{2}$ for all
$\varepsilon\in\Big[\frac{2}{3(2n+1)},\frac{2}{3(2n-1)}\Big)$. Since
$\Delta_{19}\cap P(\pi_{2})=\overline{WZ},$ we get $\Delta_{19}\cap
\mathcal{D}_{d,\varepsilon}^{0}=\Delta_{19}\backslash\overline{WZ}.$

The algorithm of Lemma \ref{lem-alg} for $\mathcal{H}=\Delta_{18}^{\left(
s\right)  },$ all $s=2,...,l-1$ and all $\varepsilon\in\Big[\frac{2}%
{3(2n+1)},\frac{2}{3(2n-1)}\Big)$ leads to the graph $G_{\varepsilon}%
(\Delta_{18}^{\left(  s\right)  })$ which is subgraph of the graph $G^{\prime
}(\Delta_{18}^{\left(  s\right)  })$ with $8a_{0}^{\left(  s\right)  }%
+2a_{1}^{\left(  s\right)  }-1$ vertices and $14a_{0}^{\left(  s\right)
}+4a_{1}^{\left(  s\right)  }-6$ edges, where ${\small a}_{0}^{\left(
s\right)  }=\left\lceil \frac{2n^{2}}{n+s}\right\rceil $ and ${\small a}%
_{1}^{\left(  s\right)  }=\left\lceil \frac{n^{2}}{n+s}\right\rceil .$ It can
be shown by induction that the graph $G^{\prime}(\Delta_{18}^{\left(
s\right)  })$ has only one non-trivial cycle $\pi_{2}=(-1,1)$ for all
$s=2,...,l-1$ (see Lemma \ref{Lemma-18-s}). Since the cycle\ $\pi_{2}%
\ $is\ also the cycle$\ $of\ the graph $G_{\varepsilon}(\Delta_{18}^{\left(
s\right)  }\mathcal{)}$ for all $\varepsilon\in\Big[\frac{2}{3(2n+1)},\frac
{2}{3(2n-1)}\Big)$ and all $s=2,..,l-1$, then $G_{\varepsilon}(\Delta
_{18}^{\left(  s\right)  })$ also has only one non-trivial cycle $\pi_{2}$.
Since $\Delta_{18}^{\left(  s\right)  }\cap P_{\varepsilon}(\pi_{2})=\left\{
W\right\}  $, we obtain $\Delta_{18}^{\left(  s\right)  }\cap\mathcal{D}%
_{d,\varepsilon}^{0}=\Delta_{18}^{\left(  s\right)  }\backslash\left\{
W\right\}  \ $for all $s=2,...,l-1\ $and all $\varepsilon\in\Big[\frac
{2}{3(2n+1)},\frac{2}{3(2n-1)}\Big)$.

Since $\Delta(W,Z,V_{l})=\bigcup\limits_{s=2}^{l-1}\Delta_{18}^{\left(
s\right)  }\cup\Delta_{19}\ $then, by De Morgan's law, we obtain
$\Delta(W,Z,V_{l})\cap\mathcal{D}_{d,\varepsilon}^{0}=\Delta(W,Z,V_{l}%
)\backslash\overline{WZ}$. Finally, since $\Delta_{14}=\Delta_{15}\cup
\Delta_{16}\cup\Delta(W,Z,V_{l}),$ we derive $\Delta_{14}\cap\mathcal{D}%
_{d,\varepsilon}^{0}=\Delta_{14}\backslash\left(  \overline{CW}\cup\left\{
W\right\}  \cup\overline{WZ}\right)  =\Delta_{14}\backslash\overline{CZ}$ for
all $\varepsilon\in\Big[\frac{2}{3(2n+1)},\frac{2}{3(2n-1)}\Big)$ and all
$n\geq4,$ which completes the proof of the lemma.
\end{proof}

\begin{remark}
All graphs given in the proofs of Lemmas \ref{Lemma-delta-0},
\ref{lemma-delta1}, \ref{Lemma-delta-C} and \ref{Lemma-delta-CZ} were obtained
by hand calculation. The use of a computer to perform the algorithm of Lemma
\ref{lem-alg} was not possible, since the vertices of the sets $\Delta
_{i}:=\Delta_{i}\left(  \varepsilon\right)  $ and also (\ref{edg}) depend on
$\varepsilon.$ Since most of these graphs become quite large the closer
$\varepsilon$ is to $0,$ we have not written them all down. The proofs that
the graphs $G^{\prime}(\Delta_{19})$ and $G^{\prime}(\Delta_{18}^{\left(
s\right)  })$ have only one non-trivial cycle $\pi_{2}$ are in Apendix
\ref{Sect5}.
\end{remark}

\begin{proposition}
\label{prop-inc-1}Let $\varepsilon\in\left(  0,\frac{1}{2}\right)  .$ Then
$B\left(  \varepsilon\right)  \subset\mathcal{D}_{2,\varepsilon}^{0}\subset
D\left(  \varepsilon\right)  .$
\end{proposition}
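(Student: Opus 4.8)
The plan is to assemble Proposition \ref{prop-inc-1} directly from the piecewise decomposition of $\overline{T\left(\varepsilon\right)}$ and $\overline{B\left(\varepsilon\right)}$ established in Lemmas \ref{lemma-delta1}--\ref{Lemma-delta-CZ}, restricting throughout to $\varepsilon\in\left(0,\frac{1}{2}\right)$ (the cases $\varepsilon=0$ and $\varepsilon=\frac{1}{2}$ being already known from (\ref{inc-0}) and Proposition \ref{SRSpola}). Recall that $\overline{B\left(\varepsilon\right)}=\bigcup_{i=2}^{14}\Delta_{i}$ and that $\mathcal{D}_{2,\varepsilon}^{0}\subset D^{\ast}\left(\varepsilon\right)$ holds by (\ref{D*-sur}), with $D\left(\varepsilon\right)=D^{\ast}\left(\varepsilon\right)\diagdown T\left(\varepsilon\right)$ and $\overline{T\left(\varepsilon\right)}=\Delta_{1}$.

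First I would prove the right inclusion $\mathcal{D}_{2,\varepsilon}^{0}\subset D\left(\varepsilon\right)$. Since $\mathcal{D}_{2,\varepsilon}^{0}\subset D^{\ast}\left(\varepsilon\right)$, it suffices to show that the triangle $T\left(\varepsilon\right)$ can be entirely cut out, i.e. $T\left(\varepsilon\right)\cap\mathcal{D}_{2,\varepsilon}^{0}=\varnothing$. Lemma \ref{lemma-delta1} gives $\Delta_{1}\cap\mathcal{D}_{2,\varepsilon}^{0}=\mathcal{F}_{1}=\overline{BC}\setminus\{C\}$, which lies on the line $x=-\varepsilon$, hence on the boundary of $\overline{T\left(\varepsilon\right)}$ but not in the open set $T\left(\varepsilon\right)$ (whose defining inequality is $x<-\varepsilon$ from (\ref{T})). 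Therefore $T\left(\varepsilon\right)\cap\mathcal{D}_{2,\varepsilon}^{0}=\varnothing$, which yields $\mathcal{D}_{2,\varepsilon}^{0}\subset D^{\ast}\left(\varepsilon\right)\diagdown T\left(\varepsilon\right)=D\left(\varepsilon\right)$.

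Next I would prove the left inclusion $B\left(\varepsilon\right)\subset\mathcal{D}_{2,\varepsilon}^{0}$. The strategy is to show that each point of $\overline{B\left(\varepsilon\right)}=\bigcup_{i=2}^{14}\Delta_{i}$ that is excised from some $\mathcal{D}_{2,\varepsilon}^{0}$ in fact lies outside the open set $B\left(\varepsilon\right)$. Combining the lemmas, the only points of $\bigcup_{i=2}^{14}\Delta_{i}$ not lying in $\mathcal{D}_{2,\varepsilon}^{0}$ are: the vertex $C$ (excluded by Lemmas \ref{Lemma-delta-C} for $i=2,10$) and the segment $\overline{CZ}=\mathcal{F}_{2}$ (excluded by Lemma \ref{Lemma-delta-CZ} for $i=14$), while Lemma \ref{Lemma-delta-0} shows $\Delta_{i}\subset\mathcal{D}_{2,\varepsilon}^{0}$ for all remaining indices. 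Thus
\[
\overline{B\left(\varepsilon\right)}\cap\mathcal{D}_{2,\varepsilon}^{0}=\overline{B\left(\varepsilon\right)}\diagdown\overline{CZ}.
\]
Since $C=C_{\varepsilon}=(-\varepsilon,1-2\varepsilon)$ and $Z=Z_{\varepsilon}=(\frac{2}{3}-\varepsilon,\frac{5}{3}-2\varepsilon)$ both satisfy $y=x+1-\varepsilon$, the segment $\overline{CZ}=\mathcal{F}_{2}$ lies on the upper boundary line of $B\left(\varepsilon\right)$, which from (\ref{B}) is open (the constraint is $y<x+1-\varepsilon$). Hence $\mathcal{F}_{2}\cap B\left(\varepsilon\right)=\varnothing$, and consequently $B\left(\varepsilon\right)\subset\overline{B\left(\varepsilon\right)}\diagdown\overline{CZ}\subset\mathcal{D}_{2,\varepsilon}^{0}$. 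This settles the case $\varepsilon\in\left(0,\frac{1}{2}\right)$. For $\varepsilon\in\left(\frac{1}{2},1\right)$, I would invoke the symmetry between $\mathcal{D}_{2,\varepsilon}^{0}$ and $\mathcal{D}_{2,1-\varepsilon}^{0}$ recorded in the discussion preceding Remark \ref{rem-cyc}, under which $B\left(\varepsilon\right)$, $D\left(\varepsilon\right)$ and their intersections with the SRS regions correspond (up to measure-zero boundary differences) to those for $1-\varepsilon$.

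The main obstacle is bookkeeping rather than a single hard step: one must verify that the pieces $\Delta_{i}$ genuinely cover $\overline{B\left(\varepsilon\right)}$ and that no excised subset (the vertex $C$, the segment $\overline{CZ}$) meets the open set $B\left(\varepsilon\right)$, for every $\varepsilon$ in the range. The delicate point is the behavior near the boundary line $y=x+1-\varepsilon$, precisely where Lemma \ref{Lemma-delta-CZ} required the elaborate subdivision of $\Delta_{14}$ and the induction of the Appendix; but that difficulty has already been absorbed into the lemmas, so here it reduces to confirming that the excised set $\mathcal{F}_{2}$ sits on the open boundary and is therefore irrelevant to the strict inclusion $B\left(\varepsilon\right)\subset\mathcal{D}_{2,\varepsilon}^{0}$.
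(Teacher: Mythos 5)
Your proposal is correct and follows essentially the same route as the paper's own proof: the right inclusion $\mathcal{D}_{2,\varepsilon}^{0}\subset D\left(\varepsilon\right)$ comes from Lemma \ref{lemma-delta1} (within $\Delta_{1}=\overline{T\left(\varepsilon\right)}$ only $\mathcal{F}_{1}\subset\partial T\left(\varepsilon\right)$ survives, so $T\left(\varepsilon\right)$ is disjoint from $\mathcal{D}_{2,\varepsilon}^{0}$ and can be cut out of $D^{\ast}\left(\varepsilon\right)$), and the left inclusion from combining Lemmas \ref{Lemma-delta-0}, \ref{Lemma-delta-C} and \ref{Lemma-delta-CZ} to obtain $\overline{B\left(\varepsilon\right)}\cap\mathcal{D}_{2,\varepsilon}^{0}=\overline{B\left(\varepsilon\right)}\setminus\overline{CZ}\supseteq B\left(\varepsilon\right)$. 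Your closing remark about $\varepsilon\in\left(\frac{1}{2},1\right)$ is superfluous, since the proposition concerns only $\varepsilon\in\left(0,\frac{1}{2}\right)$; that range is handled separately in Proposition \ref{prop-inc-2}, where the paper in fact redoes the graph computations rather than relying on the measure-zero symmetry alone.
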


\begin{proof}
Since $T\left(  \varepsilon\right)  \subset D^{\ast}\left(  \varepsilon
\right)  ,$ $D\left(  \varepsilon\right)  =D^{\ast}\left(  \varepsilon\right)
\diagdown T\left(  \varepsilon\right)  \ $and $T\left(  \varepsilon\right)
=\Delta_{1}\backslash\mathcal{F}_{1},$ then (\ref{D*-sur}) and Lemma
\ref{lemma-delta1} imply $T\left(  \varepsilon\right)  \subseteq D^{\ast
}\left(  \varepsilon\right)  \diagdown\mathcal{D}_{2,\varepsilon}^{0}$, and
consequently $\mathcal{D}_{2,\varepsilon}^{0}\subset D\left(  \varepsilon
\right)  .$ Since $\overline{B\left(  \varepsilon\right)  }=\bigcup
\limits_{i=2}^{14}\Delta_{i}$, then by Lemmas \ref{Lemma-delta-0},
\ref{Lemma-delta-C} and \ref{Lemma-delta-CZ} we obtain $\overline{B\left(
\varepsilon\right)  }\cap\mathcal{D}_{d,\varepsilon}^{0}=\overline{B\left(
\varepsilon\right)  }\backslash\overline{CZ}.$ By definition (\ref{B}) of the
set $B\left(  \varepsilon\right)  $ we have $B\left(  \varepsilon\right)
=\overline{B\left(  \varepsilon\right)  }\backslash\overline{CZ}$. Therefore,
$\overline{B\left(  \varepsilon\right)  }\cap\mathcal{D}_{d,\varepsilon}%
^{0}=B\left(  \varepsilon\right)  ,$ and thus $B\left(  \varepsilon\right)
\subset\mathcal{D}_{2,\varepsilon}^{0}.$ This proves the proposition.
\end{proof}

For all $\varepsilon\in\left(  0,1\right)  $, we have: $\overline{B\left(
\varepsilon\right)  }=\overline{B\left(  1-\varepsilon\right)  }\ $and
$\overline{T\left(  \varepsilon\right)  }=\overline{T\left(  1-\varepsilon
\right)  }.$ So if we define $\tilde{A}_{\varepsilon}:=A_{1-\varepsilon
},...,\tilde{Z}_{\varepsilon}:=Z_{1-\varepsilon}\ $and $\widetilde{\Delta}%
_{i}\left(  \varepsilon\right)  :=\Delta_{i}\left(  1-\varepsilon\right)  ,$
$i=1,...,14$ for each $\varepsilon\in\left(  \frac{1}{2},1\right)  $, we find
$\overline{T\left(  \varepsilon\right)  }=\widetilde{\Delta}_{1}\left(
\varepsilon\right)  $\ and\ $\overline{B\left(  \varepsilon\right)  }%
=\bigcup\limits_{i=2}^{14}\widetilde{\Delta}_{i}\left(  \varepsilon\right)  $
for all $\varepsilon\in\left(  \frac{1}{2},1\right)  .$ According to Remark
\ref{rem-cyc}, we can expect that for each $\varepsilon\in\left(  \frac{1}%
{2},1\right)  $ sets $\widetilde{\Delta}_{i}\left(  \varepsilon\right)
\cap\mathcal{D}_{d,\varepsilon}^{0}$ and $\Delta_{i}\left(  1-\varepsilon
\right)  \cap\mathcal{D}_{d,1-\varepsilon}^{0}$ are equal up to the boundary.
Only the boundaries will change a little. Also, $\mathcal{V}\left(
\widetilde{\Delta}_{i}\left(  \varepsilon\right)  \right)  =\mathcal{V}\left(
\Delta_{i}\left(  1-\varepsilon\right)  \right)  \ $for all $\varepsilon
\in\left(  \frac{1}{2},1\right)  .$

In what follows, we use the abbreviations: $\tilde{A}:=\tilde{A}_{\varepsilon
},...,\tilde{Z}:=\tilde{Z}_{\varepsilon}\ $and $\tilde{\Delta}_{i}%
:=\tilde{\Delta}_{i}\left(  \varepsilon\right)  ,$ $i=1,...,14$.

\begin{proposition}
\label{prop-inc-2}Let $\varepsilon\in\left(  \frac{1}{2},1\right)  .$ Then
$B\left(  \varepsilon\right)  \subset\mathcal{D}_{2,\varepsilon}^{0}\subset
D\left(  \varepsilon\right)  .$
\end{proposition}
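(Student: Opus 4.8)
The plan is to obtain Proposition \ref{prop-inc-2} from Proposition \ref{prop-inc-1} by exploiting the $\varepsilon\leftrightarrow1-\varepsilon$ symmetry recorded above, rather than running the algorithm of Lemma \ref{lem-alg} afresh. Fix $\varepsilon\in\left(\frac{1}{2},1\right)$ and set $\varepsilon^{\prime}:=1-\varepsilon\in\left(0,\frac{1}{2}\right)$. We already have $\overline{T\left(\varepsilon\right)}=\tilde{\Delta}_{1}$ and $\overline{B\left(\varepsilon\right)}=\bigcup_{i=2}^{14}\tilde{\Delta}_{i}$ with $\tilde{\Delta}_{i}=\Delta_{i}(\varepsilon^{\prime})$, and moreover $\mathcal{V}(\tilde{\Delta}_{i})=\mathcal{V}(\Delta_{i}(\varepsilon^{\prime}))$. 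The whole proof of Proposition \ref{prop-inc-1} is a cutout computation carried out piece by piece on the $\Delta_{i}(\varepsilon^{\prime})$, so it suffices to transfer each such computation from $\mathcal{D}_{2,\varepsilon^{\prime}}^{0}$ to $\mathcal{D}_{2,\varepsilon}^{0}$. The transfer engine is the identity $\operatorname{Int}P_{\varepsilon}(\pi)=\operatorname{Int}P_{1-\varepsilon}(-\pi)$ together with Remark \ref{rem-cyc}: since $\operatorname{Int}P_{\varepsilon}(-\pi)=\operatorname{Int}P_{\varepsilon^{\prime}}(\pi)$, each non-trivial cycle $\pi$ governing the $\varepsilon^{\prime}$ computation should be replaced by its negative $-\pi$ in the $\varepsilon$ computation.

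First I would settle the inclusion $\mathcal{D}_{2,\varepsilon}^{0}\subset D\left(\varepsilon\right)$, the analogue of Lemma \ref{lemma-delta1}. Applying the algorithm to $\tilde{\Delta}_{1}=\Delta_{1}(\varepsilon^{\prime})$ produces a graph with the same vertices as $G_{\varepsilon^{\prime}}(\Delta_{1}(\varepsilon^{\prime}))$, and the two cycles $\pi_{1}=(1,0)$, $\pi_{2}=(-1,1)$ of the latter are replaced by $-\pi_{1}=(-1,0)$ and $-\pi_{2}=(1,-1)$. Because $\operatorname{Int}P_{\varepsilon}(-\pi_{j})=\operatorname{Int}P_{\varepsilon^{\prime}}(\pi_{j})$, these two cutout polygons remove from $\tilde{\Delta}_{1}$ everything except the reflected copy of $\mathcal{F}_{1}$; hence $\tilde{\Delta}_{1}\cap\mathcal{D}_{2,\varepsilon}^{0}$ is that reflected set, and $T\left(\varepsilon\right)=\tilde{\Delta}_{1}\setminus(\tilde{\Delta}_{1}\cap\mathcal{D}_{2,\varepsilon}^{0})$ is disjoint from $\mathcal{D}_{2,\varepsilon}^{0}$. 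Combining this with (\ref{D*-sur}) gives $T\left(\varepsilon\right)\subseteq D^{\ast}\left(\varepsilon\right)\diagdown\mathcal{D}_{2,\varepsilon}^{0}$, and therefore $\mathcal{D}_{2,\varepsilon}^{0}\subset D^{\ast}\left(\varepsilon\right)\diagdown T\left(\varepsilon\right)=D\left(\varepsilon\right)$.

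Next I would treat the harder inclusion $B\left(\varepsilon\right)\subset\mathcal{D}_{2,\varepsilon}^{0}$ by transferring Lemmas \ref{Lemma-delta-0}, \ref{Lemma-delta-C} and \ref{Lemma-delta-CZ} index by index. For $i\in\{3,4,5,6,7,8,9,11,12,13\}$ the $\varepsilon^{\prime}$-cutout polygons were empty or disjoint from the triangle; by the interior identity the negated cycles have cutouts with the same interiors, so $\tilde{\Delta}_{i}\subset\mathcal{D}_{2,\varepsilon}^{0}$ again. For $i\in\{2,10\}$ and for $i=14$ the relevant cutting cycle $\pi_{2}$ becomes $-\pi_{2}=(1,-1)$, which by reflection cuts out the reflected copy of the corner $\{C\}$, respectively of the segment $\mathcal{F}_{2}=\overline{CZ}$. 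The subdivision apparatus used in Lemma \ref{Lemma-delta-CZ} (the auxiliary pieces $\Delta_{18}^{(s)}$ and the induction of the appendix) transfers verbatim under $\pi\mapsto-\pi$, so no new induction is needed. Assembling the pieces yields $\overline{B\left(\varepsilon\right)}\cap\mathcal{D}_{2,\varepsilon}^{0}=\overline{B\left(\varepsilon\right)}\diagdown\mathcal{F}_{2}^{\prime}$, where $\mathcal{F}_{2}^{\prime}$ is the reflected counterpart of $\mathcal{F}_{2}$; by the definition (\ref{B}) of $B\left(\varepsilon\right)$ for $\varepsilon\in\left[\frac{1}{2},1\right)$ this set is exactly $B\left(\varepsilon\right)$, giving $B\left(\varepsilon\right)\subset\mathcal{D}_{2,\varepsilon}^{0}$.

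The main obstacle is precisely the boundary bookkeeping flagged in Remark \ref{rem-cyc}: the identity $\operatorname{Int}P_{\varepsilon}(\pi)=\operatorname{Int}P_{1-\varepsilon}(-\pi)$ matches only interiors, while the two boundaries are reversed, so a weak inequality $\le$ may turn into a strict one. The bulk of each cut therefore transfers automatically, but for every cycle whose cutout meets a triangle in a lower-dimensional set — the corner $\{C\}$, the segments $\overline{CZ}$, $\overline{CW}$, $\overline{WZ}$ and the point $\{W\}$ appearing in Lemma \ref{Lemma-delta-CZ} — I would have to recheck which boundary points actually lie in $P_{\varepsilon}(-\pi)$. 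In particular, the warning in Remark \ref{rem-cyc} means that a cycle whose $\varepsilon^{\prime}$-cutout touched a triangle only along its boundary need not give a cycle of $G_{\varepsilon}(\tilde{\Delta}_{i})$ at all, so for each such cycle I must verify directly, by recomputing the relevant edges, that $-\pi$ is (or is not) a cycle and that the removed set is exactly the segment excluded from $\overline{B\left(\varepsilon\right)}$ by (\ref{B}). This boundary analysis is the only genuinely new work; everything else is a mechanical reflection of the $\varepsilon\in\left(0,\frac{1}{2}\right)$ computation.
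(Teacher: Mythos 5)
Your transfer principle cannot carry the weight you put on it, and the specific conclusions you draw from it are false. The identity $\operatorname{Int}P_{\varepsilon}(\pi)=\operatorname{Int}P_{1-\varepsilon}(-\pi)$ controls only interiors, hence determines $\mathcal{D}_{2,\varepsilon}^{0}$ only up to a set of measure zero; but the entire difference between Proposition \ref{prop-inc-2} and Proposition \ref{prop-inc-1} lives in measure-zero sets, because when $\varepsilon$ crosses $\frac{1}{2}$ the definitions (\ref{D}), (\ref{B}), (\ref{T}) flip which boundary edges are included. Concretely, your claim about $\tilde{\Delta}_{1}$ is wrong: for $\varepsilon\in\left(\frac{1}{2},1\right)$ the graph $G_{\varepsilon}(\tilde{\Delta}_{1})$ has only \emph{one} non-trivial cycle, $-\pi_{1}=(-1,0)$; the cycle $-\pi_{2}=(1,-1)$ is not a cycle at all (this is exactly the failure mode of Remark \ref{rem-cyc}, since $P_{\varepsilon'}(\pi_{2})$ met $\Delta_{1}$ only in the boundary segment $\overline{AC}$). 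Moreover $P_{\varepsilon}(-\pi_{1})=\left\{(x,y):\varepsilon-2<x\leq\varepsilon-1,\ \varepsilon-1<y\leq\varepsilon\right\}$ contains the whole closed triangle $\tilde{\Delta}_{1}$: the right edge $x=\varepsilon-1$, which at the parameter $\varepsilon'$ was excluded from $P_{\varepsilon'}(\pi_{1})$ by the strict inequality $x<-\varepsilon'$, is now included by a weak one. Hence $\tilde{\Delta}_{1}\cap\mathcal{D}_{2,\varepsilon}^{0}=\varnothing$ — nothing survives, not a ``reflected copy of $\mathcal{F}_{1}$.'' Note that your claim would actually destroy the proposition: for $\varepsilon>\frac{1}{2}$ the set $T(\varepsilon)$ contains $\overline{\tilde{B}\tilde{C}}\setminus\{\tilde{B}\}$, so a surviving reflected $\mathcal{F}_{1}$ (a subset of that edge) would give $T(\varepsilon)\cap\mathcal{D}_{2,\varepsilon}^{0}\neq\varnothing$, contradicting $\mathcal{D}_{2,\varepsilon}^{0}\subset D(\varepsilon)$.

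The same problem invalidates your treatment of $B(\varepsilon)$. The correct statements for $\varepsilon\in\left(\frac{1}{2},1\right)$ are: $\tilde{\Delta}_{3},\tilde{\Delta}_{4},\tilde{\Delta}_{5}$ are \emph{not} contained in $\mathcal{D}_{2,\varepsilon}^{0}$ — they lose the bottom-edge segments $\overline{\tilde{B}\tilde{E}}$, $\overline{\tilde{E}\tilde{G}}$, $\overline{\tilde{G}\tilde{H}}$; $\tilde{\Delta}_{2}$ loses its whole left edge $\overline{\tilde{C}\tilde{B}}$, not just a corner point (and this cut is made by $-\pi_{1}$, a cycle that played no role whatsoever for $\Delta_{2}(\varepsilon')$, so it does not arise by negating the cycles you had); and $\tilde{\Delta}_{14}$ loses only the point $\tilde{C}$, not a reflected segment. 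Consequently the set removed from $\overline{B(\varepsilon)}$ is $\overline{\tilde{C}\tilde{B}}\cup\overline{\tilde{B}\tilde{H}}$ (left edge and bottom edge), not a single ``reflected $\mathcal{F}_{2}$,'' and it is exactly because (\ref{B}) excludes these two edges for $\varepsilon\geq\frac{1}{2}$ (while including the top edge, which now does survive) that $\overline{B(\varepsilon)}\cap\mathcal{D}_{2,\varepsilon}^{0}=B(\varepsilon)$ holds. Your closing caveat — that cycles whose cutouts meet a triangle in a lower-dimensional set must be rechecked by hand — is correct, but it swallows the entire proof: every set at issue in this proposition ($\mathcal{F}_{1}$, $\{C\}$, $\overline{CZ}$, the edges of the $\tilde{\Delta}_{i}$) is such a lower-dimensional set, so the ``mechanical reflection'' decides nothing, and one is forced to rerun the algorithm of Lemma \ref{lem-alg} on the pieces $\tilde{\Delta}_{i}$ with the boundary conventions of $\varepsilon>\frac{1}{2}$, which is what the paper's proof does.
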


\begin{proof}
We show this very analogously to Proposition \ref{prop-inc-1}. In particular,
the algorithm of Lemma \ref{lem-alg} for $\mathcal{H}=\widetilde{\Delta}_{1}$
and each $\varepsilon\in\left(  \frac{1}{2},1\right)  $ leads to the graph
$G_{\varepsilon}(\widetilde{\Delta}_{1})$ with $7$ vertices and $10$ or $11$
edges as follows:\\[0.02in]$\left(  0,0\right)  ,\pm\left(  1,0\right)
,\pm\left(  0,1\right)  ,\pm\left(  1,-1\right)  ;$ \\[0.02in]$\left(
0,0\right)  \rightarrow\left(  0,0\right)  ,$ $\left(  1,0\right)
\rightarrow\left(  0,0\right)  ,$ $\left(  -1,0\right)  \rightarrow\left(
0,-1\right)  ,$ $\left(  0,1\right)  \rightarrow\left(  1,0\right)  ,$
$\left(  0,-1\right)  \rightarrow\left(  -1,0\right)  ,$ $\left(  -1,1\right)
\rightarrow\left(  1,-1\right)  ,$ $\left(  1,-1\right)  \rightarrow\left(
-1,0\right)  \ $for all $\varepsilon\in\left(  \frac{1}{2},1\right)  $, and
also $\left(  0,1\right)  \rightarrow\left(  1,-1\right)  $ if $\varepsilon
\in\left(  \frac{2}{3},1\right)  .$ \\[0.02in]Therefore, the graph
$G_{\varepsilon}(\widetilde{\Delta}_{1})$ contains one non-zero primitive
cycle $-\pi_{1}=(-1,0)$ for all $\varepsilon\in\left(  \frac{1}{2},1\right)
.$ We obtain $\widetilde{\Delta}_{1}\cap P_{\varepsilon}(-\pi_{1}%
)=\widetilde{\Delta}_{1}$, so we conclude $\widetilde{\Delta}_{1}%
\cap\mathcal{D}_{2,\varepsilon}^{0}=\varnothing.$ Since $\overline{T\left(
\varepsilon\right)  }=\widetilde{\Delta}_{1}$, then $T\left(  \varepsilon
\right)  \cap\mathcal{D}_{2,\varepsilon}^{0}=\varnothing$ which implies
$T\left(  \varepsilon\right)  \subset D^{\ast}\left(  \varepsilon\right)
\diagdown\mathcal{D}_{d,\varepsilon}^{0}$, and consequently $\mathcal{D}%
_{2,\varepsilon}^{0}\subset D\left(  \varepsilon\right)  .$

Furthermore, in the same style as in Lemmas \ref{Lemma-delta-0}%
-\ref{Lemma-delta-CZ}, we show that for all $\varepsilon\in\left(  \frac{1}%
{2},1\right)  $: $\tilde{\Delta}_{i}\subset\mathcal{D}_{d,\varepsilon}^{0}$ if
$i=6,7,8,9,11,12,13,$ $\tilde{\Delta}_{2}\cap\mathcal{D}_{d,\varepsilon}%
^{0}=\tilde{\Delta}_{2}\backslash\overline{\tilde{C}\tilde{B}},$
$\tilde{\Delta}_{3}\cap\mathcal{D}_{d,\varepsilon}^{0}=\tilde{\Delta}%
_{3}\backslash\overline{\tilde{B}\tilde{E}},$ $\tilde{\Delta}_{4}%
\cap\mathcal{D}_{d,\varepsilon}^{0}=\tilde{\Delta}_{4}\backslash
\overline{\tilde{E}\tilde{G}},$ $\tilde{\Delta}_{5}\cap\mathcal{D}%
_{d,\varepsilon}^{0}=\tilde{\Delta}_{5}\backslash\overline{\tilde{G}\tilde{H}%
},$ $\tilde{\Delta}_{14}\cap\mathcal{D}_{d,\varepsilon}^{0}=\tilde{\Delta
}_{14}\backslash\left\{  \tilde{C}\right\}  ,$ $\tilde{\Delta}_{10}%
\cap\mathcal{D}_{d,\varepsilon}^{0}=\tilde{\Delta}_{10}\backslash\left\{
\tilde{C}\right\}  .$ Since $B\left(  \varepsilon\right)  =\overline{B\left(
\varepsilon\right)  }\backslash\left(  \overline{\tilde{C}\tilde{B}}%
\cup\overline{\tilde{B}\tilde{H}}\right)  ,$ $\overline{B\left(
\varepsilon\right)  }=\bigcup\limits_{i=2}^{14}\widetilde{\Delta}_{i}\left(
\varepsilon\right)  $ and $\overline{\tilde{B}\tilde{H}}=\overline{\tilde
{B}\tilde{E}}\cup\overline{\tilde{E}\tilde{G}}\cup\overline{\tilde{G}\tilde
{H}}$, then $\overline{B\left(  \varepsilon\right)  }\cap\mathcal{D}%
_{2,\varepsilon}^{0}=B\left(  \varepsilon\right)  ,$ and consequently
$B\left(  \varepsilon\right)  \subset\mathcal{D}_{2,\varepsilon}^{0}.$
\end{proof}

\begin{proof}
[Proof of Theorem \ref{Trm-inclus}]Directly from Propositions \ref{prop-inc-1}
and \ref{prop-inc-2}.
\end{proof}

Since Theorem \ref{Trm-inclus} holds, then $\mathcal{D}_{2,\varepsilon}%
^{0}\cap R\left(  \varepsilon\right)  =B\left(  \varepsilon\right)  $ for all
$\varepsilon\in\left[  0,1\right)  $ which means that we completely
characterize the sets $\mathcal{D}_{2,\varepsilon}^{0}$ for $x\leq\frac{2}%
{3}-\varepsilon$ if $\varepsilon\in\left[  0,\frac{1}{2}\right)  \ $and for
$x<\frac{2}{3}{\small -}\left(  {\small 1-\varepsilon}\right)  $ if
$\varepsilon\in\left[  \frac{1}{2},1\right)  .$

\begin{remark}
\label{Rem 2}In the case $\varepsilon=0$, the above-mentioned range of $x$
($x\leq\frac{2}{3}$) is the best possible range of $x$ in which $\mathcal{D}%
_{2,\varepsilon}^{0}$ and $D\left(  \varepsilon\right)  $ coincide. Namely,
for $\varepsilon=0$ the range for $x$ cannot go beyond $\frac{2}{3}$ since the
points $(\frac{2}{3},-\frac{1}{3})$ and $(\frac{2}{3},\frac{4}{3})$ are on the
boundary of a cutout polygons but not contained in them (see \cite[p.
55]{Akiyama-BBPT-II} ). For some other $\varepsilon\in\left(  0,1\right)  $
this is not the case. For example, for $\varepsilon=\frac{1}{2}$ the best
possible range of $x$ where $\mathcal{D}_{2,\varepsilon}^{0}$ and $D\left(
\varepsilon\right)  $ coincide is $x<\frac{1}{2}$ (see Proposition
\ref{SRSpola}), for $\varepsilon=\frac{1}{5}$ the best possible range is
$x<\frac{2}{3}-\frac{\varepsilon}{3}=\frac{3}{5},$ while for $\varepsilon
=\frac{1}{10}$ it is $x\leq\frac{2}{3}\ $(see \cite[Theorems 6.1 and
6.2]{Surer}).

We believe that the result we obtained can still be improved. More precisely,
we believe that it can be shown that the sets $\mathcal{D}_{2,\varepsilon}%
^{0}$ and $D\left(  \varepsilon\right)  $ coincide if $x<\frac{2}{3}%
-\frac{\varepsilon}{3}$ for all $\varepsilon\in\left(  0,\frac{1}{2}\right)  $
and if $x\leq\frac{2}{3}-\frac{1-\varepsilon}{3}$ for all $\varepsilon
\in\left(  \frac{1}{2},1\right)  $. Note that the best possible range of $x$
in which $\mathcal{D}_{2,\varepsilon}^{0}$ and $D\left(  \varepsilon\right)  $
coincide can be written as $x\leq\frac{2}{3}-\frac{\varepsilon}{3}$ for
$\varepsilon=0$ and as $x<\frac{2}{3}-\frac{1-\varepsilon}{3}$ for
$\varepsilon=\frac{1}{2}.$ If we define the set $B\left(  \varepsilon\right)
$ according to these new upper bounds for $x$, to prove our assumption we
should prove that such a $B\left(  \varepsilon\right)  $ is contained in the
$\varepsilon-$SRS region $\mathcal{D}_{2,\varepsilon}^{0}$ for all
$\varepsilon\in\left(  0,\frac{1}{2}\right)  \cup\left(  \frac{1}{2},1\right)
.$
\end{remark}

\section{Characterization of quadratic $\varepsilon-$CNS polynomials
\label{Sect4}}

Let $P\left(  x\right)  =x^{d}+p_{d-1}x^{d-1}+...+p_{1}x+p_{0}\in
\mathbb{Z}\left[  x\right]  ,$ $\left\vert p_{0}\right\vert \geq2\ $and
$\varepsilon\in\left[  0,1\right)  .$ Let us write the interval $[0,1)$ as a
disjoint union of the subintervals as follows%
\[
\left[  0,1\right)  =\left[  0,\frac{1}{\left\vert p_{0}\right\vert }\right)
\cup\left[  \frac{1}{\left\vert p_{0}\right\vert },\frac{2}{\left\vert
p_{0}\right\vert }\right)  \cup...\cup\left[  \frac{k}{\left\vert
p_{0}\right\vert },\frac{k+1}{\left\vert p_{0}\right\vert }\right)
\cup...\cup\left[  \frac{\left\vert p_{0}\right\vert -1}{\left\vert
p_{0}\right\vert },1\right)  .
\]
If $\varepsilon\in\left[  \frac{k}{\left\vert p_{0}\right\vert },\frac
{k+1}{\left\vert p_{0}\right\vert }\right)  $, $k=0,...,\left\vert
p_{0}\right\vert -1,$ then $k=\left\lfloor \varepsilon\left\vert
p_{0}\right\vert \right\rfloor $ and the corresponding $\varepsilon-$set of
digits (\ref{N-eps}) has the form $\mathcal{N}_{\varepsilon}\mathbb{=}\left\{
-k,...,\left\vert p_{0}\right\vert -1-k\right\}  .$ Therefore, $\mathcal{N}%
_{\varepsilon}=\mathcal{N}_{\varepsilon_{k}},$ where $\varepsilon_{k}=\frac
{k}{\left\vert p_{0}\right\vert }.$ Note that $\varepsilon_{k}=\frac
{k}{\left\vert p_{0}\right\vert }\leq\mathcal{\varepsilon<}\varepsilon
_{k+1}=\frac{k+1}{\left\vert p_{0}\right\vert }.$ In particular, if
$\varepsilon=\frac{1}{2}$, then the corresponding $k\ $is given by
(\ref{k-pola}), which implies%
\[
\varepsilon_{k}=\left\{
\begin{tabular}
[c]{l}%
$\frac{1}{2},$ $\text{ if }\left\vert p_{0}\right\vert \text{ is even}$\\
$\frac{1}{2}-\frac{1}{2\left\vert p_{0}\right\vert },\text{ \ if }\left\vert
p_{0}\right\vert \text{ is }$odd
\end{tabular}
\ \right.  .
\]

\begin{corollary}
\label{TRM-veza 2}Let\textit{\ }$P\left(  x\right)  =x^{d}+p_{d-1}%
x^{d-1}+...+p_{1}x+p_{0}\in\mathbb{Z}\left[  x\right]  ,$ $\left\vert
p_{0}\right\vert \geq2,$ $\varepsilon\in\left[  0,1\right)  $ and
$k=\left\lfloor \varepsilon\left\vert p_{0}\right\vert \right\rfloor .$
\textit{Then }$P$\textit{\ is }$\varepsilon-$CNS polynomial\textit{\ }if and
only if $P$\textit{\ is }$\varepsilon_{k}-$CNS polynomial, where
$\varepsilon_{k}=\frac{k}{\left\vert p_{0}\right\vert }.$
\end{corollary}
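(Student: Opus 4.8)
The plan is to reduce the statement to the single observation that whether $P$ is an $\varepsilon$-CNS polynomial depends on $\varepsilon$ only through the $\varepsilon$-set of digits $\mathcal{N}_\varepsilon$. Indeed, by definition $(P,\mathcal{N}_\varepsilon)$ is an $\varepsilon$-CNS precisely when $(P,\mathcal{N}_\varepsilon)$ is a number system in the sense of Peth\H{o}, and being a number system is a property of the pair formed by the polynomial $P$ and the digit set alone. Hence if two parameters $\varepsilon,\varepsilon'\in\left[0,1\right)$ produce the same digit set, $\mathcal{N}_\varepsilon=\mathcal{N}_{\varepsilon'}$, then $P$ is $\varepsilon$-CNS if and only if it is $\varepsilon'$-CNS. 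The whole corollary will therefore follow once I check that $\mathcal{N}_\varepsilon=\mathcal{N}_{\varepsilon_k}$.

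First I would invoke the computation carried out in the paragraph immediately preceding the statement: writing $k=\left\lfloor \varepsilon\left\vert p_0\right\vert \right\rfloor$, the hypothesis $\varepsilon\in\left[0,1\right)$ places $\varepsilon$ in the subinterval $\left[\frac{k}{\left\vert p_0\right\vert},\frac{k+1}{\left\vert p_0\right\vert}\right)$, and there the digit set takes the explicit form $\mathcal{N}_\varepsilon=\left\{-k,\ldots,\left\vert p_0\right\vert -1-k\right\}$. Next I would apply the defining formula (\ref{N-eps}) to the endpoint $\varepsilon_k=\frac{k}{\left\vert p_0\right\vert}$. Since $\left\lfloor \varepsilon_k\left\vert p_0\right\vert \right\rfloor=k$, the same formula gives $\mathcal{N}_{\varepsilon_k}=\left[-k,\left\vert p_0\right\vert -k\right)\cap\mathbb{Z}=\left\{-k,\ldots,\left\vert p_0\right\vert -1-k\right\}$, so that $\mathcal{N}_{\varepsilon_k}=\mathcal{N}_\varepsilon$. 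Combining this equality of digit sets with the first paragraph yields the asserted equivalence.

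There is no genuine obstacle here, since the entire content is already captured by the identity $\mathcal{N}_\varepsilon=\mathcal{N}_{\varepsilon_k}$ established before the statement, and the corollary merely records that the notion of $\varepsilon$-CNS polynomial factors through the digit set. The only point deserving a word of care is that the right endpoint $(1-\varepsilon_k)\left\vert p_0\right\vert=\left\vert p_0\right\vert -k$ is an integer that is excluded from the half-open interval in (\ref{N-eps}); this is exactly what makes $\mathcal{N}_{\varepsilon_k}$ and $\mathcal{N}_\varepsilon$ coincide rather than differ by one element, and hence what makes the reduction to the rational parameter $\varepsilon_k$ work uniformly for all $\varepsilon$ in the relevant subinterval.
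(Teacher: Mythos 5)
Your proof is correct and follows essentially the same route as the paper: the paper's own proof consists of noting that $\mathcal{N}_{\varepsilon}=\mathcal{N}_{\varepsilon_{k}}$ (established in the paragraph preceding the corollary) and that the $\varepsilon$-CNS property depends only on the pair $(P,\mathcal{N}_{\varepsilon})$. Your additional verification that $\left\lfloor \varepsilon_{k}\left\vert p_{0}\right\vert \right\rfloor =k$ and that the right endpoint $\left\vert p_{0}\right\vert -k$ is excluded from the half-open interval is a correct and welcome elaboration of what the paper treats as obvious.
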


\begin{proof}
By definition of $\mathcal{\varepsilon-}$CNS polynomials, it is obvious that
$P$ is\textit{\ }$\mathcal{\varepsilon-}$CNS polynomial\textit{\ }if and only
if $P$ is\textit{\ }$\mathcal{\varepsilon}_{k}\mathcal{-}$CNS polynomial since
$\mathcal{N}_{\varepsilon}\mathbb{=}\mathcal{N}_{\varepsilon_{k}}.$
\end{proof}

\begin{proposition}
\label{Lem1}Let $\left(  p_{0},p_{1}\right)  \in\mathbb{Z}^{2},$ $\left\vert
p_{0}\right\vert \geq2$, $\varepsilon\in\left[  0,\frac{1}{2}\right)  $ or let
$\varepsilon=\frac{1}{2}$ if $\left\vert p_{0}\right\vert $ is odd. Let
$k=\left\lfloor \mathcal{\varepsilon}\left\vert p_{0}\right\vert \right\rfloor
$ and let $D\left(  \varepsilon\right)  $ be given by (\ref{D}). Then $\left(
\frac{1}{p_{0}},\frac{p_{1}}{p_{0}}\right)  \in D\left(  \varepsilon\right)  $
if and only if the pair $\left(  p_{0},p_{1}\right)  $ satisfies the
conditions (\ref{prva1}) or (\ref{druga 1}). Consequently, $\left(  \frac
{1}{p_{0}},\frac{p_{1}}{p_{0}}\right)  \in D\left(  \varepsilon\right)  $ if
and only if $\left(  \frac{1}{p_{0}},\frac{p_{1}}{p_{0}}\right)  \in D\left(
\varepsilon_{k}\right)  ,$ where $\varepsilon_{k}=\frac{k}{\left\vert
p_{0}\right\vert }.$
\end{proposition}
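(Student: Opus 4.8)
The plan is to prove the equivalence by a direct substitution of $x=\frac{1}{p_{0}}$ and $y=\frac{p_{1}}{p_{0}}$ into the inequalities that define $D(\varepsilon)$ in (\ref{D}), to clear the denominator $p_{0}$, and then to convert the resulting real inequalities on the integer $p_{1}$ into the sharp integer bounds (\ref{prva1}) and (\ref{druga 1}) by rounding, using $k=\left\lfloor \varepsilon\left\vert p_{0}\right\vert \right\rfloor $. Since multiplying an inequality by $p_{0}$ preserves it when $p_{0}>0$ and reverses it when $p_{0}<0$, the whole argument splits according to the sign of $p_{0}$.

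First I would treat $p_{0}\geq2$, so that $x=\frac{1}{p_{0}}>0$ and $\left\vert p_{0}\right\vert =p_{0}$. The box constraint $-\varepsilon\leq x<1-\varepsilon$ then holds automatically: the left part is clear since $-\varepsilon\leq0<\frac{1}{p_{0}}$, and the right part follows from $\frac{1}{p_{0}}\leq\frac{1}{2}<1-\varepsilon$ for $\varepsilon\in\left[  0,\frac{1}{2}\right)$, while for $\varepsilon=\frac{1}{2}$ with $\left\vert p_{0}\right\vert $ odd one has $p_{0}\geq3$, whence $\frac{1}{p_{0}}\leq\frac{1}{3}<\frac{1}{2}$. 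Multiplying the remaining constraint $-x-\varepsilon\leq y<x+1-\varepsilon$ by $p_{0}>0$ yields $-1-\varepsilon p_{0}\leq p_{1}<1+p_{0}(1-\varepsilon)$; since $p_{1}\in\mathbb{Z}$ and $\left\lfloor \varepsilon p_{0}\right\rfloor =k$, rounding the two real endpoints gives exactly $-k-1\leq p_{1}\leq\left\vert p_{0}\right\vert -k$, which is (\ref{prva1}).

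Next I would treat $p_{0}\leq-2$, where $x=\frac{1}{p_{0}}<0$. Now the right part of $-\varepsilon\leq x<1-\varepsilon$ is automatic, while the left part $-\varepsilon\leq\frac{1}{p_{0}}$ is (multiplying by $p_{0}<0$) equivalent to $\varepsilon\geq\frac{1}{\left\vert p_{0}\right\vert }$; this both pins down the subinterval appearing in (\ref{druga 1}) and forces $p_{0}\leq-3$, because $p_{0}=-2$ would require $\varepsilon\geq\frac{1}{2}$, which is impossible in the present range since $\left\vert p_{0}\right\vert =2$ is even. Multiplying $-x-\varepsilon\leq y<x+1-\varepsilon$ by $p_{0}<0$ reverses both inequalities, and after passing to integers with $\left\lfloor \varepsilon\left\vert p_{0}\right\vert \right\rfloor =k$ one obtains $k+2-\left\vert p_{0}\right\vert \leq p_{1}\leq k-1$, which is (\ref{druga 1}). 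The boundary case $\varepsilon=\frac{1}{2}$ (with $\left\vert p_{0}\right\vert $ odd) is handled by the \emph{second} branch of (\ref{D}); the identical substitution, now using $\frac{\left\vert p_{0}\right\vert }{2}=k+\frac{1}{2}$, reproduces (\ref{prva1}) for $p_{0}\geq3$ and (\ref{druga 1}) for $p_{0}\leq-3$, so this case needs no separate conclusion.

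Finally, the \emph{consequently} assertion follows at once from the first part, since the bounds (\ref{prva1}) and (\ref{druga 1}) depend on $\varepsilon$ only through $k$. Indeed, $\varepsilon_{k}=\frac{k}{\left\vert p_{0}\right\vert }$ satisfies $\left\lfloor \varepsilon_{k}\left\vert p_{0}\right\vert \right\rfloor =k$ and lies in $\left[  0,\frac{1}{2}\right)$ (because $\varepsilon_{k}\leq\varepsilon$), so applying the already-proven equivalence to $\varepsilon_{k}$ shows that $\left(  \frac{1}{p_{0}},\frac{p_{1}}{p_{0}}\right)  \in D(\varepsilon_{k})$ holds under the very same conditions, whence the two memberships are equivalent. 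I expect the only genuine difficulty to be bookkeeping rather than conceptual: one must keep the strict and non-strict inequalities aligned with the half-open definition of $D(\varepsilon)$, track the sign reversals for $p_{0}<0$ carefully, and check that the floor/ceiling rounding of the real endpoints reproduces the integer bounds (\ref{prva1}) and (\ref{druga 1}) exactly.
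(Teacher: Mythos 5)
Your proposal is correct and takes essentially the same approach as the paper's own proof: substitute $\left(\frac{1}{p_0},\frac{p_1}{p_0}\right)$ into the defining inequalities of $D(\varepsilon)$, split on the sign of $p_0$ (with the left box inequality forcing $\varepsilon\geq\frac{1}{|p_0|}$ and hence $p_0\leq-3$ in the negative case), clear denominators, and round to the integer bounds via $k=\lfloor\varepsilon|p_0|\rfloor$, with the half-integer endpoints settling the $\varepsilon=\frac{1}{2}$, $|p_0|$ odd branch and the final assertion following because all conditions depend on $\varepsilon$ only through $k$. The paper merely organizes the identical computation differently (by branch of (\ref{D}) and by implication direction, rather than by sign of $p_0$ with biconditional rounding), so there is no substantive difference in method.
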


\begin{proof}
\textit{i) }By the definition of the set $D\left(  \varepsilon\right)  $ given
by (\ref{D}), we see that we have to consider the case $\varepsilon\in\left[
0,\frac{1}{2}\right)  $ and the case $\varepsilon=\frac{1}{2}$ and $\left\vert
p_{0}\right\vert $ odd separately. Note also that if $\varepsilon\in\left[
0,\frac{1}{2}\right)  $ or if $\varepsilon=\frac{1}{2} $ and $\left\vert
p_{0}\right\vert $ is odd, then $k\in K_{\left\vert p_{0}\right\vert },$ where%
\begin{equation}
K_{\left\vert p_{0}\right\vert }=\left\{
\begin{tabular}
[c]{ll}%
$\left\{  0,1,...,\frac{\left\vert p_{0}\right\vert -1}{2}\right\}  ,$ &
$\text{if }\left\vert p_{0}\right\vert \text{ is odd}$\\
$\left\{  0,1,...,\frac{\left\vert p_{0}\right\vert }{2}-1\right\}  ,$ &
$\text{if }\left\vert p_{0}\right\vert \text{ is }$even
\end{tabular}
\right.  \text{.} \label{Kp0}%
\end{equation}
In particular, $\varepsilon\in\left[  \frac{1}{\left\vert p_{0}\right\vert
},\frac{1}{2}\right)  $ implies $k\in K_{\left\vert p_{0}\right\vert
}\diagdown\left\{  0\right\}  .$ Also, $K_{\left\vert p_{0}\right\vert
}=\left\{  0\right\}  $ if and only if $\left\vert p_{0}\right\vert =2.$

\textit{Case }$\varepsilon\in\left[  0,\frac{1}{2}\right)  .$ Let
$\varepsilon\in\left[  0,\frac{1}{2}\right)  $. First assume $\left(  \frac
{1}{p_{0}},\frac{p_{1}}{p_{0}}\right)  \in D\left(  \varepsilon\right)  .$
Then%
\begin{equation}
-\varepsilon\leq\frac{1}{p_{0}}<1-\varepsilon\text{ \ \ and \ \ }-\frac
{1}{p_{0}}-\varepsilon\leq\frac{p_{1}}{p_{0}}<\frac{1}{p_{0}}+1-\varepsilon.
\label{n1}%
\end{equation}
Note that the first inequality in (\ref{n1}) implies $-\varepsilon\left\vert
p_{0}\right\vert \leq sgn\left(  p_{0}\right)  <\left(  1-\varepsilon\right)
\left\vert p_{0}\right\vert .$ So, if $\varepsilon\in\left[  0,\frac
{1}{\left\vert p_{0}\right\vert }\right)  $, then $p_{0}$ must be a positive
integer, while if $\varepsilon\in\left[  \frac{1}{\left\vert p_{0}\right\vert
},\frac{1}{2}\right)  $, then $p_{0}$ can take positive and negative values.
The first inequality in (\ref{n1}) implies%
\begin{equation}
-\frac{k+1}{\left\vert p_{0}\right\vert }\leq-\varepsilon_{k+1}<-\varepsilon
\leq\frac{1}{p_{0}}<1-\varepsilon\leq1-\varepsilon_{k}=1-\frac{k}{\left\vert
p_{0}\right\vert }, \label{prv-n1}%
\end{equation}
while the second inequality in (\ref{n1}) implies%
\begin{equation}
-\frac{1}{p_{0}}-\frac{k+1}{\left\vert p_{0}\right\vert }<-\frac{1}{p_{0}%
}-\varepsilon\leq\frac{p_{1}}{p_{0}}<\frac{1}{p_{0}}+1-\varepsilon\leq\frac
{1}{p_{0}}+1-\frac{k}{\left\vert p_{0}\right\vert }. \label{dug-n1}%
\end{equation}
Let $p_{0}>0.$ Then (\ref{prv-n1}) implies $0<\frac{1}{p_{0}}<1-\frac{k}%
{p_{0}}$, which again implies $p_{0}>k+1$, i.e. $p_{0}\geq k+2\geq2.$ Since
$p_{0}>0,$ then (\ref{dug-n1}) implies $-k-2<p_{1}<p_{0}-k+1$, or equivalently
$-k-1\leq p_{1}\leq p_{0}-k.$ Therefore, we obtain that the pair $\left(
p_{0},p_{1}\right)  $ satisfies the condition (\ref{prva1}).

Let $p_{0}<0.$ Then (\ref{prv-n1}) implies $-\frac{k+1}{\left\vert
p_{0}\right\vert }<\frac{1}{p_{0}}<0$ , which again implies $k>0$, i.e.
$k\geq1.$ Since $K_{\left\vert p_{0}\right\vert }=\left\{  0\right\}  $ only
if $\left\vert p_{0}\right\vert =2,$ then $k\geq1$ implies $p_{0}\leq-3.$
Since $p_{0}\ $is a negative integer, then multiplying the inequality
(\ref{dug-n1}) by $p_{0}$, we obtain $-\left\vert p_{0}\right\vert
+k+1<p_{1}<k,$ which implies $-\left\vert p_{0}\right\vert +k+2\leq p_{1}\leq
k-1.$ Note that $k\geq1$ if and only if $\varepsilon\in\left[  \frac
{1}{\left\vert p_{0}\right\vert },\frac{1}{2}\right)  .$ Thus, we obtain that
the pair $\left(  p_{0},p_{1}\right)  $ satisfies the conditions in
(\ref{druga 1}). Therefore, we conclude: if $\left(  \frac{1}{p_{0}}%
,\frac{p_{1}}{p_{0}}\right)  \in D\left(  \varepsilon\right)  $, then the pair
$\left(  p_{0},p_{1}\right)  $ satisfies the conditions (\ref{prva1}) or
(\ref{druga 1})$.$

Suppose now that the pair $\left(  p_{0},p_{1}\right)  $ fulfills the
conditions (\ref{prva1}) or (\ref{druga 1})$.$

Let us first assume $\left(  p_{0},p_{1}\right)  $ fulfills condition
(\ref{prva1}). Since for every $p_{0}\geq2,$ inequality $p_{0}\geq k+2$ holds
for all $k\in K_{\left\vert p_{0}\right\vert }$, then%
\[
p_{0}\geq2\Longrightarrow p_{0}\geq k+2\Longrightarrow1\geq\frac{k+1}{p_{0}%
}+\frac{1}{p_{0}}=\varepsilon_{k+1}+\frac{1}{p_{0}}>\varepsilon+\frac{1}%
{p_{0}}.
\]
Therefore, we obtain $0<\frac{1}{p_{0}}<1-\varepsilon.$ Since $\varepsilon$ is
a nonnegative integer, we have $-\varepsilon\leq\frac{1}{p_{0}}<1-\varepsilon
.$ On the other hand, since $p_{0}\geq2$ is a positive integer, then $-k-1\leq
p_{1}\leq p_{0}-k$ implies%
\[
-\varepsilon-\frac{1}{p_{0}}\leq-\frac{k}{p_{0}}-\frac{1}{p_{0}}\leq
\frac{p_{1}}{p_{0}}\leq1-\frac{k+1}{p_{0}}+\frac{1}{p_{0}}<1-\varepsilon
+\frac{1}{p_{0}}.
\]
Therefore, if $\left(  p_{0},p_{1}\right)  $ satisfies condition
(\ref{prva1}), then $\left(  \frac{1}{p_{0}},\frac{p_{1}}{p_{0}}\right)  \in
D\left(  \varepsilon\right)  $.

Let us assume that $\left(  p_{0},p_{1}\right)  $ fulfills the condition
(\ref{druga 1}). First note that $\varepsilon\in\left[  \frac{1}{\left\vert
p_{0}\right\vert },\frac{1}{2}\right)  \ $implies $k\geq1$ and $1-\varepsilon
>0$. Therefore, as $p_{0}\leq-3$ is a negative integer, we have%
\[
k\geq1\Longrightarrow-\frac{k}{\left\vert p_{0}\right\vert }\leq\frac
{-1}{\left\vert p_{0}\right\vert }\Longrightarrow-\varepsilon\leq
-\varepsilon_{k}\leq\frac{-1}{\left\vert p_{0}\right\vert }=\frac{1}{p_{0}%
}<1-\varepsilon.
\]
Also, since $p_{0}\leq-3$ is a negative integer, then $k+2-\left\vert
p_{0}\right\vert \leq p_{1}\leq k-1$ implies%
\[
-\varepsilon-\frac{1}{p_{0}}\leq-\frac{k}{\left\vert p_{0}\right\vert }%
-\frac{1}{p_{0}}\leq\frac{p_{1}}{p_{0}}\leq\frac{1}{p_{0}}+1-\frac
{k+1}{\left\vert p_{0}\right\vert }<\frac{1}{p_{0}}+1-\varepsilon
\]
So, if $\left(  p_{0},p_{1}\right)  $ satisfies condition (\ref{druga 1}),
then $\left(  \frac{1}{p_{0}},\frac{p_{1}}{p_{0}}\right)  \in D\left(
\varepsilon\right)  $. This concludes the proof for $\varepsilon\in\left[
0,\frac{1}{2}\right)  .$

\textit{Case }$\varepsilon=\frac{1}{2}\ $\textit{and }$\left\vert
p_{0}\right\vert $\textit{\ odd.}\textbf{\ }Let $\varepsilon=\frac{1}{2}$ and
let $\left\vert p_{0}\right\vert $ be odd. Suppose that $\left(  \frac
{1}{p_{0}},\frac{p_{1}}{p_{0}}\right)  \in D\left(  \frac{1}{2}\right)  .$
Then
\begin{equation}
-\frac{1}{2}<\frac{1}{p_{0}}\leq\frac{1}{2}\text{ \ \ and \ \ }-\frac{1}%
{p_{0}}-\frac{1}{2}<\frac{p_{1}}{p_{0}}\leq\frac{1}{p_{0}}+\frac{1}{2}.
\label{n1-pola}%
\end{equation}
Note that $-\frac{1}{2}<\frac{1}{p_{0}}\leq\frac{1}{2}$ implies $p_{0}\geq2$
or $p_{0}\leq-3.$

Let $p_{0}\geq2$. Since $p_{0}$ is an odd positive integer, then
$k=\frac{p_{0}-1}{2}$, and the second inequality in (\ref{n1-pola}) implies%
\begin{equation}
-k-1-\frac{1}{2}=-1-\frac{p_{0}}{2}<p_{1}\leq1+\frac{p_{0}}{2}=p_{0}%
-k+\frac{1}{2}\text{.} \label{nej-pola}%
\end{equation}
Since $p_{0}$, $p_{1}$ and $k$ are integers, (\ref{nej-pola}) implies
$-k-1\leq p_{1}\leq p_{0}-k.$

Let $p_{0}\leq-3.$ Since $p_{0}$ is an odd negative integer, then
$k=\frac{-p_{0}-1}{2},$ and the second inequality in (\ref{n1-pola}) implies
\begin{equation}
-\left\vert p_{0}\right\vert +k+1+\frac{1}{2}=1+\frac{p_{0}}{2}\leq
p_{1}<-1-\frac{p_{0}}{2}=k-\frac{1}{2}\text{.} \label{nej-pola-2}%
\end{equation}
From (\ref{nej-pola-2}) we get $-\left\vert p_{0}\right\vert +k+2\leq
p_{1}\leq k-1.\ $Note also that $\frac{1}{2}\in\left[  \frac{1}{\left\vert
p_{0}\right\vert },\frac{1}{2}\right]  $. Therefore, $\left(  \frac{1}{p_{0}%
},\frac{p_{1}}{p_{0}}\right)  \in D\left(  \frac{1}{2}\right)  ,\ $where
$\left\vert p_{0}\right\vert $\textbf{\ }is\textbf{\ }odd, implies that the
pair $\left(  p_{0},p_{1}\right)  $ satisfies the conditions (\ref{prva1}) or
(\ref{druga 1})$.$

Suppose the pair $\left(  p_{0},p_{1}\right)  \in\mathbb{Z}^{2}$, where
$\left\vert p_{0}\right\vert $\textbf{\ }is\textbf{\ }odd, satisfies
conditions in (\ref{prva1}) or in (\ref{druga 1})$\ $for $\varepsilon=\frac
{1}{2}.$ Then%
\[
p_{0}\geq2\text{ \ or \ }p_{0}\leq-3\text{ }\Longrightarrow-\frac{1}{2}%
<\frac{1}{p_{0}}\leq\frac{1}{2}.
\]
Since $k=\frac{\left\vert p_{0}\right\vert -1}{2}$, then (\ref{prva1}) and
(\ref{druga 1}) are of the form
\begin{align*}
-\frac{1}{2}-\frac{p_{0}}{2}  &  \leq p_{1}\leq\frac{p_{0}}{2}+\frac{1}%
{2},\text{ \ }p_{0}\geq2\text{ }\\
\frac{1}{2}p_{0}+\frac{3}{2}  &  \leq p_{1}\leq-\frac{1}{2}p_{0}-\frac{3}%
{2},\text{ \ }p_{0}\leq-3,
\end{align*}
respectively. From the above inequalities, we derive the following:
\begin{align*}
-\frac{1}{p_{0}}-\frac{1}{2}  &  <-\frac{1}{2}-\frac{1}{2p_{0}}\leq\frac
{p_{1}}{p_{0}}\leq\frac{1}{2}+\frac{1}{2p_{0}}<\frac{1}{2}+\frac{1}{p_{0}%
},\text{ if \ }p_{0}\geq2\text{ }\\
-\frac{1}{p_{0}}-\frac{1}{2}  &  <-\frac{1}{2}-\frac{3}{2p_{0}}\leq\frac
{p_{1}}{p_{0}}\leq\frac{1}{2}+\frac{3}{2p_{0}}\leq\frac{1}{2}+\frac{1}{p_{0}%
},\text{ if \ }p_{0}\leq-3,
\end{align*}
so we conclude $\left(  \frac{1}{p_{0}},\frac{p_{1}}{p_{0}}\right)  \in
D\left(  \frac{1}{2}\right)  .$

\textit{ii)} We that note if $\varepsilon\in\left[  0,\frac{1}{2}\right)  $ or
if $\varepsilon=\frac{1}{2}$ and $\left\vert p_{0}\right\vert $ is odd, then
$\varepsilon_{k}\in\left[  0,\frac{1}{2}\right)  $ and $\left\lfloor
\mathcal{\varepsilon}_{k}\left\vert p_{0}\right\vert \right\rfloor =k.$
Therefore, by \textit{i)} we have: $\left(  \frac{1}{p_{0}},\frac{p_{1}}%
{p_{0}}\right)  \in D\left(  \varepsilon\right)  $ if and only if $\left(
\frac{1}{p_{0}},\frac{p_{1}}{p_{0}}\right)  \in D\left(  \varepsilon
_{k}\right)  .$
\end{proof}

\begin{proposition}
\label{Lem3}Let $\left(  p_{0},p_{1}\right)  \in\mathbb{Z}^{2},$ $\left\vert
p_{0}\right\vert \geq2.$ Let $\varepsilon\in\left(  \frac{1}{2},1\right)  $ or
let $\varepsilon=\frac{1}{2}$ if $\left\vert p_{0}\right\vert $ is even. Let
$k=\left\lfloor \mathcal{\varepsilon}\left\vert p_{0}\right\vert \right\rfloor
$ and let $D\left(  \varepsilon\right)  $ be given by (\ref{D}). Then $\left(
\frac{1}{p_{0}},\frac{p_{1}}{p_{0}}\right)  \in D\left(  \varepsilon\right)  $
if and only if the pair $\left(  p_{0},p_{1}\right)  $ satisfies the
conditions (\ref{prva2}) or (\ref{druga2}). Consequently, $\left(  \frac
{1}{p_{0}},\frac{p_{1}}{p_{0}}\right)  \in D\left(  \varepsilon\right)  $ if
and only if $\left(  \frac{1}{p_{0}},\frac{p_{1}}{p_{0}}\right)  \in D\left(
\varepsilon_{k}\right)  ,$ where $\varepsilon_{k}=\frac{k}{\left\vert
p_{0}\right\vert }.$
\end{proposition}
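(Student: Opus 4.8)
The plan is to follow the proof of Proposition \ref{Lem1} almost verbatim, the only change being that $D(\varepsilon)$ is now given by the \emph{second} branch of (\ref{D}): for every $\varepsilon\in\left[\frac{1}{2},1\right)$ --- and in particular in both cases of the hypothesis, since $\varepsilon=\frac{1}{2}$ with $\left\vert p_{0}\right\vert$ even also uses this branch --- the membership $\left(\frac{1}{p_{0}},\frac{p_{1}}{p_{0}}\right)\in D(\varepsilon)$ is equivalent to the two double inequalities $-(1-\varepsilon)<\frac{1}{p_{0}}\leq\varepsilon$ and $-\frac{1}{p_{0}}-1+\varepsilon<\frac{p_{1}}{p_{0}}\leq\frac{1}{p_{0}}+\varepsilon$. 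Everything then reduces to elementary arithmetic, using repeatedly the defining bounds $k\leq\varepsilon\left\vert p_{0}\right\vert<k+1$ for $k=\left\lfloor\varepsilon\left\vert p_{0}\right\vert\right\rfloor$.

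For the forward implication I would first multiply the inequality for $\frac{1}{p_{0}}$ by $\left\vert p_{0}\right\vert$ to read off the sign of $p_{0}$: the bound $\frac{1}{p_{0}}\leq\varepsilon$ is automatic when $p_{0}>0$ because $\varepsilon\geq\frac{1}{2}\geq\frac{1}{\left\vert p_{0}\right\vert}$, whereas $p_{0}<0$ forces $(1-\varepsilon)\left\vert p_{0}\right\vert>1$, that is $\varepsilon<\frac{\left\vert p_{0}\right\vert-1}{\left\vert p_{0}\right\vert}$, which in addition rules out $p_{0}=-2$ and hence gives $p_{0}\leq-3$. Then, multiplying the inequality for $\frac{p_{1}}{p_{0}}$ by $p_{0}$ (reversing it when $p_{0}<0$) and inserting $k\leq\varepsilon\left\vert p_{0}\right\vert<k+1$, the resulting real bounds round, for integer $p_{1}$, exactly to $-\left\vert p_{0}\right\vert+k\leq p_{1}\leq k+1$ when $p_{0}>0$ and to $-k+1\leq p_{1}\leq-k-2+\left\vert p_{0}\right\vert$ when $p_{0}\leq-3$; these are (\ref{prva2}) and (\ref{druga2}). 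The converse is the same chain read backwards: from the integer bounds on $p_{1}$ together with $k\leq\varepsilon\left\vert p_{0}\right\vert<k+1$ one recovers both real double inequalities, the only thing to watch being that the extremal values land on the correct (closed upper, open lower) sides of the second branch of (\ref{D}).

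The one point deserving genuine care is the ``consequently'' clause, the analogue of part ii) of Proposition \ref{Lem1}. Since $\left\lfloor\varepsilon_{k}\left\vert p_{0}\right\vert\right\rfloor=k$ and the conditions (\ref{prva2}), (\ref{druga2}) depend on $\varepsilon$ only through $k$, whenever $\varepsilon_{k}$ still lies in $\left[\frac{1}{2},1\right)$ one simply applies the equivalence of part i) to $\varepsilon_{k}$ and recovers the same conditions, giving $\left(\frac{1}{p_{0}},\frac{p_{1}}{p_{0}}\right)\in D(\varepsilon)\Leftrightarrow\left(\frac{1}{p_{0}},\frac{p_{1}}{p_{0}}\right)\in D(\varepsilon_{k})$. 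The delicate situation is the borderline case $\left\vert p_{0}\right\vert=2k+1$ (the smallest admissible $k$ for odd $\left\vert p_{0}\right\vert$), where $\varepsilon_{k}=\frac{\left\vert p_{0}\right\vert-1}{2\left\vert p_{0}\right\vert}<\frac{1}{2}$ drops into the regime of Proposition \ref{Lem1}; there $D(\varepsilon_{k})$ is characterized instead by (\ref{prva1}), (\ref{druga 1}), but the identity $\left\vert p_{0}\right\vert=2k+1$ makes the intervals $[-k-1,\left\vert p_{0}\right\vert-k]$ and $[-\left\vert p_{0}\right\vert+k,k+1]$ (and likewise their negative-$p_{0}$ counterparts) coincide, so the two characterizations agree and the equivalence still holds. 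I expect no obstacle beyond this bookkeeping; the computation itself is entirely routine.
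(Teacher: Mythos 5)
Your proposal is correct and follows essentially the same route as the paper: the same sign-of-$p_{0}$ case analysis with the bounds $k\leq\varepsilon\left\vert p_{0}\right\vert<k+1$ for part i), and for the ``consequently'' clause the same identification of the only delicate case ($\left\vert p_{0}\right\vert$ odd, $\varepsilon\in\left(\frac{1}{2},\frac{1}{2}+\frac{1}{2\left\vert p_{0}\right\vert}\right)$, where $\varepsilon_{k}<\frac{1}{2}$), resolved exactly as in the paper by invoking Proposition \ref{Lem1} and checking that with $\left\vert p_{0}\right\vert=2k+1$ the intervals from (\ref{prva1}), (\ref{druga 1}) coincide with those from (\ref{prva2}), (\ref{druga2}).
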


\begin{proof}
\textit{i)} Note first that in this case $k\in K_{\left\vert p_{0}\right\vert
}^{\prime}$, where%
\[
K_{\left\vert p_{0}\right\vert }^{\prime}=\left\{
\begin{tabular}
[c]{ll}%
$\left\{  \frac{\left\vert p_{0}\right\vert }{2},...,\left\vert p_{0}%
\right\vert -1\right\}  ,$ & if $\varepsilon\in\left[  \frac{1}{2},1\right)  $
and $\left\vert p_{0}\right\vert $ is even\\
$\left\{  \frac{\left\vert p_{0}\right\vert +1}{2},...,\left\vert
p_{0}\right\vert -1\right\}  ,$ & $\text{if }\varepsilon\in\left[  \frac{1}%
{2}+\frac{1}{2\left\vert p_{0}\right\vert },1\right)  $ and $\left\vert
p_{0}\right\vert \text{ is }$odd\\
$\left\{  \frac{\left\vert p_{0}\right\vert -1}{2}\right\}  ,$ & $\text{if
}\varepsilon\in\left(  \frac{1}{2},\frac{1}{2}+\frac{1}{2\left\vert
p_{0}\right\vert }\right)  $ and $\left\vert p_{0}\right\vert $ is odd
\end{tabular}
\ \right.  .
\]
We note that $k\geq1$ for all $\left\vert p_{0}\right\vert \geq2.$ In
particular, if $\left\vert p_{0}\right\vert =2,$ then $K_{\left\vert
p_{0}\right\vert }^{\prime}=\left\{  1\right\}  .$ Consequently,
$\varepsilon_{k}\in\left[  \frac{1}{2},1\right)  $ except if $\varepsilon
\in\left(  \frac{1}{2},\frac{1}{2}+\frac{1}{2\left\vert p_{0}\right\vert
}\right)  $ and $\left\vert p_{0}\right\vert $ is odd. In this case,
$\varepsilon_{k}=\frac{1}{2}-\frac{1}{2\left\vert p_{0}\right\vert }\in\left[
0,\frac{1}{2}\right)  .$

Let $\left(  \frac{1}{p_{0}},\frac{p_{1}}{p_{0}}\right)  \in D\left(
\varepsilon\right)  .$ Then%
\begin{equation}
-\left(  1-\varepsilon\right)  <\frac{1}{p_{0}}\leq\varepsilon\text{ \ \ and
\ \ }-\frac{1}{p_{0}}-1+\varepsilon<\frac{p_{1}}{p_{0}}\leq\frac{1}{p_{0}%
}+\varepsilon. \label{n1-2}%
\end{equation}
Note that (\ref{n1-2}) implies $-\left(  1-\varepsilon\right)  \left\vert
p_{0}\right\vert <sgn\left(  p_{0}\right)  \leq\varepsilon\left\vert
p_{0}\right\vert .$ Therefore, if $\varepsilon\in\left[  \frac{\left\vert
p_{0}\right\vert -1}{\left\vert p_{0}\right\vert },1\right)  $, then $p_{0}$
must be a positive integer, while if $\varepsilon\in\left[  \frac{1}{2}%
,\frac{\left\vert p_{0}\right\vert -1}{\left\vert p_{0}\right\vert }\right)
$, then $p_{0}$ can take positive and negative values.

Let $p_{0}>0.$ Then the first inequality in (\ref{n1-2}) implies $0<\frac
{1}{p_{0}}\leq\varepsilon.$ It is obvious that $0<\frac{1}{p_{0}}%
\leq\varepsilon$ holds for all $\varepsilon\in\left[  \frac{1}{2},1\right)  $
and all $p_{0}\geq2.$ Therefore, the first inequality in (\ref{n1-2}) implies
$p_{0}\geq2.$ The second inequality in (\ref{n1-2}) yields%
\[
-\frac{1}{p_{0}}-1+\frac{k}{p_{0}}\leq-\frac{1}{p_{0}}-1+\varepsilon
<\frac{p_{1}}{p_{0}}\leq\frac{1}{p_{0}}+\varepsilon<\frac{1}{p_{0}}+\frac
{k+1}{p_{0}},
\]
which implies $-p_{0}+k-1<p_{1}<k+2,$ or equivalently $-p_{0}+k\leq p_{1}\leq
k+1.$

Let $p_{0}<0.$ Then the first inequality in (\ref{n1-2}) implies
$-1+\varepsilon<\frac{1}{p_{0}}<0,$ and we derive
\[
-1-\frac{k}{p_{0}}=-1+\varepsilon_{k}\leq-1+\varepsilon<\frac{1}{p_{0}}<0,
\]
which implies $k<-p_{0}-1=\left\vert p_{0}\right\vert -1,$ or equivalently
$k\leq\left\vert p_{0}\right\vert -2.$ Note that $k\leq\left\vert
p_{0}\right\vert -2$ implies $\varepsilon\in\left[  \frac{1}{2},\frac
{\left\vert p_{0}\right\vert -1}{\left\vert p_{0}\right\vert }\right)  .$
Since $k\geq1$, then $k\leq\left\vert p_{0}\right\vert -2$ also implies
$\left\vert p_{0}\right\vert \geq3$, i.e. $p_{0}\leq-3.$ Since $p_{0}\ $is a
negative integer$,$ from the second inequality in (\ref{n1-2}), we obtain%
\[
-\frac{1}{p_{0}}-1-\frac{k}{p_{0}}\leq-\frac{1}{p_{0}}-1+\varepsilon
<\frac{p_{1}}{p_{0}}\leq\frac{1}{p_{0}}+\varepsilon<\frac{1}{p_{0}}-\frac
{k+1}{p_{0}}%
\]
which implies $-k<p_{1}<-1-p_{0}-k,$ or $-k+1\leq p_{1}\leq\left\vert
p_{0}\right\vert -k-2.$ Thus, if $\left(  \frac{1}{p_{0}},\frac{p_{1}}{p_{0}%
}\right)  \in D\left(  \varepsilon\right)  ,$ then the pair $\left(
p_{0},p_{1}\right)  $ satisfies the conditions (\ref{prva2}) or (\ref{druga2}%
)$.$

Let $\varepsilon\in\left[  \frac{1}{2},1\right)  $ and let the pair $\left(
p_{0},p_{1}\right)  \in\mathbb{Z}^{2},$ $\left\vert p_{0}\right\vert \geq2$
satisfies the conditions (\ref{prva2}) or (\ref{druga2})$.$

Suppose that $\left(  p_{0},p_{1}\right)  $ fulfills the condition
(\ref{prva2}). Since $p_{0}\geq2\ $and $\varepsilon\in\left[  \frac{1}%
{2},1\right)  $, then we have $-\left(  1-\varepsilon\right)  <\frac{1}{p_{0}%
}\leq\frac{1}{2}\leq\varepsilon.$ On the other hand, $-p_{0}+k\leq p_{1}\leq
k+1$ implies%
\[
-\frac{1}{p_{0}}-1+\varepsilon<-\frac{1}{p_{0}}-1+\frac{k+1}{p_{0}}\leq
\frac{p_{1}}{p_{0}}\leq\frac{1}{p_{0}}+\frac{k}{p_{0}}\leq\frac{1}{p_{0}%
}+\varepsilon.
\]
So, if $\left(  p_{0},p_{1}\right)  $ satisfies the condition (\ref{prva2}),
then $\left(  \frac{1}{p_{0}},\frac{p_{1}}{p_{0}}\right)  \in D\left(
\varepsilon\right)  $.

Suppose that $\left(  p_{0},p_{1}\right)  $ fulfills the condition
(\ref{druga2}). First note that $\varepsilon\in\left[  \frac{1}{2}%
,\frac{\left\vert p_{0}\right\vert -1}{\left\vert p_{0}\right\vert }\right)
\ $implies $\frac{1}{\left\vert p_{0}\right\vert }<1-\varepsilon\leq\frac
{1}{2}.$ Therefore, $p_{0}\leq-3\ $and $\varepsilon\in\left[  \frac{1}%
{2},\frac{\left\vert p_{0}\right\vert -1}{\left\vert p_{0}\right\vert
}\right)  $ implies
\[
-\left(  1-\varepsilon\right)  <-\frac{1}{\left\vert p_{0}\right\vert }%
=\frac{1}{p_{0}}\leq\varepsilon.
\]
Since $p_{0}\ $is a negative integer$,$ then $-k+1\leq p_{1}\leq
-k-2+\left\vert p_{0}\right\vert $ implies%
\[
-\frac{1}{p_{0}}-1+\varepsilon<-\frac{1}{p_{0}}-1-\frac{k+1}{p_{0}}\leq
\frac{p_{1}}{p_{0}}\leq-\frac{k}{p_{0}}+\frac{1}{p_{0}}=\varepsilon_{k}%
+\frac{1}{p_{0}}\leq\frac{1}{p_{0}}+\varepsilon.
\]
So, if $\left(  p_{0},p_{1}\right)  $ satisfies the condition (\ref{druga2}),
then $\left(  \frac{1}{p_{0}},\frac{p_{1}}{p_{0}}\right)  \in D\left(
\varepsilon\right)  $.

\textit{ii)} For all $\varepsilon\in\left(  \frac{1}{2},1\right)  $ or for
$\varepsilon=\frac{1}{2}$ and $\left\vert p_{0}\right\vert $ is even, we have
$\varepsilon_{k}\in\left[  \frac{1}{2},1\right)  $ except for $\varepsilon
\in\left(  \frac{1}{2},\frac{1}{2}+\frac{1}{2\left\vert p_{0}\right\vert
}\right)  $ and $\left\vert p_{0}\right\vert $ odd. Therefore, by \textit{i)}
in all cases where $\varepsilon_{k}\in\left[  \frac{1}{2},1\right)  ,$ we
have: $\left(  \frac{1}{p_{0}},\frac{p_{1}}{p_{0}}\right)  \in D\left(
\varepsilon\right)  $ if and only if $\left(  \frac{1}{p_{0}},\frac{p_{1}%
}{p_{0}}\right)  \in D\left(  \varepsilon_{k}\right)  .$

Let $\varepsilon\in\left(  \frac{1}{2},\frac{1}{2}+\frac{1}{2\left\vert
p_{0}\right\vert }\right)  $ and $\left\vert p_{0}\right\vert $ be odd. Then
$\varepsilon_{k}=\frac{1}{2}-\frac{1}{2\left\vert p_{0}\right\vert }$ and
$k=\frac{\left\vert p_{0}\right\vert -1}{2}.$ Since $\varepsilon_{k}\in\left[
0,\frac{1}{2}\right)  $ and also $\varepsilon_{k}\in\left[  \frac
{1}{\left\vert p_{0}\right\vert },\frac{1}{2}\right]  $ for all $p_{0}\leq-3,$
then by Proposition \ref{Lem1}, we have: $\left(  \frac{1}{p_{0}},\frac{p_{1}%
}{p_{0}}\right)  \in D\left(  \varepsilon_{k}\right)  $ if and only if
$-k-1\leq p_{1}\leq p_{0}-k,$ \ $p_{0}\geq2$ \ or $k+2-\left\vert
p_{0}\right\vert \leq p_{1}\leq k-1,$ \ $p_{0}\leq-3,\ $where $k=\frac
{\left\vert p_{0}\right\vert -1}{2}.$ On the other hand, if $\varepsilon
\in\left(  \frac{1}{2},\frac{1}{2}+\frac{1}{2\left\vert p_{0}\right\vert
}\right)  $, then $\varepsilon\in\left[  \frac{1}{2},1\right)  $ and also
$\varepsilon\in\left[  \frac{1}{2},\frac{\left\vert p_{0}\right\vert
-1}{\left\vert p_{0}\right\vert }\right)  $ for all $p_{0}\leq-3.$ Now, by
\textit{i)} it follows: $\left(  \frac{1}{p_{0}},\frac{p_{1}}{p_{0}}\right)
\in D\left(  \varepsilon\right)  $ if and only if $-p_{0}+k\leq p_{1}\leq
k+1,$ \ $p_{0}\geq2$ or $-k+1\leq p_{1}\leq-k-2+\left\vert p_{0}\right\vert ,$
\ $p_{0}\leq-3$ $\ $where $k=\left\lfloor \mathcal{\varepsilon}\left\vert
p_{0}\right\vert \right\rfloor =\frac{\left\vert p_{0}\right\vert -1}{2}.$
Since, for $k=\frac{\left\vert p_{0}\right\vert -1}{2}$ and $\left\vert
p_{0}\right\vert $ odd, the conditions $-k-1\leq p_{1}\leq p_{0}-k,\ p_{0}%
\geq2\ $and $-p_{0}+k\leq p_{1}\leq k+1,p_{0}\geq2$ both become%
\[
-\frac{p_{0}+1}{2}\leq p_{1}\leq\frac{p_{0}+1}{2},\text{ \ }p_{0}\geq3\text{
,}%
\]
and $k+2-\left\vert p_{0}\right\vert \leq p_{1}\leq k-1,$ $p_{0}\leq-3\ $and
$-k+1\leq p_{1}\leq-k-2+\left\vert p_{0}\right\vert ,$ $p_{0}\leq-3$ both
become%
\[
-\frac{\left\vert p_{0}\right\vert +3}{2}\leq p_{1}\leq\frac{\left\vert
p_{0}\right\vert -3}{2},\text{ \ }p_{0}\leq-3,
\]
we conclude that in this case we also have: $\left(  \frac{1}{p_{0}}%
,\frac{p_{1}}{p_{0}}\right)  \in D\left(  \varepsilon\right)  $ if and only if
$\left(  \frac{1}{p_{0}},\frac{p_{1}}{p_{0}}\right)  \in D\left(
\varepsilon_{k}\right)  .$
\end{proof}

\begin{proposition}
\label{Lem2}Let $\left(  p_{0},p_{1}\right)  \in\mathbb{Z}^{2},$ $\left\vert
p_{0}\right\vert \geq2$ and $\varepsilon\in\left[  0,1\right)  $. Let
$k=\left\lfloor \mathcal{\varepsilon}\left\vert p_{0}\right\vert \right\rfloor
,$ $\varepsilon_{k}=\frac{k}{\left\vert p_{0}\right\vert }\ $ and let the sets
$B\left(  \varepsilon\right)  $ and $D\left(  \varepsilon\right)  $ be given
by (\ref{B}) and (\ref{D}) respectively. Then we have:\ 

\begin{enumerate}
\item[i)] If $\left(  \frac{1}{p_{0}},\frac{p_{1}}{p_{0}}\right)  \in D\left(
\varepsilon\right)  $ and $p_{0}\neq2,3,4,5,$ than $\left(  \frac{1}{p_{0}%
},\frac{p_{1}}{p_{0}}\right)  \in B\left(  \varepsilon\right)  .$

\item[ii)] If $\left(  \frac{1}{p_{0}},\frac{p_{1}}{p_{0}}\right)  \in
D\left(  \varepsilon\right)  $, than $\left(  \frac{1}{p_{0}},\frac{p_{1}%
}{p_{0}}\right)  \in B\left(  \varepsilon_{k}\right)  ,$ except for $p_{0}=2$
or $4$ and $\varepsilon\in\left[  \frac{1}{2},\frac{1}{2}+\frac{1}{\left\vert
p_{0}\right\vert }\right)  .$
\end{enumerate}
\end{proposition}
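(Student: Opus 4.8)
The plan is to exploit the observation recorded just after (\ref{D}) and (\ref{B}): for each fixed value of the parameter, the sets $B$ and $D$ are described by \emph{the same} system of inequalities except for the upper bound on the first coordinate $x$. Since we evaluate at $x=\tfrac{1}{p_0}$, the hypothesis $(\tfrac{1}{p_0},\tfrac{p_1}{p_0})\in D$ already supplies all the $y$-inequalities together with the lower bound on $x$; hence in both parts it will suffice to verify only the appropriate upper bound on $\tfrac{1}{p_0}$.

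For part i) I would work directly with $\varepsilon$. By (\ref{B}) the upper bound defining $B(\varepsilon)$ is $\tfrac{1}{p_0}\le \tfrac{2}{3}-\varepsilon$ for $\varepsilon\in[0,\tfrac12)$ and $\tfrac{1}{p_0}\le \tfrac{2}{3}-(1-\varepsilon)$ for $\varepsilon\in[\tfrac12,1)$; in either regime the right-hand side is at least $\tfrac16$. If $p_0<0$ then $\tfrac{1}{p_0}<0$ and the bound is trivial, while if $p_0\ge 6$ then $\tfrac{1}{p_0}\le\tfrac16$ and the bound again holds. As $|p_0|\ge 2$, the only positive integers not covered are $p_0\in\{2,3,4,5\}$, which are precisely the excluded cases; this proves i).

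For part ii) I would first apply the ``consequently'' clauses of Propositions \ref{Lem1} and \ref{Lem3}, which together exhaust all $\varepsilon\in[0,1)$ and yield $(\tfrac{1}{p_0},\tfrac{p_1}{p_0})\in D(\varepsilon)\Longleftrightarrow(\tfrac{1}{p_0},\tfrac{p_1}{p_0})\in D(\varepsilon_k)$. It then remains to pass from $D(\varepsilon_k)$ to $B(\varepsilon_k)$, i.e.\ to check the upper bound on $\tfrac{1}{p_0}$ at the parameter $\varepsilon_k=\tfrac{k}{|p_0|}$, choosing the branch of (\ref{B}) according to whether $\varepsilon_k<\tfrac12$ or $\varepsilon_k\ge\tfrac12$. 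For $p_0<0$ the bound is again immediate. For $p_0>0$ and $\varepsilon_k<\tfrac12$ the required inequality $\tfrac{1}{p_0}\le\tfrac{2}{3}-\tfrac{k}{p_0}$ is equivalent to $3(k+1)\le 2p_0$, which follows from $2k\le p_0-1$ (the content of $\varepsilon_k<\tfrac12$) together with $p_0\ge 2$, and so always holds. For $p_0>0$ and $\varepsilon_k\ge\tfrac12$ the inequality becomes $3(k-1)\ge p_0$; using $p_0\le 2k$ this is automatic once $k\ge 3$, and a direct inspection of $k=1$ (which forces $p_0=2$) and $k=2$ (which forces $p_0\in\{3,4\}$) shows that it fails exactly for $(p_0,k)=(2,1)$ and $(p_0,k)=(4,2)$, both with $\varepsilon_k=\tfrac12$. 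Converting these through $k=\lfloor\varepsilon|p_0|\rfloor$ gives $\varepsilon\in[\tfrac12,1)$ for $p_0=2$ and $\varepsilon\in[\tfrac12,\tfrac34)$ for $p_0=4$, that is $\varepsilon\in[\tfrac12,\tfrac12+\tfrac{1}{|p_0|})$, matching the stated exceptions.

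The estimates themselves are elementary; the delicate point will be the bookkeeping in the subcase $p_0>0$, $\varepsilon_k\ge\tfrac12$ with small $p_0$, where one must identify the exact integer pairs $(p_0,k)$ violating $3(k-1)\ge p_0$ and then translate each failure into the correct half-open $\varepsilon$-interval via $k=\lfloor\varepsilon|p_0|\rfloor$ and $\varepsilon_k=\tfrac{k}{|p_0|}$. One must also be careful that for small $p_0$ the value $\varepsilon_k$ can fall below $\tfrac12$ even when $\varepsilon\ge\tfrac12$ (for instance $p_0=3$, where $\varepsilon\in[\tfrac12,\tfrac23)$ gives $\varepsilon_k=\tfrac13$), so the branch of (\ref{B}) used for $B(\varepsilon_k)$ must be selected according to $\varepsilon_k$ rather than $\varepsilon$; this is exactly the situation handled by the last part of Proposition \ref{Lem3}.
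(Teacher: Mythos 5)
Your proposal is correct and follows essentially the same route as the paper: both exploit that $B$ and $D$ are cut out by the same inequalities except for the upper bound on $x$, and both obtain part ii) by first passing from $D\left(\varepsilon\right)$ to $D\left(\varepsilon_{k}\right)$ via the concluding clauses of Propositions \ref{Lem1} and \ref{Lem3} and then verifying the upper bound at $\varepsilon_{k}$. Your bookkeeping is even slightly cleaner than the paper's (the uniform bound for $p_{0}<0$ or $p_{0}\geq6$ in part i), and the two inequalities $2k\leq p_{0}-1$ and $p_{0}\leq2k$ in part ii) in place of the parity-based sets $K_{|p_{0}|}$ and $K_{|p_{0}|}^{\prime}$), and it isolates exactly the same exceptional pairs $(p_{0},k)=(2,1),(4,2)$ and hence the same exceptional $\varepsilon$-intervals.
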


\begin{proof}
i) For each $\varepsilon\in\left[  0,1\right)  ,$ the sets $B(\varepsilon)$
and $D(\varepsilon)$ are defined by the same inequalities except for the upper
bound for $x$. Then $\left(  \frac{1}{p_{0}},\frac{p_{1}}{p_{0}}\right)  \in
D\left(  \varepsilon\right)  \setminus B\left(  \varepsilon\right)  $ if and
only if
\begin{gather}
\frac{2}{3}-\varepsilon<\frac{1}{p_{0}}<1-\varepsilon,\text{ \ for
\ }\varepsilon\in\left[  0,\frac{1}{2}\right)  ,\label{nejizm}\\
\frac{2}{3}-\left(  1-\varepsilon\right)  <\frac{1}{p_{0}}\leq\varepsilon
,\text{ \ for }\varepsilon\in\left[  \frac{1}{2},1\right)  . \label{nejizm2}%
\end{gather}
Since, (\ref{nejizm}) or (\ref{nejizm2}) hold only for $p_{0}=2$ \ and
$\varepsilon\in\left(  \frac{1}{6},\frac{5}{6}\right)  ,\ $or for $p_{0}=3$
and $\varepsilon\in\left(  \frac{1}{3},\frac{2}{3}\right)  ,$ or $p_{0}=4$
\ and $\varepsilon\in\left(  \frac{5}{12},\frac{7}{12}\right)  ,$ or $p_{0}=5$
\ and $\varepsilon\in\left(  \frac{7}{15},\frac{8}{15}\right)  $, we obtain
the statement i).

ii) First, let us recall that by Propositions \ref{Lem1} and \ref{Lem3} we
have $\left(  \frac{1}{p_{0}},\frac{p_{1}}{p_{0}}\right)  \in D\left(
\varepsilon\right)  $ if and only if $\left(  \frac{1}{p_{0}},\frac{p_{1}%
}{p_{0}}\right)  \in D\left(  \varepsilon_{k}\right)  $ for all $\varepsilon
\in\left[  0,1\right)  .$ Since the sets $B(\varepsilon_{k})$ and
$D(\varepsilon_{k})$ are defined by the same inequalities except for the upper
bound for $x,$ it follows that $\left(  \frac{1}{p_{0}},\frac{p_{1}}{p_{0}%
}\right)  \in D\left(  \varepsilon_{k}\right)  \setminus B\left(
\varepsilon_{k}\right)  $ if and only if
\begin{align}
\frac{2}{3}-\frac{k}{\left\vert p_{0}\right\vert }  &  <\frac{1}{p_{0}%
}<1-\frac{k}{\left\vert p_{0}\right\vert },\text{ \ for }\varepsilon_{k}%
\in\left[  0,\frac{1}{2}\right)  ,\label{ne1}\\
\frac{2}{3}-\left(  1-\frac{k}{\left\vert p_{0}\right\vert }\right)   &
<\frac{1}{p_{0}}\leq\frac{k}{\left\vert p_{0}\right\vert },\text{ \ for
}\varepsilon_{k}\in\left[  \frac{1}{2},1\right)  . \label{ne2}%
\end{align}
Also recall that for each $\varepsilon\in\left[  0,\frac{1}{2}\right)  $
corresponding $\varepsilon_{k}\in\left[  0,\frac{1}{2}\right)  ,\ $and for
each $\varepsilon\in\left[  \frac{1}{2},1\right)  $ corresponding
$\varepsilon_{k}\in\left[  \frac{1}{2},1\right)  $ except for $\varepsilon
\in\left[  \frac{1}{2},\frac{1}{2}+\frac{1}{2\left\vert p_{0}\right\vert
}\right)  $ and odd $\left\vert p_{0}\right\vert $. In this case
$\varepsilon_{k}=\frac{1}{2}-\frac{1}{2\left\vert p_{0}\right\vert }\in\left[
0,\frac{1}{2}\right)  .$

a) Let $\varepsilon\in\left[  0,\frac{1}{2}\right)  .$ Then $\varepsilon
_{k}=\frac{k}{\left\vert p_{0}\right\vert }\in\left[  0,\frac{1}{2}\right)
,\ $and (\ref{ne1}) implies that $p_{0}$ is a positive integer and
$k+1<p_{0}<\frac{3}{2}k+\frac{3}{2}.$ In this case $k\in K_{\left\vert
p_{0}\right\vert }$, where $K_{\left\vert p_{0}\right\vert }$ is given by
(\ref{Kp0}). Since for each $p_{0}\geq2,$ we have $p_{0}\geq\frac{3}{2}%
k+\frac{3}{2}$ for all $k\in K_{\left\vert p_{0}\right\vert },$ there is no
$p_{0}$ that satisfies (\ref{ne1}). Thus, statement ii) holds if
$\varepsilon\in\left[  0,\frac{1}{2}\right)  .$

b) Let $\varepsilon\in\left[  \frac{1}{2},\frac{1}{2}+\frac{1}{2\left\vert
p_{0}\right\vert }\right)  $ and let $\left\vert p_{0}\right\vert $ be odd.
Then $\varepsilon_{k}\in\left[  0,\frac{1}{2}\right)  $ and $k=\frac
{\left\vert p_{0}\right\vert -1}{2}$. In this case, (\ref{ne1}) becomes%
\[
\frac{1}{6}+\frac{1}{2\left\vert p_{0}\right\vert }<\frac{1}{p_{0}}<\frac
{1}{2}+\frac{1}{2\left\vert p_{0}\right\vert }%
\]
which holds only for $p_{0}=2.$ Since $\left\vert p_{0}\right\vert $ is odd,
we conclude that statement ii) holds in this case too.

c) Let $\varepsilon\in\left[  \frac{1}{2}+\frac{1}{2\left\vert p_{0}%
\right\vert },1\right)  $ and $\left\vert p_{0}\right\vert $ be odd or let
$\varepsilon\in\left[  \frac{1}{2},1\right)  $ and $\left\vert p_{0}%
\right\vert $ be even. Then $\varepsilon_{k}=\frac{k}{\left\vert
p_{0}\right\vert }\in\left[  \frac{1}{2},1\right)  .$ Since $\frac
{k}{\left\vert p_{0}\right\vert }\in\left[  \frac{1}{2},1\right)  ,$ then
(\ref{ne2}) implies that $p_{0}$ is positive integer and $3k-3<p_{0},$ where
$k\geq1.$

c.1) If $\varepsilon\in\left[  \frac{1}{2}+\frac{1}{2\left\vert p_{0}%
\right\vert },1\right)  $ and $\left\vert p_{0}\right\vert $ is odd, then
$k\in K_{\left\vert p_{0}\right\vert }^{\prime}=\left\{  \frac{\left\vert
p_{0}\right\vert +1}{2},...,\left\vert p_{0}\right\vert -1\right\}  $. Since
for each $p_{0}\geq3,$ we have $p_{0}\leq3k-3$ for all $k\in K_{\left\vert
p_{0}\right\vert }^{\prime}$, there is no $p_{0}$ that fulfills (\ref{ne2}) in
this case. Therefore in this case statement ii) holds.

c.2) If $\varepsilon\in\left[  \frac{1}{2},1\right)  $ and $\left\vert
p_{0}\right\vert $ is even, then $k\in K_{\left\vert p_{0}\right\vert
}^{\prime}=\left\{  \frac{\left\vert p_{0}\right\vert }{2},...,\left\vert
p_{0}\right\vert -1\right\}  .$ Since for each $p_{0}\geq6,$ we have
$p_{0}\leq3k-3$ for all $k\in K_{\left\vert p_{0}\right\vert }^{\prime}$ then
there is no even integer $p_{0}\geq6$ that satisfies (\ref{ne2}).

If $p_{0}=4$ and $\varepsilon\in\left[  \frac{1}{2},1\right)  ,$ then
$K_{\left\vert p_{0}\right\vert }^{\prime}=\left\{  2,3\right\}  $. Precisely,
if $\varepsilon\in\left[  \frac{3}{4},1\right)  ,$ then $k=3,$ while if
$\varepsilon\in\left[  \frac{1}{2},\frac{3}{4}\right)  $, then $k=2.$ It is
easy to check that if $k=2$ and $p_{0}=4$, then (\ref{ne2}) holds$,$ while for
$k=3$ and $p_{0}=4$, (\ref{ne2}) does not hold. If $p_{0}=2$ and
$\varepsilon\in\left[  \frac{1}{2},1\right)  ,$ then $K_{\left\vert
p_{0}\right\vert }^{\prime}=\left\{  1\right\}  $. It is easy to see that
(\ref{ne2}) holds for $k=1$ and $p_{0}=2.$ Therefore in this case statement
ii) holds except for $p_{0}=2$ or $4$ and $\varepsilon\in\left[  \frac{1}%
{2},\frac{1}{2}+\frac{1}{\left\vert p_{0}\right\vert }\right)  .$
\end{proof}

\begin{remark}
If we define the set $B\left(  \varepsilon\right)  $ according to the new
upper bounds of $x$ given in Remark \ref{Rem 2}, then for all $\varepsilon
\in\left[  0,1\right)  $ and $\left(  p_{0},p_{1}\right)  \in\mathbb{Z}^{2},$
$\left\vert p_{0}\right\vert \geq2$ the following holds: $\left(  \frac
{1}{p_{0}},\frac{p_{1}}{p_{0}}\right)  \in D\left(  \varepsilon\right)  $ if
and only if $\left(  \frac{1}{p_{0}},\frac{p_{1}}{p_{0}}\right)  \in B\left(
\varepsilon\right)  .$
\end{remark}

\begin{lemma}
\label{Lem5}Let $p_{0}=2$ or $p_{0}=4$ and $\varepsilon\in\left[  \frac{1}%
{2},\frac{1}{2}+\frac{1}{\left\vert p_{0}\right\vert }\right)  $. \textit{Then
}$P\left(  x\right)  =x^{2}+p_{1}x+p_{0}$\textit{\ is a }$\mathcal{\varepsilon
-}$CNS polynomial if and only if $-p_{0}+k\leq p_{1}\leq k+1$ where
$k=\left\lfloor \mathcal{\varepsilon}\left\vert p_{0}\right\vert \right\rfloor
.$
\end{lemma}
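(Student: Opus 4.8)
The plan is to reduce both exceptional cases to the already-solved symmetric situation $\varepsilon=\tfrac12$. First I would compute $k=\left\lfloor \varepsilon\left\vert p_{0}\right\vert \right\rfloor$ in each case. For $p_{0}=2$ the hypothesis $\varepsilon\in\left[\frac12,\frac12+\frac12\right)=\left[\frac12,1\right)$ gives $2\varepsilon\in\left[1,2\right)$, hence $k=1$; for $p_{0}=4$ the hypothesis $\varepsilon\in\left[\frac12,\frac34\right)$ gives $4\varepsilon\in\left[2,3\right)$, hence $k=2$. In both cases the associated reduced parameter is $\varepsilon_{k}=\frac{k}{\left\vert p_{0}\right\vert}=\frac12$.

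Next I would invoke Corollary \ref{TRM-veza 2}, which asserts that $P$ is an $\varepsilon$-CNS polynomial if and only if it is an $\varepsilon_{k}$-CNS polynomial, since the digit sets $\mathcal{N}_{\varepsilon}$ and $\mathcal{N}_{\varepsilon_{k}}$ coincide. Because $\varepsilon_{k}=\frac12$ in both cases, the question collapses to deciding whether $P$ is a $\frac12$-CNS (i.e. SCNS) polynomial, which is settled completely by Corollary \ref{Corr-1}.

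Then I would read off the SCNS conditions and match them to the claimed bounds. For $p_{0}=2$ (the case $\left\vert p_{0}\right\vert=2$), condition (\ref{car-1/2-2}) gives $-1\leq p_{1}\leq 2$; since $-p_{0}+k=-1$ and $k+1=2$, this is exactly $-p_{0}+k\leq p_{1}\leq k+1$. For $p_{0}=4$ (the case $\left\vert p_{0}\right\vert>2$ even), condition (\ref{car-1/2-even}) reads $\left\vert p_{1}\right\vert\leq 2$ or $p_{1}=3$, which for integer $p_{1}$ is precisely $-2\leq p_{1}\leq 3$; since $-p_{0}+k=-2$ and $k+1=3$, this again matches $-p_{0}+k\leq p_{1}\leq k+1$. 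This completes the argument.

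The conceptual content is entirely supplied by Corollary \ref{TRM-veza 2} and Corollary \ref{Corr-1}, so the only genuine work is the elementary arithmetic identifying the SCNS ranges with the interval $\left[-p_{0}+k,\,k+1\right]$; I expect no real obstacle here. As an alternative one could bypass Corollary \ref{Corr-1} and argue directly from Theorem \ref{TRM-veza} together with Proposition \ref{SRSpola}, testing membership of the point $\left(\frac{1}{p_{0}},\frac{p_{1}}{p_{0}}\right)$ in the explicit set $\mathcal{D}_{2,\frac12}^{0}$. The subtle point in that route is the boundary handling: for $p_{0}=2$ the abscissa is $x=\frac12$, so the value $p_{1}=2$ (corresponding to $y=1$) is admitted only through the isolated clause ``$y=1$'' in the second part of Proposition \ref{SRSpola} and is easy to miss, while for $p_{0}=4$ the top value $p_{1}=3$ sits on the closed edge $y=x+\frac12$. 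Both boundary inclusions must be kept, and verifying them is the one place where care is needed.
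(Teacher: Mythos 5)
Your proposal is correct and follows essentially the same route as the paper's own proof: reduce to $\varepsilon_{k}=\tfrac12$ via Corollary \ref{TRM-veza 2}, then read off the bounds $-p_{0}+k\leq p_{1}\leq k+1$ from the SCNS characterization in Corollary \ref{Corr-1}. Your arithmetic ($k=1$ for $p_{0}=2$, $k=2$ for $p_{0}=4$, and the matching of the ranges $[-1,2]$ and $[-2,3]$) agrees with the paper's computation.
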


\begin{proof}
If $p_{0}=2$ or $p_{0}=4$ and $\varepsilon\in\left[  \frac{1}{2},\frac{1}%
{2}+\frac{1}{\left\vert p_{0}\right\vert }\right)  ,$ then $\varepsilon
_{k}=\frac{1}{2},\ $where $k=1$ if $p_{0}=2,\ $and $k=2$ if $p_{0}=4.$ From
Corollary \ref{Corr-1} we derive: $P$ is a $\frac{1}{2}$-CNS polynomial if and
only if
\begin{align*}
-2  &  \leq p_{1}\leq3,\text{ for }p_{0}=4\\
-1  &  \leq p_{1}\leq2,\text{ for }p_{0}=2,
\end{align*}
or equivalently, if and only if $-p_{0}+k\leq p_{1}\leq k+1$. Now, from
Corollary \ref{TRM-veza 2} it follows that $P$ is $\varepsilon$-CNS polynomial
if and only if $-p_{0}+k\leq p_{1}\leq k+1$.
\end{proof}

\begin{proof}
[Proof of Theorem \ref{TRM-main-manje}]i) Let $\varepsilon\in\left[
0,\frac{1}{2}\right)  $ or let $\varepsilon=\frac{1}{2}$ if $\left\vert
p_{0}\right\vert $ is odd. Suppose\textit{\ }$P$\textit{\ }is a\textit{\ }%
$\mathcal{\varepsilon-}$CNS polynomial. Then, by Theorem \ref{TRM-veza}, we
have $\left(  \frac{1}{p_{0}},\frac{p_{1}}{p_{0}}\right)  \in D_{2,\varepsilon
}^{0}.$ Since by (\ref{inc-0}) we have $\mathcal{D}_{2,\varepsilon}^{0}\subset
D\left(  \varepsilon\right)  ,$ then $\left(  \frac{1}{p_{0}},\frac{p_{1}%
}{p_{0}}\right)  \in D\left(  \varepsilon\right)  $. Now, by Proposition
\ref{Lem1}, we conclude that the pair $\left(  p_{0},p_{1}\right)  $ fulfills
the conditions (\ref{prva1}) or (\ref{druga 1}). Let the pair $\left(
p_{0},p_{1}\right)  $ satisfies the conditions (\ref{prva1}) or (\ref{druga 1}%
). Then Proposition \ref{Lem1} implies $\left(  \frac{1}{p_{0}},\frac{p_{1}%
}{p_{0}}\right)  \in D\left(  \varepsilon_{k}\right)  $, and by Proposition
\ref{Lem2} ii), we conclude $\left(  \frac{1}{p_{0}},\frac{p_{1}}{p_{0}%
}\right)  \in B\left(  \varepsilon_{k}\right)  .$ Since, by (\ref{inc-0}), we
have $B\left(  \varepsilon_{k}\right)  \subset\mathcal{D}_{2,\varepsilon_{k}%
}^{0}$, then $\left(  \frac{1}{p_{0}},\frac{p_{1}}{p_{0}}\right)
\in\mathcal{D}_{2,\varepsilon_{k}}^{0}$. By Theorem \ref{TRM-veza} and
Corollary \ref{Corr-1}, we now conclude $P$\textit{\ }is an
$\mathcal{\varepsilon-}$CNS polynomial.

ii) Let $\varepsilon\in\left(  \frac{1}{2},1\right)  $ or let $\varepsilon
=\frac{1}{2}$ if $\left\vert p_{0}\right\vert $ is even. Assume that
$P$\textit{\ }is an\textit{\ }$\mathcal{\varepsilon-}$CNS polynomial.
Similarly as above, by Theorem \ref{TRM-veza} and by (\ref{inc-0}), we
conclude $\left(  \frac{1}{p_{0}},\frac{p_{1}}{p_{0}}\right)  \in D\left(
\varepsilon\right)  .$ Now, by Proposition \ref{Lem3}, we obtain that the pair
$\left(  p_{0},p_{1}\right)  $ fulfills the conditions (\ref{prva2}) or
(\ref{druga2}). Let the pair $\left(  p_{0},p_{1}\right)  $ satisfies the
conditions (\ref{prva2}) or (\ref{druga2}). Then from Proposition \ref{Lem3}
and Proposition \ref{Lem2} ii) we deduce $\left(  \frac{1}{p_{0}},\frac{p_{1}%
}{p_{0}}\right)  \in B\left(  \varepsilon_{k}\right)  ,$ except when $p_{0}=2$
or $4$ and $\varepsilon\in\left[  \frac{1}{2},\frac{1}{2}+\frac{1}{\left\vert
p_{0}\right\vert }\right)  .$ Since, $B\left(  \varepsilon_{k}\right)
\subset\mathcal{D}_{2,\varepsilon_{k}}^{0}$, we conclude, similar to above,
that $P$\textit{\ }is an $\mathcal{\varepsilon-}$CNS polynomial. If $p_{0}=2$
or $4$ and $\varepsilon\in\left[  \frac{1}{2},\frac{1}{2}+\frac{1}{\left\vert
p_{0}\right\vert }\right)  $, then the statement follows directly from
Proposition \ref{Lem5}.
\end{proof}

\appendix{}

\section{Proofs of auxiliary results from Section \ref{Sect3}\label{Sect5}}

Lemmas \ref{Lemma-18-prvi} and \ref{Lemma-18-s} are needed in the proof of
Lemma \ref{Lemma-delta-CZ}.

\begin{algorithm}
\label{Lemma-18-prvi}Let $n\geq4\ $be integer and $\varepsilon\in\Big[\frac
{2}{3(2n+1)},\frac{2}{3(2n-1)}\Big).$ Let $\Delta_{19}=\Delta(W,Z,V_{2})$ be
triangle, where $W=(\varepsilon,1),$ $Z=(\frac{2}{3}-\varepsilon,\frac{5}%
{3}-2\varepsilon)$ and $V_{2}=(\frac{2}{3}-\varepsilon,\frac{5}{3}%
-2\varepsilon-\frac{2}{n}\varepsilon)$. Then $\Delta_{19}\cap\mathcal{D}%
_{2,\varepsilon}^{0}=\Delta_{19}\setminus\overline{WZ}.$
\end{algorithm}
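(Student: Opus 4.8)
The plan is to apply the witness-set algorithm of Lemma \ref{lem-alg} together with Proposition \ref{prop-conv-H} to the convex hull $\mathcal{H}=\Delta_{19}=\Delta(W,Z,V_{2})$. By Proposition \ref{prop-conv-H}, $\Delta_{19}\cap\mathcal{D}_{2,\varepsilon}^{0}=\Delta_{19}\setminus\bigcup_{\pi\in\Lambda}P_{\varepsilon}(\pi)$, where $\Lambda$ is the set of non-trivial primitive graph-cycles of $G_{\varepsilon}(\Delta_{19})$. Hence the whole problem reduces to (a) determining $\Lambda$ and (b) computing the cutout polygons $P_{\varepsilon}(\pi)$ for $\pi\in\Lambda$. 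I would show that $\Lambda=\{\pi_{2}\}$ with $\pi_{2}=(-1,1)$ and that $\Delta_{19}\cap P_{\varepsilon}(\pi_{2})=\overline{WZ}$, which immediately yields the claim.

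First I would check that $\Delta_{19}\subset\mathcal{E}_{2}$, so that the hypotheses of Lemma \ref{lem-alg} and Proposition \ref{prop-conv-H} apply and the witness set is finite, and then construct $\mathcal{V}(\Delta_{19})$. By Remark \ref{rem-cyc} this set does not depend on $\varepsilon$ but only on the triangle, so I would exhibit it as an explicit list of $18n-3$ integer vectors depending on $n$, obtained by iterating the closure rule in part 2) of Lemma \ref{lem-alg} starting from $\pm\mathbf{e}_{1},\pm\mathbf{e}_{2}$; the range bounds for the new entry are controlled by the three corners $W,Z,V_{2}$, whose coordinates are explicit affine functions of $\varepsilon$. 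I would then take the union over all $\varepsilon\in\big[\frac{2}{3(2n+1)},\frac{2}{3(2n-1)}\big)$ of the edge sets $E_{\varepsilon}$ defined by (\ref{edg}); this produces the enveloping graph $G'(\Delta_{19})$ on these $18n-3$ vertices with $32n-12$ edges, and by construction $G_{\varepsilon}(\Delta_{19})$ is a subgraph of $G'(\Delta_{19})$ for every admissible $\varepsilon$. Consequently every primitive cycle of $G_{\varepsilon}(\Delta_{19})$ is one of $G'(\Delta_{19})$, and it suffices to analyse the cycles of the single graph $G'(\Delta_{19})$.

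The heart of the argument---and the step I expect to be the main obstacle---is to show that $G'(\Delta_{19})$ has exactly one non-trivial primitive cycle, namely $\pi_{2}=(-1,1)$. I would prove this by induction on $n\geq4$. The base case $n=4$ is a finite, if sizeable, graph that can be checked directly. For the inductive step I would describe precisely how the vertex and edge sets of $G'(\Delta_{19})$ grow as $n$ is increased by one: the increment from $18n-3$ to $18(n+1)-3$ vertices, together with their incident edges, enters in a regular, layered pattern, while the previously present portion of the graph keeps its cycle structure. The crux is to verify that each newly added vertex lies only on directed paths that either dead-end or feed into the already-understood part, so that no directed cycle other than $\pi_{2}$ can be produced. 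This uniform-in-$n$ bookkeeping of adjacencies is where the real work lies, and it is made delicate by the fact that the admissible $\varepsilon$-range shrinks toward $0$ as $n$ grows, pushing $\Delta_{19}$ toward the boundary line $y=x+1$ of $\mathcal{E}_{2}$ and thereby enlarging $\mathcal{V}(\Delta_{19})$.

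Once the cycle structure is settled, the rest is routine. The cutout polygon of $\pi_{2}=(-1,1)$ was already computed in Lemma \ref{lemma-delta1}:
\[
P_{\varepsilon}(\pi_{2})=\{(x,y):1\leq -x+y+\varepsilon<2,\ -1\leq x-y+\varepsilon<0\},
\]
whose binding constraint for $\varepsilon\in\left(0,\frac{1}{2}\right)$ is $x+1-\varepsilon\leq y\leq x+1+\varepsilon$, the closed lower edge lying on the line $y=x+1-\varepsilon$. Both $W=(\varepsilon,1)$ and $Z=(\frac{2}{3}-\varepsilon,\frac{5}{3}-2\varepsilon)$ lie on this line, while $V_{2}$ lies strictly below it; since $y-(x+1-\varepsilon)$ is affine, vanishes exactly on $\overline{WZ}$, and is negative at $V_{2}$, every point of $\Delta_{19}$ off $\overline{WZ}$ satisfies $y<x+1-\varepsilon$. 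Hence $\Delta_{19}\cap P_{\varepsilon}(\pi_{2})=\overline{WZ}$, and Proposition \ref{prop-conv-H} gives $\Delta_{19}\cap\mathcal{D}_{2,\varepsilon}^{0}=\Delta_{19}\setminus\overline{WZ}$, as required.
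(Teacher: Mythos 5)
Your overall framework coincides with the paper's: reduce via Lemma \ref{lem-alg} and Proposition \ref{prop-conv-H} to determining the primitive cycles of an enveloping graph $G^{\prime}(\Delta_{19})$ that contains $G_{\varepsilon}(\Delta_{19})$ for all admissible $\varepsilon$, then compute the cutout polygon of $\pi_{2}=(-1,1)$. Your concluding geometric step is correct: for $\varepsilon\in\left(0,\tfrac{1}{2}\right)$ the set $P_{\varepsilon}(\pi_{2})$ is the closed strip $x+1-\varepsilon\leq y\leq x+1+\varepsilon$, the points $W$ and $Z$ lie on the line $y=x+1-\varepsilon$ while $V_{2}$ lies strictly below it, so $\Delta_{19}\cap P_{\varepsilon}(\pi_{2})=\overline{WZ}$. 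The genuine gap is in the step you yourself call the heart of the argument, namely that $\pi_{2}$ is the only non-trivial primitive cycle of $G^{\prime}(\Delta_{19})$. You propose induction on $n$, with the inductive step resting on the claim that, as $n$ increases by one, new vertices enter in layers ``while the previously present portion of the graph keeps its cycle structure.'' That premise is false: the graphs $G^{\prime}(\Delta_{19})$ for parameters $n$ and $n+1$ are not nested, because the edge rules have $n$-dependent thresholds, so edges among the \emph{old} vertices appear and disappear as $n$ changes. Concretely, in the paper's description of $G^{\prime}(\Delta_{19})$ the edge $(t,-t)\rightarrow(-t,t-1)$ exists precisely for $t\in\{2n-1,2n\}$, so the vertex $(2n-1,-(2n-1))$ carries this edge at parameter $n$ but not at parameter $n+1$; likewise $(t,-(t-1))\rightarrow(-(t-1),t-2)$ exists precisely for $t\geq n-1$, so $(n-1,-(n-2))$ loses an outgoing edge when $n$ increases. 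Since moreover the parameter intervals $\big[\tfrac{2}{3(2n+1)},\tfrac{2}{3(2n-1)}\big)$ for distinct $n$ are disjoint, there is also no containment between the $\varepsilon$-unions of edge sets. An induction on $n$ would thus have to track a shifting edge set on the shared vertices, which amounts to redoing the entire cycle analysis at every step; your sketch supplies no mechanism for that.

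The paper resolves exactly this difficulty by a different induction: it fixes an arbitrary $n\geq4$, writes $G^{\prime}(\Delta_{19})$ down explicitly (rules a)--j), with all ranges expressed relative to $n$), and inducts not on $n$ but on a filtration $\mathcal{V}_{0}\subset\mathcal{V}_{1}\subset\cdots\subset\mathcal{V}_{2n}=\mathcal{V}(\Delta_{19})$ of the vertex set by the magnitude of the first coordinate. The inductive statement (Claim \ref{clm}) is about trails rather than cycles: every maximal directed trail starting in $\mathcal{V}_{k}$ must hit one of $(0,0)$, $(1,-1)$, $(-1,1)$, and any trail avoiding $(0,0)$ and using at most one of $(1,-1)$, $(-1,1)$ is a path; this immediately forces every cycle to be $\pi_{0}$ or $\pi_{2}$, uniformly in $n$. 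To repair your proposal you would need to replace the induction on $n$ by such a fixed-$n$, layer-by-layer argument (or some other device valid uniformly in $n$); as written, the inductive step would fail.
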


\begin{proof}
The algorithm of Lemma \ref{lem-alg} for $\mathcal{H}=\Delta_{19}$ and
$\varepsilon\in\Big[\frac{2}{3(2n+1)},\frac{2}{3(2n-1)}\Big),$ $n\geq4\ $leads
to the graph $G_{\varepsilon}(\Delta_{19})$ which is subgraph of the graph
$G^{\prime}(\Delta_{19})$ with $18n-3$ vertices and $32n-12$ edges as follows:%
\begin{align*}
\mathcal{V}\left(  {\small \Delta}_{19}\right)   &  =\left\{  {\small \pm
(t,-t),\pm(t,-(t-1)):t=0,...,2n}\right\} \\
&  {\small \cup}\left\{  {\small \pm(t,-(t+1)),\pm(t,-(t-2)):t=1,...,2n-1}%
\right\} \\
&  {\small \cup}\left\{  {\small \pm(t,-(t+2)):t=0,...,n-2}\right\}
\end{align*}

\begin{itemize}
\item[\textbf{a)}] $(t,-t)\rightarrow(-t,t)$ for all $t=0,...,2n,$ and also
$(t,-t)\rightarrow(-t,t-1)$ if $t=2n,$ $2n-1.$

\item[\textbf{b)}] $(-t,t)\rightarrow(t,-\left(  t-1\right)  ),$ for all
$t=1,...,2n,$ and also $(-t,t)\rightarrow(t,-t)$ if $t=1.$

\item[\textbf{c)}] $(t,-(t-1))\rightarrow(-\left(  t-1\right)  ,t-1),$ for all
$t=0,...,2n,$ and also $(t,-(t-1))\rightarrow(-\left(  t-1\right)  ,t-2)$ if
$t=n-1,...,2n;$

\item[\textbf{d)}] $(-t,t-1)\rightarrow(t-1,-\left(  t-2\right)  ),$ for all
$t=0,...,2n,$ and also $(-t,t-1)\rightarrow(t-1,-\left(  t-1\right)  )$ if
$t=0$ and $(-t,t-1)\rightarrow(t-1,-\left(  t-3\right)  )$ if $t=n+1,...2n;$

\item[\textbf{e)}] $(t,-(t+1))\rightarrow(-(t+1),t+1)$ for all $t=1,...,2n-1,$
and also $(t,-(t+1))\rightarrow(-(t+1),t+2)$ if $t=1,...,2n-3.$

\item[\textbf{f)}] $(-t,t+1)\rightarrow(t+1,-\left(  t+1\right)  )$ for all
$t=1,...,2n-1,$ and also $(-t,t+1)\rightarrow(t+1,-t)\ $if $t=2,...,2n-1;$

\item[\textbf{g)}] $(t,-(t-2))\rightarrow(-(t-2),t-3)$ and
$(t,-(t-2))\rightarrow(-(t-2),t-2),$ for all $t=1,...,2n-1$ and also
$(t,-(t-2))\rightarrow(-(t-2),t-4)$ if $t=2n-1;$

\item[\textbf{h)}] $(-t,t-2)\rightarrow(t-2,-\left(  t-3\right)  )$ and
$(-t,t-2)\rightarrow(t-2,-\left(  t-4\right)  ),$ for all $t=1,...,2n-1$, and
also $(-t,t-2)\rightarrow(t-2,-\left(  t-2\right)  )$ if $t=1.$

\item[\textbf{i)}] $(t,-(t+2))\rightarrow(-(t+2),t+2)$ and
$(t,-(t+2))\rightarrow(-(t+2),t+3)$ for all $t=0,...,n-2,$ and also
$(t,-(t+2))\rightarrow(-(t+2),t+4)$ if $t=0,...,n-4.$

\item[\textbf{j)}] $(-t,t+2)\rightarrow(t+2,-(t+2))$ for all $t=0,...,n-2$,
and also $(-t,t+2)\rightarrow(t+2,-(t+3))$ if $k=0,...,n-4$ and
$(-t,t+2)\rightarrow(t+2,-(t+1))$ if $t=2,...,n-2.$
\end{itemize}

By the definition of the edges of the graph $G^{\prime}(\Delta_{19})$ we have:
$(0,0)\overset{a)}{\rightarrow}(0,0)\ $and $(-1,1)\overset{b)}{\rightarrow
}(1,-1)\overset{a)}{\rightarrow}(-1,1)$ which implies that the graph
$G^{\prime}\left(  \Delta_{19}\right)  $ has two cycles, a trivial cycle
$\pi_{0}=(0)$ and a non-zero cycle $\pi_{2}=(-1,1)$. Our goal is to show that
$\pi_{0}$ and $\pi_{1}$ are the only cycles of the graph $G^{\prime}%
(\Delta_{19}).$ We will show the following:

\begin{claim}
\label{clm-osn}Let $v\in V\left(  {\small \Delta}_{19}\right)  $ be arbitrary
vertex. If there exist directed trail of the graph $G^{\prime}(\Delta_{19})$
that starts at vertex $v$ and includes some cycle $\pi$, then $\pi=\pi_{0}%
\ $or $\pi=\pi_{2}.$ Furthermore, each maximal directed trail of the graph
$G^{\prime}\left(  \Delta_{19}\right)  $ which starts at vertex $v,$ includes
exactly one of the cycles $\pi_{0}\ $or $\pi_{2}.$
\end{claim}

Obviously, the Claim \ref{clm-osn} implies that $\pi_{0}$ and $\pi_{1}$ are
the only cycles of the graph $G^{\prime}(\Delta_{19}).$

For all integers $k\geq0$ let us define sets%
\begin{align*}
\mathcal{V}_{k}  &  =\left\{  \pm(t,-t),\pm(t,-(t-1)):t=0,...,\min\left\{
k,2n\right\}  \right\} \\
&  {\small \cup}\left\{  \pm(t,-(t+1)),\pm(t,-(t-2)):t=\min\left\{
k,1\right\}  ,...,\min\left\{  k,2n-1\right\}  \right\} \\
&  {\small \cup}\left\{  \pm(t,-(t+2)):t=0,...,\min\left\{  k,n-2\right\}
\right\}  .
\end{align*}
Note that $\mathcal{V}_{k}\subset\mathcal{V}_{k+1}$ for all $k=0,...2n-1$ and
$\mathcal{V}_{k}=\mathcal{V}\left(  {\small \Delta}_{19}\right)  \ $if
$k\geq2n\geq8.$ We will prove the following:

\begin{claim}
\label{clm}For every integer $k\geq0$ and arbitrary vertex $v\in
\mathcal{V}_{k}$ the following holds:

\begin{itemize}
\item[i.] Every maximal directed trail of the graph $G^{\prime}(\Delta_{19})$
that starts at $v$ includes at least one of the vertices: $(0,0),$ $(1,-1),$
$(-1,1).$

\item[ii.] Every possible directed trail of the graph $G^{\prime}(\Delta
_{19})$ that starts at vertex $v$ which does not include vertex $(0,0)$ and
includes at most one of the vertices $(1,-1)\ $and $(-1,1)$ is a directed path.
\end{itemize}
\end{claim}

Note that Claim \ref{clm} implies Claim \ref{clm-osn}. Namely, if maximal
directed trail of the graph $G^{\prime}(\Delta_{19})$ that starts at some a
$v\in\mathcal{V}\left(  {\small \Delta}_{19}\right)  $ includes one of the
vertices: $(0,0),(-1,1),$ $(1,-1)$, then that trail includes one of the
subtrails: $(0,0)\overset{a)}{\rightarrow}\mathbf{(}0,0)$, $\mathbf{(}%
-1,1\mathbf{)}\overset{b)}{\rightarrow}\mathbf{(}1,-1\mathbf{)}%
\overset{a)}{\rightarrow}(-1,1),$ $(-1,1)\overset{b)}{\rightarrow
}(1,0)\overset{c)}{\rightarrow}(0,0)\overset{a)}{\rightarrow}\mathbf{(}%
0,0),\ \mathbf{(}1,-1\mathbf{)}\overset{a)}{\rightarrow}%
(-1,1)\overset{b)}{\rightarrow}\mathbf{(}1,-1\mathbf{)}$ and
$(1,-1)\overset{a)}{\rightarrow}(-1,1)\overset{b)}{\rightarrow}%
(1,0)\overset{c)}{\rightarrow}(0,0)\overset{a)}{\rightarrow}\mathbf{(}0,0).$
Since each of these subtrails includes exactly one of the cycles $\pi_{0}\ $or
$\pi_{2},$ to prove that Claim \ref{clm-osn} it is sufficient to show that
Claim \ref{clm} holds for $k=2n$, i.e for all vertices $v\in\mathcal{V}%
_{2n}=\mathcal{V}\left(  {\small \Delta}_{19}\right)  .$ We will prove Claim
\ref{clm} by induction on $k$.

\textit{Base case: }As $n\geq4$, then $2n-1\geq7$ and $n-2\geq2,\ $and
therefore%
\begin{align*}
\mathcal{V}_{0}  &  =\left\{  (0,0),\pm(0,1),\pm(0,-2)\right\}  ,\\
\mathcal{V}_{1}  &  =\left\{  (0,0),\pm(1,-1),\pm(0,1),\pm(1,0),\pm
(1,-2),\pm(1,1),\pm(0,-2),\pm(1,-3)\right\}  ,
\end{align*}
for all $n\geq4.$ Since $\mathcal{V}_{0}\subset\mathcal{V}_{1}$ we prove the
Claim \ref{clm} for each element of set $\mathcal{V}_{1}$ (more than we need).
By the definition of the edges of the graph $G^{\prime}(\Delta_{19})$ given by
\textbf{a)},..., \textbf{j)} we obtain that all maximal directed trails that
starts at each vertex from the set $\mathcal{V}_{1}\ $are given by:

\begin{itemize}
\item $\mathbf{(0,0)}$ $\overset{a)}{\rightarrow}\mathbf{(0,0)}$

\item $\mathbf{(-1,1)}\overset{b)}{\rightarrow}\mathbf{(1,-1)}$\newline%
$\mathbf{(-1,1)}\overset{b)}{\rightarrow}(1,0)\overset{c)}{\rightarrow
}\mathbf{(0,0)}$

\item $\mathbf{(1,-1)}\overset{a)}{\rightarrow}\mathbf{(-1,1)}$

\item $\mathbf{(0,1)}\overset{c)}{\rightarrow}\mathbf{(1,-1)}$

\item $\mathbf{(0,-1)}\overset{d)}{\rightarrow}\mathbf{(-1,1)}$\newline%
$\mathbf{(0,-1)}\overset{d)}{\rightarrow}(-1,2)\overset{f)}{\rightarrow
}(2,-2)\overset{a)}{\rightarrow}(-2,2)\overset{b)}{\rightarrow}%
(2,-1)\overset{c)}{\rightarrow}(\mathbf{-1,1)}$

\item $\mathbf{(1,0)}\overset{c)}{\rightarrow}\mathbf{(0,0)}$

\item $\mathbf{(-1,0)}\overset{d)}{\rightarrow}(0,1)\overset{c)}{\rightarrow
}\mathbf{(1,-1)}$

\item $\mathbf{(-1,2)}\overset{f)}{\rightarrow}(2,-2)\overset{a)}{\rightarrow
}(-2,2)\overset{b)}{\rightarrow}(2,-1)\overset{c)}{\rightarrow}\mathbf{(-1,1)}%
$

\item $\mathbf{(1,-2)}\overset{e)}{\rightarrow}(-2,2)\overset{b)}{\rightarrow
}(2,-1)\overset{c)}{\rightarrow}\mathbf{(-1,1)}$\newline$\mathbf{(1,-2)}%
\overset{e)}{\rightarrow}(-2,3)\overset{f)}{\rightarrow}%
(3,-3)\overset{a)}{\rightarrow}(-3,3)\overset{b)}{\rightarrow}%
(3,-2)\overset{c)}{\rightarrow}$ $...$\newline$\mathbf{(1,-2)}%
\overset{e)}{\rightarrow}(-2,3)\overset{f)}{\rightarrow}%
(3,-2)\overset{c)}{\rightarrow}$ $...$

\item $\mathbf{(1,1)}\overset{g)}{\rightarrow}\mathbf{(1,-1)}$\newline%
$\mathbf{(1,1)}\overset{g)}{\rightarrow}(1,-2)\overset{e)}{\rightarrow
}(-2,2)\overset{b)}{\rightarrow}(2,-1)\overset{c)}{\rightarrow}\mathbf{(-1,1)}%
$\newline$\mathbf{(1,1)}\overset{g)}{\rightarrow}%
(1,-2)\overset{e)}{\rightarrow}(-2,3)\overset{f)}{\rightarrow}%
(3,-3)\overset{a)}{\rightarrow}(-3,3)\overset{b)}{\rightarrow}%
(3,-2)\overset{c)}{\rightarrow}...$\newline$\mathbf{(1,1)}%
\overset{g)}{\rightarrow}(1,-2)\overset{e)}{\rightarrow}%
(-2,3)\overset{f)}{\rightarrow}(3,-2)\overset{c)}{\rightarrow}$ $...$

\item $\mathbf{(-1,-1)}\overset{h)}{\rightarrow}\mathbf{(-1,1)}$%
\newline$\mathbf{(-1,-1)}\overset{h)}{\rightarrow}%
(-1,2)\overset{f)}{\rightarrow}(2,-2)\overset{c)}{\rightarrow}%
(-2,2)\overset{b)}{\rightarrow}(2,-1)\overset{c)}{\rightarrow}\mathbf{(-1,1)}%
$\newline$\mathbf{(-1,-1)}\overset{h)}{\rightarrow}%
(-1,3)\overset{j)}{\rightarrow}(3,-3)\overset{a)}{\rightarrow}%
(-3,3)\overset{b)}{\rightarrow}(3,-2)\overset{c)}{\rightarrow}$ $...$%
\newline$\mathbf{(-1,-1)}\overset{h)}{\rightarrow}(-1,3)\underset{n\geq
5}{\overset{j)}{\rightarrow}}(3,-4)\overset{e)}{\rightarrow}...,$ \ if
$n\geq5$

\item $\mathbf{(0,2)}\overset{j)}{\rightarrow}(2,-2)\overset{a)}{\rightarrow
}(-2,2)\overset{b)}{\rightarrow}(2,-1)\overset{c)}{\rightarrow}\mathbf{(-1,1)}%
$\newline$\mathbf{(0,2)}\overset{j)}{\rightarrow}%
(2,-3)\overset{e)}{\rightarrow}(-3,3)\overset{b)}{\rightarrow}%
(3,-2)\overset{c)}{\rightarrow}$ $...$\newline$\mathbf{(0,2)}%
\overset{j)}{\rightarrow}(2,-3)\overset{e)}{\rightarrow}%
(-3,4)\overset{f)}{\rightarrow}(4,-4)\overset{a)}{\rightarrow}\left(
-4,4\right)  \overset{b)}{\rightarrow}(4,-3)$\newline$\mathbf{(0,2)}%
\overset{j)}{\rightarrow}(2,-3)\overset{e)}{\rightarrow}%
(-3,4)\overset{f)}{\rightarrow}(4,-3)$

\item $\mathbf{(0,-2)}\overset{i)}{\rightarrow}(-2,2)\overset{b)}{\rightarrow
}\left(  2,-1\right)  \overset{c)}{\rightarrow}\mathbf{(-1,1)}$\newline%
$\mathbf{(0,-2)}\overset{i)}{\rightarrow}(-2,3)\overset{f)}{\rightarrow
}\left(  3,-3\right)  \overset{a)}{\rightarrow}(-3,3)\overset{b)}{\rightarrow
}(3,-2)\overset{c)}{\rightarrow}$ $...$\newline$\mathbf{(0,-2)}%
\overset{i)}{\rightarrow}(-2,3)\overset{f)}{\rightarrow}\left(  3,-2\right)
\overset{c)}{\rightarrow}$ $...$\newline$\mathbf{(0,-2)}%
\overset{i)}{\rightarrow}(-2,4)\overset{j)}{\rightarrow}\left(  4,-4\right)
\overset{a)}{\rightarrow}\left(  -4,4\right)  \overset{b)}{\rightarrow
}(4,-3)\overset{c)}{\rightarrow}...$\newline$\mathbf{(0,-2)}%
\overset{i)}{\rightarrow}(-2,4)\underset{n\geq6}{\overset{j)}{\rightarrow}%
}\left(  4,-5\right)  \overset{e)}{\rightarrow}\left(  -5,5\right)
\overset{b)}{\rightarrow}\left(  5,-4\right)  \overset{c)}{\rightarrow}...$,
if $n\geq6$\newline$\mathbf{(0,-2)}\overset{i)}{\rightarrow}{\small (-2,4)}%
\underset{n\geq6}{\overset{j)}{\rightarrow}}\left(  {\small 4,-5}\right)
\overset{e)}{\rightarrow}\left(  {\small -5,6}\right)
\overset{f)}{\rightarrow}\left(  {\small 6,-6}\right)
\overset{a)}{\rightarrow}\left(  {\small -6,6}\right)
\overset{b)}{\rightarrow}\left(  {\small 6,-5}\right)
\overset{c)}{\rightarrow}...$ if ${\small n\geq6}$\newline$\mathbf{(0,-2)}%
\overset{i)}{\rightarrow}(-2,4)\underset{n\geq6}{\overset{j)}{\rightarrow}%
}\left(  4,-5\right)  \overset{e)}{\rightarrow}\left(  -5,6\right)
\overset{f)}{\rightarrow}\left(  6,-5\right)  \overset{c)}{\rightarrow}...$,
\ if $n\geq6$\newline$\mathbf{(0,-2)}\overset{i)}{\rightarrow}%
(-2,4)\overset{j)}{\rightarrow}(4,-3)\overset{c)}{\rightarrow}...$

\item $\mathbf{(1,-3)}\overset{i)}{\rightarrow}(-3,3)\overset{b)}{\rightarrow
}(3,-2)\overset{c)}{\rightarrow}...$\newline$\mathbf{(1,-3)}%
\overset{i)}{\rightarrow}(-3,4)\overset{f)}{\rightarrow}%
(4,-4)\overset{a)}{\rightarrow}\left(  -4,4\right)  \overset{b)}{\rightarrow
}(4,-3)\overset{c)}{\rightarrow}...$\newline$\mathbf{(1,-3)}%
\overset{i)}{\rightarrow}(-3,4)\overset{f)}{\rightarrow}%
(4,-3)\overset{c)}{\rightarrow}...\newline\mathbf{(1,-3)}\underset{n\geq
5}{\overset{i)}{\rightarrow}}(-3,5)\overset{j)}{\rightarrow}....$, \ if
$n\geq5$

\item $\mathbf{(-1,3)}\overset{j)}{\rightarrow}(3,-3)\overset{a)}{\rightarrow
}(-3,3)\overset{b)}{\rightarrow}(3,-2)\overset{c)}{\rightarrow}...$%
\newline$\mathbf{(-1,3)}\overset{j)}{\rightarrow}%
(3,-4)\overset{e)}{\rightarrow}...$
\end{itemize}

To complete the proof of the base case, we need to find all maximal directed
trails that starts at the vertices $(3,-2),(3,-4),$ $(4,-3)$ for all $n\geq4,$
at $\left(  5,-4\right)  $ $\left(  6,-5\right)  $ if $n\geq6$ and at
$(-3,5)\ $if $n\geq5.$ We have:

\begin{itemize}
\item $(3,-2)\overset{c)}{\rightarrow}(-2,2)\overset{b)}{\rightarrow
}(2,-1)\overset{c)}{\rightarrow}\mathbf{(-1,1)}$\newline%
$(3,-2)\underset{n=4}{\overset{c)}{\rightarrow}}(-2,1)\overset{d)}{\rightarrow
}(1,0)\overset{c)}{\rightarrow}\mathbf{(0,0)}$ \ if $n=4$

\item $(4,-3)\overset{c)}{\rightarrow}(-3,3)\overset{b)}{\rightarrow
}\mathbf{(3,-2)}\overset{c)}{\rightarrow}...\ \newline%
(4,-3)\underset{n=4,5}{\overset{c)}{\rightarrow}}%
(-3,2)\overset{d)}{\rightarrow}(2,-1)\overset{c)}{\rightarrow}\mathbf{(-1,1),}%
$ \ if $n=4,5$

\item $(5,-4)\overset{c)}{\rightarrow}\left(  -4,4\right)
\overset{b)}{\rightarrow}\mathbf{(4,-3)}\overset{c)}{\rightarrow}...\newline$
$(5,-4)\underset{n=4,5,6}{\overset{c)}{\rightarrow}}%
(-4,3)\overset{d)}{\rightarrow}\mathbf{(3,-2)}\overset{c)}{\rightarrow}...,$
\ if $n=4,5,6$

\item $(3,-4)\overset{e)}{\rightarrow}(-4,4)\overset{b)}{\rightarrow
}\mathbf{(4,-3)}\overset{c)}{\rightarrow}...\newline%
(3,-4)\overset{e)}{\rightarrow}(-4,5)\overset{f)}{\rightarrow}%
(5,-5)\overset{a)}{\rightarrow}(-5,5)\overset{b)}{\rightarrow}\mathbf{(5,-4)}%
\overset{c)}{\rightarrow}...\newline(3,-4)\overset{e)}{\rightarrow
}(-4,5)\overset{f)}{\rightarrow}\mathbf{(5,-4)}\overset{c)}{\rightarrow}...$

\item $(-3,5)\underset{n\geq5}{\overset{j)}{\rightarrow}}%
(5,-5)\overset{b)}{\rightarrow}\mathbf{(5,-4)}\overset{c)}{\rightarrow}...$if
$n\geq5\newline(-3,5)\underset{n\geq5}{\overset{j)}{\rightarrow}%
}\mathbf{(5,-4)}\overset{c)}{\rightarrow}...$if $n\geq5\newline%
(-3,5)\underset{n\geq7}{\overset{j)}{\rightarrow}}%
(5,-6)\overset{e)}{\rightarrow}....$if $n\geq7.$

\item $\left(  6,-5\right)  \overset{c)}{\rightarrow}\left(  -5,5\right)
\overset{b)}{\rightarrow}\left(  \mathbf{5,-4}\right)
\overset{c)}{\rightarrow}...\newline\left(  6,-5\right)
\underset{n=4,5,6,7}{\overset{c)}{\rightarrow}}\left(  -5,4\right)
\overset{d)}{\rightarrow}\left(  \mathbf{4,-3}\right)
\overset{c)}{\rightarrow}...$if $n=4,5,6,7\newline\left(  6,-5\right)
\underset{n=4,5,6,7}{\overset{c)}{\rightarrow}}\left(  -5,4\right)
\underset{n=4}{\overset{d)}{\rightarrow}}\left(  4,-2\right)
\overset{g)}{\rightarrow}...$if $n=4.$
\end{itemize}

Therefore, it remains to find all maximal directed trails that starts
at$\ $the vertices $\left(  6,-5\right)  $ for all $n\geq4$ and $\left(
-3,5\right)  $ if $n\geq5,$ or more precisely for the vertices $\left(
4,-2\right)  $ if $n=4$ and $(5,-6)$ if $n\geq7,$ respectively. We have:

\begin{itemize}
\item $\left(  4,-2\right)  \overset{g)}{\rightarrow}\left(  -2,2\right)
\overset{b)}{\rightarrow}\left(  2,-1\right)  \overset{c)}{\rightarrow
}\mathbf{(-1,1)}\newline\left(  4,-2\right)  \overset{g)}{\rightarrow}\left(
-2,1\right)  \overset{d)}{\rightarrow}(1,0)\overset{c)}{\rightarrow
}\mathbf{(0,0).}$

and

\item $(5,-6)\overset{e)}{\rightarrow}\left(  -6,6\right)
\overset{b)}{\rightarrow}\mathbf{(6,-5)}\overset{c)}{\rightarrow}%
...\newline(5,-6)\overset{e)}{\rightarrow}\left(  -6,7\right)
\overset{f)}{\rightarrow}(7,-7)\overset{b)}{\rightarrow}...\newline%
(5,-6)\overset{e)}{\rightarrow}\left(  -6,7\right)  \overset{f)}{\rightarrow
}(7,-6)\overset{c)}{\rightarrow}...$
\end{itemize}

Since

\begin{itemize}
\item $(7,-6)\overset{c)}{\rightarrow}\left(  -6,6\right)
\overset{b)}{\rightarrow}\mathbf{(6,-5)}\overset{c)}{\rightarrow}%
...\newline(7,-6)\underset{n=4,5,6,7,8}{\overset{c)}{\rightarrow}}\left(
-6,5\right)  \overset{d)}{\rightarrow}\left(  \mathbf{5,-4}\right)
\overset{c)}{\rightarrow}...$if $n=4,5,6,7,8$

\item $(7,-7)\overset{b)}{\rightarrow}\left(  -7,6\right)
\overset{d)}{\rightarrow}\left(  \mathbf{6,-5}\right)
\overset{c)}{\rightarrow}...\newline(7,-7)\overset{b)}{\rightarrow}\left(
-7,6\right)  \underset{n=4,5,6}{\overset{d)}{\rightarrow}}\left(  6,-4\right)
\overset{g)}{\rightarrow}...$if $n=4,5,6$
\end{itemize}

and

\begin{itemize}
\item $\left(  6,-4\right)  \overset{g)}{\rightarrow}\left(  -4,4\right)
\overset{b)}{\rightarrow}\mathbf{(4,-3)}\overset{c)}{\rightarrow}%
...\newline\left(  6,-4\right)  \overset{g)}{\rightarrow}\left(  -4,3\right)
\overset{d)}{\rightarrow}\mathbf{(3,-2)}\overset{c)}{\rightarrow}...$
\end{itemize}

we find all maximal directed trails that starts at$\ $each element of set
$\mathcal{V}_{1}$. It is obvious that every maximal directed trail of the
graph $G^{\prime}(\Delta_{19})$ which starts at each vertex $v\in
\mathcal{V}_{1}$ includes one of the following vertices: $(0,0),$ $(1,-1),$
$(-1,1).$ It is also easy to see that any possible directed trail of the graph
$G^{\prime}\left(  \Delta_{19}\right)  $ that starts at any $v\in
\mathcal{V}_{1}$ which does not include vertex $(0,0)$ and includes at most
one of the vertices $(1,-1)\ $and $(-1,1)$ is a directed path. Note that if
$v=\left(  -1,1\right)  ,$ there is exactly one trail $(-1,1)\rightarrow(1,0)$
with such property which is obviously a directed path. But if $v=\left(
1,-1\right)  ,\ \left(  0,0\right)  $ or $\left(  1,0\right)  $ there is no
directed trail with this property. Therefore, Claim \ref{clm} also valid for
the vertices $\left(  0,0\right)  ,$ $(-1,1),\ (1,-1)\ $and $\left(
1,0\right)  $. This concludes the proof of the base case.

\textit{Induction step: }Suppose, for some $k\geq0$ that Claim \ref{clm} holds
for every element of the set $\mathcal{V}_{k}.$ We have to prove that Claim
\ref{clm} holds for every element of the set $\mathcal{V}_{k+1}.$ If
$k\geq2n,$ then $\mathcal{V}_{k}=\mathcal{V}_{k+1},$ and consequently Claim
\ref{clm} holds for every element of the set $\mathcal{V}_{k+1}.$ If $0\leq
k\leq2n-1,$ let us define sets%
\begin{align}
\mathcal{V}_{k+1}^{\left(  1\right)  }  &  =\left\{  \pm(k+1,-\left(
k+1\right)  ){\small ,}\pm(k+1,-k)\right\}  \text{, if }0\leq k\leq
2n-1\label{v1}\\
\mathcal{V}_{k+1}^{\left(  2\right)  }  &  =\left\{  {\small \pm
(k+1,-(k+2)}){\small ,}\pm({\small k+1,-}\left(  {\small k-1}\right)
)\right\}  \text{, if }{\small 0\leq k\leq2n-2}\label{v2}\\
\mathcal{V}_{k+1}^{\left(  3\right)  }  &  =\left\{  \pm(k+1,-(k+3)\right\}
,\text{ if }0\leq k\leq n-3. \label{v3}%
\end{align}
Then%
\begin{align}
\mathcal{V}_{k+1}  &  =\mathcal{V}_{k}\cup\mathcal{V}_{k+1}^{\left(  1\right)
}\cup\mathcal{V}_{k+1}^{\left(  2\right)  }\cup\mathcal{V}_{k+1}^{\left(
3\right)  }\text{ \ if }0\leq k\leq n-3\label{v11}\\
\mathcal{V}_{k+1}  &  =\mathcal{V}_{k}\cup\mathcal{V}_{k+1}^{\left(  1\right)
}\cup\mathcal{V}_{k+1}^{\left(  2\right)  }\text{ \ if }n-2\leq k\leq
2n-2\label{v22}\\
\mathcal{V}_{k+1}  &  =\mathcal{V}_{k}\cup\mathcal{V}_{k+1}^{\left(  1\right)
}\text{ \ if }k=2n-1 \label{v33}%
\end{align}
Since, by the induction hypothesis, Claim \ref{clm} holds for every element of
the set $\mathcal{V}_{k},$ according to (\ref{v11}), (\ref{v22}) and
(\ref{v33}), we have to prove that Claim \ref{clm} holds for each element of
the sets $\mathcal{V}_{k+1}^{\left(  1\right)  },\mathcal{V}_{k+1}^{\left(
2\right)  }\ $and $\mathcal{V}_{k+1}^{\left(  3\right)  }$ if $k$ lies in the
ranges given in (\ref{v1}), (\ref{v2}) and (\ref{v3}) respectively. Note that
it suffices to show that for every possible maximal directed trail of the
graph $G^{\prime}(\Delta_{19})$ that starts at each vertex $v\ $belonging to
the sets $\mathcal{V}_{k+1}^{\left(  1\right)  },\mathcal{V}_{k+1}^{\left(
2\right)  }\ $and $\mathcal{V}_{k+1}^{\left(  3\right)  }$ there exist a
vertex $w\in\mathcal{V}\left(  {\small \Delta}_{19}\right)  $ that is included
in that trail for which Claim \ref{clm} holds and such that the corresponding
directed subtrail $\left(  v,w\right)  $ is a directed path. Then we can
conclude that Claim \ref{clm} also holds for the vertex $v$. Using the
definition of the edges of the graph $G^{\prime}(\Delta_{19})$, we deduce:

\noindent\textbf{Case 1: }$\mathbf{\pm(k+1,-}\left(  \mathbf{k+1}\right)
\mathbf{)}\mathbf{\in}\mathcal{V}_{k+1}^{\left(  1\right)  }\ $if
$k\in\left\{  0,...,2n-1\right\}  .$

If $k=0$, then $\pm(k+1,-\left(  k+1\right)  )=\pm(1,-1)\in\mathcal{V}_{1}$
and by the base case, Claim \ref{clm} holds.

If $1\leq k\leq2n-1,$ then $2\leq k+1\leq2n,$ and we have:

\begin{itemize}
\item $(k+1,-\left(  k+1\right)  ),$ $k+1=2,...,2n,\newline(k+1,-\left(
k+1\right)  )\overset{a)}{\rightarrow}(-\left(  k+1\right)  ,k+1)$ \ if
\ $k+1=2,...,2n$,$\newline(k+1,-\left(  k+1\right)  )\overset{a)}{\rightarrow
}(-\left(  k+1\right)  ,k)$ \ if \ $k+1=2n$, $2n-1,$

\item $(-\left(  k+1\right)  ,k+1),$ $k+1=2,...,2n,\newline(-\left(
k+1\right)  ,k+1)\overset{b)}{\rightarrow}(k+1,-k)$ \ if \ $k+1=2,...,2n.$
\end{itemize}

Since

\begin{itemize}
\item $(-\left(  k+1\right)  ,k),$ $k+1=2n,2n-1,\newline(-\left(  k+1\right)
,k)\overset{d)}{\rightarrow}\mathbf{(k,-}\left(  \mathbf{k-1}\right)
\mathbf{)}\ $if$\ k+1=2n,2n-1\Leftrightarrow k=2n-1,2n-2,\newline(-\left(
k+1\right)  ,k)\overset{d)}{\rightarrow}(\mathbf{k,-}\left(  \mathbf{k-2}%
\right)  )$ \ if \ $k+1=2n$, $2n-1\Leftrightarrow k=2n-1,2n-2,$

\item $(k+1,-k),$ $k+1=2,...,2n,\newline(k+1,-k)\overset{c)}{\rightarrow
}\mathbf{(-k,k)},$ \ if \ $k+1=2,...,2n\Leftrightarrow k=1,...,2n-1,\newline%
(k+1,-k)\overset{c)}{\rightarrow}\mathbf{(-k,k-1)},$\ if
\ $k+1=n-1,...,2n\Leftrightarrow k=n-2,...,2n-1,$
\end{itemize}

then every maximal directed trail that starts at each of the two vertices:
$v=\pm(k+1,-\left(  k+1\right)  )$ if $1\leq k\leq2n-1,$ includes one of the
vertices $w=\left(  -k,k\right)  ,$ $\left(  -k,k-1\right)  ,$ $(k,-\left(
k-1\right)  ),$ $(k,-\left(  k-2\right)  )\in\mathcal{V}_{k}$ and all
corresponding directed subtrails $\left(  v,w\right)  $ are the directed
paths. By the inductive hypothesis, we conclude that Claim \ref{clm} holds for
the vertices $\pm(k+1,-\left(  k+1\right)  )\mathbf{\in}\mathcal{V}%
_{k+1}^{\left(  1\right)  }$ too.

\noindent\textbf{Case 2: }$\pm\mathbf{(k+1,-k)\in}\mathcal{V}_{k+1}^{\left(
1\right)  }\ $if $k\in\left\{  0,...,2n-1\right\}  .$ $\newline$If $k=0$, then
$\pm(k+1,-k)=\pm(1,0)\in\mathcal{V}_{1}$ and by the base case, Claim \ref{clm} holds.

If $1\leq k\leq2n-1,$ then $2\leq k+1\leq2n\ $and we have:

\begin{itemize}
\item $(k+1,-k),$ $k+1=2,...,2n,\newline(k+1,-k)\overset{c)}{\rightarrow
}\mathbf{(-k,k)},$ \ if \ $k+1=2,...,2n\Leftrightarrow k=1,...,2n-1,\newline%
(k+1,-k)\overset{c)}{\rightarrow}\mathbf{(-k,k-1)},$ \ if
$k+1=n-1,...,2n\Leftrightarrow k=n-2,...,2n-1,$

\item $(-\left(  k+1\right)  ,k),$ $k+1=1,...,2n,\newline(-\left(  k+1\right)
,k)\overset{d)}{\rightarrow}\mathbf{(k,-}\left(  \mathbf{k-1}\right)
\mathbf{)},$ \ if \ $k+1=2,...,2n\Leftrightarrow k=1,...,2n-1,\newline%
(-\left(  k+1\right)  ,k)\overset{d)}{\rightarrow}\mathbf{(k,-}\left(
\mathbf{k-2}\right)  \mathbf{)}$ \ if $k+1=n+1,...,2n\Leftrightarrow
k=n,...,2n-1.$
\end{itemize}

Therefore, every maximal directed trail that starts at each of the two
vertices: $v=\pm(k+1,-k),$ if $1\leq k\leq2n-1$ includes the edge
$v\rightarrow w,$ where $w$ is one of the vertices
$(-k,k),(-k,k-1),(k,-\left(  k-1\right)  ),(k,-\left(  k-2\right)
)\in\mathcal{V}_{k}$ and, by the inductive hypothesis, we conclude that Claim
\ref{clm} holds for $\pm(k+1,-k)\mathbf{\in}\mathcal{V}_{k+1}^{\left(
1\right)  }$.

\noindent\textbf{Case 3: }$\mathbf{\pm(k+1,-}\left(  \mathbf{k-1}\right)
\mathbf{)\in}\mathcal{V}_{k+1}^{\left(  2\right)  }\ $if $k\in\left\{
0,...,2n-2\right\}  .$

If $k=0$, then $\pm(k+1,-\left(  k-1\right)  )=\pm(1,1)\in\mathcal{V}_{1}$ and
by the base case, Claim \ref{clm} holds. If $1\leq k\leq2n-2,$ then $2\leq
k+1\leq2n-1,$ and we have:

\begin{itemize}
\item $(k+1,-(k-1)),$ $k+1=2,...,2n-1\newline%
(k+1,-(k-1))\overset{g)}{\rightarrow}\mathbf{(-(k-1),k-1)}$ \ if
\ $k+1=2,...,2n-1\Leftrightarrow k-1=0,...,2n-3,\newline%
(k+1,-(k-1))\overset{g)}{\rightarrow}(\mathbf{-(k-1),k-2)}$ \ if
\ $k+1=2,...,2n-1\Leftrightarrow k-1=0,...,2n-3,\newline%
(k+1,-(k-1))\overset{g)}{\rightarrow}\mathbf{(-(k-1),k-3)}$ \ if
\ $k+1=2n-1\Leftrightarrow k-1=2n-3$

\item $(-\left(  k+1\right)  ,k-1),k+1=2,...,2n-1\newline(-\left(  k+1\right)
,k-1)\overset{h)}{\rightarrow}\mathbf{(k-1,-}\left(  \mathbf{k-2}\right)
\mathbf{)}$ \ if \ $k+1=2,...,2n-1\Leftrightarrow k-1=0,...,2n-3\newline%
(-\left(  k+1\right)  ,k-1)\overset{h)}{\rightarrow}\mathbf{(k-1,-}\left(
\mathbf{k-3}\right)  \mathbf{)}$ \ if \ $k+1=2,...,2n-1\Leftrightarrow
k-1=0,...,2n-3.$
\end{itemize}

Since every maximal directed trail that starts at each of the two vertices:
$v=\pm(k+1,-\left(  k-1\right)  \mathbf{)},$ if $1\leq k\leq2n-2$ includes the
edge $v\rightarrow w,$ where $w$ is one of the vertices $(-(k-1),k-1),$
$(-(k-1),k-2),$ $(-(k-1),k-3),$ $(k-1,-\left(  k-2\right)  ),$ $(k-1,-\left(
k-3\right)  )\in\mathcal{V}_{k-1}\subset\mathcal{V}_{k}$ by the inductive
hypothesis, we conclude that Claim \ref{clm} holds for $\pm
(k+1,-(k-1))\mathbf{\in}\mathcal{V}_{k+1}^{\left(  2\right)  }.$

\noindent\textbf{Case 4: }$\pm\mathbf{(k+1,-(k+2))\in}\mathcal{V}%
_{k+1}^{\left(  2\right)  }$ if $k\in\left\{  0,...,2n-2\right\}  .$

If $k=0$, then $\pm\mathbf{(}k+1,-(k+2)\mathbf{)}=\pm(1,-2)\in\mathcal{V}_{1}$
and by the base case, Claim \ref{clm} holds.

If $1\leq k\leq2n-2,$ then $2\leq k+1\leq2n-1,$ and we have:

\begin{itemize}
\item $(k+1,-(k+2)),$ $k+1=2,...,2n-1\newline%
(k+1,-(k+2))\overset{e)}{\rightarrow}(-(k+2),k+2)$ \ if
\ $k+1=2,...,2n-1\Leftrightarrow k+2=3,...,2n,\newline%
(k+1,-(k+2))\overset{e)}{\rightarrow}(-(k+2),k+3)$ \ if
\ $k+1=2,...,2n-3\Leftrightarrow k+2=3,...,2n-2,$

\item $(-\left(  k+1\right)  ,k+2),k+1=2,...,2n-1\newline(-\left(  k+1\right)
,k+2)\overset{f)}{\rightarrow}(k+2,-(k+2))$ \ if
\ $k+1=2,...,2n-1\Leftrightarrow k+2=3,...,2n,\newline(-\left(  k+1\right)
,k+2)\overset{f)}{\rightarrow}(k+2,-(k+1)),$ \ if
\ $k+1=2,...,2n-1\Leftrightarrow k+2=3,...,2n.$
\end{itemize}

Since

\begin{itemize}
\item $\mathbf{(}-(k+2),k+2\mathbf{),}$ $k+2=3,...,2n\mathbf{,}\newline%
(-(k+2),k+2)\overset{b)}{\rightarrow}\mathbf{(}k+2,-\left(  k+1\right)  )$
\ if \ $k+2=3,...,2n$

\item $(k+2,-(k+2),$ $k+2=3,...,2n,\newline%
(k+2,-(k+2))\overset{a)}{\rightarrow}(-\left(  k+2\right)  ,k+2)$\ if
\ $k+2=3,...,2n,\newline(k+2,-(k+2))\overset{a)}{\rightarrow}\mathbf{(}%
-\left(  k+2\right)  ,k+1)$\ if \ \ $k+2=2n,2n-1$

\item $\mathbf{(}k+2,-(k+1)\mathbf{)},$ $k+2=3,...,2n\newline%
(k+2,-(k+1))\overset{c)}{\rightarrow}\mathbf{(-}\left(  \mathbf{k+1}\right)
\mathbf{,k+1)}$\ if \ $k+2=3,...,2n\Leftrightarrow k=1,...,2n-2\newline%
(k+2,-(k+1))\overset{c)}{\rightarrow}\mathbf{(-}\left(  \mathbf{k+1}\right)
\mathbf{,k)}$\ if \ $k+2=n-1,...,2n\Leftrightarrow k=n-3,...,2n-2.$
\end{itemize}

and

\begin{itemize}
\item $(-\left(  k+2\right)  ,k+1),k+2=2n,2n-1\newline(-\left(  k+2\right)
,k+1\mathbf{)}\overset{d)}{\rightarrow}$\ $\mathbf{(k+1,-k)}$\textbf{\ }if
\ \ $k+2=2n,2n-1\Leftrightarrow k=2n-2,2n-3\newline\mathbf{(}-\left(
k+2\right)  ,k+1\mathbf{)}\overset{d)}{\rightarrow}$\ $\mathbf{(k+1,-}\left(
\mathbf{k-1}\right)  \mathbf{)}$\ if \ $k+2=2n,2n-1\Leftrightarrow
k=2n-2,2n-3,$
\end{itemize}

then every maximal directed trail that starts at vertex $v=(-\left(
k+1\right)  ,k+2)$ if $k=1,...,2n-2,$ includes one of the vertices:
$w=(-\left(  k+1\right)  ,k+1),$ $\pm(-\left(  k+1\right)  ,k),$
$(k+1,-\left(  k-1\right)  )\ $and all corresponding directed subtrails
$\left(  v,w\right)  $ are directed paths. The same applies to the vertices
$v=\pm(-(k+2),k+2)$ and $v=(k+2,-(k+1))$ if $k=1,...,2n-2.$ Since we have
already proven that Claim \ref{clm} holds for vertices: $(-\left(  k+1\right)
,k+1),$ $\pm(-\left(  k+1\right)  ,k),$ $(k+1,-\left(  k-1\right)  )$ if
$k\in\left\{  1,...,2n-2\right\}  $ (Cases 1, 2 and 3), we conclude that Claim
\ref{clm} holds for the vertex $(-\left(  k+1\right)  ,k+2)$ if $k=1,...,2n-2$
and also for the vertices $\pm(-(k+2),k+2)$, $(k+2,-(k+1))$ if $k=1,...,2n-2.$
To finish the proof of the Case 4, we need to prove that Claim \ref{clm} holds
for the vertex $(-(k+2),k+3)$ if $k=1,...,2n-4.$ We have:

\begin{itemize}
\item $(-(k+2),k+3),$ $k+2=3,...,2n-2,\newline%
(-(k+2),k+3)\overset{f)}{\rightarrow}(k+3,-\left(  k+3\right)  )$ \ if
\ $k+2=3,...,2n-2\Leftrightarrow k+3=4,...,2n-1\newline%
(-(k+2),k+3)\overset{f)}{\rightarrow}\mathbf{(}k+3,-\left(  k+2\right)  )$
\ if \ $k+2=3,...,2n-2\Leftrightarrow k+3=4,...,2n-1$
\end{itemize}

Since

\begin{itemize}
\item $(k+3,-\left(  k+3\right)  ),$ $k+3=4,...,2n-1\newline(k+3,-\left(
k+3\right)  )\overset{a)}{\rightarrow}(-\left(  k+3\right)  ,k+3)$ \ if
\ $k+3=4,...,2n-1\newline(k+3,-\left(  k+3\right)  )\overset{a)}{\rightarrow
}(-\left(  k+3\right)  ,k+2)$ \ if \ $k+3=2n-1$

\item $(-\left(  k+3\right)  ,k+2),k+3=2n-1\newline(-\left(  k+3\right)
,k+2)\overset{d)}{\rightarrow}$\ $\left(  \mathbf{k+2,-}\left(  \mathbf{k+1}%
\right)  \right)  $ \ if \ $k+3=2n-1\Leftrightarrow k+2=2n-2\newline(-\left(
k+3\right)  ,k+2)\overset{d)}{\rightarrow}$\ $\left(  k+2,-k\right)  $ \ if
\ $k+3=2n-1\Leftrightarrow k+2=2n-2$
\end{itemize}

and

\begin{itemize}
\item $(-\left(  k+3\right)  ,k+3),$ $k+3=4,...,2n-1\newline(-\left(
k+3\right)  ,k+3)\overset{b)}{\rightarrow}$\ $\left(  k+3,-\left(  k+2\right)
\right)  $ \ if \ $k+3=4,...,2n-1$

\item $(k+3,-\left(  k+2\right)  ),$ $k+3=4,...,2n-1\newline(k+3,-\left(
k+2\right)  )\overset{c)}{\rightarrow}\mathbf{(-}\left(  \mathbf{k+2}\right)
\mathbf{,k+2)}$\ if \ $k+3=4,...,2n-1\Leftrightarrow k=0,...,2n-4\newline%
(k+3,-\left(  k+2\right)  )\overset{c)}{\rightarrow}\mathbf{(-}\left(
k+2\right)  ,k+1\mathbf{)}$\ if \ $k+3=n-1,...,2n-1\Leftrightarrow
k+2=n-2,...,2n-2$

\item $\left(  k+2,-k\right)  ,k+2=2n-2\newline\left(  k+2,-k\right)
\overset{g)}{\rightarrow}\left(  \mathbf{-k,k-1}\right)  ,$\ if
$k+2=2n-2\Leftrightarrow k=2n-4\newline\left(  k+2,-k\right)
\overset{g)}{\rightarrow}\left(  \mathbf{-k,k}\right)  ,\ $if
$k+2=2n-2\Leftrightarrow k=2n-4$

\item $(-\left(  k+2\right)  ,k+1),$ $k+2=n-2,...,2n-2\newline(-\left(
k+2\right)  ,k+1\mathbf{)}\overset{d)}{\rightarrow}$\ $\mathbf{(k+1,-k)}%
$\textbf{\ }if \ \ $k+2=n-2,...,2n-2\Leftrightarrow k=n-4,...,2n-4\newline%
\mathbf{(}-\left(  k+2\right)  ,k+1\mathbf{)}\overset{d)}{\rightarrow}%
$\ $(\mathbf{k+1,-}\left(  \mathbf{k-1}\right)  \mathbf{)}$\ if
\ $k+2=n+1,...,2n-2\Leftrightarrow k=n-1,...,2n-4,$
\end{itemize}

then every maximal directed trail that starts at vertex $v=(-(k+2),k+3),$ if
$k=1,...,2n-4$ includes one of the vertices: $w=\left(  -k,k-1\right)  $,
$\left(  -k,k\right)  ,(k+1,-k),$ $(k+1,-\left(  k-1\right)  ),$ $(-\left(
k+2\right)  ,k+2),$ $\left(  k+2,-\left(  k+1\right)  \right)  $ and all
corresponding directed subtrails $\left(  v,w\right)  $ are directed paths.
Since, by the inductive hypothesis, Claim \ref{clm} holds for vertices
$\left(  -k,k-1\right)  $, $\left(  -k,k\right)  \in\mathcal{V}_{k}\ $and as
we have already proven that Claim \ref{clm} holds for the vertices:
$\mathbf{(}k+1,-k),(k+1,-\left(  k-1\right)  ),$ $\ (-\left(  k+2\right)
,k+2),$ $\left(  k+2,-\left(  k+1\right)  \right)  $, we conclude that Claim
\ref{clm} also holds for the vertex $(-(k+2),k+3)$ when $k=1,...,2n-4.$ It
also follows from the above that Claim \ref{clm} holds for vertices:
$\pm(k+3,-\left(  k+3\right)  ),$ $(k+3,-\left(  k+2\right)  )$ if
$k=1,...,2n-4.$

\noindent\textbf{Case 5: }$\mathbf{\pm(k+1,-}\left(  \mathbf{k+3}\right)
\mathbf{)\in}\mathcal{V}_{k+1}^{\left(  3\right)  }\ $if $k\in\left\{
0,...,n-3\right\}  .$ $\newline$If $k=0$, then $\pm\mathbf{(}%
k+1,-(k+3)\mathbf{)}=\pm(1,-3)\in\mathcal{V}_{1}$ and by base case, Claim
\ref{clm} holds.

If $1\leq k\leq n-3,$ then $2\leq k+1\leq n-2,$ and we have:

\begin{itemize}
\item $(k+1,-\left(  k+3\right)  )\overset{i)}{\rightarrow}\left(
\mathbf{-}\left(  \mathbf{k+3}\right)  \mathbf{,k+3}\right)  $ \ if
\ $k+1=2,...,n-2\Leftrightarrow k=1,...,n-3,\newline(k+1,-\left(  k+3\right)
)\overset{i)}{\rightarrow}\left(  -\left(  k+3\right)  ,k+4\right)  $ \ if
\ $k+1=2,...,n-2\Leftrightarrow k+3=4,...,n,\newline(k+1,-\left(  k+3\right)
)\overset{i)}{\rightarrow}\left(  -\left(  k+3\right)  ,k+5\right)  $ \ if
\ $k+1=2,...,n-4,n\geq6\Leftrightarrow k+3=4,...,n-2,$ $n\geq6,$

\item $(-\left(  k+1\right)  ,k+3)\overset{j)}{\rightarrow}\left(
\mathbf{k+3,-}\left(  \mathbf{k+3}\right)  \right)  $ \ if
\ $k+1=2,...,n-2\Leftrightarrow k=0,...,n-3,\newline(-\left(  k+1\right)
,k+3)\overset{j)}{\rightarrow}\left(  k+3,\mathbf{-}\left(  k+4\right)
\right)  $ \ if \ $k+1=2,...,n-4,n\geq6\Leftrightarrow k+3=3,...,n-2,n\geq
6,\newline(-\left(  k+1\right)  ,k+3)\overset{j)}{\rightarrow}\left(
\mathbf{k+3,-}\left(  \mathbf{k+2}\right)  \right)  $ \ if
\ $k+1=2,...,n-2\Leftrightarrow k=1,...,n-3,$
\end{itemize}

Since we already prove that Claim \ref{clm} holds for the vertices $\pm\left(
\mathbf{-}\left(  k+3\right)  ,k+3\right)  ,$ $\left(  k+3,-\left(
k+2\right)  \right)  $ for $k=1,...,n-3$, to finish the proof of the Case 5,
we have to prove that Claim \ref{clm} holds for the vertices $\left(  -\left(
k+3\right)  ,k+4\right)  ,$ $\left(  -\left(  k+3\right)  ,k+5\right)  $ and
$\left(  k+3,\mathbf{-}\left(  k+4\right)  \right)  $ in the ranges of $k$ and
$n$ given above. We have:

\begin{itemize}
\item $\left(  -\left(  k+3\right)  ,k+4\right)  ,$ $k+3=4,...,n\newline%
\left(  -\left(  k+3\right)  ,k+4\right)  \overset{f)}{\rightarrow}\left(
k+4,-\left(  k+4\right)  \right)  $ if$\ k+3=4,...,n\Leftrightarrow
k+4=5,...,n+1\newline\left(  -\left(  k+3\right)  ,k+4\right)
\overset{f)}{\rightarrow}\left(  k+4,-\left(  k+3\right)  \right)  $
if$\ k+3=4,...,n\Leftrightarrow k+4=5,...,n+1$

\item $\left(  -\left(  k+3\right)  ,k+5\right)  ,$ $k+3=4,...,n-2,$
$n\geq6\newline\left(  -\left(  k+3\right)  ,k+5\right)
\overset{j)}{\rightarrow}\left(  k+5,-\left(  k+5\right)  \right)
,\ $if$\ k+3=4,...,n-2,n\geq6\Leftrightarrow k+5=6,...,n,n\geq6\newline\left(
-\left(  k+3\right)  ,k+5\right)  \overset{j)}{\rightarrow}\left(
k+5,-\left(  k+4\right)  \right)  \ $if$\ k+3=4,...,n-2,n\geq6\Leftrightarrow
k+5=6,...,n,n\geq6\newline\left(  -\left(  k+3\right)  ,k+5\right)
\overset{j)}{\rightarrow}\left(  k+5,-\left(  k+6\right)  \right)  $\ if
\ $k+3=4,...,n-4,n\geq8\Leftrightarrow k+5=6,...,n-2,n\geq8$

\item $\left(  k+3,\mathbf{-}\left(  k+4\right)  \right)  $ \ if
$k+3=4,...,n-2,n\geq6\newline\left(  k+3,\mathbf{-}\left(  k+4\right)
\right)  \overset{e)}{\rightarrow}\left(  \mathbf{-}\left(  k+4\right)
,k+4\right)  $ if $k+3=4,...,n-2,n\geq6\Leftrightarrow k+4=5,...,n-1,n\geq
6\newline\left(  k+3,\mathbf{-}\left(  k+4\right)  \right)
\overset{e)}{\rightarrow}\left(  \mathbf{-}\left(  k+4\right)  ,k+5\right)  $
if $k+3=4,...,n-2,n\geq6\Leftrightarrow k+4=5,...,n-1,$ $n\geq6.$
\end{itemize}

Since

\begin{itemize}
\item $\left(  k+4,-\left(  k+4\right)  \right)  ,$ $k+4=5,...,n+1,\newline%
\left(  k+4,-\left(  k+4\right)  \right)  \overset{a)}{\rightarrow}\left(
\mathbf{-}\left(  k+4\right)  ,k+4\right)  \overset{b)}{\rightarrow}\left(
k+4,-\left(  k+3\right)  \right)  $ if $k+4=5,...,n+1$

\item $\left(  \mathbf{-}\left(  k+4\right)  ,k+4\right)  ,\ k+4=5,...,n-1,$
$n\geq6\newline\left(  \mathbf{-}\left(  k+4\right)  ,k+4\right)
\overset{b)}{\rightarrow}\left(  k+4,-\left(  k+3\right)  \right)  $ if
$k+4=5,...,n-1,$ $n\geq6$

\item $\left(  k+4,-\left(  k+3\right)  \right)  $, $k+4=5,...,n+1,\newline%
\left(  k+4,-\left(  k+3\right)  \right)  \overset{c)}{\rightarrow}\left(
\mathbf{-}\left(  \mathbf{k+3}\right)  \mathbf{,k+3}\right)  $ if
$k+4=5,...,n+1\Leftrightarrow k=1,...,n-3,\newline\left(  k+4,-\left(
k+3\right)  \right)  \overset{c)}{\rightarrow}(-\left(  k+3\right)
,k+2)\overset{d)}{\rightarrow}$\ $\left(  \mathbf{k+2,-}\left(  \mathbf{k+1}%
\right)  \right)  $ if $k+4=n+1\Leftrightarrow k+3=n,\newline\left(
k+4,-\left(  k+3\right)  \right)  \overset{c)}{\rightarrow}(-\left(
k+3\right)  ,k+2)\overset{d)}{\rightarrow}$\ $\left(  \mathbf{k+2,-}\left(
\mathbf{k+1}\right)  \right)  $ if $k+4=n,$ $n\geq5\Leftrightarrow k+3=n-1,$
$n\geq5,\newline\left(  k+4,-\left(  k+3\right)  \right)
\overset{c)}{\rightarrow}(-\left(  k+3\right)  ,k+2)\overset{d)}{\rightarrow}%
$\ $\left(  \mathbf{k+2,-}\left(  \mathbf{k+1}\right)  \right)  $ if
$k+4=n-1,$ $n\geq6\Leftrightarrow k+3=n-2,$ $n\geq6,$

\item $\left(  \mathbf{-}\left(  k+4\right)  ,k+5\right)  ,k+4=5,...,n-1,$
$n\geq6,\newline\left(  \mathbf{-}\left(  k+4\right)  ,k+5\right)
\overset{f)}{\rightarrow}\left(  k+5,\mathbf{-}\left(  k+5\right)  \right)
\overset{a)}{\rightarrow}\left(  \mathbf{-}\left(  k+5\right)  ,k+5\right)
\overset{b)}{\rightarrow}\left(  k+5,\mathbf{-}\left(  k+4\right)  \right)  $
if $k+4=5,...,n-1,$ $n\geq6\Leftrightarrow k+5=6,...,n,n\geq6\newline\left(
\mathbf{-}\left(  k+4\right)  ,k+5\right)  \overset{f)}{\rightarrow}\left(
k+5,\mathbf{-}\left(  k+4\right)  \right)  $ if $k+4=5,...,n-1,$
$n\geq6\Leftrightarrow k+5=6,...,n,$ $n\geq6$

\item $\left(  k+5,-\left(  k+5\right)  \right)  ,$ $k+5=6,...,n,n\geq
6,\newline\left(  k+5,-\left(  k+5\right)  \right)  \overset{a)}{\rightarrow
}\left(  -\left(  k+5\right)  ,k+5\right)  $ $\overset{b)}{\rightarrow}$
$\left(  k+5,\mathbf{-}\left(  k+4\right)  \right)  $ if $k+5=6,...,n,$
$n\geq6,$

\item $\left(  k+5,\mathbf{-}\left(  k+4\right)  \right)  ,$ $k+5=6,...,n,$
$n\geq6,\newline\left(  k+5,-\left(  k+4\right)  \right)
\overset{c)}{\rightarrow}\left(  -\left(  k+4\right)  ,k+4\right)
\overset{b)}{\rightarrow}\left(  k+4,-\left(  k+3\right)  \right)  $ if
$k+5=6,...,n,n\geq6$ $\Leftrightarrow k+4=5,...,n-1,$ $n\geq6,\newline\left(
k+5,-\left(  k+4\right)  \right)  \overset{c)}{\rightarrow}\left(  -\left(
k+4\right)  ,k+3\right)  \overset{d)}{\rightarrow}$ $\left(  \mathbf{k+3}%
,-\left(  \mathbf{k+2}\right)  \right)  $ if $k+5=n-1,n$ if $n\geq6$ and
$k+5=n=5\Leftrightarrow k=n-6,n-5$ if $n\geq6$ and $k=n-5=0$ if $n=5,$

\item $\left(  k+5,-\left(  k+6\right)  \right)  $, $k+5=6,...,n-2,n\geq
8\newline\left(  k+5,-\left(  k+6\right)  \right)  \overset{e)}{\rightarrow
}\left(  -\left(  k+6\right)  ,k+6\right)  \overset{b)}{\rightarrow}\left(
k+6,-\left(  k+5\right)  \right)  ,$ if $k+5=6,...,n-2,n\geq8\Leftrightarrow
k+6=7,...,n-1,n\geq8\newline\left(  k+5,-\left(  k+6\right)  \right)
\overset{e)}{\rightarrow}\left(  -\left(  k+6\right)  ,k+7\right)  ,$ if
$k+5=6,...,n-2,n\geq8\Leftrightarrow k+6=7,...,n-1,n\geq8,$
\end{itemize}

\noindent we conclude that Claim \ref{clm} holds for the vertices $v=\left(
k+4,-\left(  k+4\right)  \right)  ,\left(  k+4,-\left(  k+3\right)  \right)
,$ $\left(  k+5,-\left(  k+5\right)  \right)  ,$ $\left(  k+5,-\left(
k+4\right)  \right)  ,$ $\left(  \mathbf{-}\left(  k+4\right)  ,k+4\right)  $
and $\left(  \mathbf{-}\left(  k+4\right)  ,k+5\right)  $ in the ranges of $k$
and $n$ given above. Namely, every maximal directed trail that starts at each
of these six vertices $v$ includes one of the vertices: $w=\left(
\mathbf{-}\left(  k+3\right)  ,k+2\right)  ,$ $\left(  \mathbf{-}\left(
k+3\right)  \mathbf{,}k+3\right)  ,$ $\left(  k+2,-\left(  k+1\right)
\right)  $ and all corresponding directed subtrails $\left(  v,w\right)  $ are
directed paths. Since we have already proved that Claim \ref{clm} holds for
these tree vertices $w$ in the ranges of $k$ given above, we conclude that
Claim \ref{clm} holds for the each of these six vertices $v$ in the ranges of
$k$ given above. To complete the proof of the Case 5, we have to prove that
Claim \ref{clm} holds for the vertices $\left(  k+6,-\left(  k+5\right)
\right)  \ $and $\left(  -\left(  k+6\right)  ,k+7\right)  $ if
$k+6=7,...,n-1,n\geq8.$ We have:

\begin{itemize}
\item $\left(  k+6,-\left(  k+5\right)  \right)  ,$ $k+6=7,...,n-1,n\geq
8,\newline\left(  k+6,-\left(  k+5\right)  \right)  \overset{c)}{\rightarrow
}\left(  -\left(  k+5\right)  ,k+5\right)  \overset{b)}{\rightarrow}\left(
\mathbf{k+5,-}\left(  \mathbf{k+4}\right)  \right)  ,$ if
${\small k+6=7,...,n-1,n\geq8}\Leftrightarrow{\small k+5=6,...,n-2,}$
${\small n\geq8}\newline\left(  k+6,-\left(  k+5\right)  \right)
\overset{c)}{\rightarrow}\left(  -\left(  k+5\right)  ,k+4\right)  ,$ if
$k+6=n-1,$ $n\geq8\Leftrightarrow k+5=n-2,$ $n\geq8$

\item $\left(  -\left(  k+6\right)  ,k+7\right)  ,$ $k+6=7,...,n-1,n\geq
8.\newline\left(  -\left(  k+6\right)  ,k+7\right)  \overset{f)}{\rightarrow
}\left(  k+7,-\left(  k+7\right)  \right)  \overset{a)}{\rightarrow}\left(
-\left(  k+7\right)  ,k+7\right)  \overset{b)}{\rightarrow}\left(
k+7,-\left(  k+6\right)  \right)  $ if $k+6=7,...,n-1,n\geq8\Leftrightarrow
k+7=8,...,n,$ $n\geq8,\newline\left(  -\left(  k+6\right)  ,k+7\right)
\overset{f)}{\rightarrow}\left(  k+7,-\left(  k+6\right)  \right)  $ if
$k+6=7,...,n-1,n\geq8\Leftrightarrow k+7=8,...,n,n\geq8,$
\end{itemize}

and

\begin{itemize}
\item $\left(  -\left(  k+5\right)  ,k+4\right)  ,$ $k+5=n-2,n\geq
8\newline\left(  -\left(  k+5\right)  ,k+4\right)  \overset{d)}{\rightarrow
}\left(  \mathbf{k+4,-}\left(  \mathbf{k+3}\right)  \right)  ,$ if
$k+5=n-2,n\geq8\Leftrightarrow k+4=n-3,n\geq8$

\item $\left(  k+7,-\left(  k+6\right)  \right)  ,$ $k+7=8,...,n,$
$n\geq8\newline\left(  k+7,-\left(  k+6\right)  \right)
\overset{c)}{\rightarrow}\left(  -\left(  k+6\right)  ,k+6\right)
\overset{b)}{\rightarrow}\left(  k+6,-\left(  k+5\right)  \right)  ,$ if
$k+7=8,...,n,n\geq8\Leftrightarrow k+6=7,...,n-1,n\geq8\newline\left(
k+7,-\left(  k+6\right)  \right)  \overset{c)}{\rightarrow}\left(  -\left(
k+6\right)  ,k+5\right)  ,$ if $k+7=n-1,n,$ $n\geq8$

\item $\left(  -\left(  k+6\right)  ,k+5\right)  ,$ if $k+7=n-1,n,$
$n\geq8\newline\left(  -\left(  k+6\right)  ,k+5\right)
\overset{d)}{\rightarrow}\left(  \mathbf{k+5,-}\left(  \mathbf{k+4}\right)
\right)  $ if $k+7=n-1,n,n\geq8.$
\end{itemize}

From this, we conclude that every maximal directed trail that starts at each
of the vertices $v=\left(  k+6,-\left(  k+5\right)  \right)  \ $and $v=\left(
-\left(  k+6\right)  ,k+7\right)  $ if $k+6=7,...,n-1,n\geq8$ includes one of
the vertices: $w=\left(  k+5,-\left(  k+4\right)  \right)  ,$\textbf{\ }%
$\left(  k+4,-\left(  k+3\right)  \right)  $ and all corresponding directed
subtrails $\left(  v,w\right)  $ are directed paths. Since we already prove
that Claim \ref{clm} holds for these two vertices $w$ in the ranges of $k$ and
$n$ given above, we conclude that Claim \ref{clm} also holds for the vertices
$v=\left(  k+6,-\left(  k+5\right)  \right)  \ $and $v=\left(  -\left(
k+6\right)  ,k+7\right)  $ if $k+6=7,...,n-1,n\geq8.$ Therefore, Claim
\ref{clm} holds for $\mathbf{\pm(}k+1,-\left(  k+3\right)  \mathbf{)\in
}\mathcal{V}_{k+1}^{\left(  3\right)  }$ which finishes the proof of Claim
\ref{clm}.

We have thus shown that the graph $G^{\prime}\left(  \Delta_{19}\right)  $ has
only two cycles: a trivial cycle $\pi_{0}=(0)$ and a non-zero cycle $\pi
_{2}=(-1,1)$. Since the cycles $\pi_{0}$\ and $\pi_{2}\ $are\ also the
cycles$\ $of\ the graph $G_{\varepsilon}(\Delta_{19})$ for each $\Big[\frac
{2}{3(2n+1)},\frac{2}{3(2n-1)}\Big)$ and for the corresponding cutout polygon
$P_{\varepsilon}(\pi_{2})$ we have $\Delta_{19}\cap P_{\varepsilon}(\pi
_{2})=\overline{WZ},$ the lemma is proven.
\end{proof}

We will prove Lemma \ref{Lemma-18-s} analogously to Lemma \ref{Lemma-18-prvi}.

\begin{algorithm}
\label{Lemma-18-s}Let $n\geq4\ $be integer, $\varepsilon\in\Big[\frac
{2}{3(2n+1)},\frac{2}{3(2n-1)}\Big)\ $and$\ l=\left\lfloor \frac{2n^{2}-3n}%
{4}\right\rfloor +1.\ $Let $\Delta_{18}^{\left(  s\right)  }=\Delta
(W,V_{s},V_{s+1})$ be triangle$,$ where $s=2,...l-1,$ $W=(\varepsilon,1)\ $and
$V_{k}=(\frac{2}{3}-\varepsilon,\frac{5}{3}-2\varepsilon-\frac{k}%
{n}\varepsilon)$, $k=2,...,l.$ Then $\Delta_{18}^{\left(  s\right)  }%
\cap\mathcal{D}_{2,\varepsilon}^{0}=\Delta_{18}^{\left(  s\right)  }%
\setminus\left\{  W\right\}  \ $for all $s=2,...l-1.$
\end{algorithm}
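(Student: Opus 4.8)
The plan is to proceed exactly as in the proof of Lemma \ref{Lemma-18-prvi}, applying the witness algorithm of Lemma \ref{lem-alg} to each convex hull $\mathcal{H}=\Delta_{18}^{(s)}$, $s=2,\dots,l-1$, for $\varepsilon\in\big[\frac{2}{3(2n+1)},\frac{2}{3(2n-1)}\big)$. As already recorded in the proof of Lemma \ref{Lemma-delta-CZ}, the algorithm terminates and produces a graph $G_{\varepsilon}(\Delta_{18}^{(s)})$ that is a subgraph of a fixed graph $G'(\Delta_{18}^{(s)})$ with $8a_0^{(s)}+2a_1^{(s)}-1$ vertices and $14a_0^{(s)}+4a_1^{(s)}-6$ edges, where $a_0^{(s)}=\lceil\frac{2n^2}{n+s}\rceil$ and $a_1^{(s)}=\lceil\frac{n^2}{n+s}\rceil$. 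First I would write down the vertex set $\mathcal{V}(\Delta_{18}^{(s)})$, which has the same diagonal-band shape as $\mathcal{V}(\Delta_{19})$---points of the forms $\pm(t,-t)$, $\pm(t,-(t-1))$, $\pm(t,-(t+1))$, $\pm(t,-(t-2))$, $\pm(t,-(t+2))$---but with the index $t$ now ranging up to bounds governed by $a_0^{(s)}$ and $a_1^{(s)}$ instead of by $2n$ and $n-2$. I would then list the directed edges in cases entirely parallel to the cases a)--j) of Lemma \ref{Lemma-18-prvi}.

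As before, the two subtrails $(0,0)\to(0,0)$ and $(-1,1)\to(1,-1)\to(-1,1)$ show that $G'(\Delta_{18}^{(s)})$ contains the trivial cycle $\pi_0=(0)$ and the non-zero cycle $\pi_2=(-1,1)$, and the heart of the argument is to prove that these are its only cycles. For this I would reprove the analogue of Claim \ref{clm} (and hence of Claim \ref{clm-osn}) by induction on the layer index $k$: every maximal directed trail starting at a vertex of the $k$-th layer eventually passes through one of $(0,0)$, $(1,-1)$, $(-1,1)$, and any trail avoiding $(0,0)$ and meeting at most one of $(1,-1)$, $(-1,1)$ is a directed path. The base case and the induction step reduce to tracing, through the edge-cases, that from each newly added vertex one outgoing edge leads back into an already-handled layer along a path, exactly as in Lemma \ref{Lemma-18-prvi}.

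The main obstacle will be this inductive cycle-exclusion, which must now be carried out with the layer bounds depending on the extra parameter $s$ (through $a_0^{(s)}$, $a_1^{(s)}$) rather than on $n$ alone; consequently the admissible ranges of $t$ in each edge-case, the splitting of the incremental vertex sets into pieces analogous to $\mathcal{V}_{k+1}^{(1)},\mathcal{V}_{k+1}^{(2)},\mathcal{V}_{k+1}^{(3)}$, and the endpoint special cases all have to be re-checked with $s$-dependent cutoffs. Once the only non-zero cycle is shown to be $\pi_2$, the conclusion is immediate and in fact simpler than in Lemma \ref{Lemma-18-prvi}: the cutout polygon $P_{\varepsilon}(\pi_2)$ consists of the points on the upper boundary line $y=x+1-\varepsilon$, and since for $s\ge 2$ the vertex $W=(\varepsilon,1)$ is the only vertex of $\Delta_{18}^{(s)}=\Delta(W,V_s,V_{s+1})$ lying on that line (the other two vertices $V_s,V_{s+1}$ sit on $x=\frac{2}{3}-\varepsilon$, strictly below $Z$), we obtain $\Delta_{18}^{(s)}\cap P_{\varepsilon}(\pi_2)=\{W\}$. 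By Proposition \ref{prop-conv-H} this yields $\Delta_{18}^{(s)}\cap\mathcal{D}_{2,\varepsilon}^{0}=\Delta_{18}^{(s)}\setminus\{W\}$ for every $s=2,\dots,l-1$, which is the assertion of the lemma.
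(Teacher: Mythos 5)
Your proposal follows essentially the same route as the paper's own proof: run the algorithm of Lemma \ref{lem-alg} on each $\Delta_{18}^{(s)}$, show by the layer-by-layer induction (the analogues of Claims \ref{clm-osn} and \ref{clm}, with cutoffs now governed by $a_0^{(s)}$ and $a_1^{(s)}$ instead of $2n$ and $n-2$) that the only cycles of $G'(\Delta_{18}^{(s)})$ are $\pi_0=(0)$ and $\pi_2=(-1,1)$, and conclude via $\Delta_{18}^{(s)}\cap P_{\varepsilon}(\pi_2)=\{W\}$ and Proposition \ref{prop-conv-H}. One minor imprecision: $P_{\varepsilon}(\pi_2)$ is not the line $y=x+1-\varepsilon$ but the half-open strip $\{(x,y):\,x+1-\varepsilon\le y<x+2-\varepsilon,\ x+\varepsilon<y\le x+1+\varepsilon\}$; your stated intersection $\{W\}$ is nevertheless correct, since $\Delta_{18}^{(s)}$ lies in the closed half-plane $y\le x+1-\varepsilon$ and meets its boundary line only at $W$ (this is also the intersection the paper uses in the proof of Lemma \ref{Lemma-delta-CZ}).
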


\begin{proof}
The algorithm of Lemma \ref{lem-alg} for $\mathcal{H}=\Delta_{18}^{\left(
s\right)  }$ and $\varepsilon\in\Big[\frac{2}{3(2n+1)},\frac{2}{3(2n-1)}%
\Big),$ leads to the graph $G_{\varepsilon}(\Delta_{18}^{\left(  s\right)  })$
which is subgraph of the graph $G^{\prime}(\Delta_{18}^{\left(  s\right)  })$
with $8a_{0}^{\left(  s\right)  }+2a_{1}^{\left(  s\right)  }-1$ vertices and
$14a_{0}^{\left(  s\right)  }+4a_{1}^{\left(  s\right)  }-6$ edges where
${\small a}_{0}^{\left(  s\right)  }=\left\lceil \frac{2n^{2}}{n+s}%
\right\rceil $ and ${\small a}_{1}^{\left(  s\right)  }=\left\lceil
\frac{n^{2}}{n+s}\right\rceil .$ The vertices and edges of a graph $G^{\prime
}(\Delta_{18}^{\left(  s\right)  })$ are given as follows:%
\begin{align*}
\mathcal{V}\left(  \Delta_{18}^{\left(  s\right)  }\right)   &  =\left\{
{\small \pm(t,-t),\pm(t,-(t-1)):t=0,...,a}_{0}^{\left(  s\right)  }\right\} \\
&  {\small \cup}\left\{  {\small \pm(t,-}\left(  {\small t+1}\right)
{\small ),\pm(t,-(t-2)):t=1,...,a}_{0}^{\left(  s\right)  }-1\right\} \\
&  {\small \cup}\left\{  {\small \pm(t,-(t+2)),}\text{ }{\small t=0,...,a}%
_{1}^{\left(  s\right)  }-1\right\}  ;
\end{align*}

\begin{itemize}
\item[\textbf{a)}] $(t,-t)\rightarrow(-t,t)$ for all $t=0,...,a_{0}^{\left(
s\right)  },$ and also $(t,-t)\rightarrow(-t,t-1)$ if $t=a_{0}^{\left(
s\right)  }.$

\item[\textbf{b)}] $(-t,t)\rightarrow(t,-\left(  t-1\right)  ),$ for all
$t=1,...,a_{0}^{\left(  s\right)  },$ and also $(-t,t)\rightarrow(t,-t)$ if
$t=1.$

\item[\textbf{c)}] $(t,-(t-1))\rightarrow(-\left(  t-1\right)  ,t-1),$ for all
$t=0,...,a_{0}^{\left(  s\right)  }$ or additionally $(t,-(t-1))\rightarrow
(-\left(  t-1\right)  ,t-2)$ if $t=a_{1}^{\left(  s\right)  }-1,...,a_{0}%
^{\left(  s\right)  }\ $and $a_{1}^{\left(  s\right)  }\geq3;$

\item[\textbf{d)}] $(-t,k-1)\rightarrow(t-1,-\left(  t-2\right)  ),$ for all
$t=0,...,a_{0}^{\left(  s\right)  },$ or additionally $(-t,t-1)\rightarrow
(t-1,-\left(  t-1\right)  )$ if $t=0,1$ and $(-t,t-1)\rightarrow(t-1,-\left(
t-3\right)  )$ if $t=a_{1}^{\left(  s\right)  }+1,...,a_{0}^{\left(  s\right)
}\ $and $a_{1}^{\left(  s\right)  }\geq3;$

\item[\textbf{e)}] $(t,-(t+1))\rightarrow(-(t+1),t+1)$ for all $t=1,...,a_{0}%
^{\left(  s\right)  }-1,$ or additionally $(t,-(t+1))\rightarrow(-(t+1),t+2)$
if $t=1,...,a_{0}^{\left(  s\right)  }-2.$

\item[\textbf{f)}] $(-t,t+1)\rightarrow(t+1,-\left(  t+1\right)  )$ for all
$t=1,...,a_{0}^{\left(  s\right)  }-1$ or additionally $(-t,t+1)\rightarrow
(t+1,-t)\ $if $t=2,...,a_{0}^{\left(  s\right)  }-1;$

\item[\textbf{g)}] $(t,-(t-2))\rightarrow(-(k-2),t-3)$ and
$(t,-(t-2))\rightarrow(-(t-2),t-2),$ for all $t=1,...,a_{0}^{\left(  s\right)
}-1;$

\item[\textbf{h)}] $(-t,t-2)\rightarrow(t-2,-\left(  t-3\right)  )$ and
$(-t,t-2)\rightarrow(t-2,-\left(  t-4\right)  ),$ for all $t=1,...,a_{0}%
^{\left(  s\right)  }-1$ or additionally $(-t,t-2)\rightarrow(t-2,-\left(
t-2\right)  )$ if $t=1.$

\item[\textbf{i)}] $(t,-(t+2))\rightarrow(-(t+2),t+2)$ and
$(t,-(t+2))\rightarrow(-(t+2),t+3)$ for all $t=0,...,a_{1}^{\left(  s\right)
}-1$ or additionally $(t,-(t+2))\rightarrow(-(t+2),t+4)$ if $t=0,...,a_{1}%
^{\left(  s\right)  }-3\ $and $a_{1}^{\left(  s\right)  }\geq3.$

\item[\textbf{j)}] $(-t,t+2)\rightarrow(t+2,-(t+2))$ for all $t=0,...,a_{1}%
^{\left(  s\right)  }-1$ or additionally $(-t,t+2)\rightarrow(t+2,-(t+3))$ if
$t=0,...,a_{1}^{\left(  s\right)  }-3,$ $a_{1}^{\left(  s\right)  }\geq3$ and
$(-t,t+2)\rightarrow(t+2,-(t+1))$ if $t=2,...,a_{1}^{\left(  s\right)  }-1,$
$a_{1}^{\left(  s\right)  }\geq3.$
\end{itemize}

Note that the following holds: $4={\small a}_{0}^{\left(  l-1\right)  }%
\leq{\small a}_{0}^{\left(  s\right)  }\leq{\small a}_{0}^{\left(  2\right)
}=2n-3$ and \ $2={\small a}_{1}^{\left(  l-1\right)  }\leq{\small a}%
_{1}^{\left(  s\right)  }\leq{\small a}_{1}^{\left(  2\right)  }=n-1.$ It can
also be shown that ${\small a}_{1}^{\left(  s\right)  }<{\small a}%
_{1}^{\left(  s\right)  }+1\leq{\small a}_{0}^{\left(  s\right)
}-1<{\small a}_{0}^{\left(  s\right)  }$ for all $s=2,...l-1,$ which implies:
${\small a}_{1}^{\left(  s\right)  }\geq m\Longrightarrow{\small a}%
_{0}^{\left(  s\right)  }\geq m+2$ for all $m=2,...,n-1.$

By the definition of the edges of the graph $G^{\prime}(\Delta_{18}^{\left(
s\right)  })$ we have: $(0,0)\overset{a)}{\rightarrow}(0,0)\ $and
$(-1,1)\overset{b)}{\rightarrow}(1,-1)\overset{a)}{\rightarrow}(-1,1)$ which
implies that the graph $G^{\prime}(\Delta_{18}^{\left(  s\right)  })$ has two
cycles, a trivial cycle $\pi_{0}=(0)$ and a non-zero cycle $\pi_{2}=(-1,1)$.
Our goal is to show that $\pi_{0}$ and $\pi_{1}$ are the only cycles of the
graph $G^{\prime}(\Delta_{18}^{\left(  s\right)  }).$ We will show the following:

\begin{claim}
\label{clm-osn-s}Let $v\in\mathcal{V}\left(  \Delta_{18}^{\left(  s\right)
}\right)  $ be arbitrary vertex. If there exist directed trail of the graph
$G^{\prime}(\Delta_{18}^{\left(  s\right)  })$ that starts at vertex $v$ and
includes some cycle $\pi$, then $\pi=\pi_{0}\ $or $\pi=\pi_{2}.$ Furthermore,
each maximal directed trail of the graph $G^{\prime}(\Delta_{18}^{\left(
s\right)  })$ which starts at vertex $v,$ includes exactly one of the cycles
$\pi_{0}\ $or $\pi_{2}.$
\end{claim}

Obviously, the Claim \ref{clm-osn-s} implies that $\pi_{0}$ and $\pi_{1}$ are
the only cycles of the graph $G^{\prime}(\Delta_{18}^{\left(  s\right)  }).$

For fixed $s\in\left\{  2,...l-1\right\}  $ and all integers $k\geq0$ let us
define sets%
\begin{align*}
\mathcal{V}_{k}^{\left(  s\right)  }  &  =\left\{  \pm(t,-t),\pm
(t,-(t-1)):t=0,...,\min\left\{  k,a_{0}^{\left(  s\right)  }\right\}  \right\}
\\
&  {\small \cup}\left\{  \pm(t,-(t+1)),\pm(t,-(t-2)):t=\min\left\{
k,1\right\}  ,...,\min\left\{  k,a_{0}^{\left(  s\right)  }-1\right\}
\right\} \\
&  {\small \cup}\left\{  \pm(t,-(t+2)):t=0,...,\min\left\{  k,a_{1}^{\left(
s\right)  }-1\right\}  \right\}  \text{.}%
\end{align*}
Note that $\mathcal{V}_{k}^{\left(  s\right)  }\subset\mathcal{V}%
_{k+1}^{\left(  s\right)  }$ for all $k=0,...,a_{0}^{\left(  s\right)  }$ and
$\mathcal{V}_{k}^{\left(  s\right)  }=\mathcal{V}\left(  \Delta_{18}^{\left(
s\right)  }\right)  \ $if $k\geq a_{0}^{\left(  s\right)  }\geq4.$ We will
prove the following:

\begin{claim}
\label{clm-s}For every integer $k\geq0$ and arbitrary vertex $v\in
\mathcal{V}_{k}^{\left(  s\right)  }$ the following holds:

\begin{itemize}
\item[\textbf{i.}] Every maximal directed trail of the graph $G^{\prime
}(\Delta_{18}^{\left(  s\right)  })$ that starts at $v$ includes at least one
of the vertices: $(0,0),$ $(1,-1),$ $(-1,1).$

\item[\textbf{ii.}] Every possible directed trail of the graph $G^{\prime
}(\Delta_{18}^{\left(  s\right)  })$ that starts at vertex $v$ which does not
include vertex $(0,0)$ and includes at most one of the vertices $(1,-1)\ $and
$(-1,1)$ is a directed path.
\end{itemize}
\end{claim}

Note that Claim \ref{clm-s} implies Claim \ref{clm-osn-s}. Namely, if maximal
directed trail of the graph $G^{\prime}(\Delta_{19})$ that starts at a vertex
$v\in G^{\prime}(\Delta_{18}^{\left(  s\right)  })$ includes one of the
vertices: $(0,0),(-1,1),$ $(1,-1)$, then that trail includes one of subtrails:
$(0,0)\overset{a)}{\rightarrow}\mathbf{(}0,0)$, $\mathbf{(}-1,1\mathbf{)}%
\overset{b)}{\rightarrow}\mathbf{(}1,-1\mathbf{)}\overset{a)}{\rightarrow
}(-1,1),$ $(-1,1)\overset{b)}{\rightarrow}(1,0)\overset{c)}{\rightarrow
}(0,0)\overset{a)}{\rightarrow}\mathbf{(}0,0),\ \mathbf{(}1,-1\mathbf{)}%
\overset{a)}{\rightarrow}(-1,1)\overset{b)}{\rightarrow}\mathbf{(}%
1,-1\mathbf{)}$ and $(1,-1)\overset{a)}{\rightarrow}%
(-1,1)\overset{b)}{\rightarrow}(1,0)\overset{c)}{\rightarrow}%
(0,0)\overset{a)}{\rightarrow}\mathbf{(}0,0).$ Since each of these subtrails
includes exactly one of the cycles $\pi_{0}\ $or $\pi_{2},$ to prove that
Claim \ref{clm-osn-s} it is enough to show that Claim \ref{clm-s} holds for
$k=a_{0}^{\left(  s\right)  }$, i.e for all vertices $v\in\mathcal{V}%
_{a_{0}^{\left(  s\right)  }}^{\left(  s\right)  }=\mathcal{V}\left(
\Delta_{18}^{\left(  s\right)  }\right)  .$

We will prove Claim \ref{clm-s} by induction on $k$.

\textit{Base case: }Sincw $a_{0}^{\left(  s\right)  }\geq4$ and $a_{1}%
^{\left(  s\right)  }\geq2,\ $then
\begin{align*}
\mathcal{V}_{0}^{\left(  s\right)  }  &  =\left\{  (0,0),\pm(0,1),\pm
(0,-2)\right\} \\
\mathcal{V}_{1}^{\left(  s\right)  }  &  =\left\{  (0,0),\pm(1,-1),\pm
(0,1),\pm(1,0),\pm(1,-2),\pm(1,1),\pm(0,-2),\pm(1,-3)\right\}
\end{align*}
for all $s=2,...l-1.$ Since $\mathcal{V}_{0}^{\left(  s\right)  }%
\subset\mathcal{V}_{1}^{\left(  s\right)  }$ we prove the Claim \ref{clm-s}
for each element of set $\mathcal{V}_{1}^{\left(  s\right)  }$ (more than we
need). By the definition of the edges of the graph $G^{\prime}(\Delta
_{18}^{\left(  s\right)  })$ given by \textbf{a)},..., \textbf{j)} we obtain
that all maximal directed trails that starts at each vertex from the set
$\mathcal{V}_{1}^{\left(  s\right)  }\ $are given by:

\begin{itemize}
\item $\mathbf{(0,0)}$ $\overset{a)}{\rightarrow}\mathbf{(0,0)}$

\item $\mathbf{(-1,1)}\overset{b)}{\rightarrow}\mathbf{(1,-1)}$\newline%
$\mathbf{(-1,1)}\overset{b)}{\rightarrow}(1,0)\overset{c)}{\rightarrow
}\mathbf{(0,0)}$

\item $\mathbf{(1,-1)}\overset{a)}{\rightarrow}\mathbf{(-1,1)}$

\item $\mathbf{(0,1)}\overset{c)}{\rightarrow}\mathbf{(1,-1)}$

\item $\mathbf{(1,0)}\overset{c)}{\rightarrow}\mathbf{(0,0)}$

\item $\mathbf{(-1,0)}\overset{d)}{\rightarrow}(0,1)\overset{c)}{\rightarrow
}\mathbf{(1,-1)}$\newline$\mathbf{(-1,0)}\overset{d)}{\rightarrow
}\mathbf{(0,0)}$

\item $\mathbf{(-1,2)}\overset{f)}{\rightarrow}(2,-2)\overset{a)}{\rightarrow
}(-2,2)\overset{b)}{\rightarrow}(2,-1)\overset{c)}{\rightarrow}\mathbf{(-1,1)}%
$\newline$\mathbf{(-1,2)}\overset{f)}{\rightarrow}%
(2,-2)\overset{a)}{\rightarrow}(-2,2)\overset{b)}{\rightarrow}%
(2,-1)\underset{a_{1}^{\left(  s\right)  }=3}{\overset{c)}{\rightarrow}%
}(-1,0)\overset{d)}{\rightarrow}(0,1)\overset{c)}{\rightarrow}\mathbf{(1,-1)}%
...$ if $a_{1}^{\left(  s\right)  }=3$\newline$\mathbf{(-1,2)}%
\overset{f)}{\rightarrow}(2,-2)\overset{a)}{\rightarrow}%
(-2,2)\overset{b)}{\rightarrow}(2,-1)\underset{a_{1}^{\left(  s\right)
}=3}{\overset{c)}{\rightarrow}}(-1,0)\overset{d)}{\rightarrow}\mathbf{(0,0)}%
...$ if $a_{1}^{\left(  s\right)  }=3$

\item $\mathbf{(0,-1)}\overset{d)}{\rightarrow}\mathbf{(-1,1)}$\newline%
$\mathbf{(0,-1)}\overset{d)}{\rightarrow}\mathbf{(-1,2)}%
\overset{f)}{\rightarrow}....$\newline

\item $\mathbf{(1,-2)}\overset{e)}{\rightarrow}(-2,2)\overset{b)}{\rightarrow
}(2,-1)\overset{c)}{\rightarrow}\mathbf{(-1,1)}$\newline$\mathbf{(1,-2)}%
\overset{e)}{\rightarrow}(-2,2)\overset{b)}{\rightarrow}(2,-1)\underset{a_{1}%
^{\left(  s\right)  }=3}{\overset{c)}{\rightarrow}}%
(-1,0)\overset{d)}{\rightarrow}(0,1)\overset{c)}{\rightarrow}\mathbf{(1,-1)}%
...$if $a_{1}^{\left(  s\right)  }=3\mathbf{\newline(1,-2)}%
\overset{e)}{\rightarrow}(-2,2)\overset{b)}{\rightarrow}(2,-1)\underset{a_{1}%
^{\left(  s\right)  }=3}{\overset{c)}{\rightarrow}}%
(-1,0)\overset{d)}{\rightarrow}\mathbf{(0,0)}...$ if $a_{1}^{\left(  s\right)
}=3\mathbf{\newline(1,-2)}\overset{e)}{\rightarrow}%
(-2,3)\overset{f)}{\rightarrow}(3,-3)\overset{a)}{\rightarrow}%
(-3,3)\overset{b)}{\rightarrow}(3,-2)\overset{c)}{\rightarrow}$ $...$%
\newline$\mathbf{(1,-2)}\overset{e)}{\rightarrow}%
(-2,3)\overset{f)}{\rightarrow}(3,-2)\overset{c)}{\rightarrow}$ $...$

\item $\mathbf{(1,1)}\overset{g)}{\rightarrow}\mathbf{(1,-1)}$\newline%
$\mathbf{(1,1)}\overset{g)}{\rightarrow}\mathbf{(1,-2)}%
\overset{e)}{\rightarrow}$ $...$

\item $\mathbf{(-1,-1)}\overset{h)}{\rightarrow}\mathbf{(-1,1)}$%
\newline$\mathbf{(-1,-1)}\overset{h)}{\rightarrow}\mathbf{(-1,2)}%
\overset{f)}{\rightarrow}...$\newline$\mathbf{(-1,-1)}\overset{h)}{\rightarrow
}(-1,3)\overset{j)}{\rightarrow}(3,-3)\overset{a)}{\rightarrow}%
(-3,3)\overset{b)}{\rightarrow}(3,-2)\overset{c)}{\rightarrow}$ $...$%
\newline$\mathbf{(-1,-1)}\overset{h)}{\rightarrow}(-1,3)\underset{a_{1}%
^{\left(  s\right)  }\geq4}{\overset{j)}{\rightarrow}}%
(3,-4)\overset{e)}{\rightarrow}...,$ \ if $a_{1}^{\left(  s\right)  }\geq4$

\item $\mathbf{(0,2)}\overset{j)}{\rightarrow}(2,-2)\overset{a)}{\rightarrow
}(-2,2)\overset{b)}{\rightarrow}(2,-1)\overset{c)}{\rightarrow}\mathbf{(-1,1)}%
$\newline$\mathbf{(0,2)}\underset{a_{1}^{\left(  s\right)  }\geq
3}{\overset{j)}{\rightarrow}}(2,-3)\overset{e)}{\rightarrow}%
(-3,3)\overset{b)}{\rightarrow}(3,-2)\overset{c)}{\rightarrow}$ $...$if
$a_{1}^{\left(  s\right)  }\geq3$\newline$\mathbf{(0,2)}\underset{a_{1}%
^{\left(  s\right)  }\geq3}{\overset{j)}{\rightarrow}}%
(2,-3)\overset{e)}{\rightarrow}(-3,4)\overset{f)}{\rightarrow}%
(4,-4)\overset{a)}{\rightarrow}\left(  -4,4\right)  \overset{b)}{\rightarrow
}(4,-3)\overset{c)}{\rightarrow}...$if $a_{1}^{\left(  s\right)  }\geq
3$\newline$\mathbf{(0,2)}\underset{a_{1}^{\left(  s\right)  }\geq
3}{\overset{j)}{\rightarrow}}(2,-3)\overset{e)}{\rightarrow}%
(-3,4)\overset{f)}{\rightarrow}(4,-3)\overset{c)}{\rightarrow}...$if
$a_{1}^{\left(  s\right)  }\geq3$

\item $\mathbf{(0,-2)}\overset{i)}{\rightarrow}(-2,2)\overset{b)}{\rightarrow
}\left(  2,-1\right)  \overset{c)}{\rightarrow}\mathbf{(-1,1)}$\newline%
$\mathbf{(0,-2)}\overset{i)}{\rightarrow}(-2,3)\overset{f)}{\rightarrow
}\left(  3,-3\right)  \overset{a)}{\rightarrow}(-3,3)\overset{b)}{\rightarrow
}(3,-2)\overset{c)}{\rightarrow}$ $...$\newline$\mathbf{(0,-2)}%
\overset{i)}{\rightarrow}(-2,3)\overset{f)}{\rightarrow}\left(  3,-2\right)
\overset{c)}{\rightarrow}$ $...$\newline$\mathbf{(0,-2)}\underset{a_{1}%
^{\left(  s\right)  }\geq3}{\overset{i)}{\rightarrow}}%
(-2,4)\overset{j)}{\rightarrow}(4,-3)\overset{c)}{\rightarrow}...$, \ if
$a_{1}^{\left(  s\right)  }\geq3\newline\mathbf{(0,-2)}\underset{a_{1}%
^{\left(  s\right)  }\geq3}{\overset{i)}{\rightarrow}}%
(-2,4)\overset{j)}{\rightarrow}\left(  4,-4\right)  \overset{a)}{\rightarrow
}\left(  -4,4\right)  \overset{b)}{\rightarrow}(4,-3)\overset{c)}{\rightarrow
}...a_{1}^{\left(  s\right)  }\geq3$\newline$\mathbf{(0,-2)}\underset{a_{1}%
^{\left(  s\right)  }\geq3}{\overset{i)}{\rightarrow}}(-2,4)\underset{a_{1}%
^{\left(  s\right)  }\geq5}{\overset{j)}{\rightarrow}}\left(  4,-5\right)
\overset{e)}{\rightarrow}\left(  -5,5\right)  \overset{b)}{\rightarrow}\left(
5,-4\right)  \overset{c)}{\rightarrow}...$, if $a_{1}^{\left(  s\right)  }%
\geq5$\newline$\mathbf{(0,-2)}\underset{a_{1}^{\left(  s\right)  }%
\geq3}{\overset{i)}{\rightarrow}}(-2,4)\underset{a_{1}^{\left(  s\right)
}\geq5}{\overset{j)}{\rightarrow}}\left(  {\small 4,-5}\right)
\overset{e)}{\rightarrow}\left(  {\small -5,6}\right)
\overset{f)}{\rightarrow}\left(  {\small 6,-6}\right)
\overset{a)}{\rightarrow}\left(  {\small -6,6}\right)
\overset{b)}{\rightarrow}\left(  {\small 6,-5}\right)
\overset{c)}{\rightarrow}...$ if $a_{1}^{\left(  s\right)  }\geq5$%
\newline$\mathbf{(0,-2)}\underset{a_{1}^{\left(  s\right)  }\geq
3}{\overset{i)}{\rightarrow}}(-2,4)\underset{a_{1}^{\left(  s\right)  }%
\geq5}{\overset{j)}{\rightarrow}}\left(  4,-5\right)  \overset{e)}{\rightarrow
}\left(  -5,6\right)  \overset{f)}{\rightarrow}\left(  6,-5\right)
\overset{c)}{\rightarrow}...$, \ if $a_{1}^{\left(  s\right)  }\geq5$\newline

\item $\mathbf{(1,-3)}\overset{i)}{\rightarrow}(-3,3)\overset{b)}{\rightarrow
}(3,-2)\overset{c)}{\rightarrow}...$\newline$\mathbf{(1,-3)}%
\overset{i)}{\rightarrow}(-3,4)\overset{f)}{\rightarrow}%
(4,-4)\overset{a)}{\rightarrow}\left(  -4,4\right)  \overset{b)}{\rightarrow
}(4,-3)\overset{c)}{\rightarrow}...$\newline$\mathbf{(1,-3)}%
\overset{i)}{\rightarrow}(-3,4)\overset{f)}{\rightarrow}%
(4,-3)\overset{c)}{\rightarrow}...\newline\mathbf{(1,-3)}\underset{a_{1}%
^{\left(  s\right)  }\geq4}{\overset{i)}{\rightarrow}}%
(-3,5)\overset{j)}{\rightarrow}...$, \ if $a_{1}^{\left(  s\right)  }\geq4$

\item $\mathbf{(-1,3)}\overset{j)}{\rightarrow}(3,-3)\overset{a)}{\rightarrow
}(-3,3)\overset{b)}{\rightarrow}(3,-2)\overset{c)}{\rightarrow}...$%
\newline$\mathbf{(-1,3)}\underset{a_{1}^{\left(  s\right)  }\geq
4}{\overset{j)}{\rightarrow}}(3,-4)\overset{e)}{\rightarrow}...$, \ if
$a_{1}^{\left(  s\right)  }\geq4.$
\end{itemize}

To complete the proof of the base case, we need to find all maximal directed
trails that starts at the vertices $(3,-2),$ $(4,-3)$ if $a_{1}^{\left(
s\right)  }\geq2,$ $(3,-4),$ $(-3,5)$ if $a_{1}^{\left(  s\right)  }\geq
4\ $and at $\left(  5,-4\right)  $ $\left(  6,-5\right)  $ if $a_{1}^{\left(
s\right)  }\geq5$. We have:

\begin{itemize}
\item $(3,-2)\overset{c)}{\rightarrow}(-2,2)\overset{b)}{\rightarrow
}(2,-1)\overset{c)}{\rightarrow}\mathbf{(-1,1)}$\newline$(3,-2)\underset{a_{1}%
^{\left(  s\right)  }=3,4}{\overset{c)}{\rightarrow}}%
(-2,1)\overset{d)}{\rightarrow}(1,0)\overset{c)}{\rightarrow}\mathbf{(0,0)}$
\ if $a_{1}^{\left(  s\right)  }=3,4$

\item $(4,-3)\overset{c)}{\rightarrow}(-3,3)\overset{b)}{\rightarrow
}\mathbf{(3,-2)}\overset{c)}{\rightarrow}...\newline(4,-3)\underset{a_{1}%
^{\left(  s\right)  }=3,4,5}{\overset{c)}{\rightarrow}}%
(-3,2)\overset{d)}{\rightarrow}(2,-1)\overset{c)}{\rightarrow}\mathbf{(-1,1),}%
$ \ if $a_{1}^{\left(  s\right)  }=3,4,5$

\item $(5,-4)\overset{c)}{\rightarrow}\left(  -4,4\right)
\overset{b)}{\rightarrow}\mathbf{(4,-3)}\overset{c)}{\rightarrow}...$if
$a_{1}^{\left(  s\right)  }\geq4\newline$ $(5,-4)\underset{a_{1}^{\left(
s\right)  }=4,5,6}{\overset{c)}{\rightarrow}}(-4,3)\overset{d)}{\rightarrow
}\mathbf{(3,-2)}\overset{c)}{\rightarrow}...,$ \ if $a_{1}^{\left(  s\right)
}=4,5,6$

\item $(3,-4)\overset{e)}{\rightarrow}(-4,4)\overset{b)}{\rightarrow
}\mathbf{(4,-3)}\overset{c)}{\rightarrow}...$if $a_{1}^{\left(  s\right)
}\geq4\newline(3,-4)\overset{e)}{\rightarrow}(-4,5)\overset{f)}{\rightarrow
}(5,-5)\overset{a)}{\rightarrow}(-5,5)\overset{b)}{\rightarrow}\mathbf{(5,-4)}%
\overset{c)}{\rightarrow}...$if $a_{1}^{\left(  s\right)  }\geq4\newline%
(3,-4)\overset{e)}{\rightarrow}(-4,5)\overset{f)}{\rightarrow}\mathbf{(5,-4)}%
\overset{c)}{\rightarrow}...$if $a_{1}^{\left(  s\right)  }\geq4$

\item $(-3,5)\overset{j)}{\rightarrow}(5,-5)\overset{b)}{\rightarrow
}\mathbf{(5,-4)}\overset{c)}{\rightarrow}...$if $a_{1}^{\left(  s\right)
}\geq4\newline(-3,5)\underset{a_{1}^{\left(  s\right)  }\geq
4}{\overset{j)}{\rightarrow}}\mathbf{(5,-4)}\overset{c)}{\rightarrow}...$if
$a_{1}^{\left(  s\right)  }\geq4\newline(-3,5)\underset{a_{1}^{\left(
s\right)  }\geq6}{\overset{j)}{\rightarrow}}(5,-6)\overset{e)}{\rightarrow
}....$if $a_{1}^{\left(  s\right)  }\geq6.$

\item $\left(  6,-5\right)  \overset{c)}{\rightarrow}\left(  -5,5\right)
\overset{b)}{\rightarrow}\left(  \mathbf{5,-4}\right)
\overset{c)}{\rightarrow}...$if $a_{1}^{\left(  s\right)  }\geq4\newline%
\left(  6,-5\right)  \underset{a_{1}^{\left(  s\right)  }%
=4,5,6,7}{\overset{c)}{\rightarrow}}\left(  -5,4\right)
\overset{d)}{\rightarrow}\left(  \mathbf{4,-3}\right)
\overset{c)}{\rightarrow}...$if $a_{1}^{\left(  s\right)  }=4,5,6,7\newline%
\left(  6,-5\right)  \underset{a_{1}^{\left(  s\right)  }%
=4,5,6,7}{\overset{c)}{\rightarrow}}\left(  -5,4\right)  \underset{a_{1}%
^{\left(  s\right)  }=4}{\overset{d)}{\rightarrow}}\left(  4,-2\right)
\overset{g)}{\rightarrow}...$if $a_{1}^{\left(  s\right)  }=4.$
\end{itemize}

Therefore, it remains to find all maximal directed trails that starts
at$\ $the vertices $\left(  6,-5\right)  $ and $\left(  -3,5\right)  $ if
$a_{1}^{\left(  s\right)  }\geq4,$ or more precisely for the vertices $\left(
4,-2\right)  $ if $a_{1}^{\left(  s\right)  }=4$ and $(5,-6)$ if
$a_{1}^{\left(  s\right)  }\geq6,$ respectively. We have:

\begin{itemize}
\item $\left(  4,-2\right)  \overset{g)}{\rightarrow}\left(  -2,2\right)
\overset{b)}{\rightarrow}\left(  2,-1\right)  \overset{c)}{\rightarrow
}\mathbf{(-1,1)}...$if $a_{1}^{\left(  s\right)  }=4\newline\left(
4,-2\right)  \overset{g)}{\rightarrow}\left(  -2,1\right)
\overset{d)}{\rightarrow}(1,0)\overset{c)}{\rightarrow}\mathbf{(0,0)}...$if
$a_{1}^{\left(  s\right)  }=4$
\end{itemize}

and

\begin{itemize}
\item $(5,-6)\overset{e)}{\rightarrow}\left(  -6,6\right)
\overset{b)}{\rightarrow}\mathbf{(6,-5)}\overset{c)}{\rightarrow}...$if
$a_{1}^{\left(  s\right)  }\geq6\newline(5,-6)\overset{e)}{\rightarrow}\left(
-6,7\right)  \overset{f)}{\rightarrow}(7,-7)\overset{b)}{\rightarrow}...$if
$a_{1}^{\left(  s\right)  }\geq6\newline(5,-6)\overset{e)}{\rightarrow}\left(
-6,7\right)  \overset{f)}{\rightarrow}(7,-6)\overset{c)}{\rightarrow}...$ if
$a_{1}^{\left(  s\right)  }\geq6.$
\end{itemize}

Since

\begin{itemize}
\item $(7,-6)\overset{c)}{\rightarrow}\left(  -6,6\right)
\overset{b)}{\rightarrow}\mathbf{(6,-5)}\overset{c)}{\rightarrow}%
...a_{1}^{\left(  s\right)  }\geq6\newline(7,-6)\underset{a_{1}^{\left(
s\right)  }=6,7,8}{\overset{c)}{\rightarrow}}\left(  -6,5\right)
\overset{d)}{\rightarrow}\left(  \mathbf{5,-4}\right)
\overset{c)}{\rightarrow}...$if $a_{1}^{\left(  s\right)  }=6,7,8$

\item $(7,-7)\overset{b)}{\rightarrow}\left(  -7,6\right)
\overset{d)}{\rightarrow}\left(  \mathbf{6,-5}\right)
\overset{c)}{\rightarrow}...$if $a_{1}^{\left(  s\right)  }\geq6\newline%
(7,-7)\overset{b)}{\rightarrow}\left(  -7,6\right)  \underset{a_{1}^{\left(
s\right)  }=6}{\overset{d)}{\rightarrow}}\left(  6,-4\right)
\overset{g)}{\rightarrow}...$if $a_{1}^{\left(  s\right)  }=6$
\end{itemize}

and

\begin{itemize}
\item $\left(  6,-4\right)  \overset{g)}{\rightarrow}\left(  -4,4\right)
\overset{b)}{\rightarrow}\mathbf{(4,-3)}\overset{c)}{\rightarrow}...$if
$a_{1}^{\left(  s\right)  }=6\newline\left(  6,-4\right)
\overset{g)}{\rightarrow}\left(  -4,3\right)  \overset{d)}{\rightarrow
}\mathbf{(3,-2)}\overset{c)}{\rightarrow}...$if $a_{1}^{\left(  s\right)  }=6$
\end{itemize}

we find all maximal directed trails that starts at$\ $each element of set
$\mathcal{V}_{1}^{\left(  s\right)  }$. It is obvious that every maximal
directed trail of the graph $\mathcal{V}\left(  \Delta_{18}^{\left(  s\right)
}\right)  $ which starts at each vertex $v\in\mathcal{V}_{1}^{\left(
s\right)  }$ includes one of the following vertices: $(0,0),$ $(1,-1),$
$(-1,1).$ It is also easy to see that any possible directed trail of the graph
$\mathcal{V}\left(  \Delta_{18}^{\left(  s\right)  }\right)  $ that starts at
any $v\in\mathcal{V}_{1}^{\left(  s\right)  }$ which does not include vertex
$(0,0)$ and includes at most one of the vertices $(1,-1)\ $and $(-1,1)$ is a
directed path. Note that if $v=\left(  -1,1\right)  ,$ there is exactly one
trail $(-1,1)\overset{b)}{\rightarrow}(1,0)$ with such property which is
obviously a directed path. But if $v=\left(  1,-1\right)  ,\ \left(
0,0\right)  $ or $\left(  1,0\right)  $ there is no directed trail with this
property. Therefore, Claim \ref{clm-s} also valid for the vertices $\left(
0,0\right)  ,$ $(-1,1),\ (1,-1)\ $and $\left(  1,0\right)  $. This concludes
the proof of the base case.

\textit{Induction step: }Suppose, for some $k\geq0$ that Claim \ref{clm-s}
holds for every element of the set $\mathcal{V}_{k}^{\left(  s\right)  }.$ We
have to prove that Claim \ref{clm-s} holds for every element of the set
$\mathcal{V}_{k+1}^{\left(  s\right)  }.$ If $k\geq a_{0}^{\left(  s\right)
},$ then $\mathcal{V}_{k}^{\left(  s\right)  }=\mathcal{V}_{k+1}^{\left(
s\right)  },$ and consequently Claim \ref{clm-s} holds for every element of
the set $\mathcal{V}_{k+1}^{\left(  s\right)  }.$ If $0\leq k\leq
a_{0}^{\left(  s\right)  }-1,$ let us define sets%
\begin{align}
\mathcal{V}_{k+1,1}^{\left(  s\right)  }  &  =\left\{  \pm(k+1,-\left(
k+1\right)  ){\small ,}\pm(k+1,-k)\right\}  \text{, if }0\leq k\leq
a_{0}^{\left(  s\right)  }-1,\label{v1-s}\\
\mathcal{V}_{k+1,2}^{\left(  s\right)  }  &  =\left\{  {\small \pm
(k+1,-(k+2)),\pm(k+1,-}\left(  {\small k-1}\right)  {\small )}\right\}
\text{, if }{\small 0\leq k\leq a}_{0}^{\left(  s\right)  }{\small -2}%
,\label{v2-s}\\
\mathcal{V}_{k+1,3}^{\left(  s\right)  }  &  =\left\{  \pm(k+1,-(k+3)\right\}
,\text{ if }0\leq k\leq a_{1}^{\left(  s\right)  }-2. \label{v3-s}%
\end{align}
Then%
\begin{align}
\mathcal{V}_{k+1}^{\left(  s\right)  }  &  =\mathcal{V}_{k}\cup\mathcal{V}%
_{k+1,1}^{\left(  s\right)  }\cup\mathcal{V}_{k+1,2}^{\left(  s\right)  }%
\cup\mathcal{V}_{k+1,3}^{\left(  s\right)  }\text{ \ if }0\leq k\leq
a_{1}^{\left(  s\right)  }-2,\label{v11-s}\\
\mathcal{V}_{k+1}^{\left(  s\right)  }  &  =\mathcal{V}_{k}\cup\mathcal{V}%
_{k+1,1}^{\left(  s\right)  }\cup\mathcal{V}_{k+1,2}^{\left(  s\right)
}\text{ \ if }a_{1}^{\left(  s\right)  }-1\leq k\leq a_{0}^{\left(  s\right)
}-2,\label{v22-s}\\
\mathcal{V}_{k+1}^{\left(  s\right)  }  &  =\mathcal{V}_{k}\cup\mathcal{V}%
_{k+1,1}^{\left(  s\right)  }\text{ \ if }k=a_{0}^{\left(  s\right)  }-1.
\label{v33-s}%
\end{align}
Since, by the induction hypothesis, Claim \ref{clm-s} holds for every element
of the set $\mathcal{V}_{k}^{\left(  s\right)  },$ according to (\ref{v11-s}),
(\ref{v22-s}) and (\ref{v33-s}), we have to prove that Claim \ref{clm-s} holds
for each element of the sets $\mathcal{V}_{k+1,1}^{\left(  s\right)
},\mathcal{V}_{k+1,2}^{\left(  s\right)  }\ $and $\mathcal{V}_{k+1,3}^{\left(
s\right)  }$ if $k$ lies in the ranges given in (\ref{v1-s}), (\ref{v2-s}) and
(\ref{v3-s}) respectively. Note that it suffices to show that for every
possible maximal directed trail of the graph $G^{\prime}(\Delta_{18}^{\left(
s\right)  })$ that starts at each vertex $v\ $belonging to the sets
$\mathcal{V}_{k+1,1}^{\left(  s\right)  },\mathcal{V}_{k+1,2}^{\left(
s\right)  }\ $and $\mathcal{V}_{k+1,3}^{\left(  s\right)  }$ there exist a
vertex $w\in\mathcal{V}\left(  \Delta_{18}^{\left(  s\right)  }\right)  $ that
is included in that trail for which Claim \ref{clm-s} holds and such that the
corresponding directed subtrail $\left(  v,w\right)  $ is a directed path.
Then we can conclude that Claim \ref{clm-s} also holds for the vertex $v$.
Using the definition of the edges of the graph $G^{\prime}(\Delta
_{18}^{\left(  s\right)  })$, we deduce:

\noindent\textbf{Case 1: }$\mathbf{\pm(k+1,-}\left(  \mathbf{k+1}\right)
\mathbf{)}\mathbf{\in}\mathcal{V}_{k+1,1}^{\left(  s\right)  }\ $if
$k\in\left\{  0,...,a_{0}^{\left(  s\right)  }-1\right\}  .$ $\newline$

If $k=0$, then $\pm(k+1,-\left(  k+1\right)  )=\pm(1,-1)\in\mathcal{V}%
_{1}^{\left(  s\right)  }$ and by the base case, Claim \ref{clm} holds.

If $1\leq k\leq a_{0}^{\left(  s\right)  }-1,$ then $2\leq k+1\leq
a_{0}^{\left(  s\right)  },$ and we have:

\begin{itemize}
\item $(k+1,-\left(  k+1\right)  ),$ $k+1=2,...,a_{0}^{\left(  s\right)
},\newline(k+1,-\left(  k+1\right)  )\overset{a)}{\rightarrow}(-\left(
k+1\right)  ,k+1)$ \ if \ $k+1=2,...,a_{0}^{\left(  s\right)  }$%
,$\newline(k+1,-\left(  k+1\right)  )\overset{a)}{\rightarrow}(-\left(
k+1\right)  ,k)$ \ if \ $k+1=a_{0}^{\left(  s\right)  },$

\item $(-\left(  k+1\right)  ,k+1),$ $k+1=2,...,a_{0}^{\left(  s\right)
},\newline(-\left(  k+1\right)  ,k+1)\overset{b)}{\rightarrow}(k+1,-k)$ \ if
\ $k+1=2,...,a_{0}^{\left(  s\right)  }.$
\end{itemize}

Since

\begin{itemize}
\item $(-\left(  k+1\right)  ,k),$ $k+1=a_{0}^{\left(  s\right)  }%
,\newline(-\left(  k+1\right)  ,k)\overset{d)}{\rightarrow}(k,-\left(
k-1\right)  )\ $if$\ k+1=a_{0}^{\left(  s\right)  }\Leftrightarrow
k=a_{0}^{\left(  s\right)  }-1,\newline(-\left(  k+1\right)
,k)\overset{d)}{\rightarrow}(k,-\left(  k-2\right)  )$ \ if \ $k+1=a_{0}%
^{\left(  s\right)  },$ $a_{1}^{\left(  s\right)  }\geq3\Leftrightarrow
k=a_{0}^{\left(  s\right)  }-1,$ $a_{1}^{\left(  s\right)  }\geq3,$

\item $(k+1,-k),$ $k+1=2,...,a_{0}^{\left(  s\right)  }\newline%
(k+1,-k)\overset{c)}{\rightarrow}(-k,k),$ \ if \ $k+1=2,...,a_{0}^{\left(
s\right)  }\Leftrightarrow k=1,...,a_{0}^{\left(  s\right)  }-1\newline%
(k+1,-k)\overset{c)}{\rightarrow}(-k,k-1),$\ if \ $k+1=a_{1}^{\left(
s\right)  }+1,...,a_{0}^{\left(  s\right)  },$ $a_{1}^{\left(  s\right)  }%
\geq3\Leftrightarrow k=a_{1}^{\left(  s\right)  },...,a_{0}^{\left(  s\right)
}-1,$ $a_{1}^{\left(  s\right)  }\geq3,$
\end{itemize}

then every maximal directed trail that starts at each of the two vertices:
$v=\pm(k+1,-\left(  k+1\right)  )$ if $1\leq k\leq2n-1,$ includes one of the
vertices $w=\left(  -k,k\right)  ,$ $\left(  -k,k-1\right)  ,$ $(k,-\left(
k-1\right)  ),$ $(k,-\left(  k-2\right)  )\in\mathcal{V}_{k}^{\left(
s\right)  }$ and all corresponding directed subtrails $\left(  v,w\right)  $
are the directed paths. By the inductive hypothesis, we conclude that Claim
\ref{clm-s} holds for the vertices $\pm(k+1,-\left(  k+1\right)  )\mathbf{\in
}\mathcal{V}_{k+1,1}^{\left(  s\right)  }$ too.

\noindent\textbf{Case 2: }$\pm\mathbf{(k+1,-k)\in}\mathcal{V}_{k+1,1}^{\left(
s\right)  }\ $if $k\in\left\{  0,...,a_{0}^{\left(  s\right)  }-1\right\}  .$
$\newline$If $k=0$, then $\pm(k+1,-k)=\pm(1,0)\in\mathcal{V}_{1}^{\left(
s\right)  }$ and by the base case, Claim \ref{clm-s} holds.

If $1\leq k\leq a_{0}^{\left(  s\right)  }-1,$ then $2\leq k+1\leq
a_{0}^{\left(  s\right)  },\ $and we have:

\begin{itemize}
\item $(k+1,-k),$ $k+1=2,...,a_{0}^{\left(  s\right)  }\newline%
(k+1,-k)\overset{c)}{\rightarrow}(-k,k),$ \ if \ $k+1=2,...,a_{0}^{\left(
s\right)  }\Leftrightarrow k=1,...,a_{0}^{\left(  s\right)  }-1,\newline%
(k+1,-k)\overset{c)}{\rightarrow}(-k,k-1),$ \ if $k+1=a_{1}^{\left(  s\right)
}-1,...,a_{0}^{\left(  s\right)  },$ $a_{1}^{\left(  s\right)  }%
\geq3\Leftrightarrow k=a_{1}^{\left(  s\right)  }-2,...,a_{0}^{\left(
s\right)  }-1,$ $a_{1}^{\left(  s\right)  }\geq3,$

\item $(-\left(  k+1\right)  ,k),$ $k+1=2,...,a_{0}^{\left(  s\right)
},\newline(-\left(  k+1\right)  ,k)\overset{d)}{\rightarrow}(k,-\left(
k-1\right)  ),$ \ if \ $k+1=2,...,a_{0}^{\left(  s\right)  }\Leftrightarrow
k=1,...,a_{0}^{\left(  s\right)  }-1,\newline(-\left(  k+1\right)
,k)\overset{d)}{\rightarrow}(k,-\left(  k-2\right)  )$ \ if $k+1=a_{1}%
^{\left(  s\right)  }+1,...,a_{0}^{\left(  s\right)  },$ $a_{1}^{\left(
s\right)  }\geq3\Leftrightarrow k=a_{1}^{\left(  s\right)  },...,a_{0}%
^{\left(  s\right)  }-1,$ $a_{1}^{\left(  s\right)  }\geq3.$
\end{itemize}

Therefore, every maximal directed trail that starts at each of the two
vertices: $v=\pm(k+1,-k),$ if $1\leq k\leq a_{0}^{\left(  s\right)  }-1$
includes the edge $v\rightarrow w,$ where $w$ is one of the vertices $(-k,k),$
$(-k,k-1),$ $(k,-\left(  k-1\right)  ),$ $(k,-\left(  k-2\right)
)\in\mathcal{V}_{k}^{\left(  s\right)  }$ and, by the inductive hypothesis, we
conclude that Claim \ref{clm-s} holds for $\pm(k+1,-k)\mathbf{\in}%
\mathcal{V}_{k+1,1}^{\left(  s\right)  }$.

\noindent\textbf{Case 3: }$\mathbf{\pm(k+1,-}\left(  \mathbf{k-1}\right)
\mathbf{)\in}\mathcal{V}_{k+1,2}^{\left(  s\right)  }\ $if $k\in\left\{
0,...,a_{0}^{\left(  s\right)  }-2\right\}  .$ $\newline$

If $k=0$, then $\pm(k+1,-\left(  k-1\right)  )=\pm(1,1)\in\mathcal{V}%
_{1}^{\left(  s\right)  }$ and by base case, Claim \ref{clm-s} holds.

If $1\leq k\leq a_{0}^{\left(  s\right)  }-2,$ then $2\leq k+1\leq
a_{0}^{\left(  s\right)  }-1,$ and we have:

\begin{itemize}
\item $(k+1,-(k-1)),$ $k+1=2,...,a_{0}^{\left(  s\right)  }-1\newline%
(k+1,-(k-1))\overset{g)}{\rightarrow}(-(k-1),k-1)$ \ if \ $k+1=2,...,a_{0}%
^{\left(  s\right)  }-1\Leftrightarrow k-1=0,...,a_{0}^{\left(  s\right)
}-3,\newline(k+1,-(k-1))\overset{g)}{\rightarrow}(-(k-1),k-2)$ \ if
\ $k+1=2,...,a_{0}^{\left(  s\right)  }-1\Leftrightarrow k-1=0,...,a_{0}%
^{\left(  s\right)  }-3,\newline$

\item $(-\left(  k+1\right)  ,k-1),k+1=2,...,a_{0}^{\left(  s\right)
}-1\newline(-\left(  k+1\right)  ,k-1)\overset{h)}{\rightarrow}(k-1,-\left(
k-2\right)  )$ \ if \ $k+1=2,...,a_{0}^{\left(  s\right)  }-1\Leftrightarrow
k-1=0,...,a_{0}^{\left(  s\right)  }-3\newline(-\left(  k+1\right)
,k-1)\overset{h)}{\rightarrow}(k-1,-\left(  k-3\right)  )$ \ if
\ $k+1=2,...,a_{0}^{\left(  s\right)  }-1\Leftrightarrow k-1=0,...,a_{0}%
^{\left(  s\right)  }-3.$
\end{itemize}

Since every maximal directed trail that starts at each of the two vertices:
$v=\pm(k+1,-\left(  k-1\right)  \mathbf{)},$ if $1\leq k\leq a_{0}^{\left(
s\right)  }-2$ includes the edge $v\rightarrow w,$ where $w$ is one of the
vertices $(-(k-1),k-1),$ $(-(k-1),k-2),$ $(k-1,-\left(  k-2\right)  ),$
$(k-1,-\left(  k-3\right)  )\in\mathcal{V}_{k-1}^{\left(  s\right)  }%
\subset\mathcal{V}_{k}^{\left(  s\right)  }$, by the inductive hypothesis, we
conclude that Claim \ref{clm-s} holds for $\pm(k+1,-(k-1))\mathbf{\in
}\mathcal{V}_{k+1,2}^{\left(  s\right)  }.$

\noindent\textbf{Case 4: }$\pm\mathbf{(k+1,-(k+2))\in}\mathcal{V}%
_{k+1,2}^{\left(  2\right)  }$ if $k\in\left\{  0,...,a_{0}^{\left(  s\right)
}-2\right\}  .$ $\newline$

If $k=0$, then $\pm\mathbf{(}k+1,-(k+2)\mathbf{)}=\pm(1,-2)\in\mathcal{V}%
_{1}^{\left(  s\right)  }$ and by base case, Claim \ref{clm-s} holds.

If $1\leq k\leq a_{0}^{\left(  s\right)  }-2,$ then $2\leq k+1\leq
a_{0}^{\left(  s\right)  }-1,$ and we have:

\begin{itemize}
\item $(k+1,-(k+2)),$ $k+1=2,...,a_{0}^{\left(  s\right)  }-1\newline%
(k+1,-(k+2))\overset{e)}{\rightarrow}(-(k+2),k+2)$ \ if \ $k+1=2,...,a_{0}%
^{\left(  s\right)  }-1\Leftrightarrow k+2=3,...,a_{0}^{\left(  s\right)
},\newline(k+1,-(k+2))\overset{e)}{\rightarrow}(-(k+2),k+3)$ \ if
\ $k+1=2,...,a_{0}^{\left(  s\right)  }-2\Leftrightarrow k+2=3,...,a_{0}%
^{\left(  s\right)  }-1.$

\item $(-\left(  k+1\right)  ,k+2),$ $k+1=2,...,a_{0}^{\left(  s\right)
}-1\newline(-\left(  k+1\right)  ,k+2)\overset{f)}{\rightarrow}(k+2,-(k+2))$
\ if \ $k+1=2,...,a_{0}^{\left(  s\right)  }-1\Leftrightarrow k+2=3,...,a_{0}%
^{\left(  s\right)  },\newline(-\left(  k+1\right)
,k+2)\overset{f)}{\rightarrow}(k+2,-(k+1)),$ \ if \ $k+1=2,...,a_{0}^{\left(
s\right)  }-1\Leftrightarrow k+2=3,...,a_{0}^{\left(  s\right)  }.$
\end{itemize}

Since

\begin{itemize}
\item $\mathbf{(}-(k+2),k+2\mathbf{),}$ $k+2=3,...,a_{0}^{\left(  s\right)
}\mathbf{,}\newline(-(k+2),k+2)\overset{b)}{\rightarrow}\mathbf{(}k+2,-\left(
k+1\right)  )$ \ if \ $k+2=3,...,a_{0}^{\left(  s\right)  },$

\item $(k+2,-(k+2),$ $k+2=3,...,a_{0}^{\left(  s\right)  },\newline%
(k+2,-(k+2))\overset{a)}{\rightarrow}(-\left(  k+2\right)  ,k+2)$\ if
\ $k+2=3,...,a_{0}^{\left(  s\right)  },\newline%
(k+2,-(k+2))\overset{a)}{\rightarrow}\mathbf{(}-\left(  k+2\right)  ,k+1)$\ if
\ \ $k+2=a_{0}^{\left(  s\right)  }$

\item $\mathbf{(}k+2,-(k+1)\mathbf{)},$ $k+2=3,...,a_{0}^{\left(  s\right)
}\newline(k+2,-(k+1))\overset{c)}{\rightarrow}\mathbf{(-}\left(
\mathbf{k+1}\right)  \mathbf{,k+1)}$\ if \ $k+2=3,...,a_{0}^{\left(  s\right)
}\Leftrightarrow k=1,...,a_{0}^{\left(  s\right)  }-2\newline%
(k+2,-(k+1))\overset{c)}{\rightarrow}\mathbf{(-}\left(  \mathbf{k+1}\right)
\mathbf{,k)}$\ if \ $k+2=a_{1}^{\left(  s\right)  }-1,...,a_{0}^{\left(
s\right)  },$ $a_{1}^{\left(  s\right)  }\geq4\Leftrightarrow k=a_{1}^{\left(
s\right)  }-3,...,a_{0}^{\left(  s\right)  }-2,$ $a_{1}^{\left(  s\right)
}\geq4$
\end{itemize}

and

\begin{itemize}
\item $(-\left(  k+2\right)  ,k+1),$ $k+2=a_{0}^{\left(  s\right)  }%
\newline(-\left(  k+2\right)  ,k+1\mathbf{)}\overset{d)}{\rightarrow}%
$\ $\mathbf{(k+1,-k)}$\textbf{\ }if \ \ $k+2=a_{0}^{\left(  s\right)
}\Leftrightarrow k=a_{0}^{\left(  s\right)  }-2,\newline\mathbf{(}-\left(
k+2\right)  ,k+1\mathbf{)}\overset{d)}{\rightarrow}$\ $\mathbf{(k+1,-}\left(
\mathbf{k-1}\right)  \mathbf{)}$\ if \ $k+2=a_{0}^{\left(  s\right)  },$
$a_{1}^{\left(  s\right)  }\geq3\Leftrightarrow k=a_{0}^{\left(  s\right)
}-2,$ $a_{1}^{\left(  s\right)  }\geq3,$
\end{itemize}

then every maximal directed trail that starts at vertex $v=(-\left(
k+1\right)  ,k+2)$ if $k=1,...,a_{0}^{\left(  s\right)  }-2,$ includes one of
the vertices: $w=(-\left(  k+1\right)  ,k+1),$ $\pm(-\left(  k+1\right)  ,k),$
$(k+1,-\left(  k-1\right)  )\ $and all corresponding directed subtrails
$\left(  v,w\right)  $ are directed paths. The same applies to the vertices
$v=\pm(-(k+2),k+2)$ and $v=(k+2,-(k+1))$ if $k=1,...,a_{0}^{\left(  s\right)
}-2.$ Since we have already proven that Claim \ref{clm-s} holds for vertices:
$(-\left(  k+1\right)  ,k+1),$ $\pm(-\left(  k+1\right)  ,k),$ $(k+1,-\left(
k-1\right)  )$ if $k\in\left\{  1,...,a_{0}^{\left(  s\right)  }-2\right\}  $
(Cases 1, 2 and 3), we conclude that Claim \ref{clm-s} holds for the vertex
$(-\left(  k+1\right)  ,k+2)$ if $k=1,...,a_{0}^{\left(  s\right)  }-2$ and
also for the vertices $\pm(-(k+2),k+2)$, $(k+2,-(k+1))$ if $k=1,...,a_{0}%
^{\left(  s\right)  }-2.$ To finish the proof of the Case 4, we need to prove
that Claim \ref{clm-s} holds for the vertex $(-(k+2),k+3)$ if $k+2=3,...,a_{0}%
^{\left(  s\right)  }-1.$ We have:

\begin{itemize}
\item $(-(k+2),k+3),$ $k+2=3,...,a_{0}^{\left(  s\right)  }-1\newline%
(-(k+2),k+3)\overset{f)}{\rightarrow}(k+3,-\left(  k+3\right)  )$ \ if
\ $k+2=3,...,a_{0}^{\left(  s\right)  }-1\Leftrightarrow k+3=4,...,a_{0}%
^{\left(  s\right)  },\newline(-(k+2),k+3)\overset{f)}{\rightarrow}%
\mathbf{(}k+3,-\left(  k+2\right)  )$ \ if \ $k+2=3,...,a_{0}^{\left(
s\right)  }-1\Leftrightarrow k+3=4,...,a_{0}^{\left(  s\right)  },$
\end{itemize}

Since

\begin{itemize}
\item $(k+3,-\left(  k+3\right)  ),$ $k+3=4,...,a_{0}^{\left(  s\right)
}\newline(k+3,-\left(  k+3\right)  )\overset{a)}{\rightarrow}(-\left(
k+3\right)  ,k+3)$ \ if \ $k+3=4,...,a_{0}^{\left(  s\right)  }\newline%
(k+3,-\left(  k+3\right)  )\overset{a)}{\rightarrow}(-\left(  k+3\right)
,k+2)$ \ if \ $k+3=a_{0}^{\left(  s\right)  },$

\item $(-\left(  k+3\right)  ,k+2),$ $k+3=a_{0}^{\left(  s\right)  }%
,\newline(-\left(  k+3\right)  ,k+2)\overset{d)}{\rightarrow}$\ $\left(
\mathbf{k+2,-}\left(  \mathbf{k+1}\right)  \right)  $ \ if \ $k+3=a_{0}%
^{\left(  s\right)  }\Leftrightarrow k+2=a_{0}^{\left(  s\right)  }%
-1,\newline(-\left(  k+3\right)  ,k+2)\overset{d)}{\rightarrow}$\ $\left(
k+2,-k\right)  $ \ if \ $k+3=a_{0}^{\left(  s\right)  },$ $a_{1}^{\left(
s\right)  }\geq3\Leftrightarrow k+2=a_{0}^{\left(  s\right)  }-1,$
$a_{1}^{\left(  s\right)  }\geq3,$
\end{itemize}

and

\begin{itemize}
\item $(-\left(  k+3\right)  ,k+3),$ $k+3=4,...,a_{0}^{\left(  s\right)
},\newline(-\left(  k+3\right)  ,k+3)\overset{b)}{\rightarrow}$\ $\left(
k+3,-\left(  k+2\right)  \right)  $ \ if \ $k+3=4,...,a_{0}^{\left(  s\right)
},$

\item $(k+3,-\left(  k+2\right)  ),$ $k+3=4,...,a_{0}^{\left(  s\right)
},\newline(k+3,-\left(  k+2\right)  )\overset{c)}{\rightarrow}\mathbf{(-}%
\left(  \mathbf{k+2}\right)  \mathbf{,k+2)}$\ if \ $k+3=4,...,a_{0}^{\left(
s\right)  }\Leftrightarrow k=1,...,a_{0}^{\left(  s\right)  }-3,\newline%
(k+3,-\left(  k+2\right)  )\overset{c)}{\rightarrow}\mathbf{(-}\left(
k+2\right)  ,k+1\mathbf{)}$\ if \ $k+3=a_{1}^{\left(  s\right)  }%
-1,...,a_{0}^{\left(  s\right)  },$ $a_{1}^{\left(  s\right)  }\geq
5\Leftrightarrow k+2=a_{1}^{\left(  s\right)  }-2,...,a_{0}^{\left(  s\right)
}-1,$ $a_{1}^{\left(  s\right)  }\geq5,$

\item $\left(  k+2,-k\right)  ,$ $k+2=a_{0}^{\left(  s\right)  }-1,$
$a_{1}^{\left(  s\right)  }\geq3\newline\left(  k+2,-k\right)
\overset{g)}{\rightarrow}\left(  \mathbf{-k,k-1}\right)  ,$\ if $k+2=a_{0}%
^{\left(  s\right)  }-1,$ $a_{1}^{\left(  s\right)  }\geq3\Leftrightarrow
k=a_{0}^{\left(  s\right)  }-3,$ $a_{1}^{\left(  s\right)  }\geq
3\newline\left(  k+2,-k\right)  \overset{g)}{\rightarrow}\left(
\mathbf{-k,k}\right)  ,\ $if $k+2=a_{0}^{\left(  s\right)  }-1,$
$a_{1}^{\left(  s\right)  }\geq3\Leftrightarrow k=a_{0}^{\left(  s\right)
}-3,$ $a_{1}^{\left(  s\right)  }\geq3$

\item $(-\left(  k+2\right)  ,k+1),$ $k+2=a_{1}^{\left(  s\right)
}-2,...,a_{0}^{\left(  s\right)  }-1,$ $a_{1}^{\left(  s\right)  }%
\geq5\newline(-\left(  k+2\right)  ,k+1\mathbf{)}\overset{d)}{\rightarrow}%
$\ $\mathbf{(k+1,-k)}$\textbf{\ \ }if $k+2=a_{1}^{\left(  s\right)
}-2,...,a_{0}^{\left(  s\right)  }-1,$ $a_{1}^{\left(  s\right)  }%
\geq5\Leftrightarrow k=a_{1}^{\left(  s\right)  }-4,...,a_{0}^{\left(
s\right)  }-3,$ $a_{1}^{\left(  s\right)  }\geq5\newline\mathbf{(}-\left(
k+2\right)  ,k+1\mathbf{)}\overset{d)}{\rightarrow}$\ $(\mathbf{k+1,-}\left(
\mathbf{k-1}\right)  \mathbf{)}$\ if \ $k+2=a_{1}^{\left(  s\right)
}+1,...,a_{0}^{\left(  s\right)  }-1,$ $a_{1}^{\left(  s\right)  }%
\geq5\Leftrightarrow k=a_{1}^{\left(  s\right)  }-1,...,a_{0}^{\left(
s\right)  }-3,$ $a_{1}^{\left(  s\right)  }\geq5.$
\end{itemize}

then every maximal directed trail that starts at vertex $v=(-(k+2),k+3),$ if
$k=1,...,a_{0}^{\left(  s\right)  }-3$ includes one of the vertices:
$w=\left(  -k,k-1\right)  $, $\left(  -k,k\right)  ,(k+1,-k),$ $(k+1,-\left(
k-1\right)  ),$ $(-\left(  k+2\right)  ,k+2),$ $\left(  k+2,-\left(
k+1\right)  \right)  $ and all corresponding directed subtrails $\left(
v,w\right)  $ are directed paths. Since, by the inductive hypothesis, Claim
\ref{clm-s} holds for vertices $\left(  -k,k-1\right)  $, $\left(
-k,k\right)  \in\mathcal{V}_{k}^{\left(  2\right)  }\ $and as we have already
proven that Claim \ref{clm-s} holds for the vertices: $\mathbf{(}%
k+1,-k),(k+1,-\left(  k-1\right)  ),$ $(-\left(  k+2\right)  ,k+2),$ $\left(
k+2,-\left(  k+1\right)  \right)  $, we conclude that Claim \ref{clm-s} also
holds for the vertex $(-(k+2),k+3)$ when $k=1,...,a_{0}^{\left(  s\right)
}-3.$ It also follows from the above that Claim \ref{clm-s} holds for
vertices: $\pm(k+3,-\left(  k+3\right)  ),$ $(k+3,-\left(  k+2\right)  )$ if
$k=1,...,a_{0}^{\left(  s\right)  }-3.$

\noindent\textbf{Case 5: }$\mathbf{\pm(k+1,-}\left(  \mathbf{k+3}\right)
\mathbf{)\in}\mathcal{V}_{k+1,3}^{\left(  s\right)  }\ $if $k\in\left\{
0,...,a_{1}^{\left(  s\right)  }-2\right\}  .$ $\newline$If $a_{1}^{\left(
s\right)  }=2,$ then $k=0$ and $\pm\mathbf{(}k+1\mathbf{\mathbf{,-}}\left(
k+3\right)  \mathbf{)}=\pm(1,-3)\in\mathcal{V}_{1}^{\left(  s\right)  }$.
Therefore, by base case, Claim \ref{clm-s} holds if $a_{1}^{\left(  s\right)
}=2.$

Let $a_{1}^{\left(  s\right)  }\geq3.$ If $k=0,$ then $\pm\mathbf{(}%
k+1\mathbf{\mathbf{,-}}\left(  k+3\right)  \mathbf{)}=\pm(1,-3)\in
\mathcal{V}_{1}^{\left(  s\right)  }$ and by base case, Claim \ref{clm-s} holds.

If $1\leq k\leq a_{1}^{\left(  s\right)  }-2,$ then $2\leq k+1\leq
a_{1}^{\left(  s\right)  }-1,$ and we have:

\begin{itemize}
\item $(k+1,-\left(  k+3\right)  )\overset{i)}{\rightarrow}\left(
\mathbf{-}\left(  \mathbf{k+3}\right)  \mathbf{,k+3}\right)  $ \ if
\ $k+1=2,...,a_{1}^{\left(  s\right)  }-1\Leftrightarrow k=1,...,a_{1}%
^{\left(  s\right)  }-2\newline(k+1,-\left(  k+3\right)
)\overset{i)}{\rightarrow}\left(  -\left(  k+3\right)  ,k+4\right)  $ \ if
\ $k+1=2,...,a_{1}^{\left(  s\right)  }-1\Leftrightarrow k+3=4,...,a_{1}%
^{\left(  s\right)  }+1\newline(k+1,-\left(  k+3\right)
)\overset{i)}{\rightarrow}\left(  -\left(  k+3\right)  ,k+5\right)  $ \ if
\ $k+1=2,...,a_{1}^{\left(  s\right)  }-3,$ $a_{1}^{\left(  s\right)  }%
\geq5\Leftrightarrow k+3=4,...,a_{1}^{\left(  s\right)  }-1,$ $a_{1}^{\left(
s\right)  }\geq5$

\item $(-\left(  k+1\right)  ,k+3)\overset{j)}{\rightarrow}\left(
\mathbf{k+3,-}\left(  \mathbf{k+3}\right)  \right)  $ \ if \ $k+1=2,...,a_{1}%
^{\left(  s\right)  }-1\Leftrightarrow k=1,...,a_{1}^{\left(  s\right)
}-2\newline(-\left(  k+1\right)  ,k+3)\overset{j)}{\rightarrow}\left(
k+3,\mathbf{-}\left(  k+4\right)  \right)  $ \ if \ $k+1=2,...,a_{1}^{\left(
s\right)  }-3,$ $a_{1}^{\left(  s\right)  }\geq5\Leftrightarrow
k+3=4,...,a_{1}^{\left(  s\right)  }-1,$ $a_{1}^{\left(  s\right)  }%
\geq5\newline(-\left(  k+1\right)  ,k+3)\overset{j)}{\rightarrow}\left(
\mathbf{k+3,-}\left(  \mathbf{k+2}\right)  \right)  $ \ if \ $k+1=2,...,a_{1}%
^{\left(  s\right)  }-1\Leftrightarrow k=1,...,a_{1}^{\left(  s\right)  }-2.$
\end{itemize}

Since we already prove that Claim \ref{clm-s} holds for the vertices
$\pm\left(  \mathbf{-}\left(  k+3\right)  ,k+3\right)  ,\left(  k+3,-\left(
k+2\right)  \right)  $ if $k=1,...,a_{0}^{\left(  s\right)  }-3$, to finish
the proof of the Case 5, we have to prove that Claim \ref{clm-s} holds for the
vertices $\left(  -\left(  k+3\right)  ,k+4\right)  ,$ $\left(  -\left(
k+3\right)  ,k+5\right)  $ and $\left(  k+3,\mathbf{-}\left(  k+4\right)
\right)  $ in the ranges of $k$ and $a_{1}^{\left(  s\right)  }$ given above.
We have:

\begin{itemize}
\item $\left(  -\left(  k+3\right)  ,k+4\right)  ,$ $k+3=4,...,a_{1}^{\left(
s\right)  }+1\newline\left(  -\left(  k+3\right)  ,k+4\right)
\overset{f)}{\rightarrow}\left(  k+4,-\left(  k+4\right)  \right)  $
if$\ k+3=4,...,a_{1}^{\left(  s\right)  }+1\Leftrightarrow k+4=5,...,a_{1}%
^{\left(  s\right)  }+2,\newline\left(  -\left(  k+3\right)  ,k+4\right)
\overset{f)}{\rightarrow}\left(  k+4,-\left(  k+3\right)  \right)  $
if$\ k+3=4,...,a_{1}^{\left(  s\right)  }+1\Leftrightarrow k+4=5,...,a_{1}%
^{\left(  s\right)  }+2,$

\item $\left(  -\left(  k+3\right)  ,k+5\right)  ,$ $k+3=4,...,a_{1}^{\left(
s\right)  }-1,$ $a_{1}^{\left(  s\right)  }\geq5\newline\left(  -\left(
k+3\right)  ,k+5\right)  \overset{j)}{\rightarrow}\left(  k+5,-\left(
k+5\right)  \right)  ,\ $if$\ k+3=4,...,a_{1}^{\left(  s\right)  }-1,$
$a_{1}^{\left(  s\right)  }\geq5\Leftrightarrow k+5=6,...,a_{1}^{\left(
s\right)  }+1,$ $a_{1}^{\left(  s\right)  }\geq5,\newline\left(  -\left(
k+3\right)  ,k+5\right)  \overset{j)}{\rightarrow}\left(  k+5,-\left(
k+4\right)  \right)  \ $if$\ k+3=4,...,a_{1}^{\left(  s\right)  }-1,$
$a_{1}^{\left(  s\right)  }\geq5\Leftrightarrow k+5=6,...,a_{1}^{\left(
s\right)  }+1,$ $a_{1}^{\left(  s\right)  }\geq5,\newline\left(  -\left(
k+3\right)  ,k+5\right)  \overset{j)}{\rightarrow}\left(  k+5,-\left(
k+6\right)  \right)  $\ if \ $k+3=4,...,a_{1}^{\left(  s\right)  }-3,$
$a_{1}^{\left(  s\right)  }\geq7\Leftrightarrow k+5=6,...,a_{1}^{\left(
s\right)  }-1,$ $a_{1}^{\left(  s\right)  }\geq7,$

\item $\left(  k+3,\mathbf{-}\left(  k+4\right)  \right)  $ \ if
$k+3=4,...,a_{1}^{\left(  s\right)  }-1,$ $a_{1}^{\left(  s\right)  }%
\geq5\newline\left(  k+3,\mathbf{-}\left(  k+4\right)  \right)
\overset{e)}{\rightarrow}\left(  \mathbf{-}\left(  k+4\right)  ,k+4\right)  $
if $k+3=4,...,a_{1}^{\left(  s\right)  }-1,$ $a_{1}^{\left(  s\right)  }%
\geq5\Leftrightarrow k+4=5,...,a_{1}^{\left(  s\right)  },$ $a_{1}^{\left(
s\right)  }\geq5\newline\left(  k+3,\mathbf{-}\left(  k+4\right)  \right)
\overset{e)}{\rightarrow}\left(  \mathbf{-}\left(  k+4\right)  ,k+5\right)  $
if $k+3=4,...,a_{1}^{\left(  s\right)  }-1,$ $a_{1}^{\left(  s\right)  }%
\geq5\Leftrightarrow k+4=5,...,a_{1}^{\left(  s\right)  },$ $a_{1}^{\left(
s\right)  }\geq5.$
\end{itemize}

Since

\begin{itemize}
\item $\left(  k+4,-\left(  k+4\right)  \right)  ,$ $k+4=5,...,a_{1}^{\left(
s\right)  }+2\newline\left(  k+4,-\left(  k+4\right)  \right)
\overset{a)}{\rightarrow}\left(  \mathbf{-}\left(  k+4\right)  ,k+4\right)
\overset{b)}{\rightarrow}\left(  k+4,-\left(  k+3\right)  \right)  $ if
$k+4=5,...,a_{1}^{\left(  s\right)  }+2\newline$

\item $\left(  \mathbf{-}\left(  k+4\right)  ,k+4\right)  ,$ $k+4=5,...,a_{1}%
^{\left(  s\right)  },$ $a_{1}^{\left(  s\right)  }\geq5\newline\left(
\mathbf{-}\left(  k+4\right)  ,k+4\right)  \overset{b)}{\rightarrow}\left(
k+4,-\left(  k+3\right)  \right)  $ if $k+4=5,...,a_{1}^{\left(  s\right)  },$
$a_{1}^{\left(  s\right)  }\geq5$ $\newline$

\item $\left(  k+4,-\left(  k+3\right)  \right)  $, $k+4=5,...,a_{1}^{\left(
s\right)  }+2\newline\left(  k+4,-\left(  k+3\right)  \right)
\overset{c)}{\rightarrow}\left(  \mathbf{-}\left(  \mathbf{k+3}\right)
\mathbf{,k+3}\right)  $ if $k+4=5,...,a_{1}^{\left(  s\right)  }%
+2\Leftrightarrow k=1,...,a_{1}^{\left(  s\right)  }-2\newline\left(
k+4,-\left(  k+3\right)  \right)  \overset{c)}{\rightarrow}(-\left(
k+3\right)  ,k+2)\overset{d)}{\rightarrow}$\ $\left(  \mathbf{k+2,-}\left(
\mathbf{k+1}\right)  \right)  $ if $k+4=a_{1}^{\left(  s\right)  }-1,$
$a_{1}^{\left(  s\right)  }\geq6\Leftrightarrow k+3=a_{1}^{\left(  s\right)
}-2,$ $a_{1}^{\left(  s\right)  }\geq6,\newline\left(  k+4,-\left(
k+3\right)  \right)  \overset{c)}{\rightarrow}(-\left(  k+3\right)
,k+2)\overset{d)}{\rightarrow}$\ $\left(  \mathbf{k+2,-}\left(  \mathbf{k+1}%
\right)  \right)  $ if $k+4=a_{1}^{\left(  s\right)  },$ $a_{1}^{\left(
s\right)  }\geq5\Leftrightarrow k+3=a_{1}^{\left(  s\right)  }-1,$
$a_{1}^{\left(  s\right)  }\geq5,\newline\left(  k+4,-\left(  k+3\right)
\right)  \overset{c)}{\rightarrow}(-\left(  k+3\right)
,k+2)\overset{d)}{\rightarrow}$\ $\left(  \mathbf{k+2,-}\left(  \mathbf{k+1}%
\right)  \right)  $ if $k+4=a_{1}^{\left(  s\right)  }+1,$ $a_{1}^{\left(
s\right)  }\geq4\Leftrightarrow k+3=a_{1}^{\left(  s\right)  },$
$a_{1}^{\left(  s\right)  }\geq4,\newline\left(  k+4,-\left(  k+3\right)
\right)  \overset{c)}{\rightarrow}(-\left(  k+3\right)
,k+2)\overset{d)}{\rightarrow}$\ $\left(  \mathbf{k+2,-}\left(  \mathbf{k+1}%
\right)  \right)  $ if $k+4=a_{1}^{\left(  s\right)  }+2,$ $a_{1}^{\left(
s\right)  }\geq3\Leftrightarrow k+3=a_{1}^{\left(  s\right)  }+1,$
$a_{1}^{\left(  s\right)  }\geq3,$

\item $\left(  \mathbf{-}\left(  k+4\right)  ,k+5\right)  ,$ $k+4=5,...,a_{1}%
^{\left(  s\right)  },$ $a_{1}^{\left(  s\right)  }\geq5,\newline\left(
\mathbf{-}\left(  k+4\right)  ,k+5\right)  \overset{f)}{\rightarrow}\left(
k+5,\mathbf{-}\left(  k+5\right)  \right)  \overset{a)}{\rightarrow}\left(
\mathbf{-}\left(  k+5\right)  ,k+5\right)  \overset{b)}{\rightarrow}\left(
k+5,\mathbf{-}\left(  k+4\right)  \right)  $if $k+4=5,...,a_{1}^{\left(
s\right)  },$ $a_{1}^{\left(  s\right)  }\geq5,\newline\left(  \mathbf{-}%
\left(  k+4\right)  ,k+5\right)  \overset{f)}{\rightarrow}\left(
k+5,\mathbf{-}\left(  k+4\right)  \right)  $ if $k+4=5,...,a_{1}^{\left(
s\right)  },$ $a_{1}^{\left(  s\right)  }\geq5,$

\item $\left(  k+5,-\left(  k+5\right)  \right)  ,$ $k+5=6,...,a_{1}^{\left(
s\right)  }+1,a_{1}^{\left(  s\right)  }\geq5,\newline\left(  k+5,-\left(
k+5\right)  \right)  \overset{a)}{\rightarrow}\left(  -\left(  k+5\right)
,k+5\right)  $ $\overset{b)}{\rightarrow}$ $\left(  k+5,\mathbf{-}\left(
k+4\right)  \right)  $ if $k+5=6,...,a_{1}^{\left(  s\right)  }+1,a_{1}%
^{\left(  s\right)  }\geq5,$

\item $\left(  k+5,\mathbf{-}\left(  k+4\right)  \right)  ,$ $k+5=6,...,a_{1}%
^{\left(  s\right)  }+1,$ $a_{1}^{\left(  s\right)  }\geq5,\newline\left(
k+5,-\left(  k+4\right)  \right)  \overset{c)}{\rightarrow}\left(  -\left(
k+4\right)  ,k+4\right)  \overset{b)}{\rightarrow}\left(  k+4,-\left(
k+3\right)  \right)  $ if $k+5=6,...,a_{1}^{\left(  s\right)  }+1,a_{1}%
^{\left(  s\right)  }\geq5\newline\left(  k+5,-\left(  k+4\right)  \right)
\overset{c)}{\rightarrow}\left(  -\left(  k+4\right)  ,k+3\right)
\overset{d)}{\rightarrow}$ $\left(  \mathbf{k+3},-\left(  \mathbf{k+2}\right)
\right)  $ if $k+5=a_{1}^{\left(  s\right)  }-1$ if $a_{1}^{\left(  s\right)
}\geq7$ $\newline\left(  k+5,-\left(  k+4\right)  \right)
\overset{c)}{\rightarrow}\left(  -\left(  k+4\right)  ,k+3\right)
\overset{d)}{\rightarrow}$ $\left(  \mathbf{k+3},-\left(  \mathbf{k+2}\right)
\right)  $ if $k+5=a_{1}^{\left(  s\right)  },a_{1}^{\left(  s\right)  }+1$ if
$a_{1}^{\left(  s\right)  }\geq6,\newline\left(  k+5,-\left(  k+4\right)
\right)  \overset{c)}{\rightarrow}\left(  -\left(  k+4\right)  ,k+3\right)
\overset{d)}{\rightarrow}$ $\left(  \mathbf{k+3},-\left(  \mathbf{k+2}\right)
\right)  $ if $k+5=a_{1}^{\left(  s\right)  }+1$ if $a_{1}^{\left(  s\right)
}\geq5,$

\item $\left(  k+5,-\left(  k+6\right)  \right)  $, $k+5=6,...,a_{1}^{\left(
s\right)  }-1,$ $a_{1}^{\left(  s\right)  }\geq7,\newline\left(  k+5,-\left(
k+6\right)  \right)  \overset{e)}{\rightarrow}\left(  -\left(  k+6\right)
,k+6\right)  \overset{b)}{\rightarrow}\left(  k+6,-\left(  k+5\right)
\right)  ,$ if $k+5=6,...,a_{1}^{\left(  s\right)  }-1,$ $a_{1}^{\left(
s\right)  }\geq7,\Leftrightarrow k+6=7,...,a_{1}^{\left(  s\right)  },$
$a_{1}^{\left(  s\right)  }\geq7\newline\left(  k+5,-\left(  k+6\right)
\right)  \overset{e)}{\rightarrow}\left(  -\left(  k+6\right)  ,k+7\right)  ,$
if $k+5=6,...,a_{1}^{\left(  s\right)  }-1,$ $a_{1}^{\left(  s\right)  }%
\geq7\Leftrightarrow k+6=7,...,a_{1}^{\left(  s\right)  },$ $a_{1}^{\left(
s\right)  }\geq7,$
\end{itemize}

we conclude that Claim \ref{clm-s} holds for the vertices $v=\left(
k+4,-\left(  k+4\right)  \right)  ,$ $\left(  k+4,-\left(  k+3\right)
\right)  ,$ $\left(  k+5,-\left(  k+5\right)  \right)  ,$ $\left(
k+5,-\left(  k+4\right)  \right)  ,$ $\left(  \mathbf{-}\left(  k+4\right)
,k+4\right)  $ and $\left(  \mathbf{-}\left(  k+4\right)  ,k+5\right)  $ in
the ranges of $k$ and $a_{1}^{\left(  s\right)  }$ given above. Namely, every
maximal directed trail that starts at each of these six vertices $v$ includes
one of the vertices: $w=\left(  \mathbf{-}\left(  k+3\right)  ,k+2\right)  ,$
$\left(  \mathbf{-}\left(  k+3\right)  \mathbf{,}k+3\right)  ,$ $\left(
k+2,-\left(  k+1\right)  \right)  $ and all corresponding directed subtrails
$\left(  v,w\right)  $ are directed paths. Since we have already proved that
Claim \ref{clm-s} holds for these tree vertices $w$ in the ranges of $k$ given
above, we conclude that Claim \ref{clm-s} holds for the each of these six
vertices $v$ in the ranges of $k$ and $a_{1}^{\left(  s\right)  }$ given
above. To complete the proof of the Case 5, we have to prove that Claim
\ref{clm-s} holds for the vertices $\left(  k+6,-\left(  k+5\right)  \right)
\ $and $\left(  -\left(  k+6\right)  ,k+7\right)  $ if $k+6=7,...,a_{1}%
^{\left(  s\right)  },$ $a_{1}^{\left(  s\right)  }\geq7.$ We have:

\begin{itemize}
\item $\left(  k+6,-\left(  k+5\right)  \right)  ,$ $k+6=7,...,a_{1}^{\left(
s\right)  },$ $a_{1}^{\left(  s\right)  }\geq7\newline\left(  k+6,-\left(
k+5\right)  \right)  \overset{c)}{\rightarrow}\left(  -\left(  k+5\right)
,k+5\right)  \overset{b)}{\rightarrow}\left(  \mathbf{k+5,-}\left(
\mathbf{k+4}\right)  \right)  ,$ if $k+6=7,...,a_{1}^{\left(  s\right)  },$
$a_{1}^{\left(  s\right)  }\geq7\Leftrightarrow k+5=6,...,a_{1}^{\left(
s\right)  }-1,$ $a_{1}^{\left(  s\right)  }\geq7\newline\left(  k+6,-\left(
k+5\right)  \right)  \overset{c)}{\rightarrow}\left(  -\left(  k+5\right)
,k+4\right)  ,$ if $k+6=a_{1}^{\left(  s\right)  }-1,a_{1}^{\left(  s\right)
},$ $a_{1}^{\left(  s\right)  }\geq7\Leftrightarrow k+5=a_{1}^{\left(
s\right)  }-2,$ $a_{1}^{\left(  s\right)  }-1,$ $a_{1}^{\left(  s\right)
}\geq7$

\item $\left(  -\left(  k+6\right)  ,k+7\right)  ,k+6=7,...,a_{1}^{\left(
s\right)  },$ $a_{1}^{\left(  s\right)  }\geq7\newline\left(  -\left(
k+6\right)  ,k+7\right)  \overset{f)}{\rightarrow}\left(  k+7,-\left(
k+7\right)  \right)  \overset{a)}{\rightarrow}\left(  -\left(  k+7\right)
,k+7\right)  \overset{b)}{\rightarrow}\left(  k+7,-\left(  k+6\right)
\right)  $ if $k+6=7,...,a_{1}^{\left(  s\right)  },$ $a_{1}^{\left(
s\right)  }\geq7\Leftrightarrow k+7=8,...,a_{1}^{\left(  s\right)  }+1,$
$a_{1}^{\left(  s\right)  }\geq7\newline\left(  -\left(  k+6\right)
,k+7\right)  \overset{f)}{\rightarrow}\left(  k+7,-\left(  k+6\right)
\right)  $ if $k+6=7,...,a_{1}^{\left(  s\right)  },$ $a_{1}^{\left(
s\right)  }\geq7\Leftrightarrow k+7=8,...,a_{1}^{\left(  s\right)  }+1,$
$a_{1}^{\left(  s\right)  }\geq7$

and

\item $\left(  -\left(  k+5\right)  ,k+4\right)  ,$ $k+5=a_{1}^{\left(
s\right)  }-2,$ $a_{1}^{\left(  s\right)  }-1,$ $a_{1}^{\left(  s\right)
}\geq7\newline\left(  -\left(  k+5\right)  ,k+4\right)
\overset{d)}{\rightarrow}\left(  \mathbf{k+4,-}\left(  \mathbf{k+3}\right)
\right)  ,$ if $k+5=a_{1}^{\left(  s\right)  }-2,a_{1}^{\left(  s\right)
}-1,$ $a_{1}^{\left(  s\right)  }\geq7\Leftrightarrow k+4=a_{1}^{\left(
s\right)  }-3,a_{1}^{\left(  s\right)  }-2,$ $a_{1}^{\left(  s\right)  }\geq7$

\item $\left(  k+7,-\left(  k+6\right)  \right)  ,$ $k+7=8,...,a_{1}^{\left(
s\right)  }+1,$ $a_{1}^{\left(  s\right)  }\geq7\newline\left(  k+7,-\left(
k+6\right)  \right)  \overset{c)}{\rightarrow}\left(  -\left(  k+6\right)
,k+6\right)  \overset{b)}{\rightarrow}\left(  k+6,-\left(  k+5\right)
\right)  ,$ if $k+7=8,...,a_{1}^{\left(  s\right)  }+1,$ $a_{1}^{\left(
s\right)  }\geq7\Leftrightarrow k+6=7,...,a_{1}^{\left(  s\right)  },$
$a_{1}^{\left(  s\right)  }\geq7\newline\left(  k+7,-\left(  k+6\right)
\right)  \overset{c)}{\rightarrow}\left(  -\left(  k+6\right)  ,k+5\right)  ,$
if $k+7=a_{1}^{\left(  s\right)  }+1,$ $a_{1}^{\left(  s\right)  }%
\geq7\Leftrightarrow k+6=a_{1}^{\left(  s\right)  },$ $a_{1}^{\left(
s\right)  }\geq7\newline\left(  k+7,-\left(  k+6\right)  \right)
\overset{c)}{\rightarrow}\left(  -\left(  k+6\right)  ,k+5\right)  ,$ if
$k+7=a_{1}^{\left(  s\right)  },$ $a_{1}^{\left(  s\right)  }\geq
8\Leftrightarrow k+6=a_{1}^{\left(  s\right)  }-1,$ $a_{1}^{\left(  s\right)
}\geq8\newline\left(  k+7,-\left(  k+6\right)  \right)
\overset{c)}{\rightarrow}\left(  -\left(  k+6\right)  ,k+5\right)  ,$ if
$k+7=a_{1}^{\left(  s\right)  }-1,$ $a_{1}^{\left(  s\right)  }\geq
9\Leftrightarrow k+6=a_{1}^{\left(  s\right)  }-2,$ $a_{1}^{\left(  s\right)
}\geq9$

\item $\left(  -\left(  k+6\right)  ,k+5\right)  ,$ if $k+6=a_{1}^{\left(
s\right)  }+1,$ $a_{1}^{\left(  s\right)  },$ $a_{1}^{\left(  s\right)  }-1,$
$a_{1}^{\left(  s\right)  }\geq7\newline\left(  -\left(  k+6\right)
,k+5\right)  \overset{d)}{\rightarrow}\left(  \mathbf{k+5,-}\left(
\mathbf{k+4}\right)  \right)  $ if $k+6=a_{1}^{\left(  s\right)  },$
$a_{1}^{\left(  s\right)  }-1,$ $a_{1}^{\left(  s\right)  }-2,$ $a_{1}%
^{\left(  s\right)  }\geq7.\newline$
\end{itemize}

From this, we conclude that every maximal directed trail that starts at each
of the vertices $v=\left(  k+6,-\left(  k+5\right)  \right)  \ $and $v=\left(
-\left(  k+6\right)  ,k+7\right)  $ if $k+6=6,...,n-1,n\geq7$ includes one of
the vertices: $w=\left(  k+5,-\left(  k+4\right)  \right)  ,$\textbf{\ }%
$\left(  k+4,-\left(  k+3\right)  \right)  $ and all corresponding directed
subtrails $\left(  v,w\right)  $ are directed paths. Since we already prove
that Claim \ref{clm-s} holds for these two vertices $w$ in the ranges of $k$
and $a_{1}^{\left(  s\right)  }$ given above, we conclude that Claim
\ref{clm-s} also holds for the vertices $v=\left(  k+6,-\left(  k+5\right)
\right)  \ $and $v=\left(  -\left(  k+6\right)  ,k+7\right)  $ if
$k+6=7,...,a_{1}^{\left(  s\right)  },$ $a_{1}^{\left(  s\right)  }\geq7.$
Therefore, Claim \ref{clm-s} holds for $\mathbf{\pm(}k+1,-\left(  k+3\right)
\mathbf{)\in}\mathcal{V}_{k+1,3}^{\left(  s\right)  }$ which finishes the
proof of Claim \ref{clm-s}.

We have thus shown that for all $s=2,...,t-1$ the graph $G^{\prime}%
(\Delta_{18}^{\left(  s\right)  })$ has only two cycles: a trivial cycle
$\pi_{0}=(0)$ and a non-zero cycle $\pi_{2}=(-1,1)$. Since the cycles $\pi
_{0}$\ and $\pi_{2}\ $are\ also the cycles$\ $of\ the graph $G_{\varepsilon
}(\Delta_{18}^{\left(  s\right)  })$ for each $\varepsilon\in\Big[\frac
{2}{3(2n+1)},\frac{2}{3(2n-1)}\Big)$ and for the corresponding cutout polygon
$P_{\varepsilon}(\pi_{2})$ we have $\Delta_{18}^{\left(  s\right)  }\cap
P_{\varepsilon}(\pi_{2})=\overline{WZ}$ $,$ the lemma is proven.
\end{proof}

\begin{acknowledgement}
The authors would like to thank Professor Damir Vuki\v{c}evi\'{c} for helpful
suggestions on the proof of Lemma \ref{Lemma-18-prvi}.
\end{acknowledgement}

\end{document}